\newtheorem{thm}{Theorem}[section]
\newtheorem{lem}[thm]{Lemma}
\newtheorem{prop}[thm]{Proposition}
\theoremstyle{definition}
\newtheorem{defn}{Definition}
\newtheorem{notn}[thm]{Notation}
\theoremstyle{remark}
\newtheorem{rem}[thm]{Remark}
\DeclareMathOperator{\adj}{adj}
\DeclareMathOperator{\spn}{span}
\newcommand{\rank}{\mathrm{rk}}
\begin{document}

\title{Diophantine approximation of Mahler numbers}
\author{Jason P.~Bell}
\address{Department of Pure Mathematics, University of Waterloo, Waterloo, Canada}
\email{jpbell@uwaterloo.ca}

\author{Yann Bugeaud}
\address{D\'epartement de math\'ematiques, Universit\'e de Strasbourg, Strasbourg, France} 
\email{bugeaud@math.unistra.fr}
%
\author{Michael Coons}%
\address{School of Math.~and Phys.~Sciences, University of Newcastle, Callaghan, Australia}
\email{michael.coons@newcastle.edu.au}
\subjclass[2010]{11J82 (primary), 11B85, 41A21 (secondary)}
\keywords{Diophantine approximation, irrationality exponent, rational approximations, Mahler functions, Liouville numbers.}
\thanks{Research of J.~P.~Bell was supported by NSERC grant 31-611456, and the research of M.~Coons was supported by ARC grant DE140100223.}

\begin{abstract}
Suppose that $F(x)\in\mathbb{Z}[[x]]$ is a Mahler function and that $1/b$ is in the radius of convergence of $F(x)$ for an integer $b\geq 2$. In this paper, we consider the approximation of $F(1/b)$ by algebraic numbers. In particular, we prove that $F(1/b)$ cannot be a Liouville number. If, in addition, $F(x)$ is regular, we show that $F(1/b)$ is either rational or transcendental, and in the latter case that $F(1/b)$ is an $S$-number or a $T$-number in Mahler's classification of real numbers.
\end{abstract}

\maketitle

\vspace*{6pt}\tableofcontents  

\section{Introduction}

One of the interesting classes of numbers from the perspective of Diophantine approximation is the set of \emph{automatic real numbers}.  In this setting, one has natural numbers $b$ and $k$ with $b,k\ge 2$, and a sequence $f:\mathbb{Z}_{\ge 0}\to \{0,\ldots ,b-1\}$ such that the value $f(n)$ is generated by a finite-state automaton that reads as input the base $k$ expansion of $n$ and outputs a number in the range. We call such a sequence $\{f(n)\}_{n\geq 0}$ {\em $k$-automatic} (or just {\em automatic}) and call the real number whose $n$-th $b$-ary digit is $f(n)$ a {\em $k$-automatic number}. 

In a series of papers in the 1980s, Loxton and van der Poorten \cite{LvdP1982, LvdP1988} used Mahler's method to investigate whether or not an automatic number could be both irrational and algebraic, a problem posed by Cobham \cite{C1968} in 1968. Mahler's method \cite{M1929, M1930a,M1930b,N1996} is a method in transcendence theory whereby one uses a function $F(x)\in\mathbb{Q}[[x]]$ that satisfies a functional equation of the form \begin{equation}\label{MFE} \sum_{i=0}^d a_i(x)F(x^{k^i})=0\end{equation} for some integers $k\geq 2$ and $d\geq 1$ and polynomials $a_0(x),\ldots,a_d(x)\in\mathbb{Z}[x]$ with $a_0(x)a_d(x)\neq 0$, to give results about the nature of the numbers $F(1/b)$ with $b\geq 2$ an integer such that $1/b$ is in the radius of convergence of $F(x)$. We refer to such numbers $F(1/b)$ as {\em Mahler numbers}. Indeed, it is well-known that automatic numbers are special cases of Mahler numbers (see \cite[Theorem 1]{B1994}). 

Some twenty years after Loxton and van der Poorten's work, Adamczewski and Bugeaud~\cite{AB2007, ABL2004} answered the question concerning automatic numbers by substituting Schmidt's Subspace Theorem in place of Mahler's method.  
Adamczewski and Bugeaud showed that such numbers are either rational or transcendental \cite{AB2007}. Their result naturally leads to the question of irrationality measures for automatic numbers. 

Let $\xi$ be a real number. The {\em irrationality exponent}, $\mu(\xi)$, of $\xi$ is defined to be the supremum of the set of real numbers $\mu$ such that the inequality
$$
0<\left|\xi-\frac{p}{q}\right|<\frac{1}{q^\mu}
$$ 
has infinitely many solutions $(p,q)\in\mathbb{Z}\times\mathbb{N}.$ 
Computing the irrationality exponent of a given real number is generally very difficult, although there are several classes of numbers for which irrationality exponents are well-understood.  All rational numbers have irrationality exponent one, and a celebrated theorem of Roth  \cite{R1955} gives that all irrational algebraic numbers have irrationality exponent precisely two.  In fact, 
the set of real numbers with irrationality exponent strictly greater than two has Lebesgue measure zero. Roth's theorem built on work of Liouville \cite{L1844}, who showed that if $\xi$ is an algebraic number of degree $d$ over $\mathbb{Q}$, then $\mu(\xi)\leq d$.  Using this fact, Liouville produced the first examples of transcendental numbers by constructing real numbers with infinite irrationality exponent; numbers with infinite irrationality exponent are now called {\em Liouville numbers} in his honour.

Towards classifying irrationality exponents of automatic numbers as well as settling a conjecture of Shallit \cite{S1999}, Adamczewski and Cassaigne \cite{AC2006} proved that a Liouville number cannot be generated by a finite automaton.

In this paper, we use Mahler's method to provide the following significant generalisation of Adamczewski and Cassaigne's result.

\begin{thm}\label{mainshort} A Mahler number cannot be a Liouville number.
\end{thm}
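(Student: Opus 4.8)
The plan is to show that $\xi = F(1/b)$ has finite irrationality exponent by producing, from the Mahler functional equation, an infinite sequence of very good rational approximations whose denominators do not grow too fast, and then to argue that no such number can also be approximated too well. First I would use the functional equation \eqref{MFE} to control the tail of the power series: writing $F(x) = \sum_{n \ge 0} c_n x^n$ with $c_n \in \mathbb{Z}$, the relation $\sum_i a_i(x) F(x^{k^i}) = 0$ forces a linear recurrence (with polynomial-in-$n$ structure coming from the $a_i$) on the coefficients $c_n$, which gives a bound of the shape $|c_n| \le C^n$ for some constant $C \ge 1$; hence $F(x)$ is analytic on $|x| < 1/C$, and after replacing $b$ by a suitable power $b^s$ (which only rescales the irrationality exponent by a bounded factor and does not affect the Liouville property) we may assume $1/b$ lies comfortably inside the disc of convergence.

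The heart of the argument is to extract rational approximations. The truncations $P_N(x) = \sum_{n \le N} c_n x^n \in \mathbb{Z}[x]$ give $|\xi - P_N(1/b)| \le \sum_{n > N} |c_n| b^{-n} \ll (C/b)^N$, and $P_N(1/b)$ has denominator dividing $b^N$; this already shows $\mu(\xi) < \infty$ as long as $b > C$, but in general $b$ may be small compared to $C$, so truncation alone is not enough and one must exploit the functional equation more cleverly. The idea is that \eqref{MFE} expresses $F(x)$ as a rational function of $x$ and the values $F(x^{k^i})$ for $i \ge 1$; iterating, one writes $F(x) = R(x) + (\text{rational function})\cdot F(x^{k^m})$ for each $m$, where $R(x) \in \mathbb{Q}(x)$ has controlled height, and then $F(x^{k^m})$ is tiny at $x = 1/b$ because $x^{k^m} = b^{-k^m}$ is extremely small. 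Evaluating at $x = 1/b$ and clearing denominators produces rationals $p_m/q_m$ with $\log q_m = O(k^m)$ (the denominators come from the polynomial coefficients $a_i$ raised to powers growing like $k^m$, plus the power $b^{O(k^m)}$) while $|\xi - p_m/q_m|$ is bounded by something like $b^{-k^{m+1}} \cdot (\text{height factor})$, i.e.\ doubly-exponentially small in $m$. The key point is that the exponent in the error, $k^{m+1}$, beats the exponent $k^m$ in $\log q_m$ by the fixed factor $k$, so $|\xi - p_m/q_m| \le q_m^{-(k-\varepsilon)}$ infinitely often — but crucially the ratio is \emph{bounded}, which is exactly what rules out $\mu(\xi) = \infty$.

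I would then combine this with a standard gap/triangle-inequality argument: if $\xi$ were a Liouville number, there would exist rationals $u/v$ with $|\xi - u/v| < v^{-\lambda}$ for arbitrarily large $\lambda$; comparing $u/v$ with the nearest $p_m/q_m$ from our sequence (choosing $m$ so that $q_m$ and $v$ are roughly comparable) and using $|p_m/q_m - u/v| \ge 1/(q_m v)$ when the two fractions differ, one derives a contradiction once $\lambda$ exceeds the fixed exponent coming from the previous paragraph. I expect the main obstacle to be the bookkeeping on heights: showing that the denominators $q_m$ and the numerators of the auxiliary rational functions $R(x)$ grow only like $\exp(O(k^m))$ — with an explicit, uniform constant depending only on $\max_i \deg a_i$, $\max_i H(a_i)$, $d$, $k$, and $b$ — requires carefully tracking how the products of the $a_i(x^{k^j})$ accumulate under iteration, and making sure the logarithmic error exponent genuinely exceeds $\log q_m$ by a positive proportion rather than merely matching it. Handling the degenerate case where the leading/trailing coefficients $a_0, a_d$ vanish at relevant points, or where cancellation makes some $q_m$ collapse, will need a separate (routine but fiddly) argument.
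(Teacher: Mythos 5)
Your outline captures the ``upper bound'' half of the argument -- using the Mahler functional equation to manufacture a sequence $p_n/q_n$ with $|\xi - p_n/q_n|\leq q_n^{-(1+\delta)}$ and $q_{n+1}\leq C q_n^{k}$ -- which is indeed what the paper does in Section~2 (via Pad\'e approximation rather than truncation, but the spirit is the same). However, the ``standard gap/triangle-inequality argument'' you invoke at the end does not actually close the proof: it is precisely the case you flag with ``when the two fractions differ'' that can go wrong, and you offer no mechanism to handle it. The following example shows that those two conditions alone are compatible with $\mu(\xi)=\infty$. Take $\xi=\sum_{j\geq 1}10^{-j!}$ and $u_N/v_N=\sum_{j\leq N}10^{-j!}$ with $v_N=10^{N!}$, so $|\xi-u_N/v_N|<2v_{N+1}^{-1}$. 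For $\delta\in(0,1)$ and $q=rv_N$ with $1\leq r\leq \bigl(v_{N+1}/(2v_N^{1+\delta})\bigr)^{1/(1+\delta)}$, the fraction $p/q=u_N/v_N$ satisfies $|\xi-p/q|<q^{-(1+\delta)}$, and stringing these ranges together over $N$ produces a full sequence $p_n/q_n$ whose consecutive denominator ratios never exceed $v_{N+1}^{\delta/(1+\delta)}\leq q_n^{\,k-1}$ for any fixed $k>1+\delta$ once $N$ is large. Thus a Liouville number admits exactly the kind of approximating sequence your argument constructs; having such a sequence \emph{cannot} by itself certify $\mu(\xi)<\infty$. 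What is missing is a \emph{lower} bound $|\xi-p_n/q_n|\gg q_n^{-c}$ for infinitely many (in fact, a syndetic set of) indices $n$ -- that two-sided control is exactly what makes the Adamczewski--Rivoal transfer principle (the paper's Lemma~\ref{ARLem} and its syndetic variant Lemma~\ref{ARLemSyn}) applicable. The entire content of Section~\ref{LB} (Cartier operators, linear independence of the Pad\'e vectors $\mathbf{\Phi}_n$, and the norm lower bound of Lemma~\ref{Lemma1}) exists to establish this lower bound, and nothing in your sketch substitutes for it.

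A secondary point: you describe the vanishing of $a_0$ along the orbit $(a/b)^{k^i}$ as a ``routine but fiddly'' degenerate case. In the paper this is the removal of Mahler's condition (Section~\ref{MahlerCond}), which requires Dumas' structure theorem, a derivative trick to apply L'H\^opital at the zero, and a fresh application of the main estimate to a larger linear system; the authors explicitly flag it as new and nontrivial. It is not a routine fix to be handled at the end.
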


\noindent Historically, the application of Mahler's method had one major impediment. In order to consider numbers $F(1/b)$ one must ensure that $a_0((1/b)^{k^i})\neq 0$ for all $i\geq 0$; this condition is commonly referred to as {\em Mahler's condition}. Note that in the statement of Theorem \ref{mainshort} no such condition is stated. Indeed, we are able to remove Mahler's condition within the setting of this paper.  As far as we know, this is the first time that Mahler's condition has been removed.  We suspect that our trick, presented in Section 5, has further applications.

Theorem \ref{mainshort} is obtained via a more general quantitative version (see Theorem~\ref{thestuff}) in which we give a bound for the irrationality exponent of the number $F(a/b)$ in terms of information from the functional equation \eqref{MFE} and the rational number $a/b$ when $|a|$ is small enough compared to $b$. Futhermore, the rational approximations constructed to get the quantitative bound are of high enough quality that we may apply a $p$-adic version of Schmidt's Subspace Theorem to extend Adamczewski and Bugeaud's above-mentioned result to a much larger class of numbers. 

To explain this extension formally, let $\mathbf{a}=\{a(n)\}_{n\geq 0}$ be a sequence taking values in a field $\mathbb{K}$. We define the $k$-kernel of $\mathbf{a}$, denoted by $\mathcal{K}_k(\mathbf{a})$, as the set of distinct subsequences of the form $\{a(k^\ell n+r)\}_{n\geq 0}$ with $\ell\geq 0$ and $0\leq r<k^\ell$. Christol \cite{C1979} showed that a sequence is $k$-automatic if and only if its $k$-kernel is finite. We say the sequence $\mathbf{a}$ is {\em $k$-regular} (or just {\em regular}) provided the $\mathbb{K}$-vector space spanned by $\mathcal{K}_k(\mathbf{a})$ is finite-dimensional. The notion of $k$-regularity was introduced by Allouche and Shallit \cite{AS1992, AS2003}. We also use the term {\em regular} to refer to both the function $\sum_{n\geq 0}a(n)x^n$ and its values at rationals $1/b$, with $b\geq 2$ an integer, for $k$-regular sequences $\mathbf{a}$.

\begin{thm}\label{kregdichotomy} An irrational regular number is transcendental.
\end{thm}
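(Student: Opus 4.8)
The plan is to run the strategy Adamczewski and Bugeaud used for automatic numbers, but feed it the explicit rational approximations coming from Theorem~\ref{thestuff} rather than the combinatorial ones coming from automata. The first step is to make regularity concrete. If $F(x)=\sum_{n\ge 0}a(n)x^n$ with $\mathbf a$ a $k$-regular sequence, let $V$ be the finite-dimensional $\mathbb K$-span of the $k$-kernel, fix a basis $a=a_1,\dots,a_m$ of $V$, and put $\mathbf F=(F_1,\dots,F_m)^{\mathrm t}$ with $F_j(x)=\sum_n a_j(n)x^n$. Because every section $\{a_j(kn+r)\}_n$ lies in $V$, one gets a matrix functional equation $\mathbf F(x)=B(x)\,\mathbf F(x^k)$ in which $B(x)$ has polynomial entries of degree $<k$; eliminating $F_2,\dots,F_m$ shows $F=F_1$ satisfies a scalar equation of the form \eqref{MFE} (this is a theorem of Becker~\cite{B1994}), so a regular number is in particular a Mahler number. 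The reason for retaining the system is that iterating it gives
\[\mathbf F(1/b)=B(1/b)\,B(1/b^{k})\cdots B(1/b^{k^{N-1}})\,\mathbf F\bigl(b^{-k^{N}}\bigr),\]
which already exhibits rational approximations to $\xi:=F(1/b)$ whose denominators divide powers of $b$; this control on the denominators is the extra feature that regularity buys compared with the general Mahler case.

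Assume for contradiction that $\xi$ is irrational and algebraic. The second step is to invoke Theorem~\ref{thestuff} to produce an infinite sequence of rationals $p_n/q_n\to\xi$ that is simultaneously of \emph{high quality}, meaning $0<|\xi-p_n/q_n|<q_n^{-\mu}$ with $\mu$ in the range permitted by Theorem~\ref{thestuff}, and of \emph{controlled shape}, meaning $q_n=b^{e_n}r_n$ with $r_n$ bounded in terms of the data of \eqref{MFE} (and $r_n=1$ in the cleanest, system, form), so that the $\ell$-adic valuation of $q_n$ is large relative to $\log q_n$ for every prime $\ell\mid b$. One also keeps track of how consecutive approximants $p_n/q_n$, $p_{n+1}/q_{n+1}$ are related, since this is needed to discard degenerate subspaces below.

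The third step is the Diophantine core. One applies a $p$-adic Subspace Theorem over $\mathbb Q$ with place set $S=\{\infty\}\cup\{\ell:\ell\mid b\}$, evaluated at integer vectors built from the approximants --- $\mathbf x_n=(q_n,p_n)$, or, when the power-of-$b$ part must be isolated, $\mathbf x_n=(b^{e_n},p_n,q_n)$ in the manner of Adamczewski and Bugeaud. At $\infty$ one uses a linear form measuring $|q_n\xi-p_n|$; at each $\ell\mid b$ one uses coordinate forms that exploit the large $\ell$-adic valuation of $q_n$; the product formula then cancels the $b$-smooth part of $q_n$ against these non-archimedean contributions, and the quality estimate makes the product of the chosen forms fall below the required negative power of the height. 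The Subspace Theorem confines the $\mathbf x_n$ to finitely many proper rational subspaces; passing to an infinite subsequence inside one of them yields a nontrivial rational linear dependence among the coordinates, and since $p_n/q_n\to\xi$ this dependence (using a second approximant, if necessary, to eliminate the auxiliary coordinate) forces $\xi\in\mathbb Q$, a contradiction. Hence an irrational regular number is transcendental.

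I expect the main obstacle to be producing approximations that are good \emph{and} arithmetically clean at once: naive truncation of the iterated functional equation yields an approximation exponent that only tends to $1$, so one needs the finer construction of Theorem~\ref{thestuff} --- exploiting the shape of $B(x)$ and, ultimately, Pad\'e-type approximants to $F$ --- to push the exponent into the useful range while keeping the denominators within a bounded factor of a power of $b$ (one may also have to pass first from $1/b$ to $1/b^{j}$ so that the numerator is small compared with the base). A secondary difficulty is the Subspace-Theorem bookkeeping: choosing the dimension and the forms so that the gains across the places of $S$ outweigh the height, and then ruling out the degenerate subspaces that would not yield a contradiction, which is exactly where the relations between consecutive approximants are used, as in the automatic case.
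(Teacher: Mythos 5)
Your overall strategy is the same as the paper's: pass to the matrix functional equation furnished by the $k$-kernel, build Pad\'e-type rational approximations $p_n/q_n$ to $F(1/b)$, observe that regularity forces the matrix to have polynomial entries (so the denominators carry a lot of $b$-divisibility), and then apply a $p$-adic Subspace Theorem over $\mathcal{S}=\{\infty\}\cup\{p:p\mid b\}$ to conclude. Two remarks first: it is Lemma~\ref{right}, not Theorem~\ref{thestuff}, that actually produces the family of approximations; and because $F_1=F,\dots,F_L$ from the kernel need not be $\mathbb{Q}(x)$-linearly independent, the paper first runs the unimodular-completion machinery of Section~\ref{QS} (Proposition~\ref{QSprop}) to replace them by a genuinely independent system $G_1,\dots,G_S,H$ with $H(1/b)=F(1/b)$, which your sketch silently skips.

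The genuine gap is in your description of the arithmetic shape of $q_n$ and, consequently, in the choice of vectors for the Subspace Theorem. You assert that $q_n=b^{e_n}r_n$ with $r_n$ bounded (``$r_n=1$ in the cleanest form''). That is false. From Lemma~\ref{right} with the matrix $B(x)$ reduced to a scalar $B$ in the regular case, one has $Q_n(x)=B^nQ_0(x^{k^n})$, and after clearing denominators
\begin{equation*}
q_n \;=\; B^n\sum_{i=1}^{\delta}a_{\ell_i}\,b^{(D-\ell_i)k^n},
\end{equation*}
a sum of $\delta\ge 1$ terms whose last summand is the constant $B^n a_D$ ($a_D$ the leading coefficient of $Q_0$). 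Hence for any $p\mid b$ one has $|q_n|_p=|B|_p^n\,|a_D|_p$ eventually: it is bounded, or at best decays like $B^{-cn}$, which is utterly negligible next to $q_n\asymp B^n b^{Dk^n}$. So your proposed vectors $\mathbf{x}_n=(q_n,p_n)$ or $(b^{e_n},p_n,q_n)$ carry essentially no non-archimedean gain, and the product of the linear forms does not drop below the required power of the height; the Subspace Theorem step does not close. The missing idea is to \emph{split} $q_n$ into its $\delta$ constituent powers of $b$ and run the argument in dimension $\delta+1$, taking
\begin{equation*}
\mathbf{x}_n=\bigl(B^n b^{(D-\ell_1)k^n},\dots,B^n b^{(D-\ell_\delta)k^n},\,p_n\bigr),
\end{equation*}
with coordinate forms at the finite places and the form $\sum_i a_{\ell_i}\xi X_i - X_{\delta+1}$ at infinity, so that each coordinate except the last is individually a constant times $B^n$ times a pure power of $b$ and the $p$-adic smallness can be harvested coordinate by coordinate. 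The final elimination (dividing by the dominant coordinate $B^nb^{Dk^n}$ and letting $n\to\infty$ inside one subspace) then forces $\xi\in\mathbb{Q}$ exactly as you predict. Without this splitting, the argument as you have set it up would not go through.
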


As an immediate corollary of this result, we have that if $F(x)$ is a Mahler function satisfying \eqref{MFE} with $a_0(x)$ a nonzero integer, then $F(1/b)$ is either rational or transcendental (see \cite[Theorem~2]{B1994}). 

A particular case of Theorem \ref{kregdichotomy} is the transcendence of irrational automatic numbers, a result first established in \cite{AB2007}. While both proofs ultimately rest on the Schmidt Subspace Theorem, they are quite different; see the discussion after Theorem \ref{thmNotUeffective} for details.

As all regular numbers are Mahler numbers (see Becker \cite[Theorem 1]{B1994}), {Theorem~\ref{mainshort}} shows that regular numbers cannot be too well approximated by rational numbers. In the final result of this paper, we show that one may control the strength of the approximations of regular numbers by algebraic numbers of higher degree. Before stating the result, we need a few definitions.

Mahler \cite{MahCr32} in 1932, and Koksma \cite{Ko39} in 1939, 
introduced two related measures of the 
quality of approximation of a complex 
transcendental number $\xi$ by algebraic numbers.
For any integer $m \ge 1$, we denote by
$w_m (\xi)$ the supremum of the real numbers $w$ for which
$$
0 < |P(\xi)| < \frac{1}{H(P)^{w}}
$$
has infinitely many solutions in integer polynomials $P(x)$ of
degree at most $m$. Here, $H(P)$ stands for the na\"ive height of the
polynomial $P(x)$, that is, the maximum of the absolute values of
its coefficients. Further, we set 
$$w(\xi) = \limsup_{m \to \infty} \frac{w_m(\xi)}{m}.$$ 
According to Mahler \cite{MahCr32}, we say that $\xi$ is an
\vspace{.1cm}
\begin{itemize}
\item $S$-number, if $w(\xi) < \infty$;
\vspace{.1cm}
\item $T$-number, if $w(\xi) = \infty $ and $w_m(\xi) < \infty$ for any
integer $m \ge 1$;
\vspace{.1cm}
\item $U$-number, if $w(\xi) = \infty $ and $w_m(\xi) = \infty$ for some
integer $m\ge 1$.
\end{itemize}
\vspace{.1cm}
Almost all numbers are $S$-numbers in the sense of Lebesgue measure, and 
Liouville numbers are examples of $U$-numbers. The existence of
$T$-numbers remained an open problem for nearly forty years, until it was
confirmed by Schmidt \cite{Schm70,Schm71}.

Following Koksma \cite{Ko39}, for any integer $m \ge 1$, we denote by
$w_m^* (\xi)$ the supremum of the real numbers $w$ for which
$$
0 < |\xi - \alpha| < \frac{1}{H(\alpha)^{w+1}}
$$
has infinitely many solutions in complex algebraic numbers $\alpha$ of
degree at most $n$. Here, $H(\alpha)$ stands for the na\"ive height of $\alpha$,
that is, the na\"ive height of its minimal defining polynomial.
Koksma \cite{Ko39} defined $S^*$-, $T^*$- and $U^*$-numbers as above, using $w_m^*$
rather than $w_m$. He proved that this classification of numbers
is equivalent to Mahler's classification.
For more information on the functions $w_m$ and $w_m^*$, 
the reader is directed to Bugeaud's monograph \cite{BuLiv}. 

We may now state our final result.

\begin{thm}\label{thmNotU}
An irrational regular number is
an $S$-number or a $T$-number.
\end{thm}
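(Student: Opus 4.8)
The plan is to produce, for every integer $m \ge 1$, a transcendence measure
\[
|P(\xi)| \ge c_m\, H(P)^{-\psi_m} \qquad (P \in \mathbb{Z}[x],\ 1 \le \deg P \le m,\ H(P) \ge H_0(m)),
\]
where $\xi := F(1/b)$ and $c_m > 0$, $\psi_m < \infty$, $H_0(m)$ depend only on $m$ (and on $F$ and $b$). This suffices: it gives $w_m(\xi) < \infty$ for every $m$, and since $\xi$ is transcendental by Theorem~\ref{kregdichotomy} (a nonconstant $P\in\mathbb{Z}[x]$ with $P(\xi)=0$ would make $\xi$ algebraic, contradicting that an irrational regular number is transcendental), this forces, by the very definitions of Mahler's classes, that $\xi$ is an $S$-number or a $T$-number rather than a $U$-number. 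Equivalently, for an algebraic $\alpha$ of degree $\le m$ with minimal polynomial $P$ (so $H(\alpha) = H(P)$), the standard inequality $|P(\xi)| \ll_m H(\alpha)\,|\xi - \alpha|$ — obtained by bounding the conjugate factors of $P$ at $\xi$ by the Mahler measure of $P$ — combined with the displayed bound gives $w_m^*(\xi) \le \psi_m$, so $\xi$ cannot be a $U$-number.

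For $m = 1$ this is Theorem~\ref{mainshort}, in the quantitative form of Theorem~\ref{thestuff}. For $m \ge 2$ the key point is that a polynomial in a regular function is again governed by a functional equation \eqref{MFE} of controlled complexity. Realize $F = F_1$ as the first coordinate of a vector $\mathbf{F} = (F_1,\dots,F_D)^{\mathsf{T}}$ whose coordinates span the $\mathbb{Q}$-span of the $k$-kernel of the coefficient sequence of $F$, so that $\mathbf{F}(x) = A(x)\mathbf{F}(x^k)$ with $A(x)$ a matrix of polynomials of $x$-degree $< k$. Then the vector $\mathbf{G}(x)$ whose entries are all monomials of degree at most $m$ in $F_1,\dots,F_D$ satisfies $\mathbf{G}(x) = B(x)\mathbf{G}(x^k)$, where $B(x)$ is the polynomial matrix induced by $A$ on such monomials; it has size $\binom{D+m}{m}$ and $x$-degree $< mk$, and depends only on $m$ and $A$. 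Since $P(F(x))$ is a $\mathbb{Z}$-linear combination of the entries of $\mathbf{G}(x)$ with coefficients those of $P$ (hence of absolute value $\le H(P)$), eliminating through the iterates $\mathbf{G}(x^{k^s})$ exhibits $G := P\circ F$ as a solution of an equation of the form \eqref{MFE}, with the same $k$, with order $\le \binom{D+m}{m}$ and coefficient-polynomial degrees bounded in terms of $m$ and $F$ only, and with coefficient-polynomial heights bounded by a fixed power of $H(P)$, the exponent depending only on $m$ and $F$.

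It then remains to run the argument behind Theorems~\ref{mainshort} and~\ref{thestuff} for $G$ at the point $1/b$: the construction of the explicit family of rational approximations to $G(1/b)$ from its functional equation, together with the device of Section~5 that removes Mahler's condition. Since $G(1/b) = P(\xi) \ne 0$ by Theorem~\ref{kregdichotomy}, this produces a positive lower bound for $|G(1/b)|$, and tracking how each constant in the construction depends on the data of the functional equation of $G$ — which in turn depends on $H(P)$ only polynomially, with exponent controlled by $m$ and $F$ — yields the displayed transcendence measure with an admissible $\psi_m$.

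I expect the main obstacle to be precisely this last step: converting the output of the (Mahler-condition-free) Mahler method for $G$, which in Theorems~\ref{mainshort} and~\ref{thestuff} takes the form of an upper bound for an irrationality exponent, into a genuine \emph{lower} bound for $|G(1/b)|$ that is \emph{uniform} in $P$ — polynomial in $H(P)$ with an $m$-dependent exponent. This forces one to re-examine the construction underlying Theorem~\ref{thestuff} and to check that every auxiliary quantity (degrees, coefficient heights, the first denominator and the quality exponent of the approximating sequence) scales at most polynomially with the heights of the coefficient polynomials of the equation, so that the final estimate degrades only polynomially in $H(P)$. A secondary, more routine, difficulty is making the bookkeeping of the previous paragraph precise: controlling the order and the coefficient heights of the scalar equation obtained from $\mathbf{G}(x) = B(x)\mathbf{G}(x^k)$ by elimination (handling the degenerate case $\det B \equiv 0$ and ensuring the leading and trailing coefficients are nonzero), and verifying that $1/b$ remains within its radius of convergence.
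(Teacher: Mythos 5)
Your proposal takes a genuinely different route from the paper. The paper does \emph{not} build a Mahler equation for $P \circ F$; instead it takes the rational approximations $p_n/q_n$ to $F(1/b)$ already produced by Lemma~\ref{right}, writes $q_n$ in the explicitly factored form $B^n \sum_i a_{\ell_i} b^{(D-\ell_i)k^n}$, and feeds the resulting integer vectors into the quantitative $p$-adic Schmidt Subspace Theorem (Theorem~\ref{thm5.1}), together with a subspace--counting lemma (Lemma~\ref{lem8.4}) and a Liouville-type inequality (Lemma~\ref{lem8.2}), to bound the number of integers $h$ for which an algebraic $\alpha$ of degree $m$ can be close to $p_{j+h}/q_{j+h}$; this yields $w_m(F(1/b)) \le (8m)^{c\log(8m)\log\log(8m)}$ directly. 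Your plan, by contrast, is a classical Mahler-method transcendence-measure argument applied to the scalar Mahler function $G=P\circ F$.

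The gap in your plan is the step you yourself flag, and I do not think it is a bookkeeping issue. Knowing that $G(1/b)=P(\xi)\ne 0$ and that $\mu(G(1/b))$ is bounded \emph{does not give a lower bound on $|G(1/b)|$}: the irrationality exponent of a number says nothing about its absolute value (e.g. $\sqrt{2}\cdot 10^{-N}$ has irrationality exponent $2$ for every $N$, but is as small as you like). The whole content of Sections~2--4 is the two-sided control of $|G(1/b) - p_n/q_n|$; nowhere does that machinery produce a lower bound on $|G(1/b)|$ itself. To extract one you would need to show that $p_n\ne 0$ for some $n$ with $q_n$ controlled polynomially in $H(P)$; but if $|G(1/b)|$ is tiny, then $p_n$ is forced to vanish for all $n$ up to roughly $\log\log(1/|G(1/b)|)$, and Lemma~\ref{Lemma2} only kicks in for $n$ in that range with the conclusion about the approximation error, not about $|G(1/b)|$. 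In short, the reduction of $w_m$ to the $m=1$ machinery for $P\circ F$ is logically incomplete. A classical Mahler-method transcendence measure for $G$ would require a genuinely different ingredient (a zero estimate combined with a Liouville lower bound under the hypothesis that $G(1/b)$ is \emph{small}), which is not what Theorems~\ref{mainshort} and~\ref{thestuff} provide; the paper circumvents all of this by invoking the quantitative Schmidt Subspace Theorem instead.

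As a secondary remark: even if the lower-bound issue were resolved, the exponent in Theorem~\ref{thestuff} depends on the coefficient height $h$ through a term of the form $(4dHh^3)^{(\text{large})}$, so $\mu(G(1/b))$ would be bounded by a \emph{power} of $H(P)$, which would give only $w_m^*(\xi)$ finite (and hence $w_m(\xi)$ finite) for each $m$ but with a bound growing polynomially in $H(P)$ uniformly in $H(P)$ — that part of your bookkeeping is fine in spirit. The elimination step producing the scalar equation for $G$ and the control of its height and degree are plausible, though you would indeed need to address the degenerate case and the nonvanishing of the trailing coefficient. But none of this rescues the missing lower-bound mechanism.
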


Since automatic numbers form a subset of regular numbers, Theorem \ref{thmNotU} implies
that any irrational automatic number is either an $S$-number or a 
$T$-number, a result already established in~\cite{AdBu11}. 

Theorems \ref{mainshort}, \ref{kregdichotomy}, and \ref{thmNotU} combine to give a rather satisfactory answer to the following problem, posed by
Allouche and Shallit \cite[page 195]{AS1992}: 
\vspace{.1cm}
\begin{quote}
{\it Obtain transcendence results for the real numbers $\sum_{n \ge 0} s(n) p^{-n}$,
where $s(n)$ is $p$-regular and $\sum_{n \ge 0} s(n) x^n$ is not a rational function.}
\end{quote}
\vspace{.1cm}

This paper is organised as follows. In Section \ref{UB}, we produce a family of rational function approximations to a power series $F(x)$ satisfying a Mahler functional equation.  We then show that specialisation of our family of rational functions at a given rational number $a/b$ inside the radius of convergence of $F(x)$ produces a family of good approximations of $F(a/b)$ under very general conditions on $a$ and $b$.  In Section \ref{LB}, we show that the rational approximations produced in Section \ref{UB} cannot be too strong and we are able to combine these results in Section \ref{mu} to give an upper bound on the irrationality exponent for $F(a/b)$ in terms of data from the functional equation and the rational number $a/b$ under Mahler's condition. In Section \ref{MahlerCond}, we remove Mahler's condition. In Section \ref{QS}, we prove a quantitative result on the unimodular completion of matrices with polynomial entries which are of use in Section \ref{muauto}. Section \ref{muauto} considers regular (and hence also automatic) numbers; specifically, in the case of a regular function, the results of Section \ref{QS} are used to  provide an alternative method for removing Mahler's condition. This alternative method produces a functional equation based on information from the $k$-kernel, which is used in Section \ref{regAB} alongside an appropriate version of the $p$-adic Schmidt Subspace Theorem to prove general versions of Theorems \ref{kregdichotomy} and \ref{thmNotU}.

\section{Upper bounds on rational approximations}\label{UB}
In this section, we consider power series $F(x)$ satisfying a $k$-Mahler equation and construct a family of rational function approximations of $F(x)$, which we then use to give good rational approximations to $F(\alpha)$ for rational numbers $\alpha$ inside the radius of convergence of $F(x)$.  We approach this problem by looking at finite-dimensional $\mathbb{Q}(x)$-vector subspaces of $\mathbb{Q}((x))$ that are stable under the ring homomorphism $\Delta_k:\mathbb{Q}((x))\to \mathbb{Q}((x))$ given by $x\mapsto x^k$.   Throughout this paper, unless stated otherwise, we take all complex matrix norms $\|\cdot \|$ to be the operator norm; i.e., $\|A\|=\sup_{\|{\bf v}\|=1} \|A{\bf v}\|$, where the norm of a vector ${\bf v}$ is the ordinary Euclidean norm.

\begin{notn} \label{notn: 1} We adopt the following assumptions and notation.
\begin{enumerate}
\item We take $k$ and $d$ to be natural numbers that are both greater than or equal to $2$.
\item We take $a_{i,j}(x) \in\mathbb{Z}[x]$ to be polynomials for $i,j\in\{1,\ldots,d\}$.
\item We take $B(x)$ to be a polynomial with integer coefficients such that $B(0)\neq 0$.
\item We take 
$H:=\max_{1\leq i,j\leq d}\{\deg a_{i,j}(x), \deg B(x)\}$.
\item  We take 
$\mathbf{A}(x)$ to be the $d\times d$ matrix whose $(i,j)$-entry is $a_{i,j}(x)/B(x)$
and we assume that $\det\left(\mathbf{A}(x)\right)$ is a nonzero rational function.
\item We take $1=F_1(x), F_2(x),\ldots,F_d(x)\in \mathbb{Z}[[x]]$ to be power series that are linearly independent over $\mathbb{Q}(x)$ and which satisfy 
 $$\left[F_1(x),\ldots,F_d(x)\right]^T=\mathbf{A}(x)[F_1(x^k), \ldots,F_d(x^k)]^T.$$ 
\item We let ${\bf F}(x)$ denote the $d\times 1$ vector $\left[F_1(x),\ldots,F_d(x)\right]^T$.
\item We take $a,b\in\mathbb{Z}$ with $a\neq 0$, $b\geq 2$, and $\rho:=\log |a|/\log b\in [0,1/(d+1))$ such that $|a/b|$ is in the radius of convergence of $F_i(x)$ for each $i\in\{1,\ldots,d\}$ and $B((a/b)^{k^n})\neq 0$ for all $n\geq 0.$
\item We let $\nu: \mathbb{Q}((x))\to \mathbb{Z}\cup \{\infty\}$ be the valuation defined by $\nu(0)=\infty$ and $$\nu\left(\sum_{n\geq-m} c_nx^n\right):=\inf\{i:c_i\neq 0\}$$ when $\sum_{n\ge -m} c_n x^n\in \mathbb{Q}((x))$ is a nonzero Laurent power series. 
\end{enumerate}
\end{notn}

We begin with a simple norm estimate. 
\begin{lem}\label{Abbound} Assume the notation and assumptions of Notation \ref{notn: 1} and let
$t\in (0,1)$. If $B(t^{k^n})$ is nonzero for every nonnegative integer $n$, then there is a positive constant $C=C(t)>1$ such that for all $n\geq 1$ we have $$\prod_{\ell=0}^{n-1}\|\mathbf{A}(t^{k^{\ell}}) \|< C^n.$$ Moreover, there exists a closed disc containing the origin in which the constant $C$ can be taken to be independent of $t$.
\end{lem}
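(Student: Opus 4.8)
The plan is to estimate each factor $\|\mathbf{A}(t^{k^\ell})\|$ individually and then show the product telescopes into a geometric bound. Writing $\mathbf{A}(x) = \mathbf{M}(x)/B(x)$, where $\mathbf{M}(x)$ is the matrix with polynomial entries $a_{i,j}(x)$, we have $\|\mathbf{A}(x)\| \le \|\mathbf{M}(x)\|/|B(x)|$ for $x = t^{k^\ell}$. The numerator is easy: since each $a_{i,j}$ has degree at most $H$ and $t \in (0,1)$, the entries of $\mathbf{M}(t^{k^\ell})$ are bounded by a constant $C_1$ depending only on the coefficients of the $a_{i,j}$ (and not on $t$, as long as $t$ lies in a fixed disc around the origin, say $|t|\le r<1$); hence $\|\mathbf{M}(t^{k^\ell})\| \le C_2$ for all $\ell$, with $C_2$ uniform in such $t$.

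The only subtlety is the denominator, and this is the step I expect to be the main obstacle. A priori the values $|B(t^{k^\ell})|$ could be very small — indeed they could conspire to make $\prod_{\ell=0}^{n-1} 1/|B(t^{k^\ell})|$ grow faster than any geometric rate — so I need a lower bound on this product. The key observation is that since $B(0)\neq 0$ and $t^{k^\ell}\to 0$, we have $B(t^{k^\ell})\to B(0)$; more precisely, pick $r\in(0,1)$ small enough that $|B(z)|\ge |B(0)|/2 > 0$ for all $|z|\le r$. For $t\in(0,r]$ this immediately gives $\prod_{\ell=0}^{n-1}\|\mathbf{A}(t^{k^\ell})\| \le (2C_2/|B(0)|)^n$, with the constant uniform in $t\in(0,r]$, which proves the ``moreover'' clause. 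For general $t\in(0,1)$, only finitely many of the values $t^{k^\ell}$ — say those with $\ell < \ell_0(t)$, where $\ell_0$ is the least index with $t^{k^{\ell_0}}\le r$ — can lie outside the disc $|z|\le r$. For $\ell\ge \ell_0$ we again have $|B(t^{k^\ell})|\ge |B(0)|/2$, while for the finitely many $\ell < \ell_0$ we use that $B(t^{k^\ell})\neq 0$ (a hypothesis of the lemma) together with the crude bound $|B(z)|\le C_3$ for $|z|\le 1$ to get $|B(t^{k^\ell})|\ge \delta(t) > 0$ for some positive $\delta$ depending on $t$. Collecting these,
$$
\prod_{\ell=0}^{n-1}\|\mathbf{A}(t^{k^\ell})\|
\le \prod_{\ell=0}^{n-1}\frac{C_2}{|B(t^{k^\ell})|}
\le \left(\frac{C_2}{\delta(t)}\right)^{\ell_0}\left(\frac{2C_2}{|B(0)|}\right)^{n-\ell_0}
\le C(t)^n
$$
for a suitable constant $C(t)>1$, once $n$ is large; absorbing the finitely many small $n$ into $C(t)$ (enlarging it if necessary) handles all $n\ge 1$.

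In summary: bound the polynomial numerator uniformly, control the denominator by exploiting $B(0)\neq 0$ and $t^{k^\ell}\to 0$ to get a uniform lower bound past some index, and quarantine the finitely many early terms — where the hypothesis $B(t^{k^\ell})\neq 0$ is exactly what is needed — into the constant. The uniformity statement comes for free by taking $t$ in a fixed small disc, where no ``early terms'' escape the good region at all.
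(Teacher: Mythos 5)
Your proof is correct and follows essentially the same strategy as the paper's: bound $\|\mathbf{A}(t^{k^\ell})\|$ uniformly for all indices past some $N_0(t)$ by using continuity at $0$ (which relies on $B(0)\neq 0$), handle the finitely many early indices by invoking the hypothesis $B(t^{k^\ell})\neq 0$ to ensure the factors are finite, and absorb everything into a single constant $C(t)$, which becomes $t$-independent for $t$ in a small disc where no early indices escape the good region. The paper does this directly on $\|\mathbf{A}(x)\|$ by continuity; you decompose as $\|\mathbf{M}(x)\|/|B(x)|$ and bound numerator and denominator separately, which is a cosmetic difference.
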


\begin{proof} Since $B(0)\neq 0$, there is some closed disc containing the origin such that $B(x)\neq 0$ for $x$ in the disc.  Thus by compactness there is some $\varepsilon>0$ such that we have that $\|A(x)\|\le \|A(0)\|+1$ for $x\in [0,\varepsilon]$.

Furthermore, there is some $N_0=N_0(t)$ such that $t^{k^{\ell}}<\varepsilon$ for $\ell\ge N_0$ and in particular, we have $\|\mathbf{A}(t^{k^{\ell}}) \|<\|{\bf A}(0)\|+1$ for all $\ell\ge N_0$.   Since $B(t^{k^n})$ is nonzero for every nonnegative integer $n$, we have that $\|A(t^{k^i})\|$ exists for $i< N_0$. 

Let $$C=C(t):=\max_{0\leq \ell< N_0}\{\|\mathbf{A}(t^{k^{\ell}}) \|, \|{\bf A}(0)\|+1\}.$$ Then we have \begin{equation*}\prod_{\ell=0}^{n-1}\|\mathbf{A}(t^{k^{\ell}}) \|< C^n.\end{equation*} This proves the first assertion.

Notice that for $t\in (0,\varepsilon)$, we can take $N_0=0$ and so we can take $C=\|A(0)\|+1$, which does not depend upon $t$.
\end{proof} 

Our next lemma establishes good rational approximations to power series.

\begin{lem}\label{Fbbound} Assume the notation and assumptions of Notation \ref{notn: 1}.  Then there are $\varepsilon>0$, polynomials $P_{1}(x),\ldots,P_{d}(x),Q(x)\in\mathbb{Z}[x]$ of degree at most $(d-1)(d+2)H$ with $Q(0)=1$, and a positive constant $C=C(\varepsilon)$
such that for $i\in \{1,\ldots ,d\}$ we have
$$\left|F_i(t)-P_i(t)/Q(t)\right|\leq C{t^{d(d+2)H}}$$
for $t\in (0,\varepsilon)$.   
\end{lem}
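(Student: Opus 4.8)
The plan is to reduce the statement to the construction of a single denominator $Q(x)$ and to take the numerators to be truncations. Put $N:=(d+2)H$. It suffices to find $Q(x)\in\mathbb{Z}[x]$ of degree at most $(d-1)N$, with $Q(0)=1$, such that for every $i\in\{1,\dots,d\}$ the power series $Q(x)F_i(x)$ agrees with some polynomial of degree at most $(d-1)N$ to order at least $dN$; one then lets $P_i(x)$ be the truncation of $Q(x)F_i(x)$ to degrees $\le(d-1)N$, which lies in $\mathbb{Z}[x]$ since $Q\in\mathbb{Z}[x]$ and $F_i\in\mathbb{Z}[[x]]$. Granting such a $Q$, write $Q(x)F_i(x)-P_i(x)=x^{dN}R_i(x)$ with $R_i\in\mathbb{Z}[[x]]$; as $Q,P_i$ are polynomials, the radius of convergence of $R_i$ is at least that of $F_i$, so $|R_i(t)|$ is bounded on $(0,\varepsilon)$ once $\varepsilon$ is taken smaller than the common radius of convergence of $F_1,\dots,F_d$, and after shrinking $\varepsilon$ once more we may also assume $|Q(t)|\ge 1/2$ there, since $Q(0)=1$. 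Then
\[
\left|F_i(t)-\frac{P_i(t)}{Q(t)}\right|=\frac{\bigl|Q(t)F_i(t)-P_i(t)\bigr|}{|Q(t)|}\le 2\,t^{dN}\,|R_i(t)|\le C\,t^{d(d+2)H},
\]
which is the asserted estimate, the implied constant absorbing the bound on the $R_i$.

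Everything therefore rests on producing $Q$. Writing $f_i(m):=[x^m]F_i(x)\in\mathbb{Z}$, the requirement on $Q=\sum_{s=0}^{(d-1)N}q_sx^s$ is that, for each $i$, the coefficients of $Q(x)F_i(x)$ in degrees $(d-1)N+1,\dots,dN-1$ vanish: a homogeneous system of $d(N-1)$ linear equations with integer coefficients in the $(d-1)N+1$ unknowns $q_s$, and a nonzero integer solution (normalised, after a routine adjustment, so that $Q(0)=1$) is exactly what we need. The naive count is unfavourable, since $d(N-1)>(d-1)N+1$ whenever $N>d+1$, which holds here; so the Mahler relation \eqref{MFE} must intervene, and the mechanism is its self-similarity. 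From $B(x)F_i(x)=\sum_j a_{ij}(x)F_j(x^k)$ one gets $B(x)Q(x)F_i(x)=\sum_j\bigl(Q(x)a_{ij}(x)\bigr)F_j(x^k)$, and writing each polynomial $Q a_{ij}$, of degree at most $(d-1)N+H$, in the form $\sum_{r=0}^{k-1}x^r c_{ij}^{(r)}(x^k)$ with $\deg c_{ij}^{(r)}\le\bigl((d-1)N+H\bigr)/k$ exhibits $B(x)Q(x)F_i(x)$ as $\sum_{r=0}^{k-1}x^r\Theta_i^{(r)}(x^k)$, where $\Theta_i^{(r)}:=\sum_j c_{ij}^{(r)}F_j$. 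Thus the top coefficients of $QF_i$ are governed by coefficients of the $F_j$ at indices roughly $k$ times smaller; unwinding this descent shows that the $d(N-1)$ linear forms defining the system have rank at most $(d-1)N$, so a nonzero $Q$ exists. The value $N=(d+2)H$ is precisely what makes the degree and order budgets close up through these base-$k$ splittings.

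The main obstacle is exactly this: keeping the degree of $Q$ bounded. A direct attempt to build the approximation by iterating \eqref{MFE}, replacing $F_j(x^{k^n})$ by $F_j(0)$ in $\mathbf{F}(x)=\mathbf{A}(x)\mathbf{A}(x^k)\cdots\mathbf{A}(x^{k^{n-1}})\mathbf{F}(x^{k^n})$ — whose error on $(0,\varepsilon)$ is controlled by Lemma~\ref{Abbound} and is of order $t^{k^n}$ — produces a rational approximation whose denominator $\prod_{\ell<n}B(x^{k^\ell})$ has degree growing like $k^n$, far in excess of the permitted bound $(d-1)(d+2)H$. The point of working inside the finite-dimensional, $\Delta_k$-stable $\mathbb{Q}(x)$-span of $F_1,\dots,F_d$ (as in Notation~\ref{notn: 1}) is precisely to fold this blow-up back down, and the technical heart of the proof is the simultaneous bookkeeping of the degrees produced by the base-$k$ splittings and of the orders of vanishing of the associated remainders, verifying that they balance at $N=(d+2)H$.
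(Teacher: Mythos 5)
Your overall plan — construct a common denominator $Q\in\mathbb{Z}[x]$ of degree at most $(d-1)(d+2)H$ with $Q(0)=1$, take $P_i$ to be the truncation of $QF_i$, and bound the tail analytically — is exactly what the paper does (via simultaneous Pad\'e approximation), and your closing estimate is fine. The gap is in your justification that $Q$ exists. You impose the vanishing of coefficients of $QF_i$ in degrees $(d-1)N+1,\dots,dN-1$ for \emph{all} $i\in\{1,\dots,d\}$, count $d(N-1)$ constraints against $(d-1)N+1$ unknowns, observe this is unfavourable, and then invoke the functional equation \eqref{MFE} to cut the rank down. But Notation~\ref{notn: 1}(6) fixes $F_1(x)=1$, so for $i=1$ the constraint is that coefficients of $Q\cdot 1=Q$ in degrees strictly above $(d-1)N$ vanish, which is automatic since $\deg Q\le (d-1)N$. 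Only $i=2,\dots,d$ contribute, giving $(d-1)(N-1)<(d-1)N+1$ constraints, and the homogeneous integer linear system has a nonzero solution by plain linear algebra. This is precisely the paper's argument: simultaneous Pad\'e for the $d-1$ series $F_2,\dots,F_d$, with $P_1:=Q$ handled separately. So the Mahler relation is not needed here at all; it enters only at the next step (Lemma~\ref{right}), where these approximants are propagated by $x\mapsto x^k$.

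Moreover, your proposed repair via the base-$k$ splitting of $B(x)Q(x)F_i(x)$ is not rigorous as written. The forms $\Theta_i^{(r)}=\sum_j c_{ij}^{(r)}F_j$ depend on the unknown $Q$ through the $c_{ij}^{(r)}$, and the asserted ``descent'' relates the constrained coefficients of $QF_i$ to coefficients of the $F_j$ at indices of size roughly $m/k$, filtered through division by $B(x)$; this relates linear forms to \emph{other} linear forms in the $q_s$ at unconstrained indices, but it does not by itself yield an upper bound on the rank of the coefficient matrix in the $q_s$. So the claim that the system has rank at most $(d-1)N$, while true (trivially, since the $N-1$ rows with $i=1$ vanish identically), is not established by the mechanism you describe. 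Fixing the constraint count by using $F_1=1$ both corrects the error and makes the proof essentially coincide with the paper's.
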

\begin{proof}  For $i\in\{2,\ldots,d\}$, the theory of simultaneous Pad\'e approximation (see \cite
[Chapter~4]{NS1991} for details) provides polynomials $P_{i}(x)$ and $Q(x)$ of degree each 
bounded by $(d-1)(d+2)H$, and $Q(0)=1$, such that $$\nu\left(Q(x)F_i(x)-P_{i}(x)\right)\geq d(d+2)H.$$ 
Since $F_1(x)=1$, we take $P_1(x)=Q(x)$.  

For $i\in\{1,\ldots,d\}$ we thus have $$\nu\left(F_i(x)-\frac{P_{i}(x)}{Q(x)}\right)\geq d(d+2)H.$$ 

Since $Q(0)=1$ and each of $F_1(x),\ldots ,F_d(x)$ has positive radius of convergence (see \cite[Theorem 31, page 151]{D1993} and \cite[Lemma 4]{BCR2013}), there exists an $R>0$ such that $F_i(x)-{P_{i}(x)}/{Q(x)}$ is analytic on $B(0,R)$ for $i\in \{1,\ldots ,d\}$.
Hence there exist power series $G_1(x),\ldots ,G_d(x)$ that are analytic on $B(0,R)$ such that
$$F_i(x)-\frac{P_{i}(x)}{Q(x)}=x^{d(d+2)H}G_i(x)$$ for $i\in \{1,\ldots ,d\}$.  
Let $\varepsilon\in (0,R)$.  Then there is a positive constant $C$ such that 
$$|G_1(x)|,\ldots ,|G_d(x)| \le C$$ for $|x|\le \varepsilon$.  Thus for $i\in \{1,\ldots ,d\}$,
$$\left|F_i(t)-\frac{P_{i}(t)}{Q(t)}\right|\le C t^{d(d+2)H}$$ whenever $t\in (0,\varepsilon)$.
\end{proof}

\begin{lem}\label{right} Assume the notation and assumptions of Notation \ref{notn: 1} and let $t\in (0,1)$ be less than the radius of convergence of each $F_i(x)$ and satisfy $B(t^{k^n})\neq 0$ for all $n\geq 0$.  Then for each $n\ge 0$ there are polynomials $P_{1,n}(x),\ldots , P_{d,n}(x),Q_n(x)\in\mathbb{Z}[x]$ satisfying:
\begin{enumerate}
\item[(i)] $\max_{1\leq i\leq d}\{\deg P_{i,n}(x),\deg Q_n(x)\}\leq ((d+2)(d-1)+1)Hk^n$;
\item[(ii)] if ${\bf \Phi}_n(x)=[P_{1,n}(x)/Q_n(x),\ldots , P_{d,n}(x)/Q_n(x)]^T$ for $n\ge 0$, then
$${\bf \Phi}_n(x)=\mathbf{A}(x){\bf \Phi}_{n-1}(x^k);$$
\item[(iii)] $Q_n(x)=B(x)B(x^k)\cdots B(x^{k^{n-1}})Q_0(x^{k^n})$;
\item[(iv)] there exists an $\varepsilon>0$ and positive constants $C_0=C_0(\varepsilon)$ and $C_1=C_1(\varepsilon)$, not depending on $t$, such that for $i\in \{1,\ldots ,d\}$ and for all $n$ sufficiently large we have $Q_{n}(t)\neq 0$ and
$$\left|F_i(t)-P_{i,n}(t)/Q_{n}(t) \right|\leq C_1 C_0^n t^{ d(d+2)Hk^n},$$ whenever $t\in(0,\varepsilon)$ and in particular the order of vanishing of $F_i(t)-P_{i,n}(t)/Q(t)$ at $t=0$ is at least ${d(d+2)Hk^n}$; 
\item[(v)] $P_{1,n}(x)=Q_n(x)$.
\end{enumerate}
\end{lem}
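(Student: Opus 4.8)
The plan is to construct the rational approximants by iterating the Mahler equation, starting from the simultaneous Pad\'e approximants furnished by Lemma~\ref{Fbbound}. Write $P_{i,0}:=P_i$ and $Q_0:=Q$ for the polynomials of that lemma, so that $P_{1,0}=Q_0$, $Q_0(0)=1$, $\deg P_{i,0},\deg Q_0\le(d-1)(d+2)H$, and $\nu(F_i(x)-P_{i,0}(x)/Q_0(x))\ge d(d+2)H$ for all $i$. Then I would define, for $n\ge1$,
$$P_{i,n}(x):=\sum_{j=1}^{d}a_{i,j}(x)\,P_{j,n-1}(x^{k}),\qquad Q_n(x):=B(x)\,Q_{n-1}(x^{k}),$$
so that $P_{i,n},Q_n\in\mathbb{Z}[x]$. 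With ${\bf \Phi}_n(x)=[P_{1,n}(x)/Q_n(x),\dots,P_{d,n}(x)/Q_n(x)]^{T}$, this definition says precisely that the $i$-th entry of ${\bf \Phi}_n(x)$ is $\sum_j (a_{i,j}(x)/B(x))(P_{j,n-1}(x^k)/Q_{n-1}(x^k))$, i.e.\ ${\bf \Phi}_n(x)=\mathbf{A}(x){\bf \Phi}_{n-1}(x^k)$, which is~(ii). Unrolling $Q_n(x)=B(x)Q_{n-1}(x^k)$ gives $Q_n(x)=B(x)B(x^k)\cdots B(x^{k^{n-1}})Q_0(x^{k^n})$, which is~(iii); in particular $Q_n(0)=B(0)^{n}\ne0$.

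Property~(v) is where a small extra observation is needed: the first row of $\mathbf{A}(x)$ must be $(1,0,\dots,0)$. To see this, note that the first component of the functional equation reads $1=F_1(x)=\sum_j (a_{1,j}(x)/B(x))F_j(x^k)$, so, using $F_1(x^k)=1$, $(a_{1,1}(x)-B(x))F_1(x^k)+\sum_{j\ge2}a_{1,j}(x)F_j(x^k)=0$. Now $F_1(x^k),\dots,F_d(x^k)$ are linearly independent over $\mathbb{Q}(x)$ — any relation among them, after clearing denominators and separating terms according to the residue of the exponent modulo $k$, forces a relation among $F_1,\dots,F_d$ over $\mathbb{Q}(x)$ — so $a_{1,1}(x)=B(x)$ and $a_{1,j}(x)=0$ for $j\ge2$. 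Hence $P_{1,n}(x)=B(x)P_{1,n-1}(x^k)$, and since $P_{1,0}=Q_0$ an induction together with~(iii) gives $P_{1,n}=Q_n$; equivalently, the first coordinate of ${\bf \Phi}_n$ is identically $1$, matching $F_1\equiv1$.

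For the degree bound~(i) I would just unroll the recursion: with $\delta_n:=\max_i\deg P_{i,n}$ one has $\delta_n\le H+k\delta_{n-1}$, hence $\delta_n\le H(1+k+\cdots+k^{n-1})+k^{n}\delta_0$, and since $1+k+\cdots+k^{n-1}<k^{n}$ for $k\ge2$ and $\delta_0\le(d-1)(d+2)H$, this is $<((d-1)(d+2)+1)Hk^{n}$; the same estimate for $\deg Q_n$ follows from~(iii). For~(iv), iterate the functional equation and~(ii) to get ${\bf F}(x)=\mathbf{A}(x)\cdots\mathbf{A}(x^{k^{n-1}}){\bf F}(x^{k^n})$ and ${\bf \Phi}_n(x)=\mathbf{A}(x)\cdots\mathbf{A}(x^{k^{n-1}}){\bf \Phi}_0(x^{k^n})$, and subtract:
$${\bf F}(x)-{\bf \Phi}_n(x)=\left(\prod_{\ell=0}^{n-1}\mathbf{A}(x^{k^{\ell}})\right)\left({\bf F}(x^{k^n})-{\bf \Phi}_0(x^{k^n})\right).$$
Each entry of $\prod_{\ell}\mathbf{A}(x^{k^\ell})$ is analytic at $0$ (the denominators $B(x^{k^\ell})$ do not vanish at $0$), so $\nu$ of each component of the right side is at least $\nu(F_i(x^{k^n})-P_{i,0}(x^{k^n})/Q_0(x^{k^n}))=k^{n}\nu(F_i(x)-P_{i,0}(x)/Q_0(x))\ge d(d+2)Hk^{n}$; since $Q_n(0)\ne0$, the order of vanishing of $F_i(x)-P_{i,n}(x)/Q_n(x)$ at $0$ is at least $d(d+2)Hk^{n}$ too. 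For the quantitative estimate, choose $\varepsilon>0$ small enough that the constant of Lemma~\ref{Abbound} may be taken independent of $t$ on $(0,\varepsilon)$, that the bound of Lemma~\ref{Fbbound} holds there, and that $Q_0$ does not vanish on $[0,\varepsilon)$. For $t\in(0,\varepsilon)$ we have $t^{k^n}\le t<\varepsilon$, so taking operator norms in the displayed identity and applying Lemmas~\ref{Abbound} and~\ref{Fbbound} (absorbing a factor $\sqrt d$ for the vector norm) gives
$$\left|F_i(t)-\frac{P_{i,n}(t)}{Q_n(t)}\right|\le\left(\prod_{\ell=0}^{n-1}\|\mathbf{A}(t^{k^{\ell}})\|\right)\|{\bf F}(t^{k^n})-{\bf \Phi}_0(t^{k^n})\|\le C_1C_0^{n}t^{d(d+2)Hk^{n}},$$
with $C_0$ the constant of Lemma~\ref{Abbound} and $C_1$ a constant multiple of the constant of Lemma~\ref{Fbbound}, both independent of $t$; and $Q_n(t)=B(t)B(t^k)\cdots B(t^{k^{n-1}})Q_0(t^{k^n})\ne0$ because $B(t^{k^\ell})\ne0$ by hypothesis and $Q_0(t^{k^n})\ne0$.

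Most of this is bookkeeping resting on Lemmas~\ref{Abbound} and~\ref{Fbbound}; the exponential-in-$n$ loss in~(iv) is exactly what Lemma~\ref{Abbound} is designed to control, and the degree count and valuation estimates are mechanical. The one step that needs its own (short) argument, and which I would flag as the main obstacle, is the observation underlying~(v): that $\mathbb{Q}(x)$-linear independence of the $F_i$ forces the first row of $\mathbf{A}(x)$ to be $(1,0,\dots,0)$, so that the recursively defined ${\bf \Phi}_n$ retains an exact first coordinate.
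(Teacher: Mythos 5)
Your proposal is correct and follows essentially the same route as the paper: start from the simultaneous Pad\'e approximants of Lemma~\ref{Fbbound}, define ${\bf \Phi}_n={\bf A}(x){\bf \Phi}_{n-1}(x^k)$ (with $Q_n=B(x)Q_{n-1}(x^k)$), unroll the recursion for the degree bound, and telescope the functional equation to compare ${\bf F}(x)-{\bf \Phi}_n(x)$ against ${\bf F}(x^{k^n})-{\bf \Phi}_0(x^{k^n})$, controlling the product $\prod_\ell\|{\bf A}(t^{k^\ell})\|$ with Lemma~\ref{Abbound}. Two small differences are worth noting, both in your favour. First, you explicitly prove (v) by observing that the $\mathbb{Q}(x)$-linear independence of $F_1,\dots,F_d$ (combined with $F_1\equiv1$) forces the first row of ${\bf A}(x)$ to be $e_1^T$; the paper's proof of Lemma~\ref{right} simply asserts (v) without comment and only gives this justification later, in the proof of Lemma~\ref{Lemma1}. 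Second, you derive the order-of-vanishing statement in (iv) directly at the level of the valuation $\nu$, whereas the paper recovers it from the analytic estimate by letting $t\to0^+$; your version is cleaner and avoids the need to reconcile the ``$n$ sufficiently large'' clause with a fixed $n$.
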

\begin{proof} 
By Lemma \ref{Fbbound}, there are $\varepsilon>0$, polynomials $P_{1,0}(x),\ldots,P_{d,0}(x),Q_0(x)\in\mathbb{Z}[x]$ of degree at most $(d+2)(d-1)H$ with $Q_0(0)=1$, and a positive constant $C$
such that for $i\in \{1,\ldots ,d\}$ we have 
$$\left|F_i(t)-P_i(t)/Q_0(t)\right|\leq C{t^{d(d+2)H}}$$ whenever $t\in (0,\varepsilon)$.

We define
\begin{equation}
{\bf \Phi}_0(x) := [P_{1,0}(x)/Q_0(x),\ldots , P_{d,0}(x)/Q_0(x)]^T
\end{equation}
and for $n\ge 1$, we take
\begin{equation}
\label{eq: phin}
\mathbf{\Phi}_{n}(x)=\mathbf{A}(x)\mathbf{\Phi}_{n-1}(x^k).
\end{equation}
We note that there exist integer polynomials $P_{i,n}(x)$ for $i\in \{1,\ldots ,d\}$ and $Q_n(x)$ such that
\begin{enumerate}
\item[(a)] ${\bf \Phi}_{n}(x) = [P_{1,n}(x)/Q_n(x),\ldots , P_{d,n}(x)/Q_n(x)]^T$;
\item[(b)] $Q_n(x)=B(x)Q_{n-1}(x^k)$ for $n\ge 1$.
\end{enumerate}
From (b), we immediately get
$Q_n(x)=B(x)B(x^k)\cdots B(x^{k^{n-1}})Q_0(x^{k^n})$.  Since the entries of ${\bf A}(x)$ are all polynomials of degree at most $H$, we see that if we define
$$d_n:=\max_{1\leq i\leq d}\{\deg P_{i,n}(x),\deg Q_n(x)\},$$ then (\ref{eq: phin}) gives 
$d_{n}\leq kd_{n-1}+H$. By induction we see, using the fact that $d_0\le (d-1)(d+2)H$, that \begin{multline}\label{dnbound}d_n\leq d(d+2)H\cdot k^n+H(k^{n-1}+\cdots+k+1)\\=k^nd_0+H\cdot\frac{k^n-1}{k-1}\leq ((d-1)(d+2)+1)H k^n.\end{multline}
By assumption, $$\mathbf{F}(x)=\mathbf{A}(x)\mathbf{F}(x^k),$$ and hence for $n\geq 1$ we have $$\mathbf{F}(x)-\mathbf{\Phi}_n(x)=\mathbf{A}(x)\mathbf{A}(x^k)\cdots\mathbf{A}(x^{k^{n-1}})\left(\mathbf{F}(x^{k^n})-\mathbf{\Phi}_0(x^{k^n})\right).$$ 

Then for $n$ sufficiently large we have $t^{k^n}<\varepsilon$.  Hence if $e_i$ denotes the $d\times 1$ column vector whose $i$-th coordinate is $1$ and all other coordinates are zero, then 
 \begin{align*} \left|F_i(t)-P_{i,n}(t)/Q_n(t)\right|&=\left\|e_i^T\left(\mathbf{F}(t)-\mathbf{\Phi}_n(t)\right)\right\|\\
&=\left\|e_i^T\mathbf{A}(t)\mathbf{A}(t^k)\cdots\mathbf{A}(t^{k^{n-1}})(\mathbf{F}(t^{k^n})-\mathbf{\Phi}_0(t^{k^n}))\right\|\\
&\leq \left\|\left(\mathbf{F}(t^{k^n})-\mathbf{\Phi}_0(t^{k^n})\right)\right\|\cdot \prod_{\ell=0}^{n-1} \left\|\mathbf{A}(t^{k^{\ell}})\right\|\\
&\leq C\sqrt{d}{t^{d(d+2)Hk^n }}\cdot \prod_{\ell=0}^{n-1} \left\|\mathbf{A}(t^{k^{\ell}})\right\|.
\end{align*} Applying Lemma \ref{Abbound}, we see there is a positive constant $C_0=C_0(t)$ such that
$$\prod_{\ell=0}^{n-1} \left\|\mathbf{A}(t^{k^{\ell}})\right\|<C_0^n$$ for all $n\ge 1$ and hence
we have
$$\left|F_i(t)-P_{i,n}(t)/Q_n(t)\right| <C \sqrt{d} C_0^n t^{ d(d+2)Hk^n}$$ for all $i\in \{1,\ldots ,d\}$ and all $n$ sufficiently large. Also, applying Lemma \ref{Abbound}, in a neighbourhood of the origin, we see that there exists $\varepsilon>0$ such that the constant $C_0$ can be taken to be independent of $t$ (and dependent only on $\varepsilon$) for $t\in (0,\varepsilon)$.  To see that this gives the statement about the order of vanishing at $t=0$, note that if $F_i(t)-P_{i,n}(t)/Q_n(t)$ has a zero of order $\ell$ at $t=0$ then we can write $F_i(t)-P_{i,n}(t)/Q_n(t)$ as $t^{\ell}G(t)$ where $G(0)\neq 0$.  
It follows that there is a neighbourhood of zero such that $|t^{\ell}G(t)|>|G(0)||t|^{\ell}/2$ for $t$ in this neighbourhood.  Letting $t$ approach $0$ from the right and using the fact that
$$\left|F_i(t)-P_{i,n}(t)/Q_n(t)\right| <C \sqrt{d} C_0^n t^{ d(d+2)Hk^n}$$
gives $\ell\ge d(d+2)Hk^n$ and so $F_i(t)-P_{i,n}(t)/Q_n(t)$ has a zero at $t=0$ of order at least $d(d+2)Hk^n$.  The result follows.
\end{proof}
We now give the key result of this section, which shows that if $F_1(x),\ldots ,F_d(x)$ are power series and $a$ and $b$ are integers as in Notation \ref{notn: 1} then there are many good rational approximations to $F_i(a/b)$.

\begin{prop}\label{prop: lower} Assume the notation and assumptions of Notation \ref{notn: 1}. Then there exists a sequence of positive integers $\{q_n\}_{n\ge 0}$ such that:
\begin{enumerate}
\item[(i)] for $\delta=\frac{1}{3d^3}$, $i\in \{1,\ldots ,d\}$ and each $n\ge 0$, there is an integer $p_{i,n}$ such that $$|F_i(a/b)-p_{i,n}/q_n|<q_n^{-(1+\delta)}$$ for all $n$ sufficiently large;
\item[(ii)] there is a positive constant $C$ such that $q_n < q_{n+1} < C q_n^k$ for all $n$ sufficiently large.
\end{enumerate}
\end{prop}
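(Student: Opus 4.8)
The plan is to produce the $p_{i,n}$ and $q_n$ by specialising at $x=a/b$ the rational function approximants $\mathbf{\Phi}_n(x)=[P_{1,n}(x)/Q_n(x),\dots,P_{d,n}(x)/Q_n(x)]^T$ furnished by Lemma~\ref{right}, and then clearing denominators. Write $D_n:=((d-1)(d+2)+1)Hk^n$, the common degree bound of Lemma~\ref{right}(i), and note that $D_{n+1}=kD_n$. Since $P_{i,n},Q_n\in\mathbb{Z}[x]$ have degree at most $D_n$, the numbers $b^{D_n}P_{i,n}(a/b)$ and $b^{D_n}Q_n(a/b)$ are integers. For the (all but finitely many) $n$ with $Q_n(a/b)\neq 0$ I set $q_n:=b^{D_n}\,|Q_n(a/b)|$ and $p_{i,n}:=\mathrm{sgn}(Q_n(a/b))\,b^{D_n}P_{i,n}(a/b)$, so that $p_{i,n}/q_n=P_{i,n}(a/b)/Q_n(a/b)$; for the remaining $n$ I set $q_n:=1$, which does no harm since both conclusions are claimed only for large $n$.

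To control the quality of these approximations I would specialise the identity
$$\mathbf{F}(x)-\mathbf{\Phi}_n(x)=\mathbf{A}(x)\mathbf{A}(x^k)\cdots\mathbf{A}(x^{k^{n-1}})\left(\mathbf{F}(x^{k^n})-\mathbf{\Phi}_0(x^{k^n})\right)$$
(established in the proof of Lemma~\ref{right}) at $x=a/b$. Since $|a/b|<1$, for all large $n$ we have $|a/b|^{k^n}<\varepsilon$, so $Q_0((a/b)^{k^n})$ is close to $Q_0(0)=1$ and Lemma~\ref{Fbbound} bounds the Euclidean norm of the tail vector $\mathbf{F}((a/b)^{k^n})-\mathbf{\Phi}_0((a/b)^{k^n})$ by $C\sqrt{d}\,|a/b|^{d(d+2)Hk^n}$; meanwhile the argument of Lemma~\ref{Abbound}, which goes through at the point $a/b$ because $B((a/b)^{k^\ell})\neq 0$ for every $\ell$, bounds $\prod_{\ell=0}^{n-1}\|\mathbf{A}((a/b)^{k^\ell})\|$ by $C_0^{\,n}$ for a suitable $C_0>1$. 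Using $|a|=b^\rho$, this gives, for each $i\in\{1,\dots,d\}$ and all large $n$,
$$\left|F_i(a/b)-\frac{p_{i,n}}{q_n}\right|=\left|F_i(a/b)-\frac{P_{i,n}(a/b)}{Q_n(a/b)}\right|\le C_1\,C_0^{\,n}\,b^{-(1-\rho)d(d+2)Hk^n}$$
with $C_1:=C\sqrt d$. Next, from $Q_n(a/b)=\prod_{j=0}^{n-1}B((a/b)^{k^j})\cdot Q_0((a/b)^{k^n})$ together with $B((a/b)^{k^j})\to B(0)\neq 0$ and $Q_0((a/b)^{k^n})\to 1$, the numbers $Q_n(a/b)$ converge to a nonzero limit; hence $Q_n(a/b)\neq 0$ for large $n$, and there are constants $0<c_1\le c_2$ with $c_1b^{D_n}\le q_n\le c_2b^{D_n}$.

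Combining the two estimates, for all large $n$ we obtain
$$q_n^{1+\delta}\left|F_i(a/b)-\frac{p_{i,n}}{q_n}\right|\le c_2^{1+\delta}C_1\,C_0^{\,n}\,b^{Hk^n((1+\delta)((d-1)(d+2)+1)-(1-\rho)d(d+2))}=c_2^{1+\delta}C_1\,C_0^{\,n}\,b^{-\gamma Hk^n},$$
where $\gamma:=(1-\rho)d(d+2)-(1+\delta)((d-1)(d+2)+1)$. Taking $\delta=\frac{1}{3d^3}$ and using the hypothesis $\rho<\frac{1}{d+1}$, i.e.\ $1-\rho>\frac{d}{d+1}$, a short computation gives $\gamma>\frac{2d^3-2d^2+1}{3d^3(d+1)}>0$. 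Since $b\ge 2$, the doubly exponential factor $b^{-\gamma Hk^n}$ overwhelms $C_0^{\,n}$, so the right-hand side is $<1$ for $n$ large, which is part~(i). For part~(ii), the bounds $c_1b^{D_n}\le q_n\le c_2b^{D_n}$ and $D_{n+1}=kD_n$ yield $q_{n+1}/q_n\ge(c_1/c_2)\,b^{(k-1)D_n}\to\infty$, so $q_n<q_{n+1}$ eventually, and $q_{n+1}/q_n^{\,k}\le c_2/c_1^{\,k}$, whence $q_{n+1}<Cq_n^{\,k}$ for $C:=1+c_2/c_1^{\,k}$.

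The only genuinely delicate point is the inequality in the last paragraph: one must check that the single-exponential loss $C_0^{\,n}$ coming from the product of operator norms is absorbed by the doubly exponential gain $b^{-\gamma Hk^n}$, and---the real arithmetic crux---that the constraint $\rho<\frac{1}{d+1}$ is exactly what forces $\gamma>0$ for $\delta=\frac{1}{3d^3}$ (indeed $\gamma>0$ whenever $\delta\le\frac{1}{d^3+2d^2-1}$, so there is a little room to spare). Everything else is bookkeeping, once one notes that the estimates of Lemmas~\ref{Abbound} and~\ref{Fbbound} are really absolute-value estimates valid on a full disc about the origin, hence applicable at the (possibly negative) points $(a/b)^{k^\ell}$.
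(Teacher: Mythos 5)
Your construction of $p_{i,n}$ and $q_n$, the use of the telescoping identity from Lemma~\ref{right}, and the arithmetic showing $\gamma>0$ for $\delta=1/(3d^3)$ under $\rho<1/(d+1)$ all follow the paper's proof essentially verbatim, and your computation $\gamma>\frac{2d^3-2d^2+1}{3d^3(d+1)}$ is correct.

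There is, however, one genuine error in the estimate of $Q_n(a/b)$. You assert that since $B((a/b)^{k^j})\to B(0)\neq 0$ and $Q_0((a/b)^{k^n})\to 1$, the numbers $Q_n(a/b)=\prod_{j=0}^{n-1}B((a/b)^{k^j})\cdot Q_0((a/b)^{k^n})$ converge to a nonzero limit. This is false unless $|B(0)|=1$: the factors in the partial product tend to $B(0)$, not to $1$, so $\sum_j\left|B((a/b)^{k^j})-1\right|$ diverges whenever $B(0)\neq 1$ and the partial products go to infinity in absolute value when $|B(0)|>1$. The correct statement, and the one the paper proves, is that $|Q_n(a/b)|\asymp|B(0)|^n$, because $\prod_j B((a/b)^{k^j})/B(0)$ does converge (the ratios tend to $1$ super-geometrically). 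Consequently your two-sided bound should read $c_1|B(0)|^n b^{D_n}\le q_n\le c_2|B(0)|^n b^{D_n}$, not $c_1 b^{D_n}\le q_n\le c_2 b^{D_n}$. This is fortunately harmless: the extra factor $|B(0)|^n$ is only singly exponential, so it is absorbed in (i) alongside $C_0^n$; and since $B(0)$ is a nonzero integer, $|B(0)|\ge 1$, which gives $|B(0)|^{n+1-nk}\le|B(0)|$ for $k\ge 2$ and thus keeps $q_{n+1}/q_n^k$ bounded in (ii). You should correct the claim and carry the $|B(0)|^n$ factor through, noting explicitly that $|B(0)|\ge 1$ is what saves the growth estimate.
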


\begin{proof}  
By assumption, $|a|= b^{\rho}$ for some $\rho<1/(d+1)$, and so by definition of $\delta$ it is easy to check that
\begin{equation}
\label{eq: rho}
(1-\rho)d(d+2)>(1+\delta)((d+2)(d-1)+1).
\end{equation} 
In particular, we take
\begin{equation}
\label{eq: rho2}
\varepsilon:=(1-\rho)d(d+2)-(1+\delta)((d+2)(d-1)+1)
\end{equation}
and note that $\varepsilon>0$.
By Lemma \ref{right}, we have that for each $n\ge 0$ there are polynomials $P_{1,n}(x),\ldots , P_{d,n}(x),Q_n(x)$ satisfying:
\begin{enumerate}
\item[(a)] $\max_{1\leq i\leq d}\{\deg P_{i,n}(x),\deg Q_n(x)\}\leq ((d+2)(d-1)+1)H k^n$;
\item[(b)] $Q_n(x)=B(x)B(x^k)\cdots B(x^{k^{n-1}})Q_0(x^{k^n})$;
\item[(c)] there exist positive constants $C_0$ and $C_1$ such that for $i\in \{1,\ldots ,d\}$ and for all $n$ sufficiently large we have $Q_n(a/b)\neq 0$ and 
$$\left|F_i(a/b)-P_{i,n}(a/b)/Q_n(a/b) \right|\leq C_1 C_0^n (a/b)^{ d(d+2)Hk^n}.$$ 
\end{enumerate}

Let $N_0\ge 0$ be such that $Q_n(a/b)\neq 0$ for $n\ge N_0$.  For $n< N_0$, we take $q_n=1$ and $p_{i,n}=1$ for $i\in \{1,\ldots ,d\}$.  For each $n\ge N_0$, we have $\max_{1\leq i\leq d}\{\deg P_{i,n}(x),\deg Q_n(x)\}\le  ((d+2)(d-1)+1)H k^n$, and hence we define integers
 \begin{equation} \label{q}
q_n:=b^{((d+2)(d-1)+1)H k^n} |Q_n(a/b)|\in \mathbb{Z}\end{equation}
 and
 \begin{equation} \label{p}
p_{i,n}:=b^{((d+2)(d-1)+1)Hk^n} P_{i,n}(a/b) \cdot \frac{Q_n(a/b)}{|Q_n(a/b)|}\in \mathbb{Z}.\end{equation}
Observe that
$$Q_n(a/b)=\left|Q_0((a/b)^{k^n})\prod_{\ell=0}^{n-1}B((a/b)^{k^{\ell}})\right|.$$
Since
$$\sum_{\ell\ge 0} \left(B\big((a/b)^{k^{\ell}}\big)-B(0)\right)$$ is absolutely convergent, $B(0)\neq 0$, and
$Q_0((a/b)^{k^n})\to Q_0(0)=1$ as $n\to\infty$, we see that there are positive constants $C_2$, $C_3$ such that
\begin{equation}
C_2 |B(0)|^n \le |Q_n(a/b)| \le C_3 |B(0)|^n
\end{equation}
for all $n\ge N_0$.
Thus
\begin{equation}\label{CqC}C_2 |B(0)|^n b^{((d+2)(d-1)+1)H k^n} \le q_n \le C_3 |B(0)|^n b^{((d+2)(d-1)+1)H k^n}\end{equation} for $n\ge N_0$.
Using the fact that $|B(0)|\ge 1$, we see that there is some $C>0$ such that
$$q_{n+1}\le C_3 |B(0)|^{n+1} b^{((d+2)(d-1)+1)H k^{n+1}} \le C q_n^k$$ for $n$ sufficiently large.
Finally, by (\ref{eq: rho}) and (\ref{eq: rho2}), we have
$$(1-\rho)d(d+2)Hk^n=(1+\delta)((d+2)(d-1)+1)Hk^n+\varepsilon Hk^n$$ for all $n$ and hence by inequality (\ref{CqC}), for $n$ sufficiently large,
we have 
\begin{align*}
\left|F_i(a/b)-P_{i,n}(a/b)/Q_n(a/b) \right| & \leq  C_1 C_0^n \left|a/b\right|^{ d(d+2)Hk^n}\\
& =   C_1 C_0^n b^{-(1-\rho)d(d+2)Hk^n} \\
&= C_1 C_0^n b^{-(1+\delta)((d+2)(d-1)+1)Hk^n} b^{-\varepsilon H k^n}\\
&\le C_1 C_0^n (C_3 |B(0)|^n)^{1+\delta}b^{-\varepsilon H k^n} q_n^{-(1+\delta)}\\
&<q_n^{-(1+\delta)},\end{align*}
for all $n$ sufficiently large.  The result now follows.
\end{proof}
We make the remark that the estimates obtained in the proof of Proposition \ref{prop: lower} do not imply that the $F_i(a/b)$ are irrational; they do, however, show that if one of $F_i(a/b)$ is rational then $p_{i,n}/q_{i,n}=F_i(a/b)$ for all $n$ sufficiently large.

\section{Lower bounds on rational approximations}\label{LB}

In this section, we continue the work we began in the preceding section and analyse the rational approximations to special values of functions satisfying a $k$-Mahler equation.  The purpose of this section is to show that these rational approximations cannot be too good.

In addition to the assumptions and notation of Notation \ref{notn: 1}, we adopt the following.

\begin{notn}\label{notn: 2} We adopt the following assumptions and notation.
\begin{enumerate}
\item We take $P_{1,n}(x),\ldots , P_{d,n}(x),Q_n(x)\in\mathbb{Z}[x]$ for $n\geq 0$ to be polynomials satisfying conditions $(i)$--$(iv)$ of Lemma \ref{right}.
\item We take the integers $p_{i,n}$ and $q_n$ to be as defined in Equations \eqref{q} and \eqref{p}.
\item We take $\mathbf{\Phi}_n(x):=[P_{1,n}(x)/Q_n(x),\ldots,P_{d,n}(x)/Q_n(x)]^T$ for $n\geq 0$.
\end{enumerate}
\end{notn}

\begin{lem}\label{SetI} Assume the notation and assumptions of Notation \ref{notn: 1} and let $N$ be a positive integer. If $c_1(x),\ldots,c_d(x)\in \mathbb{Q}[x]$ are polynomials of degree at most $N$, not all of which are zero, then $$\nu\left(\sum_{i=1}^d c_i(x)F_i(x)\right)\leq  2^{Hd} N k^{2d+3}.$$\end{lem}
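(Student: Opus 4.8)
The plan is to prove a non-vanishing bound for linear combinations $\sum_i c_i(x) F_i(x)$ over $\mathbb{Q}[x]$ by a Wronskian-type argument combined with the functional equation. Since $F_1(x),\ldots,F_d(x)$ are linearly independent over $\mathbb{Q}(x)$, the quantity $G(x):=\sum_{i=1}^d c_i(x)F_i(x)$ is a nonzero element of $\mathbb{Q}((x))$, so $\nu(G)$ is a well-defined nonnegative integer (nonnegative because all $F_i\in\mathbb{Z}[[x]]$ and all $c_i\in\mathbb{Q}[x]$; actually we only need finiteness). The goal is a quantitative upper bound on $\nu(G)$. First I would set up the standard trick: consider the $\mathbb{Q}(x)$-vector space $V$ spanned by $F_1,\ldots,F_d$ inside $\mathbb{Q}((x))$. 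By Notation \ref{notn: 1}(6), $V$ is $d$-dimensional and is stable under $\Delta_k\colon x\mapsto x^k$ in the sense that $\mathbf{F}(x)=\mathbf{A}(x)\mathbf{F}(x^k)$, equivalently $\mathbf{F}(x^k)=\mathbf{A}(x)^{-1}\mathbf{F}(x)$, so applying $\Delta_k$ carries $V$ into the $\mathbb{Q}(x)$-span of the $F_i$ with matrix $\mathbf{A}(x)^{-1}$ (whose entries have denominator $\det\mathbf{A}(x)$ and whose numerators, via the adjugate, have degree at most $(d-1)H$, while $\det(B(x)\mathbf{A}(x))$ has degree at most $dH$).

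The key step is to iterate. Write $G=\mathbf{c}(x)^T\mathbf{F}(x)$ with $\mathbf{c}=(c_1,\ldots,c_d)^T$, $\deg c_i\le N$. Then $G(x^{k^m})=\mathbf{c}(x^{k^m})^T\mathbf{F}(x^{k^m})=\mathbf{c}(x^{k^m})^T\,\mathbf{A}(x)^{-1}\mathbf{A}(x^k)^{-1}\cdots\mathbf{A}(x^{k^{m-1}})^{-1}\,\mathbf{F}(x)$, so after clearing denominators we get, for each $m$, a relation $R_m(x)\,G(x^{k^m})=\mathbf{b}_m(x)^T\mathbf{F}(x)$ where $R_m(x)=\prod_{\ell=0}^{m-1}\det(B(x^{k^\ell})\mathbf{A}(x^{k^\ell}))$ and $\mathbf{b}_m(x)$ is a vector of polynomials; crucially the entries of $\mathbf{b}_m$ have degree bounded by $Nk^m + (\text{sum of a geometric series in }k)$, which one checks is at most roughly $2^{Hd}\cdot$(a polynomial-in-$k$ factor)$\cdot N k^m$ — this is where the $2^{Hd}$ enters, from bounding products/adjugates of the $\mathbf{A}$-matrices, and the $k^{2d+3}$ from the accumulated degrees over the iteration. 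Taking $m=d$ iterates, we obtain $d+1$ vectors $\mathbf{b}_0=\mathbf{c},\mathbf{b}_1,\ldots,\mathbf{b}_d$ of polynomials in $\mathbb{Q}[x]^d$; since this is $d+1$ vectors in the $d$-dimensional space $\mathbb{Q}(x)^d$, they are $\mathbb{Q}(x)$-linearly dependent, so there exist $\lambda_0(x),\ldots,\lambda_d(x)\in\mathbb{Q}[x]$, not all zero, with $\sum_{j=0}^d \lambda_j(x)\mathbf{b}_j(x)=\mathbf{0}$. Pairing with $\mathbf{F}(x)$ and using $R_j(x)G(x^{k^j})=\mathbf{b}_j(x)^T\mathbf{F}(x)$ yields a nontrivial linear relation $\sum_{j=0}^d \mu_j(x)\,G(x^{k^j})=0$ in $\mathbb{Q}((x))$ with polynomial coefficients $\mu_j$ of controlled degree.

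From such a relation one extracts the valuation bound. Let $e=\nu(G)$ and let $j_0$ be the smallest index with $\mu_{j_0}\ne 0$; then $\nu(\mu_{j_0}(x)G(x^{k^{j_0}}))=\nu(\mu_{j_0})+k^{j_0}e$, and this must be matched by some other term $\mu_j G(x^{k^j})$ with $j>j_0$, whose valuation is at least $k^{j}e \ge k^{j_0+1}e$. Hence $k^{j_0}e \le \nu(\mu_{j_0})+k^{j_0}e$ cannot balance against something of strictly larger order unless... — more carefully, comparing the term of minimal valuation forces $\nu(\mu_{j_0})+k^{j_0}e \ge k^{j_0+1}e$ is impossible for $e$ large, so instead the minimal-valuation term must be cancelled at the level of its leading coefficient, which forces a relation among the $\mu_j$ modulo $x$; peeling this off and inducting on the number of terms, one bounds $e$ by (essentially) $\max_j \deg\mu_j$ divided by $(k-1)$ times the smallest gap, giving $\nu(G)=e \le \deg$-bound, which after the degree bookkeeping is $\le 2^{Hd}Nk^{2d+3}$. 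The main obstacle I anticipate is the degree bookkeeping: tracking how $\deg\mathbf{b}_m$ grows under iteration of $\mathbf{A}(x)^{-1}$ (each step multiplies by $k$ and adds $dH$ from the new $\det(B\mathbf{A})$ factor and up to $(d-1)H$ from the adjugate), then bounding the degrees of the dependency coefficients $\lambda_j$ via Cramer's rule on a $d\times d$ system whose entries already have degree $\sim Nk^d$, and finally converting the resulting relation's degree into the valuation bound — making all constants collapse into the clean form $2^{Hd}Nk^{2d+3}$ is the delicate part, and one must be slightly generous (as the statement's exponents suggest) rather than optimal.
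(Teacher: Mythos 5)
Your approach is correct in essence but genuinely different from the paper's. The paper goes \emph{downward}: starting from $G(x)=\sum c_i(x)F_i(x)$, it pushes the relation through $\mathbf{A}(x)$ to the level of $F_i(x^k)$ and then applies a Cartier operator $\Lambda_j$ to \emph{contract} the valuation by a factor of $k$; after roughly $\log_k N$ iterations the coefficient degrees stabilise below $H/(k-1)+1$, so the coefficient vectors live in a $\mathbb{Q}$-vector space of dimension about $d\lfloor H/(k-1)+2\rfloor$, and the pigeonhole happens \emph{there}. Your proposal instead goes \emph{upward}: substitute $x\mapsto x^{k^m}$ and invert $\mathbf{A}$, so the coefficient degrees \emph{grow} like $Nk^m$, but one only needs $m=0,\dots,d$ because $d+1$ vectors in $\mathbb{Q}(x)^d$ are automatically dependent. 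That trivial dimension count replaces the paper's more delicate bounded-degree pigeonhole, and — if you do the bookkeeping carefully — it actually produces a \emph{sharper} bound of order $(N+dH)k^d$, which of course implies the stated $2^{Hd}Nk^{2d+3}$. This is a genuine alternative and arguably cleaner in structure.

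That said, your write-up has a few inaccuracies you should fix. First, the matrix product should be $\mathbf{A}(x^{k^{m-1}})^{-1}\cdots\mathbf{A}(x^k)^{-1}\mathbf{A}(x)^{-1}$ (you wrote the factors in the opposite order). Second, your explanation of where $2^{Hd}$ comes from is wrong for your own method: the degrees of $\mathbf{b}_m$ are bounded by $Nk^m + dH(k^m-1)/(k-1)$, a geometric series with \emph{no} factor of $2^{Hd}$; the $2^{Hd}$ in the paper's bound arises from $k^{Hd/(k-1)}\le 2^{Hd}$, which counts the number of iterations after stabilisation — a phenomenon that simply does not occur in your dilation argument. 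Third, the valuation-extraction step at the end is more convoluted than it needs to be: you do not need to peel off leading coefficients or induct. In a relation $\sum_j \mu_j(x)G(x^{k^j})=0$ with at least two nonzero $\mu_j$ (which is forced, since each $\mathbf{b}_j\neq 0$), the minimum of $\nu(\mu_j)+k^j\nu(G)$ over $j$ with $\mu_j\neq 0$ must be attained at least twice, say at $j_1<j_2$; then $(k^{j_2}-k^{j_1})\nu(G)=\nu(\mu_{j_1})-\nu(\mu_{j_2})\le\deg\mu_{j_1}$, and since $k^{j_2}-k^{j_1}\ge 1$ this immediately gives $\nu(G)\le\max_j\deg\mu_j$. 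With the Cramer's-rule degree bound on the $\lambda_j$ and the geometric-series bound on the $R_j$, that single comparison finishes the proof.
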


\begin{proof} 
We define {\em Cartier operators} $\Lambda_i:\mathbb{Q}((x))\to \mathbb{Q}((x))$ by $$\Lambda_i\left(\sum_{n\geq -j}c_nx^n\right)=\sum_{n\geq (-j-i)/k} c_{kn+i}x^n$$ for $i=0,\ldots,k-1$.

Let $v_0:=\nu\left(\sum_{i=1}^d c_i(x)F_i(x)\right)$. We have $$\sum_{i=1}^d c_i(x)F_i(x)=[c_1(x),\ldots,c_d(x)]\mathbf{F}(x)=[c_1(x),\ldots,c_d(x)]\mathbf{A}(x)\mathbf{F}(x^k).$$ Note that $$[c_1(x),\ldots,c_d(x)]\mathbf{A}(x)=\left[\frac{c_{1,1}(x)}{B(x)},\ldots,\frac{c_{d,1}(x)}{B(x)}\right],$$ where $c_{1,1}(x),\ldots,c_{d,1}(x)$ are polynomials of degree at most $N+H$. It follows that \begin{equation}\label{J1} \nu\left(\sum_{i=1}^d c_{i,1}(x)F_i(x^k)\right)=v_0+\nu(B(x))=v_0,\end{equation} since $B(0)\neq 0$.

Let $j_0\in\{0,\ldots,k-1\}$ be such that $v_0\equiv j_0\ (\bmod\ k).$ Applying the Cartier operator $\Lambda_{j_0}$ to $\sum_{i=1}^d c_{i,1}(x)F_i(x^k)$ we have $$\nu\left(\sum_{i=1}^d \Lambda_{j_0}\left(c_{i,1}(x)\right)F_i(x)\right)=(v_0-j_0)/k.$$ Set $v_1:=(v_0-j_0)/k$. Then \begin{equation}\label{J3} v_0/k-1\leq v_1\leq v_0/k.\end{equation} Moreover, $\Lambda_{j_0}\left(c_{1,1}(x)\right),\ldots,\Lambda_{j_0}\left(c_{d,1}(x)\right)$ have degree at most $N_1:=(N+H)/k.$ Similarly, $$\left[\Lambda_{j_0}\left(c_{1,1}(x)\right),\ldots,\Lambda_{j_0}\left(c_{d,1}(x)\right)\right]\mathbf{A}(x)=\left[\frac{c_{1,2}(x)}{B(x)},\ldots,\frac{c_{d,2}(x)}{B(x)}\right],$$ for some polynomials $c_{1,2}(x),\ldots,c_{d,2}(x)$ of degree at most $N_1+H$ and we have $$\nu\left(\sum_{i=1}^d c_{i,2}(x)F_i(x^k)\right)=v_1.$$

Let $j_1\in\{0,\ldots,k-1\}$ be such that $v_1\equiv j_1\ (\bmod\ k).$ Applying the operator $\Lambda_{j_1}$ to $\sum_{i=1}^d c_{i,2}(x)F_i(x^k)$ we see that $$\nu\left(\sum_{i=1}^d \Lambda_{j_1}\left(c_{i,2}(x)\right)F_i(x)\right)=(v_1-j_1)/k.$$ Now set $v_2:=(v_1-j_1)/k$. Then \begin{equation}\label{J4} v_1/k-1\leq v_2\leq v_1/k\end{equation} and thus by \eqref{J3} we have \begin{equation}\label{J5} v_0/k^2-1/{k}-1\leq v_2\leq v_0/k^2.\end{equation} Moreover, $\Lambda_{j_1}\left(c_{1,2}(x)\right),\ldots,\Lambda_{j_1}\left(c_{d,2}(x)\right)$ have degree at most $$N_2:=(N_1+H)/{k}\leq N/k^2+H/k^2+H/k.$$

Continuing in this manner, for each $n$, we can construct polynomials $c_{1,n}(x),\ldots,$ $c_{d,n}(x)$ such that for some $j\in\{0,\ldots,k-1\}$ we have $$\nu\left(\sum_{i=1}^d \Lambda_j\left(c_{i,n}(x)\right)F_i(x)\right)=v_n,$$ where \begin{equation}\label{J6}v_0/{k^n}-k/{(k-1)}\leq v_n\leq v_0/k^n,\end{equation} and the degrees of $$\left[b_{1,n}(x),\ldots,b_{d,n}(x)\right]:=\left[\Lambda_j\left(c_{1,n}(x)\right),\ldots,\Lambda_j\left(c_{d,n}(x)\right)\right]$$ are at most $$N/k^n+H/k+\cdots+H/k^n\leq N/k^n+H/(k-1).$$

Let $n_0$ be the unique natural number such that $k^{n_0}>N\geq k^{n_0-1}$. Then for $$n\in\{n_0,n_0+1,\ldots,n_0+\lfloor H/(k-1)+2\rfloor d\}$$ the constructed vectors $[b_{1,n}(x),\ldots,b_{d,n}(x)]$ are in the vector space of $d$-tuples of polynomials of degree less than $H/(k-1)+1$, which has dimension at most $\lfloor H/(k-1)+2\rfloor d$ as a $\mathbb{Q}$-vector space. Hence for $n\in\{n_0,n_0+1,\ldots,n_0+\lfloor H/(k-1)+2\rfloor d\}$ the vectors $[b_{1,n}(x),\ldots,b_{d,n}(x)]$ are linearly dependent over $\mathbb{Q}$.  In particular, there exist constants $\lambda_{n}$, not all of which are zero, such that $$\sum_{n=n_0}^{n_0+\lfloor H/(k-1)+2\rfloor d} \lambda_{n}\cdot [b_{1,n}(x),\ldots,b_{d,n}(x)]=0.$$ But this gives that \begin{align*} 0 &= \sum_{n=n_0}^{n_0+\lfloor H/(k-1)+2\rfloor d} \lambda_{n}\cdot [b_{1,n}(x),\ldots,b_{d,n}(x)]\cdot\mathbf{F}(x)\\ &=\sum_{n=n_0}^{n_0+\lfloor H/(k-1)+2\rfloor d} \lambda_{n}\cdot \sum_{i=1}^d b_{i,n}(x)F_i(x),\end{align*} which immediately yields that the functions $$G_n(x):=\sum_{i=1}^d b_{i,n}(x)F_i(x)$$ for $n\in\{n_0,n_0+1,\ldots,n_0+\lfloor H/(k-1)+2\rfloor d\}$ are linearly dependent over $\mathbb{Q}$.

Suppose, towards a contradiction, that $$v_0> \frac{k^2}{(k-1)^2}k^{n_0+(H/(k-1)+2)d}.$$ Then for $i\in\{0,\ldots,\lfloor H/(k-1)+2\rfloor\cdot d-1\}$ we have 
by Inequality \eqref{J6} that \begin{equation*}\label{Jp7} v_{n_0+i}-v_{n_0+i+1} 
\ge \frac{v_0}{k^{n_0+i}}-\frac{k}{k-1} - \frac{v_0}{k^{n_0+i+1}} = v_0k^{-n_0-i}\left(1-\frac{1}{k}\right) - \frac{k}{k-1}>0.
\end{equation*}
Thus the numbers $$v_{n_0},v_{n_0+1},\ldots, v_{n_0+\lfloor H/(k-1)+2\rfloor d}$$ are all distinct, and so for $n\in\{n_0,n_0+1,\ldots,n_0+\lfloor H/(k-1)+2\rfloor d\}$ the functions $G_n(x)$ are linearly independent over $\mathbb{Q}$, as we have $\nu(G_n(x))=v_n$ for all $n$.
This is a contradiction. 

Hence
$$v_0\le \frac{k^2}{(k-1)^2}k^{n_0+(H/(k-1)+2)d} \le \frac{k^2}{(k-1)^2}\cdot (kN) k^{\frac{Hd}{k-1}}k^{2d}.$$
Note that $k^{1/(k-1)}\le 2$ and thus
$$v_0\le 2^{Hd} N k^{2d+3}.$$
The result follows.
\end{proof}

\begin{lem}\label{Lem: LIi} Assume the assumptions and notation of Notations \ref{notn: 1} and \ref{notn: 2} and let $m$ be a natural number with the property that $k^m\ge 2^{Hd}k^{2d+3}$.  Then for each $n\geq 0$, we have that $\mathbf{\Phi}_n(x),\mathbf{\Phi}_{n+m}(x),\ldots,\mathbf{\Phi}_{n+m(d-1)}(x)$ are linearly independent over $\mathbb{Q}(x)$.
\end{lem}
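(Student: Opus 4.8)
The plan is to argue by contradiction and, from a putative $\mathbb{Q}(x)$-linear dependence, manufacture a nonzero combination $\sum_{i=1}^{d}c_i(x)F_i(x)$ that vanishes at $x=0$ to an order forbidden by Lemma~\ref{SetI}. So suppose $\mathbf{\Phi}_n(x),\mathbf{\Phi}_{n+m}(x),\ldots,\mathbf{\Phi}_{n+m(d-1)}(x)$ are linearly dependent over $\mathbb{Q}(x)$. By Lemma~\ref{right}(v) each $\mathbf{\Phi}_{n+mj}(x)$ has first coordinate $P_{1,n+mj}(x)/Q_{n+mj}(x)=1$, so in particular $\mathbf{\Phi}_n(x)\neq 0$; let $s$ be the largest index in $\{0,1,\ldots,d-2\}$ for which $\mathbf{\Phi}_n(x),\mathbf{\Phi}_{n+m}(x),\ldots,\mathbf{\Phi}_{n+ms}(x)$ are linearly independent over $\mathbb{Q}(x)$. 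Such an $s$ exists and satisfies $s\le d-2$ by the dependence assumption, and by maximality $\mathbf{\Phi}_{n+m(s+1)}(x)$ lies in the $\mathbb{Q}(x)$-span of $\mathbf{\Phi}_n(x),\ldots,\mathbf{\Phi}_{n+ms}(x)$.

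The crucial step is to produce an annihilating covector that is built only from the \emph{low-index} approximants, so that its degree is governed by $k^{n+ms}$ rather than by $k^{n+m(d-1)}$. Put $D_j:=((d+2)(d-1)+1)Hk^{n+mj}$ and let $W(x)$ be the $d\times(s+2)$ integer-polynomial matrix whose $j$th column ($0\le j\le s+1$) is $[P_{1,n+mj}(x),\ldots,P_{d,n+mj}(x)]^{T}$; by Lemma~\ref{right}(i) its $j$th column has entries of degree at most $D_j$, and since each $Q_{n+mj}(x)$ is a nonzero polynomial, the first $s+1$ columns of $W(x)$ are linearly independent over $\mathbb{Q}(x)$ while the last lies in their span, so $\mathrm{rk}\,W(x)=s+1$. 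Pick rows $i_1<\cdots<i_{s+1}$ on which the first $s+1$ columns of $W(x)$ form a nonsingular matrix $M(x)$, pick one more row $i_{s+2}$, and for $l=1,\ldots,s+2$ set $c_{i_l}(x):=\pm\det\big(W(x)[\{i_1,\ldots,i_{s+2}\}\setminus\{i_l\};\{0,1,\ldots,s\}]\big)$ with alternating signs, and $c_i(x):=0$ for all other $i$. Then $c_{i_{s+2}}(x)=\pm\det M(x)\neq 0$, so $\mathbf{c}(x):=[c_1(x),\ldots,c_d(x)]\neq 0$; and $\deg c_i\le\sum_{j=0}^{s}D_j$, since each $c_{i_l}$ is a determinant whose columns come from columns $0,\ldots,s$ of $W(x)$. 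Moreover $\mathbf{c}(x)W(x)=0$: expanding along the appropriate column shows that the $\kappa$th entry of $\mathbf{c}(x)W(x)$ is, up to sign, the determinant of an $(s+2)\times(s+2)$ matrix which has a repeated column when $\kappa\le s$ and is a submatrix of a rank-$(s+1)$ matrix when $\kappa=s+1$. Since the $Q_{n+mj}(x)$ are nonzero, this gives $\mathbf{c}(x)\mathbf{\Phi}_{n+mj}(x)=0$ for all $j\in\{0,1,\ldots,s+1\}$.

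It remains to play the two sides of the estimate against each other. Because $\mathbf{c}(x)\mathbf{\Phi}_{n+m(s+1)}(x)=0$, we have $\sum_{i}c_i(x)F_i(x)=\mathbf{c}(x)\mathbf{F}(x)=\sum_{i}c_i(x)\big(F_i(x)-P_{i,n+m(s+1)}(x)/Q_{n+m(s+1)}(x)\big)$; the $c_i$ are polynomials and each difference vanishes at $x=0$ to order at least $d(d+2)Hk^{n+m(s+1)}$ by Lemma~\ref{right}(iv), so $\nu\big(\sum_i c_iF_i\big)\ge d(d+2)Hk^{n+m(s+1)}$. On the other hand, $\sum_i c_iF_i\neq 0$ because $F_1,\ldots,F_d$ are linearly independent over $\mathbb{Q}(x)$ and $\mathbf{c}\neq 0$, so Lemma~\ref{SetI} applied with $N=\max_i\deg c_i\le\sum_{j=0}^{s}D_j$ gives $\nu\big(\sum_i c_iF_i\big)\le 2^{Hd}k^{2d+3}\,((d+2)(d-1)+1)\,H\sum_{j=0}^{s}k^{n+mj}$. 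Writing $K:=k^m$ and dividing through by $Hk^n$ (we may assume $H\ge 1$, for if $H=0$ then $\mathbf{F}(x)=\mathbf{A}\mathbf{F}(x^{k})$ with $\mathbf{A}$ constant, which forces $\mathbf{F}$ to be constant and contradicts $d\ge 2$), the two bounds combine to give $d(d+2)K^{s+1}\le 2^{Hd}k^{2d+3}((d+2)(d-1)+1)(K^{s+1}-1)/(K-1)$, so $d(d+2)(K-1)<2^{Hd}k^{2d+3}((d+2)(d-1)+1)\le K((d+2)(d-1)+1)$, which rearranges to $(d+1)K<d(d+2)$ and hence $K<d+1$. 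This contradicts $K=k^m\ge 2^{Hd}k^{2d+3}>d+1$, completing the proof.

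I expect the only real obstacle to be the bookkeeping in this last numerical squeeze. The valuation lower bound and the degree-driven upper bound are of the same order $k^{n+m(s+1)}$, so the geometric sum $\sum_{j\le s}k^{mj}$ must be handled essentially exactly rather than bounded crudely; the inequality closes precisely because $d(d+2)=d^2+2d$ strictly exceeds $(d+2)(d-1)+1=d^2+d-1$, which leaves just enough slack to absorb the surplus factor $K-1$ under the hypothesis $k^m\ge 2^{Hd}k^{2d+3}$. This is also the reason the covector must be assembled from the minors on the first $s+1$ columns only: allowing column $s+1$ into those minors would inflate $\deg c_i$ by a factor of $k^m$ and reverse the inequality.
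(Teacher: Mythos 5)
Your proof is correct, and it uses the same essential mechanism as the paper's: build a nonzero polynomial covector $\mathbf{c}(x)$ from $(s+1)\times(s+1)$ minors drawn only from the low-index approximants, show it also annihilates the next approximant, and play the resulting lower bound on $\nu(\sum c_i F_i)$ from Lemma~\ref{right}(iv) against the upper bound from Lemma~\ref{SetI}. The real difference is organizational: the paper proves the $n=0$ case first (by an inner induction on the number of independent vectors), and then extends to general $n$ by an outer induction using $\mathbf{\Phi}_{n+1}(x)=\mathbf{A}(x)\mathbf{\Phi}_n(x^k)$ and invertibility of $\mathbf{A}(x)$. You avoid that second induction by observing that the factor $k^n$ cancels in the final inequality, so the estimate closes uniformly in $n$; you also work directly with the cleared-denominator matrix $W(x)$ of numerators $P_{i,\cdot}(x)$, which makes the degree accounting for the minors immediate rather than requiring a separate denominator-clearing step as in the paper. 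This buys you a slightly cleaner, single-pass argument and a sharper final contradiction ($K<d+1$ rather than merely $K<2^{Hd}k^{2d+3}$), at the cost of having to track the (cancelling) $k^n$ factor through the estimates. Your side observation that $H\geq 1$ can be assumed (since $H=0$ forces the $F_i$ to be constants, contradicting their $\mathbb{Q}(x)$-linear independence with $d\geq 2$) is a legitimate point that the paper leaves implicit.
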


\begin{proof} We prove this by induction on $n\geq 0$. We first prove the base case $n=0$ by showing by induction on $i$ that $\mathbf{\Phi}_0(x),\mathbf{\Phi}_m(x),\ldots,\mathbf{\Phi}_{mi}(x)$ are linearly independent over $\mathbb{Q}(x)$ for all $i<d$. When $i=0$, $\mathbf{\Phi}_0(x)$ is linearly independent over $\mathbb{Q}(x)$ as it is nonzero.

Suppose that $\mathbf{\Phi}_0(x),\mathbf{\Phi}_m(x),\ldots,\mathbf{\Phi}_{mi}(x)$ are linearly independent over $\mathbb{Q}(x)$ for all $i$ with $i<j<d$ and consider the case $i=j$. Then $\mathbf{\Phi}_0(x),\mathbf{\Phi}_m(x),\ldots,$ $\mathbf{\Phi}_{m(j-1)}(x)$ are linearly independent over $\mathbb{Q}(x)$.

If $\mathbf{\Phi}_0(x),\mathbf{\Phi}_m(x),\ldots,\mathbf{\Phi}_{mj}(x)$ are linearly dependent over $\mathbb{Q}(x)$ then the column spaces of the matrices $(\mathbf{\Phi}_0(x),\mathbf{\Phi}_m(x),\ldots,\mathbf{\Phi}_{mj}(x))$ and $(\mathbf{\Phi}_0(x),\mathbf{\Phi}_m(x),\ldots,$ $\mathbf{\Phi}_{m(j-1)}(x))$ are the same, and hence if $b_1(x),\ldots,b_d(x)\in \mathbb{Q}(x)$ are such that  \begin{equation}\label{JII3}[b_1(x),\ldots,b_d(x)](\mathbf{\Phi}_0(x),\mathbf{\Phi}_m(x),\ldots,\mathbf{\Phi}_{m(j-1)}(x))=0,\end{equation} then we have $$[b_1(x),\ldots,b_d(x)]\mathbf{\Phi}_{mj}(x)=0.$$ By Cramer's rule, there is a nonzero solution to \eqref{JII3} with $b_i(x)$ equal---up to sign---to the determinant of a $j\times j$ submatrix of $(\mathbf{\Phi}_0(x),\mathbf{\Phi}_m(x),\ldots,$ $\mathbf{\Phi}_{m(j-1)}(x))$ for $i=1,\ldots, j+1$ and $b_i(x)=0$ for $i=j+2,\ldots,d$. 
By Lemma \ref{right}, if 
we define $d_n$ to be $\max_{1\le i\le d} \left\{ \deg(P_{i,n}(x)), \deg(Q_n(x))\right\}$ 
and define $h_{n}$ to be $\min_{1\leq i\leq d}\{\nu(F_i(x)-P_{i,n}(x)/Q_n(x))\}$ for $n\geq 0$, then we have
\begin{equation}\label{eq: hhh}
h_n \ge d(d+2)Hk^n
\end{equation}
and 
 \begin{equation}
\label{eq: ddd}
d_n\le ((d+2)(d-1)+1)Hk^n
\end{equation} for all $n\ge 0$.
Observe that each determinant of a $j\times j$ submatrix of $(\mathbf{\Phi}_0(x),\mathbf{\Phi}_m(x),\ldots,$ $\mathbf{\Phi}_{m(j-1)}(x))$ is a rational function with numerator and denominator of degree at most $d_0+d_m+\cdots+d_{m(j-1)}$; moreover, they all have a common denominator of at most this degree. Thus there exist polynomials $b_1(x),\ldots,b_d(x)\in \mathbb{Q}[x]$, not all zero, of degree at most $d_0+d_m+\cdots+d_{m(j-1)}$ such that $$[b_1(x),\ldots,b_d(x)](\mathbf{\Phi}_0(x),\mathbf{\Phi}_m(x),\ldots,\mathbf{\Phi}_{m(j-1)}(x))=0,$$ which gives $$[b_1(x),\ldots,b_d(x)]\mathbf{\Phi}_{mj}(x)=0.$$ 

Thus \begin{align}\nonumber \nu\left(\sum_{i=1}^d b_i(x)F_i(x)\right)&=\nu\left(\sum_{i=1}^d b_i(x)F_i(x)-\sum_{i=1}^d b_i(x)P_{i,jm}(x)/Q_{jm}(x)\right)\\
\nonumber &=\nu\left(\sum_{i=1}^d b_i(x)(F_i(x)-P_{i,jm}(x)/Q_{jm}(x))\right)\\
\label{pFhjm}&\geq h_{jm} \ge d(d+2)Hk^{jm}.\end{align} 

But the polynomials $b_i(x)$ are of degree at most $d_0+d_m+\cdots+d_{m(j-1)}$ and so by Lemma \ref{SetI} and (\ref{eq: ddd}) we have
\begin{align*}  d(d+2)Hk^{jm} \le \nu\left(\sum_{i=1}^d b_i(x)F_i(x)\right)  &\le  2^{Hd} (d_0+d_m+\cdots +d_{m(j-1)}) k^{2d+3} \\
&\le 2^{Hd} ((d+2)(d-1)+1)H \frac{k^{jm}}{(k^m-1)}k^{2d+3}.   
 \end{align*}
Dividing this inequality by $d(d+2)Hk^{jm}$ we get
$$k^m \le  1+ 2^{Hd}\frac{((d+2)(d-1)+1)}{d(d+2)}k^{2d+3}<2^{Hd}k^{2d+3},$$
a contradiction.

This completes the base case for our induction on $n$.

Next assume that the result is true for all natural numbers less than or equal to $n$. Notice by construction $\mathbf{\Phi}_{n+1}(x)=\mathbf{A}(x)\mathbf{\Phi}_{n}(x^k),$ and $\mathbf{A}(x)$ is invertible. Thus we have that \begin{multline}\label{PAP} \left(\mathbf{\Phi}_n(x),\mathbf{\Phi}_{n+m}(x),\ldots,\mathbf{\Phi}_{n+m(d-1)}(x)\right)\\ =\mathbf{A}(x)\left(\mathbf{\Phi}_{n-1}(x^k),\mathbf{\Phi}_{n-1+m}(x^k),\ldots,\mathbf{\Phi}_{n-1+m(d-1)}(x^k)\right).\end{multline} By the inductive hypothesis, the matrix $$\left(\mathbf{\Phi}_{n-1}(x^k),\mathbf{\Phi}_{n-1+m}(x^k),\ldots,\mathbf{\Phi}_{n-1+m(d-1)}(x^k)\right)$$ is invertible. Since $\mathbf{A}(x)$ is also invertible, the product $$\mathbf{A}(x)\left(\mathbf{\Phi}_{n-1}(x^k),\mathbf{\Phi}_{n-1+m}(x^k),\ldots,\mathbf{\Phi}_{n-1+m(d-1)}(x^k)\right)$$ is invertible, and thus the equality \eqref{PAP} gives the desired result.  This proves the lemma.
\end{proof}

We will need a few more estimates for the norms of matrices. 

\begin{lem}\label{Binvbound} Let $d$ and $H$ be natural numbers and let $\mathbf{B}(x)$ be a $d\times d$ invertible matrix whose $(i,j)$-entry is the rational function $b_{i,j}(x)/b(x)$ for $1\le i,j\le d$, where $b_{i,j}(x),b(x)$ are polynomials of degree at most $H$. Then there exist $\varepsilon>0$ and a positive constant $C$ such that for all $\alpha\in (-\varepsilon,\varepsilon)\setminus \{0\}$ we have $$\left\|\mathbf{B}(\alpha)^{-1}\right\|\leq \frac{C}{|\alpha|^{Hd}}.$$ 
\end{lem}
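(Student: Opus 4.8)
The statement to prove (Lemma \ref{Binvbound}) is: if $\mathbf{B}(x)$ is a $d\times d$ invertible matrix with entries $b_{i,j}(x)/b(x)$, all of $b_{i,j}(x),b(x)$ polynomials of degree at most $H$, then there exist $\varepsilon>0$ and $C>0$ so that $\|\mathbf{B}(\alpha)^{-1}\|\le C/|\alpha|^{Hd}$ for all $\alpha\in(-\varepsilon,\varepsilon)\setminus\{0\}$. The natural approach is the classical adjugate formula: $\mathbf{B}(\alpha)^{-1}=\operatorname{adj}(\mathbf{B}(\alpha))/\det(\mathbf{B}(\alpha))$, and then bound the numerator from above and the denominator from below near $0$.

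First I would clear denominators. Writing $\mathbf{B}(x)=\mathbf{M}(x)/b(x)$ where $\mathbf{M}(x)=(b_{i,j}(x))$ is a polynomial matrix of entrywise degree $\le H$, we get $\mathbf{B}(x)^{-1}=b(x)\,\mathbf{M}(x)^{-1}=b(x)\,\operatorname{adj}(\mathbf{M}(x))/\det(\mathbf{M}(x))$. Each entry of $\operatorname{adj}(\mathbf{M}(x))$ is a $\pm$ determinant of a $(d-1)\times(d-1)$ submatrix of $\mathbf{M}(x)$, hence a polynomial of degree at most $(d-1)H$; so on a fixed bounded interval $[-1,1]$ (say) the operator norm $\|\operatorname{adj}(\mathbf{M}(\alpha))\|$ is bounded by a constant $C_1$ depending only on $d,H$ and the coefficients. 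Likewise $|b(\alpha)|$ is bounded above by a constant on $[-1,1]$.

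The crucial step is the lower bound on $|\det(\mathbf{M}(\alpha))|$ as $\alpha\to 0$. Since $\mathbf{B}(x)$ is invertible as a matrix over $\mathbb{Q}(x)$, $\det(\mathbf{M}(x))$ is a nonzero polynomial; write $\det(\mathbf{M}(x))=x^{e}g(x)$ with $g(0)\ne 0$. The degree of $\det(\mathbf{M}(x))$ is at most $dH$ (sum of $d$ columns each of degree $\le H$), so in particular $e\le dH$. There is then an $\varepsilon\in(0,1)$ with $|g(\alpha)|\ge |g(0)|/2$ for $|\alpha|<\varepsilon$, whence $|\det(\mathbf{M}(\alpha))|\ge (|g(0)|/2)\,|\alpha|^{e}\ge (|g(0)|/2)\,|\alpha|^{dH}$ for $0<|\alpha|<\varepsilon$ (using $|\alpha|<1$ and $e\le dH$ to replace $|\alpha|^e$ by the larger-exponent, hence smaller, quantity $|\alpha|^{dH}$). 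Combining, for $0<|\alpha|<\varepsilon$,
$$
\left\|\mathbf{B}(\alpha)^{-1}\right\|=\frac{|b(\alpha)|\,\|\operatorname{adj}(\mathbf{M}(\alpha))\|}{|\det(\mathbf{M}(\alpha))|}\le \frac{C_1\cdot \sup_{|\alpha|\le 1}|b(\alpha)|}{(|g(0)|/2)}\cdot\frac{1}{|\alpha|^{Hd}}=:\frac{C}{|\alpha|^{Hd}},
$$
which is the claim. The only mild subtlety — the "main obstacle", though it is modest — is bookkeeping the exponent: one must check that the order of vanishing $e$ of $\det(\mathbf{M}(x))$ at $0$ is genuinely $\le dH$, which follows from the degree bound $\deg\det(\mathbf{M})\le dH$, and then argue that shrinking $\varepsilon$ below $1$ lets one uniformly absorb $|\alpha|^{e}$ into $|\alpha|^{dH}$. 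Everything else is a routine continuity/compactness argument on the fixed interval $[-1,1]$.
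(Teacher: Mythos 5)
Your proof is correct and follows essentially the same route as the paper's: clear denominators to get $\mathbf{B}(x)^{-1} = b(x)\operatorname{adj}(\mathbf{M}(x))/\det(\mathbf{M}(x))$, bound the numerator by compactness on a small interval, and use $\deg\det(\mathbf{M})\le dH$ to get the lower bound $|\det(\mathbf{M}(\alpha))|\gg|\alpha|^{dH}$ near $0$. The only cosmetic difference is that the paper packages the lower bound as the observation that $x^{Hd}/\det(\mathbf{D}(x))$ extends continuously to a closed disk about $0$, whereas you factor out the zero explicitly and bound $|g(\alpha)|\ge|g(0)|/2$; these are interchangeable.
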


\begin{proof}  
Define $\mathbf{D}(x):=b(x)\mathbf{B}(x)$, and let $\Delta_{i,j}(x)$ denote the determinant of the $(d-1)\times (d-1)$ submatrix of $\mathbf{D}(x)$ formed by removing the $i$-th row and $j$-th column of $\mathbf{D}(x)$. Note that $\Delta_{i,j}(x)$ is a polynomial of degree at most $H(d-1)$. The classical adjoint of the square matrix $\mathbf{B}(x)$ is the $d\times d$ matrix \begin{align*}\adj(\mathbf{B}(x))&=b(x)^{-(d-1)}\left((-1)^{i+j}\Delta_{j,i}(x)\right)_{1\leq i,j\leq d}.
\end{align*} Note that each entry of $\left((-1)^{i+j}\Delta_{j,i}(x)\right)_{1\leq i,j\leq d}$ is a polynomial of degree at most $H(d-1)$.

Since $$\mathbf{B}(x)^{-1}=\frac{\adj(\mathbf{B}(x))}{\det(\mathbf{B}(x))},$$ and $$\det(\mathbf{B}(x))=\det\left(b(x)^{-1}\cdot \mathbf{D}(x)\right)=\det(\mathbf{D}(x))b(x)^{-d},$$ we have that $$\mathbf{B}(x)^{-1}=\frac{b(x)}{\det(\mathbf{D}(x))}\cdot \left((-1)^{i+j}\Delta_{j,i}(x)\right)_{1\leq i,j\leq d}.$$ Note that the entries of $\mathbf{D}(x)$ are all polynomials of degree at most $H$, and so $\det(\mathbf{D}(x))$ is a polynomial of degree at most $Hd$.  In particular, there is some $\varepsilon>0$ such that 
$\det(\mathbf{D}(x))$ has no zeros on $\{x\in \mathbb{C}~:~0<|x|\le \varepsilon\}$ and so
$x^{Hd}/\det(\mathbf{D}(x))$ is continuous on the compact set $\{x\in \mathbb{C}~:~|x|\le \varepsilon\}$.  In particular, there is some constant $C_0>0$ such that
$$|x^{Hd}/\det(\mathbf{D}(x))|\le C_0$$ for $|x|\le \varepsilon$.  Similarly, there is a positive constant $C_1$ such that $|b(x)\Delta_{i,j}(x)|\le C_1$ for $|x|\le \varepsilon$ and for $1\le i,j\le d$.

Thus for $\alpha\in (-\varepsilon,\varepsilon)\setminus\{0\}$ we have
 $$\left\|\mathbf{B}(\alpha)^{-1}\right\|\leq \left(\sum_{1\leq i,j\leq d}\left|\frac{b(\alpha)\Delta_{i,j}(\alpha)}{\det(\mathbf{D}(\alpha))}\right|^2\right)^{1/2} \le (d^2 C_1^2C_0^2 |\alpha|^{-2Hd})^{1/2}.$$ 
The result follows taking $C=dC_1 C_0$. 
\end{proof}

Applying the preceding results gives the following lemma.  As is customary, when working with a vector space $K^n$ over a field $K$, we let $e_i$ denote the $1\times n$ row vector with a $1$ in the $i$-th coordinate and zeros in the remaining coordinates.

\begin{lem}\label{Lemma1} Assume the assumptions and notation of Notations \ref{notn: 1} and \ref{notn: 2} and let $m$ be a natural number with the property that $k^m\ge 2^{Hd}k^{2d+3}$.  For each natural number $n\geq 1$, let $\mathbf{C}_n(x)$ denote the $d\times (d-1)$ matrix whose $j$-th column is ${\bf \Phi}_{n}(x)-{\bf \Phi}_{n+jm}(x)$.  Then $\mathbf{C}_n(x)$ has rank $d-1$.  Moreover, there are positive constants $c_0$ and $c_1$ such that for $i\in \{2,\ldots, d\}$ we either have $$\|e_i^T{\bf C}_n(a/b)\|\geq c_0 c_1^n |a/b|^{(d-1)(d^2+d)Hk^{m(d-1)}k^n}.$$ for all n sufficiently large or $\|e_i^T\mathbf{C}_n\left(a/b\right)\|$ is eventually zero.
\end{lem}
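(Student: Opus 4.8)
Here is how I would approach proving Lemma~\ref{Lemma1}.

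\emph{Rank.} The first assertion follows from Lemma~\ref{Lem: LIi}: the $j$-th column of $\mathbf{C}_n(x)$ is $\mathbf{\Phi}_n(x)-\mathbf{\Phi}_{n+jm}(x)$, and a nontrivial relation $\sum_{j=1}^{d-1}\lambda_j(x)\big(\mathbf{\Phi}_n(x)-\mathbf{\Phi}_{n+jm}(x)\big)=0$ rearranges to $\big(\sum_j\lambda_j(x)\big)\mathbf{\Phi}_n(x)=\sum_j\lambda_j(x)\mathbf{\Phi}_{n+jm}(x)$, contradicting the $\mathbb{Q}(x)$-linear independence of $\mathbf{\Phi}_n(x),\dots,\mathbf{\Phi}_{n+m(d-1)}(x)$ (Lemma~\ref{Lem: LIi} applies since $k^m\ge 2^{Hd}k^{2d+3}$). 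Hence $\mathbf{C}_n(x)$ has rank $d-1$. Since $F_1(x)=1$, Lemma~\ref{right}(v) gives $P_{1,\ell}(x)=Q_\ell(x)$, so the first coordinate of each $\mathbf{\Phi}_\ell(x)$ is $1$ and the first row of $\mathbf{C}_n(x)$ is identically zero; the remaining $d-1$ rows must then be $\mathbb{Q}(x)$-independent, so $e_i^T\mathbf{C}_n(x)\not\equiv 0$ for every $i\in\{2,\dots,d\}$.

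\emph{Set-up for the lower bound.} Fix $i$ and write the $j$-th entry of $e_i^T\mathbf{C}_n(x)$ as $r_{i,n,j}(x)=N_{i,n,j}(x)/\big(Q_n(x)Q_{n+jm}(x)\big)$ with $N_{i,n,j}(x)=P_{i,n}(x)Q_{n+jm}(x)-P_{i,n+jm}(x)Q_n(x)\in\mathbb{Z}[x]$, so $\|e_i^T\mathbf{C}_n(a/b)\|$ is eventually zero exactly when all the $r_{i,n,j}(a/b)$ vanish for large $n$. From $\mathbf{\Phi}_\ell(x)=\mathbf{A}(x)\mathbf{\Phi}_{\ell-1}(x^k)$ and Lemma~\ref{right}(iii) one reads off $P_{i,\ell}(x)=\sum_s a_{i,s}(x)P_{s,\ell-1}(x^k)$ and $Q_\ell(x)=B(x)Q_{\ell-1}(x^k)$, and substituting these gives the recursion $\mathbf{N}_{n,j}(x)=B(x)^2\mathbf{A}(x)\mathbf{N}_{n-1,j}(x^k)$ for the columns $\mathbf{N}_{\ell,j}(x):=\big(N_{1,\ell,j}(x),\dots,N_{d,\ell,j}(x)\big)^T$. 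Iterating and evaluating at $a/b$,
\[
\mathbf{N}_{n,j}(a/b)=\Big(\prod_{\ell=0}^{n-1}B\big((a/b)^{k^\ell}\big)^2\Big)\,\mathbf{A}(a/b)\cdots\mathbf{A}\big((a/b)^{k^{n-1}}\big)\,\mathbf{N}_{0,j}\big((a/b)^{k^n}\big),
\]
so, setting $w_n^T:=e_i^T\mathbf{A}(a/b)\cdots\mathbf{A}\big((a/b)^{k^{n-1}}\big)$ and letting $\mathcal{N}(y)$ be the $d\times(d-1)$ numerator matrix of $\mathbf{C}_0$ (entry $(s,j)$ equal to $N_{s,0,j}(y)$, of rank $d-1$ over $\mathbb{Q}(y)$ by Part~1), we obtain $e_i^T\mathbf{C}_n(a/b)=0\iff w_n^T\mathcal{N}\big((a/b)^{k^n}\big)=0$ for all large $n$.

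\emph{The lower bound and the dichotomy.} The point of this reformulation is that $\mathcal{N}$ and $w_n$ are ``small'' in a way the raw $\mathbf{\Phi}_\ell(a/b)_i$ are not: each column of $\mathcal{N}(y)$ equals $y^{e_j}\mathbf{H}_j(y)$ with $\mathbf{H}_j(0)\ne 0$ and $e_j\le ((d+2)(d-1)+1)H(1+k^{jm})$, so the nonzero singular values of $\mathcal{N}\big((a/b)^{k^n}\big)$ are $\gtrsim |a/b|^{((d+2)(d-1)+1)H(1+k^{(d-1)m})k^n}$ (in the generic situation; degenerate columns are handled by peeling off their vanishing orders), whereas $w_n$ is a rational vector whose denominator grows only like $b^{Hk^n/(k-1)}\prod_{\ell<n}|B((a/b)^{k^\ell})|$, exponentially smaller than $b^{((d+2)(d-1)+1)Hk^n}$. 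With $\prod_{\ell<n}|B((a/b)^{k^\ell})|^2\asymp|B(0)|^{2n}$, $|Q_\ell(a/b)|\asymp|B(0)|^\ell$ (from the proof of Proposition~\ref{prop: lower}) and $\|w_n\|\gtrsim c_1^n|a/b|^{Hdk^n/(k-1)}$ (chaining Lemma~\ref{Binvbound}), the argument splits. If $w_n$ lies in the (one-dimensional, for large $n$) left kernel of $\mathcal{N}\big((a/b)^{k^n}\big)$ for infinitely many $n$, then — since $\mathbf{A}\big((a/b)^{k^\ell}\big)\to\mathbf{A}(0)$ and $\mathcal{N}\big((a/b)^{k^n}\big)$, suitably rescaled, converges to its leading coefficient matrix, both super-geometrically — the direction of $w_n$ and the kernel direction each stabilise, so $w_n$ lies in that kernel for \emph{all} large $n$ and $\|e_i^T\mathbf{C}_n(a/b)\|$ is eventually zero. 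Otherwise $w_n^T\mathcal{N}\big((a/b)^{k^n}\big)\ne 0$ for all large $n$, and the estimate $\|w_n^T\mathcal{N}\big((a/b)^{k^n}\big)\|\ge\sigma_{d-1}\big(\mathcal{N}((a/b)^{k^n})\big)\cdot\operatorname{dist}\!\big(w_n,\ker\big)$ — with a Liouville bound for the nonzero distance applied to the \emph{small} vector $w_n$, so that the resulting factor $1/(1-\rho)$ stays harmless because $\rho<1/(d+1)$ — yields a lower bound for $\|e_i^T\mathbf{C}_n(a/b)\|$ whose $|a/b|$-exponent has size $\big(((d+2)(d-1)+1)k^{(d-1)m}+O(1)\big)Hk^n$; the hypothesis $k^m\ge 2^{Hd}k^{2d+3}$ makes this strictly below $(d-1)(d^2+d)Hk^{m(d-1)}k^n$, giving the stated estimate (with $c_0,c_1$ absorbing the $|B(0)|$-powers).

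\emph{Main obstacle.} The crux is exactly this dichotomy: ruling out that $w_n$ sits close to, but not on, the left-kernel line for arbitrarily large $n$ — equivalently that $\{n:e_i^T\mathbf{C}_n(a/b)=0\}$ is infinite yet not cofinite — and carrying out the exponent accounting so that the near-kernel Liouville bound, which carries the $1/(1-\rho)$ factor, still beats $(d-1)(d^2+d)Hk^{m(d-1)}k^n$. A blunter route to the bound in the generic case is also available: take the least $j_n\le d-1$ with $r_{i,n,j_n}(a/b)\ne 0$, apply a Liouville inequality directly to the integer polynomial $N_{i,n,j_n}$ using $\deg N_{i,n,j_n}\le ((d+2)(d-1)+1)H(k^n+k^{n+j_nm})$ and $\nu(N_{i,n,j_n})\ge d(d+2)Hk^n$ from Lemma~\ref{right}, and invoke the remark following Proposition~\ref{prop: lower} for the ``eventually zero'' alternative — but this loses the $1/(1-\rho)$-improvement and so requires the refined approach when $\rho$ is near $1/(d+1)$.
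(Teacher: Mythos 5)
Your rank argument is correct and matches the paper's. You also correctly identify that the first row of $\mathbf{C}_n(x)$ vanishes (since $P_{1,\ell}=Q_\ell$), and your recursion $\mathbf{N}_{n,j}(x)=B(x)^2\mathbf{A}(x)\mathbf{N}_{n-1,j}(x^k)$ is a valid computation. However, the quantitative part of your proposal takes a genuinely different route from the paper's, and it contains gaps that you have not closed — gaps which, in fact, you partly acknowledge.

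The paper's proof avoids all of the difficulties you wrestle with by pressing the zero-first-row observation one step further. Since the first row of $\mathbf{C}_n(x)$ is zero, one passes to the $(d-1)\times(d-1)$ matrix $\mathbf{C}_n^{(1)}(x)$, which is \emph{square and invertible}. Since $F_1=1$ forces the first row of $\mathbf{A}(x)$ to be $e_1^T$, the recursion descends to the $(d-1)\times(d-1)$ block $\mathbf{B}(x)$, giving $\mathbf{C}_n^{(1)}(a/b)=\mathbf{B}(a/b)\cdots\mathbf{B}((a/b)^{k^{n-1}})\mathbf{C}_0^{(1)}((a/b)^{k^n})$. The lower bound then follows from the elementary inequality $\|w^TM\|\ge\|w^T\|/\|M^{-1}\|$ applied with $w^T=e_{i-1}^T\mathbf{B}(a/b)\cdots\mathbf{B}((a/b)^{k^{r-1}})$ (for a fixed threshold $r$) and $M$ the remaining square product, together with Lemma~\ref{Binvbound} applied to the square matrices $\mathbf{B}$ and $\mathbf{C}_0^{(1)}$. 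Crucially, the dichotomy is then decided at the \emph{single fixed level} $r$: either $e_{i-1}^T\mathbf{B}(a/b)\cdots\mathbf{B}((a/b)^{k^{r-1}})=0$, whence $e_i^T\mathbf{C}_n(a/b)=0$ for all $n\ge r$, or it is a fixed nonzero vector, whence the lower bound. There is no need to compare $w_n$ against an $n$-varying kernel at all. Passing to $(d-1)\times(d-1)$ matrices is also what produces the exponent $H(d-1)$ rather than $Hd$ in the $\mathbf{B}^{-1}$ bound, which is needed for the final exponent bookkeeping.

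Concretely, the gaps in your proposal are these. First, you never prove your claimed lower bound on $\sigma_{d-1}\bigl(\mathcal{N}\bigl((a/b)^{k^n}\bigr)\bigr)$; for a $d\times(d-1)$ matrix whose columns vanish to different orders as $y\to 0$, a uniform lower bound on the smallest nonzero singular value at $y=(a/b)^{k^n}$ does not follow from degree bounds alone, and ``peeling off vanishing orders'' does not automatically control the remaining $\mathbf{H}_j(0)$, which need not be linearly independent. Second, your direction-stabilisation claim — that if $w_n$ lies in the left kernel of $\mathcal{N}\bigl((a/b)^{k^n}\bigr)$ for infinitely many $n$ then it does so cofinitely — is asserted, not proved; the left kernel of a polynomial matrix need not converge nicely as $y\to 0$ when rank can drop at $y=0$, and in any case the statement is exactly the kind of ``infinitely often versus cofinitely'' phenomenon the paper's fixed-level-$r$ dichotomy is designed to bypass. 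Third, you concede that the ``blunt'' Liouville route does not deliver the required exponent when $\rho$ is close to $1/(d+1)$ (which the paper's square-matrix method does deliver, with no $1/(1-\rho)$ loss), so you are committed to the refined approach — which is the part that remains a sketch. The structural observations you make are all available to you; the missing step is to use the zero first row of $\mathbf{C}_n(x)$ and the block-triangular form of $\mathbf{A}(x)$ to reduce to the invertible square system, which renders all of these difficulties moot.
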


\begin{proof} For the first part of the lemma, suppose to the contrary that ${\bf C}_n(x)$ has rank strictly less than $d-1$. Then the vectors $\{\mathbf{\Phi}_n(x)-\mathbf{\Phi}_{n+im}(x)\}_{i=1}^{d-1}$ are linearly dependent. Thus there exist $a_1(x),\ldots,a_{d-1}(x)\in \mathbb{Q}(x)$, not all zero, such that $$\sum_{i=1}^{d-1}a_i(x)(\mathbf{\Phi}_n(x)-\mathbf{\Phi}_{n+im}(x))=0.$$ Hence $$(a_1(x)+a_2(x)\cdots+a_{d-1}(x))\mathbf{\Phi}_n(x)-\sum_{i=1}^{d-1}a_i(x)\mathbf{\Phi}_{n+im}(x)=0,$$ and so the set of vectors $\{\mathbf{\Phi}_{n+im}(x)\}_{i=0}^{d-1}$ is linearly dependent over $\mathbb{Q}(x)$.  But this contradiction the conclusion of Lemma \ref{Lem: LIi}. 

Recall from Lemma \ref{right} $(v)$ that $P_{1,n}(x)/Q_n(x)=1$ for all $n$ and hence the top row of ${\bf C}_n(x)$ is zero.  We let ${\bf C}_n^{(1)}(x)$ denote the $(d-1)\times (d-1)$ matrix obtained by deleting the first row of ${\bf C}_n(x)$.  Then ${\bf C}_n^{(1)}(x)$ must have rank $(d-1)$ and thus it is invertible.

Then Lemma \ref{right} $(ii)$ gives 
\begin{equation}
\label{eq: BBB}
{\bf C}_n(x)=\mathbf{A}(x)\mathbf{A}(x^k)\cdots \mathbf{A}(x^{k^{n-1}}){\bf C}_0(x^{k^n})
\end{equation} for each $n\ge 1$.  
Let $i\in \{2,\ldots ,d\}$.  By construction we have
$$\|e_{i-1}^T{\bf C}^{(1)}_n(\alpha)\| = \|e_i^T{\bf C}_n(\alpha)\|$$ for $\alpha\in (-1,1)$ at which ${\bf C}_n(x)$ is defined.

From the first part of the lemma we have that ${\bf C}_0^{(1)}(x)$ has nonzero determinant and by Lemma \ref{right}, its entries are rational functions with numerator and common denominator having degree bounded by $((d-1)(d+2)+1)Hk^{m(d-1)}$.  Hence by Lemma \ref{Binvbound}, there is some $\varepsilon>0$ and some constant $C_0>0$ such that $$\|{\bf C}_0^{(1)}(\alpha)^{-1}\| < C_0|\alpha|^{-(d-1)((d-1)(d+2)+1)Hk^{m(d-1)}}$$ for $\alpha\in (-\varepsilon,\varepsilon)\setminus \{0\}$.  

Since $F_1(x)=1$ we have
$$1=e_1^T{\bf F}(x)=e_1^T{\bf A}(x){\bf F}(x^k).$$  Since $F_1,\ldots ,F_d$ are linearly independent over $\mathbb{Q}(x)$, we see that the first row of ${\bf A}(x)$ is $e_1^T$.  Let ${\bf B}(x)$ denote the $(d-1)\times (d-1)$ submatrix of ${\bf A}(x)$ obtained by deleting the first row and first column.  This fact along with  (\ref{eq: BBB}) imply that  
\begin{equation}\label{cbbc0}\mathbf{C}_n^{(1)}(a/b)=\mathbf{B}(a/b)\mathbf{B}((a/b)^k)\cdots \mathbf{B}((a/b)^{k^{n-1}})\mathbf{C}_0^{(1)}((a/b)^{k^n}).\end{equation}

By Notation \ref{notn: 1} the entries of ${\bf B}(x)$ are rational functions with numerator and common denominator of degree at most $H$ and so by Lemma \ref{Binvbound} there exists some $\varepsilon_0\in (0,\varepsilon)$ such that 
\begin{equation}
\label{eq: Bb}
\|{\bf B}(\alpha)^{-1}\|\le C_1|\alpha|^{-H(d-1)}
\end{equation} for $\alpha\in (-\varepsilon_0,\varepsilon_0)\setminus \{0\}$.

Pick a natural number $r$ such that $(a/b)^{k^n}\in (-\varepsilon_0,\varepsilon_0)$ for $n\ge r$.  
Then \eqref{cbbc0} and the definition of $\mathbf{B}(x)$ imply that for $i\in\{2,\ldots,d\}$ we have \begin{multline}\label{CBBC}\left\|e_i^T\mathbf{C}_n(a/b)\right\|\geq \left\|e_i^T\mathbf{A}(a/b)\mathbf{A}((a/b)^k)\cdots \mathbf{A}((a/b)^{k^{r-1}})\right\|\\ \times\left(\left\|\mathbf{C}_0^{(1)}((a/b)^{k^n})^{-1}\right\|\cdot\prod_{\ell=r}^{n-1}\left\|\mathbf{B}((a/b)^{k^{\ell}})^{-1}\right\|\right)^{-1}.\end{multline}

If $$e_i^T\mathbf{A}(a/b)\mathbf{A}((a/b)^k)\cdots \mathbf{A}((a/b)^{k^{r-1}})=0$$ then $e_i^T{\bf C}_n(a/b)$ is zero for all $n\ge r$ and so we may assume that 
$$C_2:=\|e_i^T\mathbf{A}(a/b)\mathbf{A}((a/b)^k)\cdots \mathbf{A}((a/b)^{k^{r-1}})\|>0.$$ 

Inequality \eqref{eq: Bb} gives \begin{align}\nonumber \left(\prod_{\ell=r}^{n-1}\left\|\mathbf{B}((a/b)^{k^{\ell}})^{-1}\right\|\right)^{-1}&\geq  C_1^{r-n}|a/b|^{H(d-1)k^{n-1}}\cdots
|a/b|^{H(d-1)k^{r}} \\
\label{k3abHH}&\geq C_1^{r-n} |a/b|^{H(d-1)k^{n}/(k-1)}.
\end{align}

Hence by (\ref{CBBC}) we have
\begin{equation}
\label{eq: CBBC}\|e_i^T{\bf C}_n(a/b)\| \ge C_2C_0^{-1}C_1^{r-n}
|a/b|^{(d-1)((d-1)(d+2)+1)Hk^{m(d-1)}k^n}
 |a/b|^{H(d-1)k^{n}/(k-1)}.
 \end{equation}
 Note that 
 $$((d-1)(d+2)+1)k^{m(d-1)}+1/(k-1)<(d^2+d)k^{m(d-1)}$$ and so taking
 $c_0=C_2 C_0^{-1}C_1^r$ and $c_1=C_1^{-1}$ in (\ref{eq: CBBC}) gives
 $$\|e_i^T{\bf C}_n(a/b)\|\geq c_0 c_1^n
|a/b|^{(d-1)(d^2+d)Hk^{m(d-1)}k^n}.$$  The result follows.
\end{proof}

\begin{defn} We call an infinite subset $S$ of the natural numbers {\em syndetic} if there exists a natural number $m$ such that for each $s\in S$ there exists some $t\in S$ with $0<t-s\le m$.  \end{defn}

 We have the following lemma.

\begin{lem}\label{Lemma2} Assume the assumptions and notation of Notations \ref{notn: 1} and \ref{notn: 2} and let $m$ be the smallest natural number with the property that $k^m\ge 2^{Hd}k^{2d+3}$.  If $i\in \{1,\ldots ,d\}$, then either $F_i(a/b)$ is a rational number or $$|F_i(a/b)-P_{i,n}(a/b)/Q_n(a/b)|\geq {b^{-(1-\rho)d^3Hk^{m(d-1)}k^{n}}}$$ for a syndetic set of natural numbers $n$ with gaps eventually bounded by at most $(d-1)m$.
\end{lem}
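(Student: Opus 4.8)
The plan is to transfer the information in Lemma~\ref{Lemma1} about the rows of the matrices $\mathbf{C}_n(a/b)$ to the individual approximation errors $r_{i,n}:=F_i(a/b)-P_{i,n}(a/b)/Q_n(a/b)$. Since the $j$-th column of $\mathbf{C}_n(x)$ is $\mathbf{\Phi}_n(x)-\mathbf{\Phi}_{n+jm}(x)$ and $[\mathbf{\Phi}_n(a/b)]_i=F_i(a/b)-r_{i,n}$, the $j$-th entry of $e_i^T\mathbf{C}_n(a/b)$ equals $r_{i,n+jm}-r_{i,n}$, whence $\|e_i^T\mathbf{C}_n(a/b)\|\le 2\sqrt{d}\,\max_{0\le j\le d-1}|r_{i,n+jm}|$. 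For $i=1$ the statement is immediate, since Lemma~\ref{right}(v) gives $P_{1,n}(x)/Q_n(x)=1=F_1(x)$, so $F_1(a/b)=1$ is rational. So fix $i\in\{2,\dots,d\}$ and invoke the dichotomy of Lemma~\ref{Lemma1}.

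Suppose first that $\|e_i^T\mathbf{C}_n(a/b)\|=0$ for all large $n$. Then $r_{i,n+jm}=r_{i,n}$ for $1\le j\le d-1$ and all large $n$, so $(r_{i,n})_n$ is eventually constant along each residue class modulo $m$. On the other hand, Lemma~\ref{right}(iv) at $t=a/b$ (with the constants now allowed to depend on $a/b$) gives $|r_{i,n}|\le C_1C_0^{\,n}|a/b|^{d(d+2)Hk^n}\to 0$. A null sequence that is eventually constant along each residue class is eventually zero, so $r_{i,n}=0$ for all large $n$; that is, $F_i(a/b)=P_{i,n}(a/b)/Q_n(a/b)\in\mathbb{Q}$, the rational alternative.

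Suppose instead that $\|e_i^T\mathbf{C}_n(a/b)\|\ge c_0c_1^{\,n}|a/b|^{(d-1)(d^2+d)Hk^{m(d-1)}k^n}$ for all large $n$. Then for each such $n$ the norm estimate above yields an index $j(n)\in\{0,\dots,d-1\}$ with $|r_{i,N}|\ge\frac{c_0}{2\sqrt d}c_1^{\,n}|a/b|^{(d-1)(d^2+d)Hk^{m(d-1)}k^n}$, where $N:=n+j(n)m\in\{n,n+m,\dots,n+(d-1)m\}$. Using $|a/b|=b^{\rho-1}$ (Notation~\ref{notn: 1}), the identity $(d-1)(d^2+d)=d^3-d$, and $k^n\le k^N$, we get $|r_{i,N}|\ge \frac{c_0}{2\sqrt d}c_1^{\,n}\,b^{-(1-\rho)(d^3-d)Hk^{m(d-1)}k^N}$, which is $b^{-(1-\rho)d^3Hk^{m(d-1)}k^N}$ multiplied by the factor $\frac{c_0}{2\sqrt d}c_1^{\,n}\,b^{(1-\rho)dHk^{m(d-1)}k^N}$. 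Since $1-\rho>0$ and $H\ge 1$, the $b$-power here grows doubly exponentially in $N$ and hence overwhelms the at-most-single-exponential $c_1^{\,n}$, so the factor exceeds $1$ once $N$ is large, giving $|r_{i,N}|\ge b^{-(1-\rho)d^3Hk^{m(d-1)}k^N}$ for all large $N$ of this form. Since $n$ runs over every large integer and $N$ lies in $\{n,n+m,\dots,n+(d-1)m\}$, the set of such $N$ meets every block of this shape, hence is syndetic with gaps eventually at most $(d-1)m$; this is the asserted inequality.

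The matrix-to-errors translation and the degenerate case are routine. The one point that actually forces the numerology is the final absorption: the slack between the exponent $(d-1)(d^2+d)=d^3-d$ coming out of Lemma~\ref{Lemma1} and the target exponent $d^3$ is exactly a factor $b^{(1-\rho)dHk^{m(d-1)}k^n}$, and it is precisely this slack, together with $k^n\le k^N$, that absorbs the accumulated constants $c_0c_1^{\,n}$ and makes $d^3$---rather than $d^3-d$---the exponent that survives. This bookkeeping is the only step requiring real care.
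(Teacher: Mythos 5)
Your proof is correct and, despite being presented directly rather than by contradiction, uses essentially the same approach as the paper: the matrix-to-errors translation (the $j$-th entry of $e_i^T\mathbf{C}_n(a/b)$ equals $r_{i,n+jm}-r_{i,n}$), the dichotomy of Lemma~\ref{Lemma1}, and the absorption of the constants $c_0c_1^n$ by the doubly-exponential slack between the exponent $(d-1)(d^2+d)=d^3-d$ and $d^3$. The paper packages this final absorption as a $\limsup$ computation inside a proof by contradiction (assuming infinitely many arithmetic-progression windows where the inequality fails), whereas you exhibit the syndetic index $N$ directly for each large $n$; the underlying estimate, and the role of the numerological slack $dHk^{m(d-1)}k^n$, is the same in both.
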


\begin{proof} Let $L:=d^3Hk^{m(d-1)}$. If the conclusion of the statement of the theorem is not true, then there are infinitely many natural numbers $N$ such that $$|F_i(a/b)-P_{i,n}(a/b)/Q_n(a/b)|<b^{-(1-\rho)Lk^n}$$ for $n=N,N+m,\ldots,N+(d-1)m$.  So by triangle inequality $$|P_{i,N}(a/b)/Q_N(a/b)-P_{i,N+jm}(a/b)/Q_{N+jm}(a/b)|<2{b^{-(1-\rho)Lk^N}}$$ for $j=1,\ldots,d-1.$

As defined in the previous lemma, let $\mathbf{C}_n(x)$ denote the $d\times (d-1)$ matrix whose $j$-th column is ${\bf \Phi}_{n}(x)-{\bf \Phi}_{n+jm}(x)$.
Then 
$$|P_{i,N}(a/b)/Q_N(a/b)-P_{i,N+jm}(a/b)/Q_{N+jm}(a/b)|$$ is the $j$-th coordinate of
$e_i^T\mathbf{C}_N(a/b)$ and so
we have $$\left\|e_i^T\mathbf{C}_N(a/b)\right\|<\frac{2\sqrt{d-1}}{b^{(1-\rho)Lk^N}}.$$ 
But by Lemma \ref{Lemma1}, we have that either $e_i^T\mathbf{C}_n(a/b)$ is zero for all $n$ sufficiently large or there exist positive constants $c_0$ and $c_1$ such that
$$\|e_i^T\mathbf{C}_n(a/b)\|\ge  c_0 c_1^n
(a/b)^{(d-1)(d^2+d)Hk^{m(d-1)}k^n}$$ for $n$ sufficiently large.
If the former possibility occurs then there is some $r>0$ such that
$$|P_{i,r}(a/b)/Q_r(a/b)-P_{i,r+nm}(a/b)/Q_{r+nm}(a/b)|=0$$ for all $n\ge 0$ sufficiently large, and since
$P_{i,n}(a/b)/Q_n(a/b)\to F_i(a/b)$ as $n\to \infty$, we see $F_i(a/b)=P_{i,r}(a/b)/Q_r(a/b)\in \mathbb{Q}$.

If the latter possibility occurs then we have
\begin{equation}\label{eq: C0C1}
c_0c_1^n b^{-(1-\rho)(d-1)(d^2+d)Hk^{m(d-1)+n}}<2(d-1)^{1/2}{b^{-(1-\rho)Lk^n}}
\end{equation}
for infinitely many $n$.  
Taking logarithms of both sides, dividing by $(1-\rho)Hk^{n+m(d-1)}$, and taking the limit supremum of both sides over all $n$ for which the inequality (\ref{eq: C0C1}) holds, we see that 
$$(d-1)(d^2+d)\ge d^3,$$ a contradiction.  The result follows.
\end{proof}

\begin{prop}\label{prop: upper} Assume the assumptions and notation of Notations \ref{notn: 1} and \ref{notn: 2}, let $i\in \{2,\ldots ,d\}$, and let $m$ be the smallest natural number with the property that $k^m\ge 2^{Hd}k^{2d+3}$.  Then either $F_i(a/b)$ is a rational number or for each sufficiently large natural number $n$, there is $j\in\{n,n+1,\ldots,n+m(d-1)\}$ such that $$\left|F_i(a/b)-\frac{p_{i,j}}{q_j}\right|\geq 1/q_{j}^{(1-\rho)dk^{m(d-1)}}.$$
\end{prop}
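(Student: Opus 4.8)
The plan is to deduce Proposition~\ref{prop: upper} directly from Lemma~\ref{Lemma2}, the only real work being to rewrite that lemma's conclusion in terms of the integers $q_n$ rather than the rational functions $Q_n$, and then to exploit syndeticity. First I would observe that, by the definitions \eqref{q} and \eqref{p}, clearing the common denominator $b^{((d+2)(d-1)+1)Hk^n}$ does not change the value of the approximation, so $p_{i,n}/q_n = P_{i,n}(a/b)/Q_n(a/b)$ for every $n$ at which $Q_n(a/b)\neq 0$. Hence, taking $m$ to be the smallest natural number with $k^m\ge 2^{Hd}k^{2d+3}$ exactly as in Lemma~\ref{Lemma2}, that lemma yields the dichotomy: either $F_i(a/b)\in\mathbb{Q}$, in which case there is nothing more to prove, or
$$\left|F_i(a/b)-\frac{p_{i,n}}{q_n}\right|\ge b^{-(1-\rho)d^3Hk^{m(d-1)}k^n}$$
for all $n$ in a syndetic set $S$ whose gaps are eventually at most $(d-1)m$.

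Next I would compare $q_n$ with $b^{d^2Hk^n}$. From \eqref{CqC} we have $q_n\ge C_2|B(0)|^n b^{((d+2)(d-1)+1)Hk^n}$; since $d\ge 2$ the exponent $(d+2)(d-1)+1=d^2+d-1$ is at least $d^2+1$, and $|B(0)|\ge 1$, so $q_n\ge b^{d^2Hk^n}$ once $n$ is large enough that $C_2 b^{(d-1)Hk^n}\ge 1$. Using the identity $b^{-(1-\rho)d^3Hk^{m(d-1)}k^n}=\bigl(b^{d^2Hk^n}\bigr)^{-(1-\rho)dk^{m(d-1)}}$, and the fact that raising $q_n\ge b^{d^2Hk^n}>1$ to a negative power reverses the inequality, the displayed bound gives
$$\left|F_i(a/b)-\frac{p_{i,n}}{q_n}\right|\ge q_n^{-(1-\rho)dk^{m(d-1)}}$$
for every sufficiently large $n\in S$.

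Finally, syndeticity of $S$ closes the argument: since the gaps of $S$ are eventually bounded by $(d-1)m=m(d-1)$, every sufficiently large natural number $n$ admits some $j\in\{n,n+1,\ldots,n+m(d-1)\}$ with $j\in S$, and for that $j$ the last display is precisely the claimed inequality. The only point requiring any care is the exponent bookkeeping in the middle step — verifying that the $d^3$ produced by Lemma~\ref{Lemma2} factors as $d^2\cdot d$ after rescaling the base from $b^{d^2Hk^n}$ up to $q_n$, and confirming that the stray factors $C_2$ and $|B(0)|^n$ coming from \eqref{CqC} are harmlessly absorbed for large $n$ thanks to the $b^{(d-1)Hk^n}$ surplus in the exponent. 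Everything else is direct substitution, so I do not expect a genuine obstacle here; the substance of the proposition is already carried by Lemma~\ref{Lemma2}.
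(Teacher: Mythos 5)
Your proposal is correct and follows essentially the same route as the paper: quote Lemma~\ref{Lemma2}, identify $p_{i,n}/q_n$ with $P_{i,n}(a/b)/Q_n(a/b)$, and then use the lower bound $q_n \ge C_2|B(0)|^n b^{(d^2+d-1)Hk^n}$ from \eqref{CqC} together with $d\ge 2$ to convert the $b$-power lower bound into the stated $q_j$-power lower bound, with syndeticity supplying the required $j$. The paper's own proof is exactly this (with the minor bookkeeping that the surplus factor is $C_2|B(0)|^jb^{(d-1)Hk^j}$, which exceeds $1$ for large $j$ since $H\ge 1$ and $b\ge 2$).
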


\begin{proof} Suppose that $F_i(a/b)$ is irrational.  From  (\ref{q}), we have 
 \begin{equation} \label{q1}
q_n:=b^{((d+2)(d-1)+1)H k^n} |Q_n(a/b)|\in \mathbb{Z}.\end{equation}
Equation
 \eqref{CqC} shows there exist positive constants $C_2$ and $C_3$ that \begin{equation}\label{Cqee} C_2 |B(0)|^n b^{((d+2)(d-1)+1)H k^n}  \le q_n \le 
 C_3 |B(0)|^n b^{((d+2)(d-1)+1)H k^n} 
 \end{equation} 
 for all $n$ sufficiently large.
 Since $p_{i,n}/q_{n}=P_n(a/b)/Q_n(a/b)$ we see from Lemma~\ref{Lemma2} that for each $n$ sufficiently large there is some
 $j\in\{n,n+1,\ldots,n+m(d-1)\}$ such that
 $$\left|F_i(a/b)-\frac{p_{i,j}}{q_j}\right| \geq   {b^{-(1-\rho)d^3Hk^{m(d-1)}k^{j}}}.$$
 For $n$ sufficiently large we have
 $${b^{-(1-\rho)d^3Hk^{m(d-1)}k^{j}}}\ge  q_j^{-(1-\rho)dk^{m(d-1)}}$$ by inequality \eqref{Cqee}.
This proves the proposition.
\end{proof}

\section{Mahler numbers are not Liouville}\label{mu}

In this section, combining Propositions \ref{prop: lower} and \ref{prop: upper}, we give a quantitative bound on the irrationality exponent of a Mahler number assuming Mahler's condition is satisfied. That is, we prove the following result. 

\begin{thm}\label{main} Suppose that $F(x)\in \mathbb{Z}[[x]]$ satisfies the Mahler equation \eqref{MFE}, that $a/b$ is inside the radius of convergence of $F(x)$, $\log|a|/\log b\in [0,1/(d+2))$, and $a_0((a/b)^{k^n}) \neq 0$ for $n\ge 0$. Then 
$$\mu(F(a/b))\le 4^{H(k^d+1)d^2}k^{5d^2},$$ where $H=\max_{0\leq i\leq d}\{\deg a_i(x)\}.$ 
\end{thm}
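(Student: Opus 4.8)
The plan is to reduce the single $k$-Mahler equation \eqref{MFE} to the matrix setup of Notation~\ref{notn: 1} and then feed the two families of estimates---the upper bounds on rational approximations from Proposition~\ref{prop: lower} and the lower bounds from Proposition~\ref{prop: upper}---into the standard ``two-sided squeeze'' that forces a bound on the irrationality exponent. First I would rewrite \eqref{MFE} in companion/vector form. Set $d'=d$ (or possibly $d+1$, to accommodate $F_1\equiv 1$) and let $\mathbf{F}(x)=[F_1(x),\dots,F_{d'}(x)]^T$ with $F_1(x)=1$ and $F_{j}(x)=F(x^{k^{j-2}})$ for $j\ge 2$; one checks that the relation $\sum_{i=0}^d a_i(x)F(x^{k^i})=0$ lets one express $\mathbf{F}(x)=\mathbf{A}(x)\mathbf{F}(x^k)$ where $\mathbf{A}(x)$ has entries in $\mathbb{Z}[x]/a_0(x)$, i.e. of the form $a_{i,j}(x)/B(x)$ with $B(x)=a_0(x)$ (so $B(0)=a_0(0)\neq0$ after dividing out any power of $x$; this is where I must be slightly careful, as in principle $a_0(0)$ could vanish, but Mahler's condition $a_0((a/b)^{k^n})\neq0$ together with $a/b$ inside the radius of convergence is exactly what is needed to stay within Notation~\ref{notn: 1}, item (8)). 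The $F_i$ can be taken $\mathbb{Q}(x)$-linearly independent after passing to a minimal such system; the degree bound $H=\max_i\deg a_i(x)$ controls the $H$ of Notation~\ref{notn: 1}. The hypothesis $\log|a|/\log b<1/(d+2)$ is exactly $\rho<1/(d'+1)$ with $d'=d+1$, putting us in the admissible range.

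Next I would invoke Proposition~\ref{prop: lower} to get a sequence $q_n$ and integers $p_{1,n}$ (recall $F=F_2$ or, after renumbering, one of the coordinates) with $|F(a/b)-p_{i,n}/q_n|<q_n^{-(1+\delta)}$, $\delta=1/(3d'^3)$, and $q_n<q_{n+1}<Cq_n^k$. The key point is the controlled geometric-type growth $q_{n+1}<Cq_n^k$: it means the $q_n$ do not have huge gaps. On the other side, Proposition~\ref{prop: upper} says (since $F(a/b)$ is either rational---in which case $\mu=1$ and we are trivially done---or irrational) that for every large $n$ there is $j\in\{n,\dots,n+m(d'-1)\}$ with $|F(a/b)-p_{i,j}/q_j|\ge q_j^{-(1-\rho)d'k^{m(d'-1)}}$, where $m$ is the least integer with $k^m\ge 2^{Hd'}k^{2d'+3}$. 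Combining: take any $p/q$ approximating $F(a/b)$ very well, say $|F(a/b)-p/q|<q^{-\mu}$ with $\mu$ large. A triangle-inequality argument against the $p_{i,j}/q_j$ supplied by Proposition~\ref{prop: upper}---using that $p/q\neq p_{i,j}/q_j$ forces $|p/q-p_{i,j}/q_j|\ge 1/(q\,q_j)$---shows $1/(q\,q_j)\le |F(a/b)-p/q|+|F(a/b)-p_{i,j}/q_j|$, so if the first term is much smaller than the second then $q$ must be comparable to $q_j^{(1-\rho)d'k^{m(d'-1)}}$ (up to the bounded factor $q_j^{m(d'-1)}$ coming from the spread of $j$ in its block and the bound $q_{n+1}<Cq_n^k$). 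Plugging this back into $|F(a/b)-p/q|<q^{-\mu}$ and comparing with the lower bound yields $\mu\le (1-\rho)d'k^{m(d'-1)}\cdot k^{m(d'-1)}$ up to additive constants absorbed in the exponent; being generous with constants and using $(1-\rho)<1$, $d'\le d+1$, and $k^m\le 2^{Hd'}k^{2d'+3}\cdot k\le 4^{H(d+1)}k^{2d+5}$ (say) gives an explicit bound of the shape $4^{H(k^d+1)d^2}k^{5d^2}$ after bookkeeping.

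The main obstacle, I expect, is not the ``squeeze'' argument itself---which is a routine though bookkeeping-heavy manipulation of the three inequalities above---but rather the translation step: verifying that the scalar equation \eqref{MFE} genuinely produces a system satisfying \emph{all} of Notation~\ref{notn: 1}, in particular that one can arrange $F_1\equiv1$ among a $\mathbb{Q}(x)$-linearly independent family, that $\det\mathbf{A}(x)\neq0$ as a rational function, that the effective ambient dimension is at most $d+1$ (so that the constant $2^{Hd'}k^{2d'+3}$ controlling $m$ is genuinely governed by the original $d$ and not something larger), and that the $H$ in the matrix setting is still $\max_i\deg a_i$. The factor $k^d$ appearing in the exponent of the final bound---as opposed to just $k$---strongly suggests that the reduction introduces companion-type blocks of size growing like $k^{\,d-1}$ somewhere (e.g. clearing denominators in the standard linearisation of a height-$k$ Mahler equation forces degrees to be multiplied by powers of $k$), and getting that dependence exactly right, together with the $d^2$ in the exponent coming from $m(d'-1)$ appearing squared, is the delicate part. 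Once the correct values of $d'$, $H$, and $m$ are pinned down, substituting into the inequality $\mu\le$ (lower-bound exponent)$\times$(something bounded by $q_{n+1}/q_n$ iterated $m(d'-1)$ times) and simplifying gives the stated estimate; I would present the final numerical simplification as a short explicit computation rather than tracking optimal constants.
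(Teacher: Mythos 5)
Your squeeze argument is sound: feeding the upper bound of Proposition~\ref{prop: lower} and the syndetic lower bound of Proposition~\ref{prop: upper} into the two-sided estimate is exactly how the paper establishes the core result Theorem~\ref{maineffective}, and your treatment of the syndetic gaps of width $m(d-1)$ together with the growth $q_{n+1}<Cq_n^k$ is the right idea. But the gap is in the translation step you yourself flag as delicate, and it is a genuine gap, not mere bookkeeping. To meet the $\mathbb{Q}(x)$-linear independence required by Notation~\ref{notn: 1}(6) you propose passing to a minimal-length relation, but nothing you write controls the degrees of that minimal relation's coefficients; a priori they could be much larger than $H$, which would ruin the bound. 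The paper devotes Proposition~\ref{htS} (with Lemmas~\ref{brq0} and~\ref{ekdH}, a degree analysis in the Ore ring $\mathbb{Q}(x)[\Delta_k]$) to proving the minimal annihilator's height is still at most $H$; your proposal does not supply this, and it is not obvious.

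Moreover, your diagnosis of the $k^d$ factor is wrong. The companion linearisation does not multiply degrees by powers of $k$: the companion matrix has entries $a_i(x)/a_0(x)$ of degree at most $H$, so if $a_0(0)\neq 0$ one lands directly in Theorem~\ref{maineffective} with bound $4^{Hd^2}k^{5d^2}$ and no $k^d$ appears. The $k^d$ arises from the case $a_0(0)=0$, which you dismiss as ``dividing out a power of $x$.'' Writing $a_0(x)=x^{\nu(a_0)}A(x)$ with $A(0)\neq 0$ and modifying $F$ to absorb the zero at the origin turns the coefficient of $F(x^{k^i})$ into a polynomial of degree roughly $\deg a_i(x)+k^i\nu(a_0(x))$, so the effective height becomes $\max_i\{\deg a_i(x)+k^i\nu(a_0(x))\}$ as in Proposition~\ref{mainMinimal}; bounding $\nu(a_0)\le\deg a_0\le H$ is what produces $(k^d+1)H$. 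Cases~II and~III of Proposition~\ref{mainMinimal} carry out exactly this reduction. Without that case analysis, and without Proposition~\ref{htS}, your bookkeeping has no mechanism that produces the $k^d$ factor the target bound requires.
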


We use a slightly modified version of the following classical lemma.

\begin{lem}\label{ARLem} Let $\xi, \delta, \varrho$ and 
$\vartheta$ be 
real numbers such that $0<\delta\leqslant\varrho$ and $\vartheta\geqslant 1$. Let us assume that there exists 
a sequence $\{p_n/q_n\}_{n\geqslant 1}$ of rational numbers and some positive constants 
$c_0,c_1$ and $c_2$ such that both $$q_n<q_{n+1}\leqslant c_0 q_n^{\vartheta},$$ and $$\frac{c_1}
{q_n^{1+\varrho}}\leqslant\left|\xi-\frac{p_n}{q_n}\right|\leqslant\frac{c_2}{q_n^{1+\delta}}.$$ Then we 
have that $$\mu(\xi)\leqslant (1+\varrho)\frac{\vartheta}{\delta}.$$
\end{lem}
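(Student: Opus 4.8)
The plan is to bound $\mu(\xi)$ by showing that any very good rational approximation $p/q$ to $\xi$ must be "close" to one of the $p_n/q_n$, and then to estimate how close two such fractions can be using the lower bound on $|\xi - p_n/q_n|$. First I would let $p/q$ be an arbitrary rational in lowest terms with $q$ large and with $|\xi - p/q| < 1/q^\mu$ for some exponent $\mu$ whose admissibility I want to constrain; the goal is to show $\mu \le (1+\varrho)\vartheta/\delta$ follows whenever there are infinitely many such $p/q$. Using the gap condition $q_{n+1} \le c_0 q_n^\vartheta$, choose the index $n$ for which $q_n$ is, roughly, the largest denominator not exceeding a suitable power of $q$; concretely one picks $n$ so that $q_n \le q^{\delta/\vartheta} < q_{n+1}$ (after absorbing the constant $c_0$), which is possible for $q$ large since the $q_n$ are strictly increasing and the ratios are controlled.

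Next I would compare $p/q$ with $p_n/q_n$. If they are equal there is nothing to do (or it leads to a contradiction with $\xi$ irrational in the relevant regime), so assume $p/q \ne p_n/q_n$, whence $|p/q - p_n/q_n| \ge 1/(q q_n)$. On the other hand, by the triangle inequality and the upper bounds, $|p/q - p_n/q_n| \le |\xi - p/q| + |\xi - p_n/q_n| \le q^{-\mu} + c_2 q_n^{-(1+\delta)}$. The choice of $n$ makes $q_n^{-(1+\delta)}$ comparable to $q^{-\delta(1+\delta)/\vartheta}$, and for $q$ large the term $q^{-\mu}$ is negligible against the separation $1/(qq_n) \ge q^{-1-\delta/\vartheta}$ unless $\mu$ is already small; combining these inequalities yields $q^{-1-\delta/\vartheta} \lesssim c_2 q_n^{-(1+\delta)}$, i.e.\ $q_n^{1+\delta} \lesssim q^{1+\delta/\vartheta}$. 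Now use the \emph{lower} bound $|\xi - p_n/q_n| \ge c_1 q_n^{-(1+\varrho)}$ together with $|\xi - p/q| < q^{-\mu}$: since $|\xi - p_n/q_n| \le |\xi - p/q| + |p/q - p_n/q_n|$ and $|p/q - p_n/q_n|$ is itself bounded in terms of these same quantities, one deduces that $p/q$ cannot approximate $\xi$ better than order $q_n^{-(1+\varrho)}$ allows, which after substituting the relation $q_n^{1+\delta} \lesssim q^{1+\delta/\vartheta}$ (hence $q_n \gtrsim q^{(1+\delta/\vartheta)/(1+\delta)}$ is false; rather $q_n \lesssim q^{(1+\delta/\vartheta)/(1+\delta)}$) gives an upper bound on the admissible $\mu$. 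Tracking the exponents: $|\xi - p/q| \gtrsim q_n^{-(1+\varrho)} \gtrsim q^{-(1+\varrho)(1+\delta/\vartheta)/(1+\delta)}$, and since $(1+\delta/\vartheta)/(1+\delta) \le \vartheta^{-1}\cdot(\vartheta+\delta)/(1+\delta)$... the clean bookkeeping here is exactly what produces $\mu \le (1+\varrho)\vartheta/\delta$, and letting the implied constants wash out as $q \to \infty$ finishes the argument.

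The main obstacle I anticipate is purely the exponent bookkeeping in the last step: one must choose the cutoff relating $q_n$ to $q$ (the exponent $\delta/\vartheta$ above is the natural guess, but the optimal choice that yields precisely $(1+\varrho)\vartheta/\delta$ rather than a weaker bound needs care) and then verify that the negligible terms $q^{-\mu}$ and the multiplicative constants $c_0,c_1,c_2$ genuinely do not affect the limiting exponent. There is also a minor case analysis — whether $p/q = p_n/q_n$ for the chosen $n$, and whether the chosen $n$ exists (handled by $q$ large) — but these are routine. Since this is a classical lemma (a mild variant of a standard tool, e.g.\ from Adamczewski–Rivoal), I expect the proof to be short once the cutoff is chosen correctly, with all the content residing in that single optimization of exponents.
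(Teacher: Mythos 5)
The paper does not prove this lemma: it simply cites Adamczewski and Rivoal \cite[Lemma 4.1]{AR2009}. So your sketch is a proof from scratch, and its high-level shape (choose an index $n$ tied to $q$, compare $p/q$ with $p_n/q_n$ by the triangle inequality, feed in the gap condition $q_{n+1}\le c_0 q_n^{\vartheta}$) is the standard and correct one. But the cutoff you pick is inverted, and this is not merely a matter of getting a weaker exponent; it makes the separation argument collapse. You take $n$ with $q_n \le q^{\delta/\vartheta} < q_{n+1}$, so $q_n$ is \emph{small} relative to $q$. Then the error term coming from the hypothesis, $c_2 q_n^{-(1+\delta)} = c_2/(q_n\,q_n^{\delta})$, satisfies $q_n^{\delta}\le q^{\delta^2/\vartheta}$, which in the regimes of interest is far below $q$; hence $c_2 q_n^{-(1+\delta)}$ \emph{dominates} the separation $1/(qq_n)$ rather than being absorbed by it, and $\frac{1}{qq_n}\le |\xi-\tfrac pq| + \frac{c_2}{q_n^{1+\delta}}$ yields nothing. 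The correct choice runs the other way: take $n$ minimal with $q_n^{\delta}\ge 2c_2 q$; then $c_2 q_n^{-(1+\delta)}\le \frac{1}{2qq_n}$, and minimality combined with $q_n\le c_0 q_{n-1}^{\vartheta}$ gives $q_n < c_0(2c_2 q)^{\vartheta/\delta}$. (You likely meant $\vartheta/\delta$ where you wrote $\delta/\vartheta$.)

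A second gap is the step where you "use the lower bound $|\xi-p_n/q_n|\ge c_1 q_n^{-(1+\varrho)}$" to conclude $|\xi-p/q|\gtrsim q_n^{-(1+\varrho)}$; that transfer is only legitimate when $p/q=p_n/q_n$, and your sketch does not isolate that case. The clean bookkeeping is a dichotomy. If $p/q = p_n/q_n$, the hypothesis lower bound plus $q_n < c_0(2c_2 q)^{\vartheta/\delta}$ gives $|\xi-p/q|\gg q^{-(1+\varrho)\vartheta/\delta}$ — this is the only place the lower-bound hypothesis is used, and it is the case that produces the stated exponent. If $p/q\neq p_n/q_n$, the separation $1/(qq_n)$ absorbs the error term (by the cutoff above) and gives $|\xi-p/q|\ge 1/(2qq_n)\gg q^{-1-\vartheta/\delta}$, which suffices because $1+\vartheta/\delta\le(1+\varrho)\vartheta/\delta$ thanks to $\varrho\ge\delta$ and $\vartheta\ge 1$. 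Your proposal blurs these two cases into one, which is why the final exponent you extract, $(1+\varrho)(1+\delta/\vartheta)/(1+\delta)$, does not match the target and does not come from a valid inequality chain.
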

\begin{proof} See Adamczewski and Rivoal \cite[Lemma 4.1]{AR2009}. 
\end{proof}
As a consequence of this lemma, we have the following result.

\begin{lem}\label{ARLemSyn} Let $\xi, \delta, \varrho$ and $\vartheta$ be real numbers such that $0<\delta\leqslant\varrho$ and $\vartheta\geqslant 1$ and let $d\in\mathbb{N}$. Let us assume that there exists a sequence $\{p_n/q_n\}_{n\geqslant 1}$ of rational numbers and some positive constants $c_3,c_4$ and $c_5$ such that for all $n\geq 1$ we have
\begin{itemize}
\item[$(i)$] $0<q_n<q_{n+1}\leq c_3q_n^{\vartheta}$, 
\item[$(ii)$] $\left|\xi-\frac{p_n}{q_n}\right|\leqslant\frac{c_4}{q_n^{1+\delta}},$ 
\item[$(iii)$] and that there is a syndetic subset $S$ of $\mathbb{N}$ with gaps eventually bounded by $\ell$ such that \begin{equation}\label{ARleft}\frac{c_5}{q_n^{1+\varrho}}\leqslant\left|\xi-\frac{p_n}{q_n}\right|\end{equation} for all $n\in S$.
\end{itemize} 
Then $$\mu(\xi)\leqslant (1+\varrho)\frac{\vartheta^\ell}{\delta}.$$
\end{lem}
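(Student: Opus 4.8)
The plan is to reduce to the non-syndetic version, Lemma \ref{ARLem}, by passing to a carefully chosen subsequence. The issue is that the lower bound \eqref{ARleft} only holds on the syndetic set $S$, whereas Lemma \ref{ARLem} demands it for every index. I would extract from $S$ a subsequence $\{q_{n_j}\}_{j\geq 1}$ on which the hypotheses of Lemma \ref{ARLem} are satisfied, at the cost of replacing the growth exponent $\vartheta$ by $\vartheta^\ell$. Concretely, since $S$ is syndetic with gaps eventually bounded by $\ell$, I can pick an increasing enumeration $n_1<n_2<\cdots$ of (a cofinal subset of) $S$ so that consecutive terms satisfy $n_{j+1}-n_j\leq \ell$ for all large $j$. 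Then hypothesis $(i)$, applied repeatedly, gives
$$
q_{n_{j+1}} \leq c_3 q_{n_{j+1}-1}^{\vartheta} \leq c_3 \bigl(c_3 q_{n_{j+1}-2}^{\vartheta}\bigr)^{\vartheta} \leq \cdots \leq c_3^{1+\vartheta+\cdots+\vartheta^{\ell-1}}\, q_{n_j}^{\vartheta^{\ell}},
$$
so with $c_0 := c_3^{(\vartheta^{\ell}-1)/(\vartheta-1)}$ (or simply $c_3^{\ell}$ if $\vartheta=1$; in any case some explicit constant) we get $q_{n_j} < q_{n_{j+1}} \leq c_0 q_{n_j}^{\vartheta^{\ell}}$ for all large $j$.

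Next I would check the two-sided approximation inequality for the subsequence. The upper bound is immediate: hypothesis $(ii)$ gives $|\xi - p_{n_j}/q_{n_j}| \leq c_4 / q_{n_j}^{1+\delta}$ for every $j$, so we may take $c_2 = c_4$. The lower bound is exactly where we use that each $n_j$ lies in $S$: hypothesis $(iii)$ gives $|\xi - p_{n_j}/q_{n_j}| \geq c_5 / q_{n_j}^{1+\varrho}$ for all $j$, so we may take $c_1 = c_5$. Thus the sequence $\{p_{n_j}/q_{n_j}\}_{j\geq 1}$ satisfies all the hypotheses of Lemma \ref{ARLem} with the parameters $\delta$, $\varrho$ unchanged and with $\vartheta$ replaced by $\vartheta^{\ell}$ (one discards the finitely many initial indices where the gap bound or the asymptotic inequalities fail, which does not affect the conclusion since $\mu(\xi)$ is defined by an infinitude of solutions). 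Lemma \ref{ARLem} then yields
$$
\mu(\xi) \leq (1+\varrho)\frac{\vartheta^{\ell}}{\delta},
$$
which is the desired bound.

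The one point requiring a little care — and the only genuine obstacle — is verifying that the conclusion of Lemma \ref{ARLem} applied to a \emph{sub}sequence still controls $\mu(\xi)$ for $\xi$ itself. This is fine because $\mu(\xi)$ is defined via the existence of infinitely many good rational approximations, and Lemma \ref{ARLem}'s proof (via the standard argument: any $p/q$ with $q$ between two consecutive $q_{n_j}$ and approximating $\xi$ too well forces $p/q = p_{n_j}/q_{n_j}$ for a suitable $j$, using the lower bound to separate $p_{n_j}/q_{n_j}$ from $p_{n_{j+1}}/q_{n_{j+1}}$) only ever uses the properties of the chosen sequence. So no subsequence subtlety actually causes trouble; one simply invokes Lemma \ref{ARLem} as a black box with the stated data. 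Everything else is the routine constant-chasing indicated above, which I would not write out in full.
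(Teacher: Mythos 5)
Your proposal is correct and follows essentially the same approach as the paper: pass to a subsequence of $S$ whose consecutive indices differ by at most $\ell$, iterate hypothesis $(i)$ to get $q_{n_{j+1}}\le c_0 q_{n_j}^{\vartheta^\ell}$, check the two-sided inequality on the subsequence, and invoke Lemma~\ref{ARLem}. (Your constant $c_3^{1+\vartheta+\cdots+\vartheta^{\ell-1}}$ is in fact the accurate one; the paper writes $c_3^\ell$, a harmless slip since only the exponent $\vartheta^\ell$ enters the conclusion.)
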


\begin{proof} Let $S$ be the set described in assumption $(iii)$ of the lemma, and let $S'$ be an infinite subset of $S$ for which the difference of consecutive elements is at most $\ell$. Set $s_1:=\min\{k\in S'\}$ and for $n\geqslant 2$ set $$s_n:=\min\left\{k\in(s_{n-1},s_{n-1}+\ell]: \frac{c_5}
{q_n^{1+\varrho}}\leqslant\left|\xi-\frac{p_n}{q_n}\right|\right\}.$$ By assumption $(iii)$ the number $s_n$ exists and for $n\geqslant 2$ we have $s_n\leqslant s_{n-1}+\ell.$ Define the 
sequence $\{Q_n\}_{n\geqslant 1}$ by $Q_n=q_{s_n}$ for all $n\geqslant 1$. By assumption $(i)$ 
we have that $$0<q_{s_n}<q_{s_{n+1}}\leqslant q_{s_n+\ell}\leqslant c_3q_{s_n+\ell-1}^{\vartheta}\leqslant c_3^2
q_{s_n+\ell-2}^{\vartheta^2}\leqslant\cdots\leqslant c_3^\ell q_{s_n+\ell-\ell}^{\vartheta^\ell}=c_3^\ell q_{s_n}^{\vartheta^\ell},$$ so that 
\begin{equation}\label{AR1}0<Q_n<Q_{n+1}\leq C_0 Q_n^{\vartheta^\ell},\end{equation} where $C_0=c_3^\ell$ is a positive constant independent of $n$. Also, we have for all 
$s_n$ that $$\frac{c_5}{q_{s_n}^{1+\varrho}}\leqslant\left|\xi-\frac{p_{s_n}}{q_{s_n}}\right|\leqslant
\frac{c_4}{q_{s_n}^{1+\delta}},$$  so that setting $P_n=p_{s_n}$, we have that \begin{equation}
\label{AR2}\frac{c_5}{Q_n^{1+\varrho}}\leqslant\left|\xi-\frac{P_n}{Q_n}\right|\leqslant\frac{c_4}
{Q_n^{1+\delta}}.\end{equation} We now apply Lemma \ref{ARLem} to the sequence $\{P_n/Q_n\}_{n
\geqslant 1}$ of rational numbers using \eqref{AR1} and \eqref{AR2} to give that $$\mu(\xi)\leqslant 
(1+\varrho)\frac{\vartheta^\ell}{\delta}.$$ This proves the lemma.
\end{proof}

We are now ready to prove one of the main results of this section.

\begin{thm}\label{maineffective} Assume the assumptions and
notation of Notation \ref{notn: 1}. Then 
$$\mu\left(F_i(a/b)\right)<4^{Hd^2}k^{5d^2}.$$ 
\end{thm}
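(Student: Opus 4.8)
The plan is to combine the two one-sided estimates on rational approximations that have just been established, namely Proposition~\ref{prop: lower} (a family of good approximations $p_{i,n}/q_n$ with error $< q_n^{-(1+\delta)}$ for $\delta = 1/(3d^3)$, together with the growth control $q_n < q_{n+1} < C q_n^k$) and Proposition~\ref{prop: upper} (the matching lower bound: for each large $n$ some $j \in \{n,\ldots,n+m(d-1)\}$ has $|F_i(a/b) - p_{i,j}/q_j| \geq q_j^{-(1-\rho)dk^{m(d-1)}}$, unless $F_i(a/b)$ is rational). Feeding these into the syndetic Liouville-type estimate Lemma~\ref{ARLemSyn} will produce the bound. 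First I would dispose of the rational case: if $F_i(a/b) \in \mathbb{Q}$ then $\mu(F_i(a/b)) = 1$, which is trivially below the claimed bound, so we may assume $F_i(a/b)$ is irrational.

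Next I would set up the parameters for Lemma~\ref{ARLemSyn}. Take $\vartheta = k$ (from Proposition~\ref{prop: lower}(ii)), $\delta = \tfrac{1}{3d^3}$ (from Proposition~\ref{prop: lower}(i)), and $\varrho = (1-\rho)dk^{m(d-1)} - 1$ (from Proposition~\ref{prop: upper}); note $\varrho \geq \delta$ since $k^{m(d-1)} \geq 1$ and $d \geq 2$ and $\rho < 1/(d+1)$, so the hypothesis $0 < \delta \leq \varrho$ holds. The syndetic set $S$ is the one implicit in Proposition~\ref{prop: upper}: for each large $n$ the index $j$ lies in a window of length $m(d-1)$, so the set of such $j$ is syndetic with gaps eventually bounded by $\ell := m(d-1)$. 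Conditions (i) and (ii) of Lemma~\ref{ARLemSyn} come directly from Proposition~\ref{prop: lower}. Lemma~\ref{ARLemSyn} then gives
$$\mu(F_i(a/b)) \leq (1+\varrho)\frac{\vartheta^{\ell}}{\delta} = (1-\rho)dk^{m(d-1)} \cdot 3d^3 \cdot k^{m(d-1)}.$$

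The remaining step is a purely arithmetic simplification to reach the stated clean bound $4^{Hd^2}k^{5d^2}$. Here $m$ is the smallest natural number with $k^m \geq 2^{Hd}k^{2d+3}$, so $k^m < k \cdot 2^{Hd}k^{2d+3} = 2^{Hd}k^{2d+4}$ (using $k^{m-1} < 2^{Hd}k^{2d+3}$). Then $k^{m(d-1)} < (2^{Hd}k^{2d+4})^{d-1} = 2^{Hd(d-1)}k^{(2d+4)(d-1)}$, so $k^{2m(d-1)} < 2^{2Hd(d-1)}k^{2(2d+4)(d-1)}$. Using $\rho \geq 0$ so $(1-\rho) \leq 1$, and $d \leq H$ is \emph{not} assumed, I would instead just bound $3d^4 \leq 4^{d^2} \leq 4^{Hd^2}$ crudely (valid for $d \geq 2$, $H \geq 1$) and absorb the powers of $2$: $2^{2Hd(d-1)} \cdot 3d^4 \leq 4^{Hd(d-1)} \cdot 4^{d^2} \leq 4^{Hd^2}$, while $k^{2(2d+4)(d-1)} = k^{(4d+8)(d-1)} \leq k^{5d^2}$ for $d \geq 2$ (since $(4d+8)(d-1) = 4d^2+4d-8 \leq 5d^2 \iff d^2 - 4d + 8 \geq 0$, always true). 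Collecting these gives $\mu(F_i(a/b)) < 4^{Hd^2}k^{5d^2}$.

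The only mild obstacle is bookkeeping: making sure the constants $c_3, c_4, c_5$ required by Lemma~\ref{ARLemSyn} are exactly the ones supplied by Propositions~\ref{prop: lower} and~\ref{prop: upper} (the $q_n$ there is a genuine positive integer sequence, $p_{i,n}$ genuine integers, and the inequalities hold for all $n$ sufficiently large, which is all Lemma~\ref{ARLemSyn} needs after reindexing), and verifying that the value of $\ell$ used in the syndetic gap bound matches $m(d-1)$ rather than $m(d-1)+1$ — the windows $\{n, n+1, \dots, n+m(d-1)\}$ have $m(d-1)+1$ elements but consecutive selected indices differ by at most $m(d-1)$, which is what "gaps bounded by $\ell$" means. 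Everything else is the routine power-of-$k$ and power-of-$2$ estimation sketched above.
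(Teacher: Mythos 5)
Your proposal follows the paper's proof almost verbatim: same choice of parameters $\vartheta=k$, $\delta=1/(3d^3)$, $1+\varrho=(1-\rho)dk^{m(d-1)}$, $\ell=m(d-1)$ fed into Lemma~\ref{ARLemSyn}, same intermediate bound $3(1-\rho)d^4k^{2m(d-1)}$, and the same $k^m<2^{Hd}k^{2d+4}$ estimate.

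One minor arithmetic wrinkle in the final absorption: you claim $2^{2Hd(d-1)}\cdot 3d^4\le 4^{Hd(d-1)}\cdot 4^{d^2}\le 4^{Hd^2}$, and the second inequality needs $Hd(d-1)+d^2\le Hd^2$, i.e.\ $d\le H$, which you yourself flag as not assumed. You cannot separately fit the $2$-powers under $4^{Hd^2}$ and the $k$-powers under $k^{5d^2}$; for instance with $d=3$, $H=1$ one has $3d^4\cdot 2^{2Hd(d-1)}=995328>262144=4^{Hd^2}$. The correct repair is to pool the slack: after pulling out $4^{Hd^2}=2^{2Hd(d-1)}\cdot 2^{2Hd}$, what remains to check is $3(1-\rho)d^4<2^{2Hd}k^{d^2-4d+8}$, and since $d^2-4d+8\ge 4$ for $d\ge 2$ the right side is at least $4^d\cdot 16$, which comfortably exceeds $3d^4$ for all $d\ge 2$. (The paper's own write-up uses the bound $d^4\le 2^{2Hd}$, which likewise fails at $d=3,H=1$ but is rescued by the same $k$-slack in the subsequent line, so this imprecision is inherited rather than newly introduced.) Apart from tightening that last splitting, your argument and the paper's are the same.
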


\begin{proof} By Propositions \ref{prop: lower} and \ref{prop: upper}, we may apply Lemma \ref{ARLemSyn} using
$$1+\varrho=(1-\rho)dk^{m(d-1)} \qquad \vartheta = k, \qquad \delta = 1/(3d^3),\qquad \ell=m(d-1).$$
This gives $$\mu(F_i(a/b))\leq 3(1-\rho)d^4 k^{2m(d-1)}.$$

Since $m$ is the smallest natural number such that $k^m\ge 2^{Hd}k^{2d+3}$, we have $k^m<2^{Hd}k^{2d+4}$.  Thus
\begin{equation}\label{better1}\mu(F_i(a/b))\leq 3(1-\rho)d^4 2^{2Hd(d-1)}k^{2(d-1)(2d+4)}.\end{equation}
Since $d^4\le 2^{2Hd}$, we see that
\begin{align*}\mu(F_i(a/b))&\leq 2^{2Hd}\cdot 3(1-\rho)2^{2Hd(d-1)}k^{2(d-1)(2d+4)}\\ 
&=3(1-\rho)2^{2Hd^2}k^{4d^2+4d-8}\\
&<4^{Hd^2}k^{5d^2},\end{align*} which is the desired result.
\end{proof}

In the proof of our main theorem, we will use the well-known fact that if $\xi$ is a real number, then 
\begin{equation}
\label{eq: wellknownfact}
\mu\left(\frac{x}{y}\cdot\xi\right)=\mu\left(\frac{x}{y}+\xi\right)=\mu(\xi)\end{equation} for any nonzero rational $x/y\in\mathbb{Q}$. 

\begin{prop}\label{mainMinimal} Suppose that $F(x)\in \mathbb{Z}[[x]]$ satisfies $$p(x)+\sum_{i=0}^d a_i(x)F(x^{k^i})=0$$ for some integers $k\geq 2$ and $d\geq 1$ and polynomials $a_0(x),\ldots,a_d(x),p(x)\in\mathbb{C}[x]$ with $a_0(x)a_d(x)\neq 0$, which is minimal with respect to $d$.  Suppose that $a$ and $b$ are integers with $b>0$ satisfying
\begin{enumerate}
\item[(i)] $a/b$ is inside the radius of convergence of $F(x)$,
\item[(ii)] $|a|=b^{\rho}$ for some $\rho\in [0,1/(d+2))$, and
\item[(iii)] $a_0((a/b)^{k^n}) \neq 0$ for all $n\ge 0$.
\end{enumerate}
Then 
$$\mu(F(a/b))\le 4^{H(d+1)^2}k^{5(d+1)^2},$$ where $$H=\max_{0\leq i\leq d}\{\deg p(x), \deg a_i(x)+k^i\nu(a_0(x))\}.$$
If, in addition, we have $a_0(0)\neq 0$, then $H=\max_{0\leq i\leq d}\{\deg p(x), \deg a_i(x)\}$ suffices. 
\end{prop}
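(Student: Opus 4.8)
The plan is to reduce Proposition \ref{mainMinimal} to the already-established Theorem \ref{maineffective} by converting the inhomogeneous, possibly-vanishing-at-zero Mahler equation for $F$ into a homogeneous system of the shape required by Notation \ref{notn: 1}. First I would write $\nu(a_0(x)) = e$ and, if $e > 0$, substitute $G(x) := x^{-s} F(x)$ for an appropriate nonnegative integer $s$ (or clear denominators in a suitable way) so that after multiplying the functional equation through by a power of $x$ and by $1/a_0(0)$-type normalisations, one obtains a new equation in which the coefficient of the ``leading'' term $F$ is a polynomial with nonzero constant term; this is exactly the mechanism the proposition hints at with the $k^i \nu(a_0(x))$ correction to $H$. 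The key observation is that rescaling $F(x) \mapsto x^{-s}F(x)$ changes $F(a/b)$ only by multiplication by the nonzero rational $(a/b)^{-s}$, and by \eqref{eq: wellknownfact} this does not change the irrationality exponent, so it suffices to bound $\mu$ for the rescaled function.

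Next I would build the vector ${\bf F}(x)$. Since $F$ satisfies $p(x) + \sum_{i=0}^d a_i(x) F(x^{k^i}) = 0$, the natural candidates for the coordinates are $F_1(x) = 1$, then $F(x), F(x^k), \ldots, F(x^{k^{d-1}})$ — a total of $d+1$ functions, which is why the final bound is stated with $d+1$ in place of $d$. I would verify that this collection is linearly independent over $\mathbb{Q}(x)$: the constant $1$ is independent of the others by the minimality hypothesis on $d$ (if $1$ were in the span of $F(x),\ldots,F(x^{k^{d-1}})$ over $\mathbb{Q}(x)$ one would get a shorter relation, contradicting minimality; similarly any relation among $F(x),\ldots,F(x^{k^{d-1}})$ alone would contradict minimality of $d$). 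Then I would write down the matrix $\mathbf{A}(x)$ expressing $[F_1, F, F(x^k), \ldots, F(x^{k^{d-1}})]^T$ as $\mathbf{A}(x)$ applied to the same vector with $x$ replaced by $x^k$: the first $d-1$ of the shift relations are tautological ($F(x^{k^j})$ in terms of $F((x^k)^{k^{j-1}})$), the last row comes from solving the Mahler equation for $F(x^{k^d})$ in terms of lower-order terms and the inhomogeneity $p$, and the constant $1$ absorbs $p(x)$ into the matrix entries. After the rescaling trick the common denominator $B(x)$ of $\mathbf{A}(x)$ (essentially a power of $a_0$) has $B(0)\neq 0$, and one checks $\det \mathbf{A}(x) \neq 0$ using $a_d(x)\neq 0$; the degree bound $H = \max\{\deg p, \deg a_i + k^i\nu(a_0)\}$ is precisely what controls $\max\{\deg(\text{entries}), \deg B\}$ after clearing.

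Then I would verify the hypotheses of Notation \ref{notn: 1} for this $(d+1)$-dimensional system: condition (viii) requires $\rho = \log|a|/\log b \in [0, 1/((d+1)+1)) = [0,1/(d+2))$, which is exactly hypothesis (ii) of the proposition, and it requires $B((a/b)^{k^n}) \neq 0$ for all $n$, which follows from hypothesis (iii) since $B$ is built from $a_0$. Applying Theorem \ref{maineffective} with $d$ replaced by $d+1$ gives $\mu(F_2(a/b)) = \mu((a/b)^{-s}F(a/b)) < 4^{H(d+1)^2}k^{5(d+1)^2}$, and by \eqref{eq: wellknownfact} this equals $\mu(F(a/b))$, proving the bound. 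The final sentence of the proposition, that when $a_0(0)\neq 0$ one may take $H = \max\{\deg p, \deg a_i\}$, is simply the case $s=0$ where no rescaling is needed and $B$ can be taken to be $a_0$ itself.

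The main obstacle I expect is the bookkeeping in the rescaling step and the construction of $\mathbf{A}(x)$ with integer (or at least $\mathbb{Z}[x]$) entries and a denominator $B$ with $B(0)\neq 0$: one must choose the shift exponent $s$ and the power of $x$ multiplying the functional equation so that \emph{all} entries of $\mathbf{A}(x)$ are polynomials over a denominator with nonzero constant term, and simultaneously track how the degrees inflate — in particular the term $a_i(x)$ gets multiplied by something of degree roughly $k^i \nu(a_0)$ when one homogenises the $F(x^{k^i})$ coordinates, which accounts for the $k^i\nu(a_0(x))$ appearing in the definition of $H$. A secondary point of care is confirming linear independence over $\mathbb{Q}(x)$ genuinely uses minimality of $d$ (and that the inhomogeneous term $p$ does not spoil it), but this is a short argument once set up correctly.
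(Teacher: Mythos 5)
Your outline of the core construction (the $(d+1)$-dimensional system with coordinates $1, F(x), F(x^k), \ldots, F(x^{k^{d-1}})$, linear independence via the minimality of $d$, and an application of Theorem \ref{maineffective} with $d$ replaced by $d+1$) matches the paper's Case~I, and your use of $\mu((a/b)^{-s}\xi)=\mu(\xi)$ to justify rescaling by a power of $x$ is exactly the device the paper employs in its Case~II. However, there is a genuine gap in the reduction to the $a_0(0)\neq 0$ situation. The substitution $G(x):=x^{-\nu(F)}F(x)$, after dividing the functional equation by $x^{\nu(a_0)+\nu(F)}$, produces coefficients $a_i(x)\,x^{(k^i-1)\nu(F)-\nu(a_0)}$ for $i\geq 1$ and an inhomogeneous term $x^{-\nu(a_0)-\nu(F)}p(x)$. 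These are polynomials only if $(k^j-1)\nu(F)\geq\nu(a_0)$, where $j$ is the least positive index with $a_j\neq 0$, and $\nu(p)\geq\nu(a_0)+\nu(F)$; neither inequality need hold. For example, if $F(0)\neq 0$ (so $\nu(F)=0$) while $\nu(a_0)>0$, then no rescaling of $F$ by a power of $x$ alone yields a polynomial Mahler equation whose leading coefficient has nonzero constant term.

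The paper repairs this with a third step (its Case~III): replace $F$ by $G(x):=x^{\nu(a_0)}+\sum_{n>\nu(a_0)}f(n)x^n$, i.e., subtract from $F$ a polynomial $P$ chosen so that $\nu(G)=\nu(a_0)$. Then $(k^j-1)\nu(G)\geq\nu(G)=\nu(a_0)$ holds automatically, and one verifies directly from \eqref{mini} that the new inhomogeneous term $Q(x)=p(x)+\sum_i a_i(x)P(x^{k^i})$ also satisfies $\nu(Q)\geq\nu(a_0)+\nu(G)$, so the rescaling of Case~II now goes through; since $F(a/b)-G(a/b)=P(a/b)$ is rational, \eqref{eq: wellknownfact} again preserves the irrationality exponent. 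Your phrase ``or clear denominators in a suitable way'' gestures at the need for further manipulation, but without the specific device of subtracting a Taylor-type polynomial to force $\nu(F-P)=\nu(a_0)$, the rescaling step does not close. (A minor slip as well: in constructing $\mathbf{A}(x)$ one solves the Mahler equation for $F(x)$, not for $F(x^{k^d})$; the non-tautological row expresses $F(x)$ in terms of $1,F(x^k),\ldots,F(x^{k^d})$, as in the paper's matrix \eqref{FAF}.)
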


\begin{proof} In this proof, we will consider two preliminary cases based on conditions of our coefficient polynomials and then proceed to prove the general result, as the third case, based on the two previous cases.

\emph{Case I.} Assume in addition to the assumptions of the theorem that  $a_0(0)\neq 0$.

Let $F_1(x)=1$ and $F_i(x)=F(x^{k^{i-2}})$ for $i=2,\ldots,d+1$. It is clear that $F_1(x),F_2(x),\ldots,$ $F_{d+1}(x)$ are linearly independent over $\mathbb{Q}(x)$ since $d$ is  minimal. 
Let $\mathbf{A}_F(x)$ denote the $(d+1)\times (d+1)$ matrix
\begin{equation}\label{FAF}\left(\begin{matrix}1 & 0 & 0 & \cdots & 0 \\ 
-\frac{p(x)}{a_0(x)} & -\frac{a_1(x)}{a_0(x)} & -\frac{a_2(x)}{a_0(x)} & \cdots &-\frac{a_d(x)}{a_0(x)}\\
0 &  &  &   &0\\
\vdots &  & I_{d-1} &   &\vdots\\
0 &  &  &  &0 \end{matrix}\right),\end{equation} where $I_{d-1}$ is the $(d-1)\times (d-1)$ identity matrix. 
Then we have 
\begin{equation}
[F_1(x),\ldots ,F_{d+1}(x)]^{\rm T}=\mathbf{A}_F(x)[F_1(x^k),\ldots ,F_{d+1}(x^k)]^{\rm T}.
\end{equation}
Taking the determinant of ${\mathbf A}_F(x)$ by using cofactor expansion along the right-most column gives $$\det \mathbf{A}_F(x)=\frac{a_d(x)}{a_0(x)}\neq 0.$$ Applying Theorem \ref{maineffective} (replacing $d$ by $d+1$),  
gives the result in this case where $$H:=\max_{0\leq i\leq d}\{\deg p(x), \deg a_i(x)\}.$$ 

{\em Case II.} Set $j:=\min_{1\leq i\leq d}\{i: a_i(x)\neq 0\}$ and assume in addition to the assumptions of the theorem that $$\left(k^j-1\right)\cdot \nu({F(x)})\geq \nu({a_0(x)})\quad\mbox{and}\quad \nu(p(x))\geq \nu(a_0(x))+\nu(F(x)).$$

Define the polynomial $A(x)$ and the power series $G(x)$ by $$a_0(x)=x^{\nu({a_0(x)})} A(x)\qquad\mbox{and}\qquad F(x)=x^{\nu(F(x))} G(x).$$ Then \begin{align*} 0 &=p(x)+ a_0(x)F(x)+\sum_{i=1}^d a_i(x)F(x^{k^i})\\ &= x^{-\nu(F(x))-\nu({a_0(x)})}p(x)+A(x)G(x)+\sum_{i=1}^d a_i(x)x^{(k^i-1)\nu(F(x))-\nu({a_0(x)})}G(x^{k^i}),\end{align*} where $x^{-\nu(F(x))-\nu({a_0(x)})}p(x)\in\mathbb{Z}[x]$ since by assumption $\nu(p(x))\geq \nu({a_0(x)})+\nu(F(x)).$ Also, since we have $(k^j-1)\nu(F(x))-\nu({a_0(x)})\geq 0$ and $A(0)\neq 0$, the above equality shows that $G(x)$ satisfies a Mahler-type functional equation with $A(0)a_d(x)x^{(k^d-1)\nu(F(x))-\nu({a_0(x)})}\neq 0$. Thus Case I applies to give that $$\mu\left(G(a/b)\right)\leq 4^{H(d+1)^2}k^{5(d+1)^2},$$ where $$H:=\max_{0\leq i\leq d}\left\{\deg p(x), \deg a_i(x)+\left(k^i-1\right)\cdot \nu({F(x)})\right\}.$$ Since $F(x)=x^{\nu(F(x))} G(x)$ we have that $F(a/b)$ is just a rational multiple of $G(a/b)$, and so $\mu(F(a/b))=\mu(G(a/b))$. This proves the theorem under this added assumption.

{\em Case III.} Assume that neither Case I nor Case II applies. 

We have that $\nu(a_0(x))\neq 0$. Define the series $G(x)$ and the polynomial $P(x)$ by $$G(x):=x^{\nu(a_0(x))}+\sum_{n>\nu(a_0(x))} f(n)x^n\quad\mbox{and}\quad P(x):=F(x)-G(x).$$ Then \begin{align}\nonumber 0 &= p(x)+\sum_{i=0}^d a_i(x)F(x^{k^i})\\
\nonumber &=p(x)+\sum_{i=0}^d a_i(x)P(x^{k^i})+\sum_{i=0}^d a_i(x)G(x^{k^i})\\
\label{PgGg}&=Q(x)+\sum_{i=0}^d a_i(x)G(x^{k^i}),
\end{align} where $Q(x):=p(x)+\sum_{i=0}^d a_i(x)P(x^{k^i}).$ Notice that by construction $\nu(G(x))=\nu(a_0(x)).$ Also, note that by \eqref{PgGg} we have that \begin{equation}\label{mini}\nu(Q(x))=\nu\left(\sum_{i=0}^d a_i(x)G(x^{k^i})\right)\geq \min_{0\leq i\leq d}\left\{\nu(a_i(x))+k^i \nu(G(x))\right\}.\end{equation} 

If the minimum occurring in the right-hand side of \eqref{mini} is at $i=0$, then we can apply Case II to the functional equation \eqref{PgGg} to give \begin{equation}\label{muGg}\mu\left(G(a/b)\right)\leq 4^{H(d+1)^2}k^{5(d+1)^2},\end{equation} where $$H:=\max_{0\leq i\leq d}\left\{\deg Q(x), \deg a_i(x)+\left(k^i-1\right)\cdot \nu({a_0(x)})\right\}.$$ 

If the minimum occurring in the right-hand side of \eqref{mini} is at some $i\geq 1$, then we at least have that $$\nu(Q(x))\geq k\cdot\nu(G(x))\geq 2\cdot\nu(G(x))=\nu(G(x))+\nu(a_0(x)),$$ since $\nu(a_0(x))=\nu(G(x)),$  and again we can apply Case II to give the same bound as in \eqref{muGg}. Since $G(a/b)-F(a/b)$ is rational we have that $\mu(G(a/b))=\mu(F(a/b)).$ Noting that $$\deg Q(x)\leq \max_{0\leq i\leq d}\left\{\deg p(x),\deg a_j(x)+k^i\nu(a_0(x))\right\},$$ gives the desired result, finishing the proof of the theorem.
\end{proof}

Notice that Proposition \ref{mainMinimal} is very similar to Theorem \ref{main}; it has the added condition that $d$ be minimal. In fact, there is no need for this minimality condition, which we will demonstrate in the following results.

Let $k\geq 2$ be a positive integer and let $\Delta_k:\mathbb{Q}[[x]]\to \mathbb{Q}[[x]]$ be defined by $\Delta_k(F(x))=F(x^k).$ Let $F(x)$ be a $k$-Mahler function and $S(\Delta_k)\in\mathbb{Q}(x)[\Delta_k]$ be the minimal degree annihilator of $F(x)$; note that since $\mathbb{Q}(x)[\Delta_k]$ is a left principal ideal domain, $S(\Delta_k)$ is well defined. Write $$S(\Delta_k)=q_0(x)+q_1(x)\Delta_k+\cdots+q_s(x)\Delta_k^s$$ where $q_s(x)q_0(x)\neq 0$ and without loss of generality suppose that $q_i(x)\in\mathbb{Q}[x]$ for $i=0,\ldots, s$ and $\gcd(q_0(x),\ldots,q_s(x))=1$.

Suppose that there is some $$R(\Delta_k)=\frac{a_0(x)}{b_0(x)}+\frac{a_1(x)}{b_1(x)}\Delta_k+\cdots+\frac{a_r(x)}{b_r(x)}\Delta_k^r,$$ with $\gcd(a_i(x),b_i(x))=1$ such that $R(\Delta_k)S(\Delta_k)=h_0(x)+h_1(x)\Delta_k+\cdots+h_{d}(x)\Delta_k^{d}\in\mathbb{Q}[x][\Delta_k]$ is an annihilator of $F(x)$ of degree $s+r$ and height \begin{equation}
\label{eq: new height}
H \ := \ \max\{ \deg(h_i(x))\colon 0\le i\le d\},
\end{equation} where, as is clear from  (\ref{eq: new height}), we use height in this context to mean the maximum of the degrees of the polynomial coefficients.

\begin{prop}\label{htS} The height of $S(\Delta_k)$ is at most $H$, where $H$ is as defined in  (\ref{eq: new height}).
\end{prop}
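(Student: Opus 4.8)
The plan is to exploit the division structure of the left principal ideal domain $\mathbb{Q}(x)[\Delta_k]$, combined with the $\gcd$-primitivity of the $q_i(x)$, to show that passing from the product $R(\Delta_k)S(\Delta_k)$ back down to $S(\Delta_k)$ cannot increase the height. The key point is that $S(\Delta_k)$ divides the given annihilator $R(\Delta_k)S(\Delta_k)$ on the right (trivially, it \emph{is} a right factor), so I want to run a Gauss-lemma / content argument in the skew setting: the ``content'' (in the sense of the $\gcd$ of the polynomial coefficients, after clearing denominators) of a product is related multiplicatively to the contents of the factors, and comparing degrees of the leading and trailing coefficients pins down the height.

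First I would normalize: write $R(\Delta_k)$ over a common denominator $b(x)=\mathrm{lcm}(b_0,\dots,b_r)$, so that $b(x)R(\Delta_k)=\tilde a_0(x)+\cdots+\tilde a_r(x)\Delta_k^r$ has polynomial coefficients, and let $c(x)=\gcd(\tilde a_0,\dots,\tilde a_r)$ be its content, $b(x)R(\Delta_k)=c(x)R_0(\Delta_k)$ with $R_0$ primitive. Then $b(x)\,(h_0(x)+\cdots+h_d(x)\Delta_k^d)=c(x)R_0(\Delta_k)S(\Delta_k)$. Next I would establish a skew Gauss lemma: the product of two primitive elements of $\mathbb{Q}[x][\Delta_k]$ is primitive. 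This is the standard argument — if a prime $p(x)$ divided every coefficient of $R_0(\Delta_k)S(\Delta_k)$, look at the first coefficient of $R_0$ and the first coefficient of $S$ not divisible by $p(x)$ and derive a contradiction by examining the appropriate coefficient of the product — but one must be careful that $\Delta_k$ does not commute with $\mathbb{Q}[x]$; since $\Delta_k$ acts by $x\mapsto x^k$, divisibility by an irreducible $p(x)$ is preserved under $\Delta_k$ only when $p(x)\mid p(x^k)$, which fails in general. So instead I would work coefficientwise using the explicit formula for the coefficients of a product in $\mathbb{Q}[x][\Delta_k]$ and track a chosen irreducible $p(x)$ through the $\Delta_k$-twists, using that $S$ is primitive over $\mathbb{Q}[x]$.

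Granting the Gauss lemma, $R_0(\Delta_k)S(\Delta_k)$ is primitive, so from $b(x)\sum h_i\Delta_k^i=c(x)R_0(\Delta_k)S(\Delta_k)$ and uniqueness of content we get $c(x)=u\cdot\gcd_i(b(x)h_i(x))$ up to a unit, hence $\sum h_i\Delta_k^i = u\cdot d(x) R_0(\Delta_k)S(\Delta_k)$ for some polynomial $d(x)$ and unit $u$. Comparing the leading $\Delta_k$-coefficient: $h_d(x)=u\,d(x)\,(\text{lead. coeff. of }R_0)(x^{k^s})\,q_s(x)$, and similarly $h_0(x)=u\,d(x)\,(\text{trail. coeff. of }R_0)(x)\,q_0(x)$. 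In particular $\deg h_d\ge \deg q_s$ and $\deg h_0\ge\deg q_0$; more importantly, since $h_i$ is a polynomial multiple of $d(x)$ times a shift of a coefficient of $R_0$ times $q_i$ \emph{when} the product expansion is arranged by the top term, I would instead read off directly that every $q_i(x)$ appears dividing into the coefficients $h_j$ in a way forcing $\deg q_i\le H$ for all $i$. Concretely: the top coefficient of the product $R_0 S$ is $(\text{lead }R_0)(x^{k^s})q_s(x)$ and equals $\pm h_d/d(x)$, giving $\deg q_s\le\deg h_d\le H$; then peel off the top term and induct downward on $s$, at each stage the new ``top'' coefficient being a sum whose dominant term is a shift of $q_i$, so $\deg q_i\le H$. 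Taking the max over $i$ gives $\mathrm{ht}(S(\Delta_k))=\max_i\deg q_i\le H$.

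The main obstacle, which I would handle carefully, is the non-commutativity: $\Delta_k$ twists coefficients by $x\mapsto x^k$, so the naive Gauss lemma and the naive ``compare leading coefficients'' step need the observation that $\Delta_k^j$ sends a nonzero polynomial to a nonzero polynomial of degree $k^j$ times as large, and in particular does not kill content, together with the fact that in the product the coefficient of $\Delta_k^{s+r}$ is exactly $(\text{lead }R_0)(x^{k^s})\cdot q_s(x)$ with no interference from lower terms. Once that bookkeeping is in place — and using minimality of $S(\Delta_k)$ only to know $q_0 q_s\ne 0$ so the degrees are well-defined — the degree comparison is routine. I expect the primitive-times-primitive-is-primitive claim in the skew polynomial ring to be the single step requiring genuine care.
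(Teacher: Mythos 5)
Your approach is genuinely different from the paper's: the paper proves a divisibility lemma (that $b_r(x)$ divides $q_0(x)q_0(x^k)\cdots q_0(x^{k^{r-1}})$) and a coupled degree estimate (that $e_i+k^i d_j\le H$ for all $i,j$), then combines them, whereas you want to clear denominators, invoke a skew Gauss lemma to pass to a primitive factorization $R_0S$, and read off degree bounds coefficientwise. However, there are two genuine gaps.

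First, the skew Gauss lemma is not established, and the ``standard argument'' you allude to does not go through. You correctly identify the obstruction — $\Delta_k$ twists by $x\mapsto x^k$, an endomorphism of $\mathbb{Q}[x]$ that is not an automorphism, so $p(x)\mid q(x)$ does not give $p(x)\mid q(x^{k^\ell})$ — but the repair you sketch (``track a chosen irreducible through the twists'') is not carried out and does not obviously close the gap. Concretely, taking $a$ minimal with $p\nmid (R_0)_a$ and $b$ minimal with $p\nmid q_b$ and examining the coefficient of $\Delta_k^{a+b}$, the terms with $\ell>a$ contribute $(R_0)_\ell(x)\,q_{a+b-\ell}(x^{k^\ell})$, and although $p\mid q_{a+b-\ell}(x)$ by the minimality of $b$, one cannot conclude $p\mid q_{a+b-\ell}(x^{k^\ell})$. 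In fact it is not clear that the statement is even true in this twisted setting, and the paper avoids the content argument entirely.

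Second, even granting the Gauss lemma, ``peel off the top term and induct downward'' is not a valid argument. For the coefficient of $\Delta_k^{r+s-1}$ in $R_0 S$ one has $(R_0)_r(x)q_{s-1}(x^{k^r})+(R_0)_{r-1}(x)q_s(x^{k^{r-1}})$, and the two summands can cancel, so the degree of the visible coefficient need not bound $\deg q_{s-1}$. Handling precisely this possible cancellation is the substance of the paper's Lemma \ref{ekdH}, which chooses $j$ minimal among indices maximizing $d_j$, then $i$ minimal maximizing $e_i+k^i d_j$, and shows no other term in the relevant sum can produce a pole at infinity of the same order. Your sketch does not supply a replacement for that extremal argument. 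There is also a small bookkeeping slip: the leading coefficient of $R_0(\Delta_k)S(\Delta_k)$ is $(R_0)_r(x)\,q_s(x^{k^r})$, not $(\text{lead } R_0)(x^{k^s})\,q_s(x)$; the right factor is the one that gets twisted, by $k^r$.
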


We will use the following two lemmas to prove Proposition \ref{htS}.

\begin{lem}\label{brq0} We have that $b_r$ divides $q_0(x)q_0(x^k)\cdots q_0(x^{k^{r-1}}).$
\end{lem}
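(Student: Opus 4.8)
The plan is to read off the divisibility directly from the identity $R(\Delta_k)S(\Delta_k)=\sum_{i=0}^{d}h_i(x)\Delta_k^i$, whose top-degree part in $\Delta_k$ is governed by the leading coefficients of $R$ and $S$. First I would recall that $\Delta_k$ acts on coefficients by $x\mapsto x^k$, so that for any rational functions $u(x),v(x)$ one has $u(x)\Delta_k^a\cdot v(x)\Delta_k^b = u(x)\,v(x^{k^a})\,\Delta_k^{a+b}$. Applying this to the product $R(\Delta_k)S(\Delta_k)$ and extracting the coefficient of the top power $\Delta_k^{r+s}=\Delta_k^{d}$, I get
\[
h_{d}(x) \;=\; \frac{a_r(x)}{b_r(x)}\;q_s\!\big(x^{k^{r}}\big).
\]
Wait — more useful is the \emph{bottom} coefficient. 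Extracting the coefficient of $\Delta_k^0$ gives
\[
h_0(x) \;=\; \frac{a_0(x)}{b_0(x)}\,q_0(x),
\]
which only controls $b_0$. To see where $b_r$ enters, I would instead look at how each $b_i$ must cancel: since $h_i(x)\in\mathbb{Q}[x]$ for every $i$, every denominator appearing on the left of the identity must be cleared by the numerators of the $q_j$'s evaluated at the appropriate powers of $x$.

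More precisely, the coefficient of $\Delta_k^{i}$ in $R(\Delta_k)S(\Delta_k)$ is $\sum_{j+\ell=i} \tfrac{a_j(x)}{b_j(x)}\,q_\ell\!\big(x^{k^{j}}\big)$, and this sum lies in $\mathbb{Q}[x]$ for each $i$. The key step is an induction on $i$ from $0$ up to $r$: assuming $b_0,\dots,b_{i-1}$ have already been shown to divide suitable products of the $q_0(x^{k^t})$, I isolate the term with $j=i$, namely $\tfrac{a_i(x)}{b_i(x)}\,q_0\!\big(x^{k^{i}}\big)$ (the $\ell=0$ contribution to the coefficient of $\Delta_k^i$), and conclude from integrality that $b_i(x)$ divides $q_0(x^{k^i})$ times the accumulated factors — but since $\gcd(a_i,b_i)=1$, in fact $b_i(x)\mid q_0(x)q_0(x^k)\cdots q_0(x^{k^{i}})$ after clearing the lower denominators. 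Specializing to $i=r$ yields $b_r(x)\mid q_0(x)q_0(x^k)\cdots q_0(x^{k^{r-1}})$ — here one uses minimality of $S$, which forbids $q_0$ from vanishing and ensures no spurious cancellation shifts the index; the precise bookkeeping of which power $x^{k^{r-1}}$ versus $x^{k^{r}}$ appears comes from the fact that in the coefficient of $\Delta_k^r$ the pure-$R$-leading term is paired with $q_0$, i.e.\ $\ell=0,j=r$, and $q_0$ gets twisted by $\Delta_k^{r}$ applied $0$ times after the first $r$ — giving the stated product up to $x^{k^{r-1}}$.

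The main obstacle I anticipate is making the induction genuinely clean: the naive extraction of the $j=i,\ell=0$ term also carries contributions $\tfrac{a_j}{b_j}q_{i-j}(x^{k^j})$ with $j<i$, whose denominators $b_j$ are not yet known to be cleared by that single coefficient alone. The fix is to multiply the whole identity through by $b_0(x)b_1(x)\cdots b_r(x)$ at the outset, work entirely in $\mathbb{Q}[x]$, and then argue prime-by-prime: for an irreducible $\pi(x)\mid b_r(x)$, localize and use $\gcd(a_r,b_r)=1$ together with $\gcd(q_0,\dots,q_s)=1$ to force $\pi(x)$ to divide one of $q_0(x),q_0(x^k),\dots,q_0(x^{k^{r-1}})$, with multiplicity handled by tracking $\pi$-adic valuations. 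This localization-and-valuation argument sidesteps the tangle of simultaneous denominators and is, I expect, the cleanest route to the stated divisibility.
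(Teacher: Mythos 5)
Your induction correctly reproduces the paper's \emph{first} step, but that step only yields the weaker conclusion $b_r(x)\mid q_0(x)q_0(x^k)\cdots q_0(x^{k^{r}})$, with the extra factor $q_0(x^{k^r})$. To see why: in the coefficient of $\Delta_k^r$, the $j=r,\ell=0$ term is $\tfrac{a_r(x)}{b_r(x)}q_0(x^{k^{r}})$, and after clearing the lower denominators $b_0,\ldots,b_{r-1}$ (which by the induction divide $q_0(x)\cdots q_0(x^{k^{r-1}})$) and using $\gcd(a_r,b_r)=1$, you get exactly $b_r(x)\mid q_0(x^{k^r})\,q_0(x)\cdots q_0(x^{k^{r-1}})$. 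Nothing in this computation removes the $q_0(x^{k^r})$ factor. Your proffered explanation ("$q_0$ gets twisted by $\Delta_k^r$ applied $0$ times after the first $r$") and the appeal to minimality of $S$ do not produce the index shift from $k^{r}$ to $k^{r-1}$ — minimality is not used in this lemma, and the index in that single term is genuinely $k^{r}$.

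The missing idea is to exploit \emph{all} of the top $s+1$ coefficients, not just $h_r$. For each $j\in\{0,\ldots,s\}$, the coefficient of $\Delta_k^{r+s-j}$ isolates the term $\tfrac{a_r(x)}{b_r(x)}q_{s-j}(x^{k^r})$, and the first step shows the remaining terms become polynomial after multiplying by $q_0(x)\cdots q_0(x^{k^{r-1}})$; hence $b_r(x)\mid q_{s-j}(x^{k^r})\,q_0(x)\cdots q_0(x^{k^{r-1}})$ for every $j$. Now the normalization $\gcd(q_0,\ldots,q_s)=1$ gives a B\'ezout relation $\sum_i u_i(x)q_i(x)=1$, and substituting $x\mapsto x^{k^r}$ and taking the corresponding combination of these divisibilities eliminates the $q_{s-j}(x^{k^r})$ factor entirely, leaving $b_r(x)\mid q_0(x)\cdots q_0(x^{k^{r-1}})$. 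Your fallback paragraph gestures at $\gcd(q_0,\ldots,q_s)=1$ and a local, $\pi$-adic argument, but from the coefficient of $\Delta_k^r$ alone a prime $\pi(x)$ dividing $b_r(x)$ could divide $q_0(x^{k^r})$ and none of $q_0(x),\ldots,q_0(x^{k^{r-1}})$, so the localization as sketched does not close the gap — you must first assemble the family of divisibilities over $j=0,\ldots,s$ before the coprimality of the $q_i$ can do any work.
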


\begin{proof} We will first show that $b_\ell$ divides $q_0(x)q_0(x^k)\cdots q_0(x^{k^{\ell}})$ for all $\ell$. To see this note that the constant coefficient of $R(\Delta_k)S(\Delta_k)$ is $a_0(x)q_0(x)/b_0(x)\in\mathbb{Q}[x]$. Since $\gcd(a_0(x),b_0(x))=1$ we have that $b_0(x)$ divides $q_0(x)$. Now suppose that $b_i$ divides $q_0(x)q_0(x^k)\cdots q_0(x^{k^{i}})$ for all $i<\ell$. The coefficient of $\Delta_k^\ell$ in $R(\Delta_k)S(\Delta_k)$ is $$\sum_{j=0}^\ell \frac{a_j(x)q_{\ell-j}(x^{k^j})}{b_j(x)}=h_\ell(x)\in\mathbb{Q}[x].$$ Thus multiplying through by the product of the $b_j(x)$s, with minor rearrangement we have $$h_\ell(x)\prod_{j=0}^\ell b_j(x)-\sum_{j=0}^{\ell-1} a_j(x)q_{\ell-j}(x^{k^j})\frac{b_0(x)\cdots b_\ell(x)}{b_j(x)}=b_0(x)\cdots b_{\ell-1}(x)a_\ell(x)q_0(x^{k^\ell}) ;$$ we note here that each term in the sum is a polynomial. Since $b_\ell(x)$ divides each term on the left-hand side, it must divide the right-hand side. That is, $b_\ell$ divides $b_0(x)\cdots b_{\ell-1}(x)a_\ell(x)q_0(x^{k^\ell})$. Since for $i<\ell$ we assumed that $b_i$ divides $q_0(x^{k^i})$, we have by transitivity of division that $b_\ell$ divides $q_0(x)q_0(x^k)\cdots q_0(x^{k^{\ell}})$. This completes the first step of the proof of the lemma.

Now let $j\in\{0,\ldots,s\}$ and consider $h_{r+s-j}(x)$, and note that since $h_{r+s-j}(x)$ is the coefficient of $\Delta_k^{r+s-j}$, \begin{equation}\label{LRhrs}\frac{a_r(x)}{b_r(x)}q_{s-j}(x^{k^r})=-\sum_{i=0}^{r-1}\frac{a_i(x)}{b_i(x)}q_{r+s-j-i}(x^{k^i})+h_{r+s-j}(x).\end{equation} By the first step of this proof, we have that the right-hand side of \eqref{LRhrs} is a polynomial if we multiply it by $q_0(x)q_0(x^k)\cdots q_0(x^{k^{r-1}})$, and we thus also have this for the left-hand side of \eqref{LRhrs}; that is, $$\frac{a_r(x)q_{s-j}(x^{k^r})q_0(x)q_0(x^k)\cdots q_0(x^{k^{r-1}})}{b_r(x)}\in\mathbb{Q}[x].$$ Since $\gcd(a_r(x),b_r(x))=1$, we thus have that for each $j\in\{0,\ldots,s\}$, $b_r(x)$ divides the $q_{s-j}(x^{k^r})q_0(x)q_0(x^k)\cdots q_0(x^{k^{r-1}})$. Since $\gcd(q_0(x),\ldots,q_s(x))=1$, there exist $u_0(x),\ldots,u_s(x)$ such that $$\sum_{i=0}^s q_i(x)u_i(x)=1,$$ and since $b_r(x)$ divides the $q_{s-j}(x^{k^r})q_0(x)q_0(x^k)\cdots q_0(x^{k^{r-1}})$ for each $j\in\{0,\ldots,s\}$, we have that $b_r(x)$ divides any linear combination of these. In particular, $b_r(x)$ divides \begin{align*} \sum_{j=0}^s u_{s-j}(x^{k^r})&q_{s-j}(x^{k^r})q_0(x)q_0(x^k)\cdots q_0(x^{k^{r-1}})\\ &= q_0(x)q_0(x^k)\cdots q_0(x^{k^{r-1}})\sum_{j=0}^s u_{s-j}(x^{k^r})q_{s-j}(x^{k^r})\\ &=q_0(x)q_0(x^k)\cdots q_0(x^{k^{r-1}}).\end{align*} This completes the proof of the lemma.
\end{proof}

\begin{lem}\label{ekdH} Let $e_i:=\deg a_i(x)-\deg b_i(x)$ for $i\in\{0,\ldots,r\}$, $d_j:=\deg q_j(x)$ for $j\in\{0,\ldots,s\}$, and $H$ be as given in  (\ref{eq: new height}). Then $e_i+k^id_j\leq H$ for all $i$ and $j$.
\end{lem}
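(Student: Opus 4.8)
The plan is to extract the bound $e_i + k^i d_j \le H$ directly from the leading-order behaviour of the coefficients of the product $R(\Delta_k)S(\Delta_k)$, using Lemma~\ref{brq0} to control how badly the denominators $b_i(x)$ can grow. Recall that the coefficient of $\Delta_k^\ell$ in $R(\Delta_k)S(\Delta_k)$ is
\[
h_\ell(x)=\sum_{m=0}^{\ell}\frac{a_m(x)}{b_m(x)}\,q_{\ell-m}(x^{k^m}),
\]
so $\deg h_\ell \le H$ by definition of $H$. The idea is to isolate, for a given pair $(i,j)$, the term $m=i$, $\ell - m = j$, i.e.\ the summand $\frac{a_i(x)}{b_i(x)}q_j(x^{k^i})$ inside $h_{i+j}(x)$. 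On its own this summand is a rational function whose ``degree'' (numerator degree minus denominator degree) is $e_i + k^i d_j$, and one wants to say that it cannot be cancelled to below degree $H$ by the other summands.

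First I would clear denominators in $h_{i+j}(x)=\sum_{m=0}^{i+j}\frac{a_m(x)}{b_m(x)}q_{i+j-m}(x^{k^m})$ by multiplying through by the least common multiple (or simply the product) $b(x):=\prod_{m=0}^{i+j} b_m(x)$, obtaining a polynomial identity $b(x)h_{i+j}(x)=\sum_m a_m(x)q_{i+j-m}(x^{k^m})\prod_{m'\ne m} b_{m'}(x)$. Next, using Lemma~\ref{brq0}, each $b_m(x)$ divides $q_0(x)q_0(x^k)\cdots q_0(x^{k^{m-1}})$; in particular $\deg b_m \le k^{m-1}d_0 + \cdots + d_0 \le \frac{k^m-1}{k-1}d_0$ (so denominators have controlled degree, and are in fact factors of a fixed product of shifts of $q_0$). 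The cleanest route is to multiply the full identity defining $h_{i+j}$ instead by $D(x):=q_0(x)q_0(x^k)\cdots q_0(x^{k^{i+j-1}})$, which by Lemma~\ref{brq0} kills every denominator: $D(x)h_{i+j}(x)=\sum_{m=0}^{i+j} \frac{D(x)}{b_m(x)}\,a_m(x)q_{i+j-m}(x^{k^m})$ is an identity of polynomials. Now compare degrees. Every term on the right has the shape $(\text{polynomial of degree }\deg D - \deg b_m + e_m^{+})\cdot(\text{something})$; the point is that because $\gcd(a_i,b_i)=1$, the term $m=i$ contributes a factor $a_i(x)$ of degree exactly $\deg b_i + e_i$ (when $e_i \ge 0$; the case $e_i<0$ makes the inequality trivial) times $q_j(x^{k^i})$ of degree $k^i d_j$, times $D(x)/b_i(x)$.

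The main obstacle — and the step I expect to require the most care — is ruling out \emph{cancellation of the top-degree term} among the various summands after clearing denominators: a priori the leading coefficients could conspire so that $D(x)h_{i+j}(x)$ has much smaller degree than the individual summand $\frac{D(x)}{b_i(x)}a_i(x)q_j(x^{k^i})$. To handle this I would argue valuation-wise rather than degree-wise, or pass to the ``degree'' valuation $-\deg$ on $\mathbb{Q}(x)$ and exploit that the ``degrees'' $e_m + k^m d_{i+j-m}$ of the distinct summands of $h_{i+j}$ are \emph{incongruent modulo the relevant powers of $k$} — precisely the same trick used in the proof of Lemma~\ref{SetI} via the Cartier operators $\Lambda_i$. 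Concretely: if $e_i + k^i d_j$ were $>H$, then among all summands $m$ with $e_m + k^m d_{i+j-m}$ maximal there would have to be at least two (to cancel), but two summands with indices $m\ne m'$ have top-degree parts that are polynomials in $x^{k^{m}}$ and $x^{k^{m'}}$ respectively with leading exponents $e_m + k^m d_{i+j-m}$ and $e_{m'}+k^{m'}d_{i+j-m'}$; one then checks that equality of these together with the base-$k$ structure forces a contradiction with $q_s(x)q_0(x)\ne 0$ and $\deg h_{i+j}\le H$. Once cancellation is excluded, $\deg h_{i+j} \ge e_i + k^i d_j$ (after accounting for the bounded-degree factor $D(x)/b_i(x)$, whose degree is itself absorbed by the inequality since $\gcd(a_i,b_i)=1$), and since $\deg h_{i+j}\le H$ by definition of $H$, we conclude $e_i + k^i d_j \le H$ for all $i,j$, as desired. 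I would also record the degenerate cases $e_i<0$ or $d_j=0$ separately, where the bound is immediate.
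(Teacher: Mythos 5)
Your proposal has a genuine gap, and it concerns the single idea that makes the paper's argument work. You try to prove the bound $e_i + k^i d_j \le H$ \emph{directly for an arbitrary pair} $(i,j)$ by examining the coefficient $h_{i+j}(x)$ and ruling out cancellation of the term $m=i$. This cannot succeed for two reasons. First, for a non-extremal pair $(i,j)$, the term $m=i$ in $h_{i+j}$ need not be a leading term at all: there may be another index $m$ with $e_m + k^m d_{i+j-m} > e_i + k^i d_j$, in which case $\deg h_{i+j} \le H$ tells you nothing about $e_i + k^i d_j$. Second, cancellation of equal-degree leading terms among the summands of $h_{i+j}$ \emph{can} occur in general; your appeal to the ``base-$k$ structure'' and to Cartier operators is not an argument here (the leading parts are single monomials, and $e_m + k^m d_{i+j-m} = e_{m'} + k^{m'} d_{i+j-m'}$ is a perfectly possible coincidence for generic $i,j$). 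You flag this step yourself as the one requiring the most care, but the route you gesture at does not close it.

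The paper's proof sidesteps both problems with one move: it proves the bound only for an \emph{extremal} pair and lets monotonicity do the rest. Pick $j$ so that $d_j$ is maximal, and among such $j$ the smallest; then pick $i$ minimizing subject to $e_i + k^i d_j$ being maximal. With these strict choices, the term $\ell = i$ in $h_{i+j}$ is the \emph{unique} summand of largest pole order at $\infty$: any $\ell > i$ gives $e_\ell + k^\ell d_{i+j-\ell} < e_\ell + k^\ell d_j \le e_i + k^i d_j$ (strict because $d_{i+j-\ell} < d_j$ by the minimality of $j$), and any $\ell < i$ gives $e_\ell + k^\ell d_{i+j-\ell} \le e_\ell + k^\ell d_j < e_i + k^i d_j$ (strict by the minimality of $i$). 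Hence no cancellation of the top term can occur, and $\deg h_{i+j} \le H$ forces $e_i + k^i d_j \le H$ for this extremal pair. The general case then follows immediately, since $e_{i'} + k^{i'} d_{j'} \le e_{i'} + k^{i'} d_j \le e_i + k^i d_j$ for any $(i',j')$. Also note that the detour through clearing denominators with $D(x) = q_0(x)q_0(x^k)\cdots q_0(x^{k^{i+j-1}})$ is unnecessary: the paper works directly with the pole order at $\infty$ (numerator degree minus denominator degree) of the rational summands, which keeps the bookkeeping clean.
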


\begin{proof} Pick $j$ such that $d_j$ is maximal and such that $d_k < d_j$ for $k<j$. Next pick the smallest $i$ such that $e_i + k^i d_j$ is maximal; that is, $e_t + k^t d_j < e_i +k^i d_j$ for $t<i$. Now consider the coefficient $$h_{i+j}(x)=\sum_{\ell=0}^{i+j}\frac{a_\ell(x)}{b_\ell(x)}q_{i+j-\ell}(x^{k^\ell})$$ in the product $R(\Delta_k)S(\Delta_k)$. Notice that we look at the term when $\ell=i$ in the above sum we get a contribution of $\frac{a_i(x)}{b_i(x)}q_{j}(x^{k^\ell})$, which has a pole at $x=\infty$ of order $e_i+k^id_j>H$.  Since $\deg h_{i+j}(x)\leq H$, we see that $h_{i+j}(x)$ has a pole at $x=\infty$ of order at most $H$ and so there must be some cancellation in the above sum; that is, necessarily there is some other term that has a pole at $x=\infty$ of order $e_i+k^i d_j$ and so $e_\ell + k^\ell d_{i+j-\ell} = e_i + k^i d_j$ for some $\ell \neq i$.  If $\ell>i$, then $$e_\ell + k^\ell d_{i+j-\ell} < e_\ell + k^\ell d_j \leq  e_i+k^id_j = e_\ell + k^\ell d_{i+j-\ell},$$ a contradiction. And if $\ell<i$, then $$e_\ell + k^\ell d_{i+j-\ell} \leq e_\ell + k^\ell d_j < e_i +k^i d_j= e_\ell + k^\ell d_{i+j-\ell},$$ which is again a contradiction.
\end{proof}

\begin{proof}[Proof of Proposition \ref{htS}] Let $j\in\{0,\ldots,s\}$ be such that $d_j$ is maximal. By Lemma~\ref{brq0} we have that $$e_r=\deg a_r(x)-\deg b_r(x)\geq 0-d_0(1+k+\cdots+k^{r-1})\geq -d_j(1+k+\cdots+k^{r-1}).$$ By Lemma \ref{ekdH} we have that $e_r+k^rd_j\leq H$, and so using the preceding inequality we have $$H\geq e_r+k^rd_j\geq d_j(k^r-k^{r-1}-\cdots-k-1)=d_j\left(\frac{k^{r+1}-2k^r+1}{k-1}\right)\geq d_j,$$ where the last inequality follows from the fact that $k\geq 2$.
\end{proof}

\begin{proof}[Proof of Theorem \ref{main}] Let $d\geq 1$ be an integer and suppose that $F(x)\in \mathbb{Z}[[x]]$ satisfies a Mahler equation $$\sum_{i=0}^d a_i(x)F(x^{k^i})=0,$$ where $a_0(x)a_d(x)\neq 0$. By Proposition \ref{htS}, the Mahler equation which is minimal with respect to length, say \begin{equation}\label{dmin} \sum_{i=0}^{d_{\min}} b_i(x)F(x^{k^i})=0,\end{equation} where $b_0(x)b_{d_{\min}}(x)\neq 0$, has coefficients which are polynomials with integer coefficients which satisfy $\max_{0\leq i\leq d_{\min}}\{\deg b_i(x)\}\leq \max_{0\leq j\leq d}\{\deg a_j(x)\}$.

Now suppose that $a$ and $b$ are integers with $b>1$ satisfying
\begin{enumerate}
\item[(i)] $a/b$ is inside the radius of convergence of $F(x)$,
\item[(ii)] $|a|=b^{\rho}$ for some $\rho\in [0,1/(d+2))$, and
\item[(iii)] $a_0((a/b)^{k^n}) \neq 0$ for $n\ge 0$.
\end{enumerate}
Then by Proposition \ref{mainMinimal} we have 
$$\mu(F(a/b))\le 4^{N(d_{\min}+1)^2}k^{5(d_{\min}+1)^2},$$ where $$N=\max_{0\leq i\leq d_{\min}}\{\deg b_i(x)+k^i\nu(b_0(x))\}.$$ Since $\nu(b_0(x))\leq \deg b_0(x)$, applying Proposition \ref{htS} again, we have $$N=\max_{0\leq i\leq d_{\min}}\{\deg b_i(x)+k^i\nu(b_0(x))\}\leq (k^{d_{\min}}+1)\cdot\max_{0\leq j\leq d}\{\deg a_i(x)\}.$$ Since $d_{\min}\leq d$, the result follows.
\end{proof}

\section{Removal of Mahler's condition}\label{MahlerCond}

While no general method for removing Mahler's condition is known, our interest is focused on irrationality exponents of Mahler numbers, and in this context we are able to remove Mahler's condition. We require Dumas' characterisation of Mahler functions \cite[Theorem 31]{D1993}.

\begin{thm}[Structure Theorem of Dumas \cite{D1993}]\label{Dumas} A $k$-Mahler function is the quotient of a power series and an infinite product which are analytic in the unit disk. Specifically, if ${\bf F}(x):=[F_1(x)=F(x),\ldots,F_d(x)]^T$ satisfies $${\bf F}(x)=\frac{{\bf A}(x)}{B(x)}{\bf F}(x^k),$$ where ${\bf A}(x)\in \mathbb{Q}[x]^{d\times d}$ with $\det{\bf A}(x)\neq 0$ and $B(x)\in\mathbb{Z}[x]$, then there exist $k$-regular series $G_i(x)$ ($i=1,\ldots,d$) such that $$F_i(x)=\frac{G_i(x)}{\prod_{j\geq 0}\beta(x^{k^j})},$$ where $B(x)=\alpha x^{\delta}\beta(x)$, with $\alpha\neq 0$ and $\beta(0)=1$.
\end{thm}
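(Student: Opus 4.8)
The plan is to read the right denominator directly off $B(x)$ and then show that clearing it turns the $F_i$ into $k$-regular series. Set $\delta:=\nu(B(x))$, let $\alpha$ be the coefficient of $x^{\delta}$ in $B(x)$ (so $\alpha\neq 0$), and put $\beta(x):=B(x)/(\alpha x^{\delta})\in\mathbb{Q}[x]$, so that $B(x)=\alpha x^{\delta}\beta(x)$ with $\beta(0)=1$. The first step is to record the elementary properties of the formal product $P(x):=\prod_{j\geq 0}\beta(x^{k^{j}})$. Writing $\beta(x)=1+x\,b(x)$ with $b\in\mathbb{Q}[x]$, each factor is $1+O(x^{k^{j}})$, so the product converges in $\mathbb{Q}[[x]]$ to a power series with $P(0)=1$; moreover $\sum_{j\geq 0}|\beta(x^{k^{j}})-1|$ converges uniformly on every closed disc of radius $<1$, so $P$ is analytic on the open unit disc. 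The only identity needed is $P(x)=\beta(x)\,P(x^{k})$ (peel off the $j=0$ factor), equivalently $P(x)/B(x)=P(x^{k})/(\alpha x^{\delta})$.

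Next, put $G_{i}(x):=P(x)F_{i}(x)$ and $\mathbf{G}(x):=P(x)\mathbf{F}(x)$; each $G_{i}$ is a power series. Multiplying $\mathbf{F}(x)=\mathbf{A}(x)\mathbf{F}(x^{k})/B(x)$ by $P(x)$ and using the identity above gives
\[
\alpha x^{\delta}\,\mathbf{G}(x)=\mathbf{A}(x)\,\mathbf{G}(x^{k}),
\]
that is, a Mahler system for $\mathbf{G}$ with polynomial coefficient matrix $\mathbf{A}(x)$ in which the only denominator is the monomial $\alpha x^{\delta}$. Granting that each $G_{i}$ is $k$-regular, the theorem follows: $F_{i}=G_{i}/P$ is the asserted decomposition, and $G_{i}$ is analytic on the open unit disc because a $k$-regular series has coefficients of at most polynomial growth, so $F_{i}$ is a quotient of a power series and an infinite product, both analytic in the unit disc.

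It remains to prove that the $G_{i}$ are $k$-regular, and this is the heart of the matter. I would use the section operators $\Lambda_{0},\dots,\Lambda_{k-1}$ on $\mathbb{Q}((x))$, $\Lambda_{r}(\sum_{n}c_{n}x^{n}):=\sum_{n}c_{kn+r}x^{n}$, which satisfy $\Lambda_{r}(h(x)g(x^{k}))=\Lambda_{r}(h)(x)\,g(x)$ and whose iterated images of $G_{i}$ are exactly the elements of $\mathcal{K}_{k}(G_{i})$. Applying $\Lambda_{r}$ to $\alpha x^{\delta}G_{i}(x)=\sum_{j}A_{ij}(x)G_{j}(x^{k})$, and using that $\Lambda_{r}(x^{\delta}g)=x^{q}\Lambda_{r'}(g)$ with $0\le q\le\lfloor\delta/k\rfloor+1$ (and $r'$ ranging over all residues as $r$ does), expresses each $\Lambda_{r'}(G_{i})$ in the form $x^{-q}\sum_{j}c_{j}(x)G_{j}(x)$ with $c_{j}\in\mathbb{Q}[x]$ of degree $\le h/k$, where $h:=\max_{i,j}\deg A_{ij}$. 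Re-inserting the same relation for the $G_{j}(x)$ and iterating, one shows by induction on the kernel depth that every element of $\mathcal{K}_{k}(G_{i})$ has the form $x^{-e}\sum_{j=1}^{d}c_{j}(x)G_{j}(x)$ with $0\le e\le E_{0}$ and $\deg c_{j}\le D_{0}$; here $E_{0}$ and $D_{0}$ are constants depending only on $k$, $\delta$, $h$, because the recursions $D\mapsto(D+h)/k$ and $E\mapsto(E+\delta)/k+1$ have bounded orbits. The $\mathbb{Q}$-vector space of power series of this shape is finite-dimensional, so the $\mathbb{Q}$-span of $\mathcal{K}_{k}(G_{i})$ is finite-dimensional and $G_{i}$ is $k$-regular. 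When $\delta=0$ this is exactly Becker's theorem \cite{B1994}; the only new feature in general is controlling the order of the pole at $0$ created by repeated sectioning, and this is where I expect the (routine but slightly fiddly) bookkeeping to lie, everything else being formal manipulation of power series together with the convergence properties of $P(x)$.
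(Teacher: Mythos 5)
The paper does not actually prove this statement: Theorem~\ref{Dumas} is quoted, with attribution, from Dumas \cite[Theorem 31]{D1993} and is used as a black box in Section~\ref{MahlerCond}. So there is no in-paper proof to compare against; what follows is an assessment of your argument on its own.

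Your proof is correct and, as far as the structure goes, it is the natural one. Factoring $B(x)=\alpha x^{\delta}\beta(x)$, observing that $P(x)=\prod_{j\ge 0}\beta(x^{k^j})$ is a well-defined power series with $P(0)=1$ that converges (and is analytic) on the open unit disc, and that $P(x)=\beta(x)P(x^k)$, is exactly right. Setting $G_i=PF_i$ then converts the system $\mathbf{F}=B^{-1}\mathbf{A}\,\mathbf{F}(\cdot^k)$ into $\alpha x^{\delta}\mathbf{G}(x)=\mathbf{A}(x)\mathbf{G}(x^k)$ as you claim, and $F_i=G_i/P$ is precisely the asserted decomposition. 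The real content is the $k$-regularity of the $G_i$, and your Cartier-operator argument is sound: starting from $G_i$ (so $e=0$, $D=0$), substituting the relation $G_j(x)=\alpha^{-1}x^{-\delta}\sum_{\ell}A_{j\ell}(x)G_\ell(x^k)$ into an expression $x^{-e}\sum_j c_j(x)G_j(x)$ and applying $\Lambda_r$ yields an expression of the same form with $e'\le (e+\delta)/k+1$ and $D'\le (D+h)/k$; both recursions are contractions with fixed points $(\delta+k)/(k-1)$ and $h/(k-1)$, and orbits starting at $0$ stay bounded. Hence every iterated Cartier image of $G_i$ lies in the finite-dimensional $\mathbb{Q}$-space $\{x^{-e}\sum_j c_j(x)G_j(x):0\le e\le E_0,\ \deg c_j\le D_0\}$, giving $k$-regularity. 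The claim analyticity of $G_i$ then follows from polynomial growth of $k$-regular coefficients. You correctly identify the $\delta=0$ case with Becker's theorem \cite{B1994}; what you are really doing is Becker's argument with an extra bookkeeping of pole order. Two very minor quibbles: the displayed bound ``$0\le q\le\lfloor\delta/k\rfloor+1$'' should really read $\lfloor\delta/k\rfloor\le q\le\lfloor\delta/k\rfloor+1$ (the upper bound is all that matters, so this is harmless), and since the iterates of $G_i$ are honest power series, the phrase ``the pole at $0$'' refers to a pole in the formal representation rather than in the object; worth a sentence of clarification in a full write-up.
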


We also require the following lemma, which is based upon \cite[Lemma 6.1]{AB2013}, although more care is needed in keeping track of certain estimates.

\begin{lem} Suppose that $F(x)=\sum_{n\geq 0} f(n)x^n \in \mathbb{Z}[[x]]$ satisfies the Mahler equation \eqref{MFE} and suppose that $F(x)$ is not a polynomial.  If $\delta$ is the order of the zero of $a_0(x)$ at $x=0$ and $H$ denotes the maximum of the degrees of $a_0(x),\ldots ,a_d(x)$, then there exist 
some natural number $M\le H+k^d \delta$ and some polynomial $P(x)$ of degree at most $\delta$ such that $F(x)=P(x)+x^M E(x)$ where $E(0)\neq 0$ and $E(x)$ satisfies a Mahler equation of the form
$$\sum_{i=0}^{d+1} b_i(x) E(x^{k^i}) = 0$$ with $b_0(0)\neq 0$ and such that the degrees of $b_0,\ldots ,b_{d+1}$ are at most $(H+k^d)^2(k+1)$.   Furthermore, the following hold:
\begin{enumerate}
\item[(i)] if $\alpha$ is a nonzero complex number and $a_0(x)$ has a zero at $x=\alpha$ of order $s$, then $b_0(x)$ has a zero at $x=\alpha$ of order at most $s+H+k^d\delta$;
\item[(ii)] if $h$ is the maximum of the absolute values of the coefficients of $a_0(x),\ldots ,a_d(x)$,
then the leading coefficient of $b_0(x)$ is, in absolute value, bounded above by $(d+1)h^2(\delta+1)\cdot\max\{|f(0)|,\ldots,|f(\delta)|\}$.
\end{enumerate}
\label{lem: AB}
\end{lem}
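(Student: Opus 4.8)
The plan is to start from the Mahler equation
\eqref{MFE} for $F(x)=\sum_{n\ge0}f(n)x^n$ and extract the part of $F$ that is ``visible'' to the functional equation near $x=0$. Write $a_0(x)=x^\delta\widetilde a_0(x)$ with $\widetilde a_0(0)\ne0$, and let $\nu$ be the order of vanishing of $F$ at $0$. Plugging $F=\sum f(n)x^n$ into $\sum_{i=0}^d a_i(x)F(x^{k^i})=0$ and comparing the lowest-order terms forces a relation between $\nu$, $\delta$ and the degrees of the $a_i$: since $a_0(x)F(x)$ contributes a term of order $\nu+\delta$ while $a_i(x)F(x^{k^i})$ for $i\ge1$ contributes a term of order at least $k^i\nu\ge k\nu$, if $\nu$ were large the term of order $\nu+\delta$ could not be cancelled, so one gets $\nu\le$ something like $\delta$ (more precisely, $(k-1)\nu\le\delta$, hence $\nu\le\delta$). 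This lets me split $F(x)=P(x)+x^ME(x)$ where $P(x)=\sum_{n<M}f(n)x^n$ has degree at most $\delta$, $E(0)\ne0$, and $M$ is chosen as the first index beyond $\delta$ (and beyond the influence of the shift) with $f(M)\ne0$; the bound $M\le H+k^d\delta$ comes from tracking how far the polynomial part can extend once we substitute and clear denominators of the form $x^{k^i}$.

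The core step is to substitute $F(x)=P(x)+x^ME(x)$ into \eqref{MFE} and derive a new Mahler equation for $E$. We get
\[
\sum_{i=0}^d a_i(x)\bigl(P(x^{k^i})+x^{Mk^i}E(x^{k^i})\bigr)=0,
\]
so $E$ satisfies $\sum_{i=0}^d a_i(x)x^{Mk^i}E(x^{k^i}) = -\sum_{i=0}^d a_i(x)P(x^{k^i})=:-R(x)$, an inhomogeneous $k$-Mahler equation. To homogenise it and simultaneously arrange $b_0(0)\ne0$, I would apply the inhomogeneous-to-homogeneous trick (as in the proof of Proposition~\ref{mainMinimal}, Case III, or the standard device of multiplying by a suitable shift and subtracting): one produces an equation of length $d+1$ by combining the inhomogeneous equation with its $\Delta_k$-image to kill $R(x)$, or directly by noting $R(x)$ itself satisfies a trivial one-term relation after a shift. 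The coefficient $b_0(x)$ is then, up to a unit, $a_0(x)x^M/x^\delta=x^{M-\delta}\widetilde a_0(x)$ times a factor coming from the homogenisation; the point is that the factors of $x$ cancel against $x^M$ precisely because $M>\delta$, yielding $b_0(0)\ne0$. The degree bound $(H+k^d)^2(k+1)$ then follows by bookkeeping: $R(x)$ has degree at most $H+k^dM\le H+k^d(H+k^d\delta)\le(H+k^d)^2$ roughly, and the homogenisation multiplies degrees by a bounded factor and introduces one $\Delta_k$, accounting for the extra $(k+1)$.

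For parts (i) and (ii), everything reduces to reading off the claimed quantities from the explicit description of $b_0(x)$. For (i): away from $x=0$, the only new zeros of $b_0$ beyond those of $a_0$ come from the homogenisation factor, whose degree is at most $H+k^d\delta$, so a zero of $a_0$ at $\alpha\ne0$ of order $s$ becomes a zero of $b_0$ of order at most $s+H+k^d\delta$. For (ii): the leading coefficient of $b_0$ is a product/sum of leading coefficients of the $a_i$ and of the polynomial part $P$, whose coefficients are among $f(0),\dots,f(\delta)$; counting $d+1$ terms, each a product of two coefficients of the $a_i$ (hence bounded by $h^2$) and involving at most $\delta+1$ of the $f(n)$, gives the bound $(d+1)h^2(\delta+1)\max\{|f(0)|,\dots,|f(\delta)|\}$.

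The main obstacle I anticipate is not any single deep idea but the careful tracking of the shift $M$ and the degree inflation through the homogenisation: one must choose $M$ large enough to guarantee $b_0(0)\ne0$ yet small enough ($\le H+k^d\delta$) that the degree bound $(H+k^d)^2(k+1)$ survives, and the homogenisation step must be set up so that it adds only one to the Mahler order (giving length $d+1$, not $d+2$) while keeping all the quantitative estimates in (i) and (ii) intact. Getting the homogenisation to interact cleanly with the factor $x^M$ — so that the powers of $x$ cancel exactly rather than leaving $b_0$ divisible by $x$ — is the delicate point, and it is where the precise value of $M$ (first nonzero coefficient index beyond $\delta$) rather than a cruder choice really matters.
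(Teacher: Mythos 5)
Your overall strategy matches the paper's: take $P$ to be the degree-$\leq\delta$ Taylor polynomial of $F$, write $F=P+x^ME$ with $E(0)\neq 0$, substitute into the Mahler equation to get an inhomogeneous equation for $E$, and homogenise by multiplying the equation and its $\Delta_k$-image by the right-hand sides and subtracting. Parts (i) and (ii) are also read off correctly in spirit.

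However, the step that is supposed to give $b_0(0)\neq 0$ is garbled as written and as stated would give the opposite conclusion. You claim $b_0(x)$ is ``up to a unit, $a_0(x)x^M/x^\delta = x^{M-\delta}\widetilde a_0(x)$ times a factor from the homogenisation''; since $M>\delta$, this would force $b_0(0)=0$, not $b_0(0)\neq 0$. The correct normalisation is to divide the whole inhomogeneous equation
$\sum_{i=0}^d a_i(x)x^{Mk^i}E(x^{k^i}) = -Q(x)$
by $x^{M+\delta}$, which is the exact order of vanishing at $0$ of the $i=0$ coefficient $a_0(x)x^M$. That makes $c_0(x)=a_0(x)x^{-\delta}=\widetilde a_0(x)$ a polynomial with $c_0(0)\neq 0$, makes $c_i(x)=a_i(x)x^{(k^i-1)M-\delta}$ a polynomial with $c_i(0)=0$ for $i\geq 1$ (using $M>\delta$ to check $(k^i-1)M>\delta$), and makes the right-hand side $R(x)=-Q(x)x^{-M-\delta}$ a polynomial with $R(0)\neq 0$. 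Only then does $b_0(x)=c_0(x)R(x^k)$ satisfy $b_0(0)\neq 0$. In your version, this normalisation by $x^{M+\delta}$ — the crux of the lemma — is absent; a division by $x^\delta$ alone, as you wrote, does not kill the $x^M$ in front of $E$.

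A secondary gap: you gesture at $M\leq H+k^d\delta$ but do not prove it. The argument is short but not automatic: set $Q(x)=\sum_i a_i(x)P(x^{k^i})$, note $\deg Q\leq H+k^d\delta$, and observe that if $M>H+k^d\delta$ then $\sum_i a_i(x)x^{Mk^i}E(x^{k^i})=-Q(x)$ forces $Q=0$, which is impossible since the $i=0$ term alone has order exactly $M+\delta$ at $x=0$ while all the other terms have strictly larger order. This should be spelled out, since the degree bound $(H+k^d)^2(k+1)$ on the $b_i$ depends on it.
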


\begin{proof} Let $P(x)$ be the unique polynomial of degree at most $\delta$ such that $F(x)-P(x)$ has a zero at $x=0$ of order at least $\delta+1$.  (In other words, $P(x)$ is the Taylor polynomial approximation to $F(x)$ of degree at most $\delta$.)  Let $M$ denote the order of zero at $x=0$ of $F(x)-P(x)$ and let $E(x)$ be such that $F(x)=P(x)+x^M E(x)$.  Then $E(0)\neq 0$, by construction and $M\geq \delta +1$.  We claim that $M\le H+k^d\delta$.  To see this, let 
\begin{equation}
\label{eq: GQQQ}
Q(x) = \sum_{i=0}^d a_i(x) P(x^{k^i}).
\end{equation}
Then making the substitution $F(x)=P(x)+x^M E(x)$ into the Mahler equation \eqref{MFE} gives
\begin{equation}
\label{eq: GQ}
\sum_{i=0}^d a_i(x) x^{M k^i} E(x^{k^i}) = - Q(x).
\end{equation}
Notice that $Q(x)$ has degree at most $H+k^d \delta$.  Thus if $M>H+k^d\delta$ then we must have $Q(x)=0$ since the left-hand side has a zero of order at least $M$ at $x=0$.  On the other hand if $Q(x)=0$ then we must have $$ \sum_{i=0}^d a_i(x) x^{M k^i} E(x^{k^i}) = 0,$$ and this cannot occur 
if $M>H+k^d\delta$ since the order of vanishing at $x=0$ for 
$a_i(x) x^{M k^i} E(x^{k^i})$ is exactly $\delta+M$ for $i=0$ and is at least $Mk^i>M+\delta$ for $i>0$.  It follows that $M\le H+k^d\delta$.

Observe that the order of vanishing of $a_0(x) x^{M}$ at $x=0$ is precisely $x^{\delta+M}$, which is strictly less than the order of vanishing of $a_i(x) x^{k^i M}$ at $x=0$ for $i>0$ since $M>\delta$.  We let $c_i(x)=a_i(x) x^{k^iM-M-\delta}$ for $i=0,\ldots ,d$ and $R(x)=-Q(x)x^{-M-\delta}$.  Then the $c_i(x)$ and $R(x)$ are all polynomials.  Moreover, each $c_i(x)$ has degree at most $H+k^d M-M-\delta\le H+k^dM $ and $R(x)$ has degree at most $H+k^d\delta-M-\delta \le H+k^d M$.  Then we have
\begin{equation}
\label{eq: GQnew}
\sum_{i=0}^d c_i(x)  E(x^{k^i}) = R(x)
\end{equation}
where $c_0(0)\neq 0$ and $c_i(0)=0$ for $i>0$.  In particular, $R(0)\neq 0$ since $c_0(0)\neq 0$ and $E(0)\neq 0$.  
Applying the operator $x\mapsto x^k$ to  (\ref{eq: GQnew}) gives
\begin{equation}
\label{eq: GQnewk}
\sum_{i=0}^d c_i(x^k)  E(x^{k^{i+1}}) = R(x^k).
\end{equation}
Finally, multiplying both sides of  (\ref{eq: GQnew}) by $R(x^k)$ and subtracting the result of multiplying both sides of  (\ref{eq: GQnewk}) by $R(x)$ gives
\begin{equation}
\label{eq: GQnewk1}
\sum_{i=0}^{d+1} b_i(x)  E(x^{k^{i}}) = 0,
\end{equation}
where $b_i(x)=c_i(x)R(x^k)-c_{i-1}(x^k)R(x),$ and we take $c_{-1}(x)=c_{d+1}(x)=0$. 
We observe now that each $b_i(x)$ has degree at most $(H+k^dM)(k+1)$.  Using the fact that $M\le H+k^d$ now gives us the desired bound of $(H+k^d)^2(k+1)$.  

Finally, we note that $b_0(x)=c_0(x)R(x^k)=-a_0(x)Q(x^k)x^{-M-\delta}$.  Thus the order of vanishing of $b_0(x)$ at $x=\alpha$ is $s$ more than the order of vanishing of $Q(x)$ at $x=\alpha^k$, which is at most $s+H+k^d\delta$, since $Q(x)$ has degree at most $H+k^d\delta$.  Next note that the leading coefficient of $b_0(x)$ is, up to sign, equal to the leading coefficient of $a_0(x)$ times the leading coefficient of $Q(x)$, which is at most $h$ times the absolute value of the leading coefficient of $Q(x)$.
To estimate the size of the leading coefficient of $Q(x)$, we use Equation (\ref{eq: GQQQ})
and observe that $P(x)$ is just $\sum_{j\le \delta} f(j)x^j$. If $Q(x)$ has degree $r$, then we have that the leading coefficient of $Q(x)$ is at most the sum of the absolute values of the coefficients of $x^r$ in $a_i(x)P(x^{k^i})$ as $i$ ranges from $0$ to $d$.  Since $P(x^{k^i})$ has at most $\delta+1$ nonzero coefficients, we see that the coefficient of $x^r$ in $a_i(x) P(x^{k^i})$ is at most $h(\delta+1)$ times the maximum of $|f(0)|,\ldots ,|f(\delta)|$.  Since there are a total of $d+1$ such terms that contribute to the leading coefficient of $Q(x)$, we see that its leading coefficient is at most $(d+1)h(\delta+1)$ times the maximum of $|f(0)|,\ldots ,|f(\delta)|$.  The result now follows.  
\end{proof}

We are now ready to prove the main result of this section.

\begin{thm}\label{thestuff} Suppose that $F(x)=\sum_{n\geq 0} f(n)x^n \in\mathbb{Z}[[x]]$ satisfies the Mahler equation \eqref{MFE} and that $a/b$ is in the radius of convergence of $F(x)$.  Then $$\mu(F(a/b)) \leq (4dHh^3)^{1536 H^6 k^{10d}d^2} k^{320H^4k^{6d}d^2}$$
whenever $\log|a|/\log b\in [0,1/8H^2dk^{2d+1})$,
where $H=\max\{1,\deg a_0(x),\ldots ,\deg a_d(x)\}$ and $h$ is the maximum of $\max\{|f(0)|,\ldots ,|f(H)|\}$ and the absolute values of the coefficients of $a_0(x),\ldots ,a_d(x)$.\end{thm}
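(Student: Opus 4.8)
\emph{Reductions.} The plan is to bring the quantitative machinery of Sections~\ref{UB}--\ref{mu}—which underlies Theorem~\ref{main} but presupposes Mahler's condition—to bear, after two preparatory reductions. If $F$ is a polynomial then $F(a/b)\in\mathbb{Q}$ and $\mu(F(a/b))=1$, so assume $F$ is not a polynomial, and take $\gcd(a,b)=1$. Apply Lemma~\ref{lem: AB} to write $F(x)=P(x)+x^M E(x)$, where $E(0)\ne 0$ and $E$ satisfies a Mahler equation $\sum_{i=0}^{d+1}b_i(x)E(x^{k^i})=0$ with $b_0(0)\ne 0$ and $\deg b_i\le (H+k^d)^2(k+1)$; moreover, by part~(ii) of that lemma—together with $\delta\le\deg a_0\le H$, where $\delta$ is the order of the zero of $a_0$ at $0$, and the definition of $h$—the leading coefficient of $b_0$ has absolute value at most $(d+1)(H+1)h^3$. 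Since $P(a/b),(a/b)^M\in\mathbb{Q}$, \eqref{eq: wellknownfact} gives $\mu(F(a/b))=\mu(E(a/b))$ (unless $F(a/b)\in\mathbb{Q}$, when we are done), so it suffices to bound $\mu(E(a/b))$. The leading-coefficient estimate is the source of the factor $h^3$ in the final bound.

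\emph{Bounding where Mahler's condition fails.} Let $\mathcal{B}:=\{n\ge 0:\ b_0((a/b)^{k^n})=0\}$. As $\gcd(a^{k^n},b^{k^n})=1$, a rational root $(a/b)^{k^n}$ of $b_0$ forces $b^{k^n}$ to divide the leading coefficient of $b_0$; since $b\ge 2$ this gives $k^n\le\log_2\!\bigl((d+1)(H+1)h^3\bigr)$, hence $\mathcal{B}\subseteq\{0,\dots,N^\ast\}$ with $N^\ast:=\lfloor\log_k\log_2\!\bigl((d+1)(H+1)h^3\bigr)\rfloor$, and $b_0((a/b)^{k^n})\ne 0$ whenever $n>N^\ast$. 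Since $k^{N^\ast}\le\log_2\!\bigl((d+1)(H+1)h^3\bigr)$, the substitution $x\mapsto x^{k^{N^\ast+1}}$ used below costs only a factor $\approx\log h$ in the degrees.

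\emph{Removing Mahler's condition.} The key step is to realise $E(a/b)$ as a $\mathbb{Q}$-linear combination of values at $x=a/b$ of functions lying in one finite-dimensional $\Delta_k$-stable subspace $W\subseteq\mathbb{Q}((x))$ whose connection matrix, in the sense of Notation~\ref{notn: 1}, has no pole at any point of the orbit $\{(a/b)^{k^n}:n\ge0\}$. I would unwind $E(a/b)$ through the identity $E(x)=-b_0(x)^{-1}\sum_{i=1}^{d+1}b_i(x)E(x^{k^i})$: when the current evaluation point is $(a/b)^{k^j}$ with $j\notin\mathcal{B}$ this moves the problem to $(a/b)^{k^{j+1}},\dots,(a/b)^{k^{j+d+1}}$; and when $j\in\mathcal{B}$ the right-hand side has a \emph{removable} singularity at $(a/b)^{k^j}$—here Theorem~\ref{Dumas} enters, guaranteeing that $E$ is analytic at $(a/b)^{k^j}$, hence that $\sum_{i\ge1}b_i(x)E(x^{k^i})$ vanishes there to order at least $m_j$, the order of the zero of $b_0$ at $(a/b)^{k^j}$. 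A de l'H\^opital computation then rewrites $E((a/b)^{k^j})$ as a $\mathbb{Q}$-linear combination of the values of $\tfrac{d^{s}}{dx^{s}}\bigl[E(x^{k^i})\bigr]$ at $x=(a/b)^{k^j}$, for $1\le i\le d+1$ and $0\le s\le m_j$—that is, of $E$ and its first $m_j$ derivatives at the \emph{later} iterates $(a/b)^{k^{j+i}}$. Each step moves an index of $\mathcal{B}$ strictly forward, so all iterates encountered lie in $\{0,\dots,N^\ast+d+1\}$; after at most $|\mathcal{B}|\le N^\ast+1$ steps every surviving evaluation point is past $N^\ast$; and the orders of derivatives accumulated along any path sum to at most $\sum_{j\in\mathcal{B}}m_j\le\deg b_0$. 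Thus $E(a/b)$ equals a $\mathbb{Q}$-linear combination of values at $x=a/b$ of functions $\tfrac{d^{s}}{dx^{s}}\bigl[E(x^{k^m})\bigr]$ with $s\le\deg b_0$ and $m\ge N^\ast+1$, all lying in $W:=\mathbb{Q}(x)\cdot1+\{\,v(x^{k^{N^\ast+1}}):v\in\Psi\,\}$, where $\Psi$ is the $\Delta_k$-module generated by $E,E',\dots,E^{(\deg b_0)}$. One checks, much as in Section~\ref{UB}, that $W$ is $\Delta_k$-stable; that $\Delta_k$ is invertible on it (differentiating the Mahler equation expresses each $E^{(s)}(x)$ over $\mathbb{Q}(x)$ in terms of the $E^{(\le s)}(x^{k^i})$, $i\ge1$, so the connection matrix of $W$ has nonzero determinant); that $\dim_{\mathbb{Q}(x)}W\le(\deg b_0+1)(d+2)$; and that the degrees of its connection matrix are at most $C\,(\deg b_0)\,k^{N^\ast+1}$ for an absolute constant $C$. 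Crucially, the common denominator of that matrix divides a power of $b_0(x^{k^{N^\ast+1}})$, which is nonzero at every $(a/b)^{k^n}$, so Mahler's condition holds for $W$ at $a/b$.

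\emph{Conclusion, and the main obstacle.} Choosing a basis $1=F_1,\dots,F_D$ of $W$ that contains, after clearing denominators, an integral series $G^\ast$ with $G^\ast(a/b)$ a fixed nonzero rational multiple of $E(a/b)$, the data $(k,D,\mathbf{A}(x),B(x),F_1,\dots,F_D,a,b)$ meet Notation~\ref{notn: 1}—in particular $\rho=\log|a|/\log b<1/(8H^2dk^{2d+1})\le 1/(D+1)$, which is exactly why that restriction on $\rho$ is imposed—so Theorem~\ref{maineffective} (equivalently, Propositions~\ref{prop: lower} and~\ref{prop: upper} with Lemma~\ref{ARLemSyn}) gives $\mu(E(a/b))=\mu(G^\ast(a/b))<4^{\mathcal{H}D^2}k^{5D^2}$, where $D=\dim_{\mathbb{Q}(x)}W$ and $\mathcal{H}$ is the degree of its connection matrix. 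Inserting the bounds $D\le(\deg b_0+1)(d+2)$, $\mathcal{H}\le C\,(\deg b_0)\,k^{N^\ast+1}$, $\deg b_0\le(H+k^d)^2(k+1)$ and $k^{N^\ast+1}\le k\log_2\!\bigl((d+1)(H+1)h^3\bigr)$, and simplifying crudely, yields $\mu(F(a/b))\le(4dHh^3)^{1536H^6k^{10d}d^2}k^{320H^4k^{6d}d^2}$. The main obstacle is the third step: establishing the removability of the singularities along $\{(a/b)^{k^n}\}$ and performing the de l'H\^opital re-expression while retaining \emph{explicit} control of $\dim W$, of the degrees and heights of its connection matrix, and of the rationality of every coefficient produced—in particular, ensuring that prolonging $\Psi$ by derivatives does not destroy the invertibility of $\Delta_k$ on $W$, and that the shift by $k^{N^\ast+1}$—which is what finally makes the common denominator harmless on the orbit—inflates the degrees by only a factor $\approx\log h$.
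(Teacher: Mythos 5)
Your reductions via Lemma~\ref{lem: AB}, the rational-root bound on the bad orbit indices $\mathcal{B}$, the L'H\^opital re-expression, and the shift $x\mapsto x^{k^{N^*+1}}$ all match the paper's strategy closely, and your intuition about the general shape of the argument is right. However, there is a genuine gap in the ``removing Mahler's condition'' step, and it is precisely the one you flag as the main obstacle.

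The paper invokes Dumas' structure theorem (Theorem~\ref{Dumas}) not to prove analyticity of $E$ at the orbit points $(a/b)^{k^j}$ — that is automatic from the hypothesis that $a/b$ lies strictly inside the radius of convergence of $F$, hence of $E$ — but to produce the decomposition $E_i(x)=G_i(x)/\prod_{j\ge 0}\beta(x^{k^j})$ with $\beta(x)=b_0(x)/b_0(0)$ and the $G_i$ \emph{$k$-regular}. The point of this is that the $G$-system has connection matrix $\mathbf{A}(x)/b_0(0)$ with \emph{constant} denominator, so differentiating it $s$ times (equation~\eqref{Gs}) keeps the denominator constant and keeps all numerator degrees bounded by $s(k-1)+(H+k^d)^2(k+1)$, linear in $s$. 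You replace this by working directly with the $\Delta_k$-module $\Psi$ generated by $E,E',\dots,E^{(\deg b_0)}$. But the connection matrix of $\mathbf{E}(x)=\bigl(\mathbf{A}(x)/b_0(x)\bigr)\mathbf{E}(x^k)$ has denominator $b_0(x)$, and each differentiation applies the quotient rule, so the connection matrix for $(E,E',\dots,E^{(s)})$ has denominator $b_0(x)^{s+1}$. With $s\approx\deg b_0$ this makes the degree bound for the connection matrix of $\Psi$ on the order of $(\deg b_0)^2$, not the $C\deg b_0$ you assert. After the shift by $k^{N^*+1}$, your $\mathcal{H}$ is then roughly $(\deg b_0)^2 k^{N^*+1}\approx H^4 k^{4d+3}\log h$ instead of the paper's $\approx H^2 k^{2d+2}\log h$, an extra factor of order $H^2 k^{2d+1}$. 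Feeding this into Theorem~\ref{maineffective}'s bound $4^{\mathcal{H}D^2}k^{5D^2}$ yields an exponent of roughly $H^8 k^{8d}d^2\log h$ rather than the $768 H^6 k^{6d+4}d^2\log h$ the paper obtains, which fails to give the stated bound $(4dHh^3)^{1536 H^6 k^{10d}d^2}k^{320 H^4 k^{6d}d^2}$ as soon as, say, $H$ is not tiny relative to $k^d$. So the Dumas decomposition into a regular numerator is not an optional structural remark — it is the device that keeps the denominator constant across all derivatives and makes the degree accounting close. Your proof needs it in that role.
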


\begin{proof} To see how Theorem \ref{thestuff} is deduced from Theorem \ref{maineffective} suppose that $F(x)$ satisfies Mahler's equation \eqref{MFE}, that is, 
$$\sum_{i=0}^d a_i(x)F(x^{k^i})=0,$$ $a/b$ is in the radius of convergence of $F(x)$, and that $\log|a|/\log b\in [0,1/6H^2dk^{2d+1})$. Let $\delta$ be the order of the zero of $a_0(x)$ at $x=0$.

By Lemma \ref{lem: AB}, there exists some natural number $M\le H+k^d \delta$ and some polynomial $P(x)$ of degree at most $\delta\le H$ such that $F(x)=P(x)+x^M E(x)$ where $E(0)=0$ and $E(x)$ satisfies a Mahler equation
$$\sum_{i=0}^{d+1} b_i(x) E(x^{k^i}) \ = \ 0$$ with $b_0(0)\neq 0$ and such that the degrees of $b_0,\ldots ,b_{d+1}$ are at most $(H+k^d)^2(k+1)$.  
It is immediate from Equation (\ref{eq: wellknownfact}) that $\mu(F(a/b))=\mu(E(a/b)).$ We continue by bounding $\mu(E(a/b)).$

To set us up to use Theorem \ref{maineffective}, note that $${\bf E}(x):=[E_1(x)=E(x),\ldots,E_{d+1}(x)]^T$$ satisfies $${\bf E}(x)=\frac{{\bf A}(x)}{B(x)}{\bf E}(x^k),$$ where  $E_j(x)=E(x^{k^{j-1}})$ for $j=1,\ldots,d+1$, $B(x)=b_0(x)$, and $${\bf A}(x)=\left(\begin{matrix}
-b_1(x) & -b_2(x) & \cdots& &-b_{d+1}(x)\\
1 & 0 & \cdots &&0  \\ 
0 & 1 &  &   \\
\vdots &  & \ddots& &   \vdots\\
0 &  &  & 1 &0 \end{matrix}\right).$$

We note that if there does not exist some $i\ge 0$ such that $B((a/b)^{k^i})=0$, then we can apply Theorem \ref{main} to give an upper bound on the irrationality exponent of $E(a/b)$ that is better than the one given in the statement of the theorem.   Thus it is sufficient to consider the case that $B((a/b)^{k^i})=0$ for some $i\ge 0$.  We shall give an upper bound on the irrationality exponent for $E(a/b)$, if $B((a/b)^{k^i})=0$ for some $i\geq 0$, by applying Dumas' theorem and then using L'H\^opital's rule.

By Theorem \ref{Dumas}, there exist $k$-regular functions $G_1(x),\ldots ,G_{d+1}(x)$ such that
$$E_i(x) \ = \ \frac{G_i(x)}{\prod_{j\ge 0} \beta(x^{k^j})},$$ 
where $\beta(x)=b_0(x)/b_0(0)$.

If we let ${\bf G}(x):=[G_1(x)=G(x),\ldots,G_{d+1}(x)]^T$, then we have $${\bf G}(x)=\frac{\beta(x)}{B(x)}{\bf A}(x){\bf G}(x^k)=\frac{{\bf A}(x)}{b_0(0)}{\bf G}(x^k).$$  Differentiating we have that $$\left[\begin{matrix} G_1(x)\\ \vdots\\ G_{d+1}(x)\\ G_1'(x)\\ \vdots\\ G_{d+1}'(x)\end{matrix}\right]=\frac{1}{b_0(0)}\left[\begin{matrix}\mathbf{A}(x) & \mathbf{0}_{r\times r}\\ \mathbf{A}'(x) & kx^{k-1}\mathbf{A}(x)\end{matrix}\right]\left[\begin{matrix} G_1(x^k)\\ \vdots\\ G_{d+1}(x^k)\\ G_1'(x^k)\\ \vdots\\ G_{d+1}'(x^k)\end{matrix}\right].$$ We note that the entries of the above matrix all have degree at most $k-1+(H+k^d)^2(k+1)$. Continuing in this manner one can write for any $n\in\mathbb{N}$ the equation \begin{equation}\label{Gs}{\bf G}_n(x)=\frac{1}{b_0(0)}\left[\begin{matrix}
\mathbf{A}(x) & & &\\ 
* & kx^{k-1}\mathbf{A}(x) &  \\
\vdots & \ddots& \ddots & &  \\
*&\cdots& *& k^nx^{n(k-1)}{\bf A}(x)  
\end{matrix}\right]{\bf G}_n(x^k),\end{equation} where $${\bf G}_n(x):=[G_1(x),\ldots,G_{d+1}(x),G_1'(x),\ldots,G_{d+1}'(x),\ldots,G_1^{(n)}(x),\ldots,G_{d+1}^{(n)}(x)]^T,$$ and the upper triangular part of the matrix in the equality is all zeros.  Again the degrees of all entries in this matrix are bounded by
$n(k-1)+(H+k^d)^2(k+1)$.

We are now set up to show that Mahler's condition can be removed. Towards this, suppose that $a,b\in\mathbb{Z}$, $b>0$, and $\gcd(a,b)=1$.

Since $B(x)=b_0(x)$, Lemma \ref{lem: AB} gives that the leading coefficient of $B(x)$ is at most
$$(d+1)(\delta+1)h_1^2\max\{|f(0)|,\ldots ,|f(\delta)|\},$$
where $h_1$ is the maximum of the absolute values of the coefficients of $a_0(x),\ldots ,a_d(x)$.  We note that $h_1\le h$ and $\max\{|f(0)|,\ldots ,|f(\delta)|\}\le \max\{|f(0)|,\ldots ,|f(H)|\}\le h$ and so the leading coefficient of $B(x)$ is at most $(d+1)(H+1)h^3$.   Using the rational roots theorem and the fact that $\gcd(a,b)=1$, we have that 
$B((a/b)^{k^n})\neq 0$ whenever $b^{k^n}>(d+1)(H+1)h^3$.  In particular, there exists a unique positive integer $N$ such that
\begin{equation}
k^{N-1} \le \max\left\{1,\frac{\log((d+1)(H+1)h^3)}{\log b}\right\} < k^{N}.
\label{eq: kN}
\end{equation}
Then, by the remarks above, we have that for all $n\ge N$, $$B\left(\left(\frac{a}{b}\right)^{k^n}\right)\neq 0.$$ 

If $E(a/b)$ is defined, then $G(a/b)$, as defined above, must be zero. Define the integers $s_i\geq 0$ and the polynomial $\tau(x)$ by $$\beta(x)=\tau(x)\prod_{i=0}^{N-1}\Big(x-\left(\frac{a}{b}\right)^{k^i}\Big)^{s_i},$$ where $\tau((a/b)^{k^i})\neq 0$ for $i\geq 0$. Thus there is a polynomial $T(x)\in\mathbb{Q}[x]$ such that $$\prod_{i=0}^{N-1}\beta(x^{k^i})=\left(x-\frac{a}{b}\right)^sT(x),$$ where $s=s_0+\cdots+s_{N-1}$ and $T(a/b)\neq 0$; note that $0\leq s\leq \deg \beta(x)$. Then $G(x)$ has a zero of order at least $s$ at $x=a/b$. A routine calculation shows that $$E\left(\frac{a}{b}\right)=\frac{G^{(s)}\left(\frac{a}{b}\right)}{s!\cdot T\left(\frac{a}{b}\right)\prod_{i\geq N}\beta\left(\left(\frac{a}{b}\right)^{k^i}\right)}.$$ Now $s!\cdot T(a/b)\in\mathbb{Q}\setminus\{0\},$ so using Equation (\ref{eq: wellknownfact}), we see that we need not worry about it in order to determine the irrationality exponent.

Set $$L_{i,j}(x)=\frac{G_i^{(j)}(x)}{\prod_{n\geq N}\beta(x^{k^n})},$$ and write $${\bf L}(x):=\frac{{\bf G}_s(x)}{\prod_{n\geq N}\beta(x^{k^n})},$$ so that we have   \begin{equation}\label{Li}{\bf L}(x)=\frac{1}{b_0(0)\cdot\beta(x^{k^N})}\left[\begin{matrix}
\mathbf{A}(x) & & &\\ 
* & kx^{k-1}\mathbf{A}(x) &  \\
\vdots & \ddots& \ddots & &  \\
*&\cdots& *& k^sx^{s(k-1)}{\bf A}(x)  
\end{matrix}\right]{\bf L}(x^k).\end{equation}

Recall that $\beta((a/b)^{k^N+i})\neq 0$ for all $i\geq 0$ and that $\deg\beta(x)=\deg B(x)$. Also, we have that $$E\left(\frac{a}{b}\right)\cdot s!\cdot T\left(\frac{a}{b}\right)=\frac{G^{(s)}\left(\frac{a}{b}\right)}{\prod_{i\geq N}\beta\left(\left(\frac{a}{b}\right)^{k^i}\right)}=L_{1,s}\left(\frac{a}{b}\right).$$  Finally, we note that the matrix in  (\ref{Li}) has entries whose degrees are at most 
$$s(k-1)+(H+k^d)^2(k+1)$$ and $\beta(x^{k^N})$ has degree at most
$$k^N (H+k^d)^2(k+1),$$ which by  (\ref{eq: kN}) gives
\begin{equation}
\deg\beta(x^{k^N}) \le \max\left( 1, \frac{\log((d+1)(H+1)h^3)}{\log b}\right) (H+k^d)^2(k+1)k.
\label{eq: beta}
\end{equation}
Thus using the vector equation of length $(s+1)(d+1)$ in \eqref{Li}, we can apply Theorem \ref{maineffective} to $L_{1,s}(a/b)$\footnote{We must actually work with a vector equation of length $(s+1)(d+1)+1$ to apply Theorem \ref{maineffective} since we must throw the constant function $1$ into our system.  This has the net effect of adding a new row and column to the matrix in \eqref{Li} that has their shared entry exactly equal to $b_0(0)\beta(x^{k^N})$ and all other entries equal to zero.}
to obtain 
\begin{equation}
\label{eq: muFfirst}
\mu(F(a/b))=\mu(E(a/b))=\mu(L_{1,s}(a/b)) \leq 4^{H_0{(1+(s+1)(d+1))}^2}k^{5{(1+(s+1)(d+1))}^2},
\end{equation}
 where 
\begin{equation}
\label{eq: HH0}
H_0=\max\left(\max\left( 1, \frac{\log((d+1)(H+1)h^3)}{\log b}\right)(H+k^d)^2(k+1)k, s(k-1)+(H+k^d)^2(k+1)\right).
\end{equation}
The bound $s\leq \deg \beta(x)$ implies $$s+1\le 1+\deg \beta(x)\leq 1+\deg B(x)\leq 1+(H+k^d)^2(k+1).$$

Thus $(s+1)(d+1)+1\le (1+(H+k^d)^2(k+1))(d+1)+1$.  It is straightforward to check that
$(1+(H+k^d)^2(k+1))(d+1)+1\le 8H^2k^{2d+1}d$ for $d,H\ge 1$ and $k\ge 2$.  
Using this estimate along with (\ref{eq: muFfirst}), we see that
$$\mu(F(a/b))\leq 4^{H_0(8H^2k^{2d+1}d)^2}k^{5{(8H^2k^{2d+1}d)}^2}.$$
To finish the proof, we estimate $H_0$.  
Notice from the above remarks, we have $$s(k-1)+(H+k^d)^2(k+1)\le (H+k^d)^2(k+1)(k-1)+(H+k^d)^2(k+1)=(H+k^d)^2k(k+1)$$ and it is straightforward to check that this is at most
$8H^2k^{2d+2}$ for $H,d\ge 1$ and $k\ge 2$.  
On the other hand, for $H,d\ge 1$ and $k\ge 2$, we have
$$\frac{\log((d+1)(H+1)h^3)}{\log b}(H+k^d)^2(k+1)k\le 12\log((d+1)(H+1)h^3)H^2 k^{2d+2}.$$
Since $(H+k^d)^2 (k+1)k \le 12\log((d+1)(H+1)h^3)H^2 k^{2d+2}$,
the definition (\ref{eq: HH0}) of $H_0$ gives    
$$H_0\le 12\log((d+1)(H+1)h^3)H^2 k^{2d+2}.$$
Using this estimate for $H_0$ now gives
$$\mu(F(a/b))\leq 4^{768\log((d+1)(H+1)h^3)H^6 k^{6d+4}d^2}k^{5{(8H^2k^{2d+1}d)}^2}.$$
Since $4<e^2$, $6d+4\le 10d$, and $4d+2\le 6d$, we see that this estimate is less than 
$$((d+1)(H+1)h^3)^{1536 H^6 k^{10d}d^2} k^{320H^4k^{6d}d^2}.$$ We now obtain the desired result by noting that $(d+1)(H+1)h^3\le 4dHh^3$.
Moreover, we notice this upper bound 
applies whenever
$$\rho=\log|a|/\log(b)\le 1/((s+1)(d+1)+1),$$
thus, in particular, it applies for $\rho < 1/8H^2dk^{2d+1}$. 
\end{proof}

\section{Free modules over polynomial rings}\label{QS}

In this section, we prove a quantitative result on the unimodular completion of matrices with coefficients in $\mathbb{Q}[x]$ which is used in the next section to give a quantitative bound on the irrationality exponent of regular numbers (see Proposition~\ref{QSprop} and Theorem \ref{regularkkernel}). This result is a quantitative version of a special case of the following result of Quillen and Suslin: given a unimodular $m\times n$ matrix ${\bf A}$ ($m\leq n$) over $\mathbb{C}[x_1,\ldots,x_r]$, there exists a unimodular $n\times n$ matrix ${\bf U}$ over $\mathbb{C}[x_1,\ldots,x_r]$ such that $${\bf A}\cdot{\bf U}=\left[\begin{matrix} 1&0&\cdots&0&0&\cdots&0\\ *&1&\cdots&0&0&\cdots&0\\ \vdots&\ddots&\ddots&\vdots&\vdots&&\vdots\\ *&\cdots&*&1&0&\cdots&0\end{matrix}\right].$$ See Fitchas and Galligo \cite{FG1990} and Logar and Sturmfels \cite{LS1992} for more general algorithms and less specialised degree bounds on the entries of ${\bf U}$.

\begin{defn} Let $A$ be an $m\times n$ matrix with $m\le n$ and with entries in a commutative ring $R$.  We say that $A$ is \emph{unimodular} if the induced map $A:R^n\to R^m$ is surjective.
\end{defn}

\begin{defn} Let $A$ and $B$ be matrices of the same size with entries in a commutative ring $R$. We say that matrix $A$ is {\em row equivalent} to a matrix $B$ if matrix $B$ can be obtained by applying a finite sequence of invertible elementary row operations to $A$. We will denote this equivalence by $A\equiv B$.
\end{defn}

\begin{lem}\label{get0s} Let $\mathbf{A}(x)\in\mathbb{Q}[x]^{m\times n}$ be a matrix with polynomial entries of degree at most $H$. Then there is a polynomial $a(x)\in\mathbb{Q}[x]$ with $\deg a(x)\leq H$ and a matrix $\mathbf{B}(x)\in\mathbb{Q}[x]^{m\times (n-1)}$ with polynomial entries of degree at most $2H$ such that $\mathbf{A}(x)$ is row equivalent to the $m\times n$ matrix \begin{equation}\label{AB}\left[\begin{matrix} a(x) &  
\\ \mathbf{0}_{(m-1)\times 1} &\mathbf{B}(x) & \end{matrix}
\right].\end{equation}
\end{lem}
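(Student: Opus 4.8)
The plan is to reduce the first column of $\mathbf{A}(x)$ to a single entry $a(x)$ in the top-left position by a sequence of elementary row operations, using the Euclidean algorithm in $\mathbb{Q}[x]$ applied to the entries of that column, while keeping careful track of how the degrees of the remaining entries grow. Write the first column of $\mathbf{A}(x)$ as $(\alpha_1(x),\dots,\alpha_m(x))^{\rm T}$, each $\alpha_i(x)$ of degree at most $H$. First I would let $a(x)=\gcd(\alpha_1(x),\dots,\alpha_m(x))$ (taken monic, say); then $\deg a(x)\le H$. By B\'ezout there are polynomials $u_i(x)$ with $\sum_i u_i(x)\alpha_i(x)=a(x)$, and by performing row operations one can arrange for $a(x)$ to appear in the $(1,1)$ position and then clear all other entries of the first column, since each $\alpha_i(x)$ is a polynomial multiple of $a(x)$. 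The resulting matrix has the block form \eqref{AB}; what remains is to bound the degrees of the entries of $\mathbf{B}(x)$.

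The degree bookkeeping is the crux, so I would not invoke B\'ezout coefficients directly (those can have large degree) but instead use the structure of the Euclidean algorithm more carefully. Concretely, I would process the first column two entries at a time: given two entries $f(x),g(x)$ of degrees at most $D$ with $g\ne 0$, one division step $f\mapsto f-q(x)g$ replaces $f$ by a polynomial of degree $<\deg g\le D$, where $q(x)$ has degree at most $D$; this row operation multiplies the other entries of one row by at most a degree-$D$ factor when we combine rows, so after one such reduction the ``non-first-column'' entries have degree bounded by (old bound) $+\,D$. The key observation is that throughout the reduction of the first column, every pivot entry we ever divide by has degree at most $H$ (it is obtained from entries already of degree $\le H$ by taking remainders, which only decreases degree), so each elementary operation inflates the degrees of the entries in $\mathbf{B}(x)$ by at most $H$. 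The subtle point is to check that only a bounded number — in fact, a number small enough to yield the bound $2H$ rather than something growing with $m$ — of degree-increasing operations affect any fixed entry of $\mathbf{B}(x)$.

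To get the clean bound $2H$, I would argue as follows: rather than a long chain of pairwise reductions, choose a single index $i_0$ with $\alpha_{i_0}(x)$ of minimal degree among the nonzero $\alpha_i(x)$; actually, better, first replace the first column by its gcd $a(x)$ placed in row $1$ using the following one-shot description. Since $a(x)\mid \alpha_i(x)$ for all $i$, write $\alpha_i(x)=a(x)\beta_i(x)$ with $\deg\beta_i(x)\le H-\deg a(x)$. Then the elementary operation ``add $\sum_{i\ge 2} c_i\cdot(\text{row }i)$ to row $1$'' for suitable \emph{constants} — wait, that only works if the $\beta_i$ already B\'ezout to a constant, which they need not. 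So instead I would do it by one genuine Euclidean sweep: order the rows so that $\alpha_1(x)$ has minimal degree among the nonzero entries; replace each $\alpha_i(x)$ ($i\ge 2$) by $\alpha_i(x)-q_i(x)\alpha_1(x)$, which has degree $<\deg\alpha_1(x)$, with $\deg q_i(x)\le H$; this is a single batch of row operations, after which every entry of the trailing block $\mathbf{B}$-part has degree at most $H+H=2H$, and the first column now has a strictly smaller maximal degree among its lower entries. Repeating, the maximal degree in the lower part of the first column strictly decreases at each batch, but — and this is the point I would verify carefully — after the \emph{first} batch every entry in columns $2,\dots,n$ already has degree at most $2H$, and subsequent batches only ever subtract multiples (of degree $\le H$, in fact $\le$ the current pivot degree) of rows from one another, so I must show these later batches do not push the degree above $2H$. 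Here the resolution is that in later batches the pivot $\alpha_1(x)$ has degree $\le H$ still (degrees only go down in the first column), but the multipliers $q_i(x)$ then have degree $\le H$ as well, so a later operation on an entry already of degree $\le 2H$ could in principle reach $3H$. The cleanest fix, and the one I expect to use, is to accumulate \emph{all} the division steps symbolically: the net transformation on the first column is left-multiplication by an $m\times m$ matrix of the form $\mathbf{E}(x)$ whose action sends $(\alpha_1,\dots,\alpha_m)^{\rm T}$ to $(a(x),0,\dots,0)^{\rm T}$, and one checks that $\mathbf{E}(x)$ can be taken with entries of degree at most $H$; since the other columns of $\mathbf{A}(x)$ have entries of degree $\le H$, the product $\mathbf{E}(x)\cdot(\text{column})$ has degree at most $2H$. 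The existence of such an $\mathbf{E}(x)$ with the degree-$H$ bound on its entries is exactly a matrix form of the fact that a row vector of polynomials of degree $\le H$ can be reduced to its gcd by a matrix from $\mathrm{GL}_m(\mathbb{Q}[x])$ whose entries have degree $\le H$, which follows by induction on $m$ from the two-variable Euclidean step. I expect the main obstacle to be precisely this last claim — pinning down a unimodular row-reduction matrix $\mathbf{E}(x)$ with the sharp degree bound $\deg \mathbf{E}(x)_{ij}\le H$ — and I would handle it by induction on $m$, at each stage combining the current pivot of degree $\le H$ with one more entry via a single division and an elementary $2\times 2$ unimodular block, verifying that the degrees of the accumulated transformation never exceed $H$ because each new pivot has degree no larger than the previous one.
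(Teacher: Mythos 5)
Your opening moves are essentially the paper's: swap so that the minimal-degree entry of the first column is the pivot, subtract $q_i(x)$ times row $1$ from row $i$ to strictly decrease the degrees in the first column, and iterate. Where you go off the rails is the degree bookkeeping. You record that one batch inflates the entries in columns $2,\dots,n$ to degree $\le 2H$, then correctly observe that a naive iteration of a ``$+H$ per batch'' estimate would overshoot to $3H, 4H, \ldots$, and conclude that a different strategy is needed. But the bound $2H$ after the first batch is not sharp: if $\alpha_1(x)$ is the first pivot, then the multipliers have degree $\le H - \deg\alpha_1$, so the trailing entries are bounded by $2H - \deg\alpha_1$, not merely $2H$. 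This is the invariant the paper keeps: after the $j$-th batch, with pivot $a_{1,1}^{(j)}$, every entry in columns $2,\dots,n$ has degree at most $2H - \deg a_{1,1}^{(j)}$. Since the pivot degree strictly decreases at each batch, the multiplier in the $(j{+}1)$-st batch has degree $\le \deg a_{1,1}^{(j)} - \deg a_{1,1}^{(j+1)}$, the row-$1$ entries being subtracted have degree $\le 2H - \deg a_{1,1}^{(j)}$, and these bounds telescope to exactly $2H - \deg a_{1,1}^{(j+1)}$. The induction closes cleanly, and the final bound is $\le 2H$. You were one refinement of the estimate away from finishing; the ``slack'' that saves the later batches is precisely the term you dropped.

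The fallback you propose — accumulating the row operations into a single unimodular matrix $\mathbf{E}(x)$ with entries of degree $\le H$ — does not work, and it is worth seeing why. The first row of $\mathbf{E}(x)$ consists of B\'ezout-type coefficients expressing the gcd of $\alpha_1,\dots,\alpha_m$, and those coefficients need not be of degree $\le H$: in fact, a lemma later in this very paper shows only that a unimodular row in $\mathbb{Q}[x]^m$ with entries of degree $\le H$ admits B\'ezout coefficients of degree less than $(m-1)H$, and this bound is essentially sharp. So a degree-$H$ bound on $\mathbf{E}(x)$ is simply false in general, and the product $\mathbf{E}(x)\mathbf{A}(x)$ having entries of degree $\le 2H$ is a cancellation phenomenon, not something you can read off from degree bounds on the factors. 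The paper avoids this entirely by never estimating the transformation matrix at all — it tracks the degrees of the reduced matrix directly through the invariant described above. You should abandon the $\mathbf{E}(x)$ route and instead sharpen the batch estimate.
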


\begin{proof} Let $\mathbf{A}(x)=(a_{i,j}(x))\in\mathbb{Q}[x]^{m\times n}$ be a matrix with polynomial entries of degree at most $H$. If the first column of $\mathbf{A}(x)$ contains only zero entries then the result follows trivially. Thus we can assume that there is some nonzero entry in the first column. Interchanging rows as necessary, we can assume that the $(1,1)$-entry of $\mathbf{A}(x)$ is nonzero of minimal degree within the first column. 

By now adding the appropriate multiple of row $1$ to each of the other rows we can reduce the degrees of the first entry of each row $i$ for $i\in\{2,\ldots,m\}$, and we have that \begin{equation}\label{A1}\mathbf{A}(x)\equiv \left[\begin{matrix} a_{1,1}^{(1)}(x) & & & \\  \vdots & & \mathbf{B}^{(1)}(x) & \\ a_{m,1}^{(1)}(x) & & & \end{matrix}\right],\end{equation} where $\deg a_{i,1}^{(1)}(x)<\deg a_{1,1}^{(1)}(x)\leq H$ for $i=2,\ldots,m$, and the entries of $\mathbf{B}^{(1)}(x)$ are bounded in degree by $2H-\deg a_{1,1}^{(1)}(x)=2H-\deg a_{1,1}^{(1)}(x).$ Now interchange rows so that again the $(1,1)$-entry of the matrix on the right-hand side of \eqref{A1} is of minimal degree within the first column (note that this degree is now strictly less than $H$). Repeat the process of adding the appropriate multiple of the new row $1$ to each of the other rows to again reduce the degrees of the first entry of each row $i$ for $i\in\{2,\ldots,m\}$. Thus we have that \begin{equation}\label{A2}\mathbf{A}(x)\equiv \left[\begin{matrix} a_{1,1}^{(2)}(x) & & & \\  \vdots & & \mathbf{B}^{(2)}(x) & \\ a_{m,1}^{(2)}(x) & & & \end{matrix}\right],\end{equation} where $$\deg a_{i,1}^{(2)}(x)<\deg a_{1,1}^{(2)}(x)<\deg a_{1,1}^{(1)}(x)\leq H$$ for $i=2,\ldots,m$, and the entries of $\mathbf{B}^{(2)}(x)$ are bounded in degree by $$H+H-\deg a_{1,1}^{(1)}(x)+\deg a_{1,1}^{(1)}(x)-\deg a_{1,1}^{(2)}(x)=2H-\deg a_{1,1}^{(2)}(x).$$ Continuing in this manner, since the maximum degree of the elements of the first column decreases at each step, there is a $k$ with $0\leq k\leq H$, such that \begin{equation}\label{Ak}\mathbf{A}(x)\equiv \left[\begin{matrix} a_{1,1}^{(k)}(x) & & & \\ \mathbf{0}_{(n-1)\times 1} & &\mathbf{B}^{(k)}(x) & \end{matrix}\right],\end{equation} where $\deg a_{1,1}^{(k)}(x)\leq H$ and the entries of $\mathbf{B}^{(k)}(x)$ are bounded in degree by $$2H-\deg a_{1,1}^{(k)}(x)\leq 2H.$$ This proves the lemma.
\end{proof}

\begin{rem} Note that $a(x)$ in the matrix in \eqref{AB} as given by the proof of Lemma~\ref{get0s} is the greatest common divisor of the entries in the first column of $\mathbf{A}(x)$.
\end{rem}

\begin{lem}\label{2mH} If a $\mathbb{Q}[x]$-submodule $W$ of $\mathbb{Q}[x]^d$ is spanned by polynomial vectors whose coordinates all have degree at most $H$, then $W$ has a $\mathbb{Q}(x)$-basis consisting of polynomial vectors whose coordinates all have degree at most $2^{m} H$, where $m=\rank(W)$. 
\end{lem}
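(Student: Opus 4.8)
The plan is to argue by induction on $m = \rank(W)$, using Lemma \ref{get0s} to peel off one coordinate at a time while keeping track of how the degree bound grows. First I would handle the base case $m = 1$: here $W$ is a rank-one submodule of $\mathbb{Q}[x]^d$ spanned by vectors whose coordinates have degree at most $H$, and since all spanning vectors are $\mathbb{Q}(x)$-proportional, any single nonzero spanning vector already gives a $\mathbb{Q}(x)$-basis of $W$ with coordinate degrees at most $H \le 2^1 H$, so the claim holds trivially. (In fact one could take a primitive vector, but this is not needed for the degree bound.)

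For the inductive step, suppose $W$ has rank $m \ge 2$ and is spanned by polynomial vectors of coordinate degree at most $H$. Arrange these spanning vectors as the rows of a matrix $\mathbf{A}(x) \in \mathbb{Q}[x]^{N \times d}$ (with $N$ the number of generators), so that $\deg$ of every entry is at most $H$. Row operations over $\mathbb{Q}[x]$ — adding polynomial multiples of one row to another, and swapping rows — preserve the $\mathbb{Q}[x]$-span of the rows; more precisely, they preserve $W$ exactly when the operations are invertible over $\mathbb{Q}[x]$, which is the case for the operations used in the proof of Lemma \ref{get0s}. Applying that lemma, $\mathbf{A}(x)$ is row equivalent (over $\mathbb{Q}[x]$) to a matrix of the form
$$\left[\begin{matrix} a(x) & * \\ \mathbf{0} & \mathbf{B}(x) \end{matrix}\right],$$
where $\deg a(x) \le H$ and $\mathbf{B}(x) \in \mathbb{Q}[x]^{(N-1)\times(d-1)}$ has entries of degree at most $2H$. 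The row $(a(x), *)$ has coordinate degree at most $H$ — wait, the entries to the right of $a(x)$ in that first row are among the original entries modified only by the row swaps that precede the column-clearing, so they still have degree at most $H$; I will state this carefully. Let $W'$ be the $\mathbb{Q}[x]$-span of the rows of $\mathbf{B}(x)$ inside $\mathbb{Q}[x]^{d-1}$. Since the total rank dropped by exactly one when we cleared the first column (the first row is $\mathbb{Q}(x)$-independent from the rest, as its leading coordinate $a(x)$ is nonzero while all other rows have zero first coordinate), we get $\rank(W') = m - 1$. By the inductive hypothesis applied to $W'$ with degree bound $2H$, the module $W'$ has a $\mathbb{Q}(x)$-basis $\mathbf{v}_2, \ldots, \mathbf{v}_m \in \mathbb{Q}[x]^{d-1}$ with coordinate degrees at most $2^{m-1}(2H) = 2^m H$. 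Lifting each $\mathbf{v}_i$ to $\tilde{\mathbf{v}}_i := (0, \mathbf{v}_i) \in \mathbb{Q}[x]^d$ and setting $\mathbf{v}_1 := (a(x), *)$, I claim $\mathbf{v}_1, \tilde{\mathbf{v}}_2, \ldots, \tilde{\mathbf{v}}_m$ is a $\mathbb{Q}(x)$-basis of $W$: they lie in $W$ (the $\tilde{\mathbf{v}}_i$ because $\mathbf{v}_i \in W'$ and appending a zero first coordinate corresponds to a row of $\mathbf{B}$, hence a $\mathbb{Q}[x]$-combination of rows of the reduced matrix with zero first entry; and $\mathbf{v}_1$ is the first row of the reduced matrix), they are $\mathbb{Q}(x)$-independent (the block-triangular shape: $\mathbf{v}_1$ has nonzero first coordinate while the others have zero first coordinate, and $\mathbf{v}_2,\ldots,\mathbf{v}_m$ are independent), and there are $m = \rank(W)$ of them, hence they span $W \otimes \mathbb{Q}(x)$. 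Their coordinate degrees are bounded: $\mathbf{v}_1$ has degree at most $H \le 2^m H$, and each $\tilde{\mathbf{v}}_i$ has degree at most $2^m H$. This completes the induction.

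The main obstacle I anticipate is bookkeeping rather than conceptual: one must be careful that the row operations used are genuinely invertible over $\mathbb{Q}[x]$ so that the row span (the module $W$, not just its $\mathbb{Q}(x)$-saturation) is preserved — swaps and adding a polynomial multiple of one row to another are fine — and one must verify that the rank truly drops by exactly one after clearing the first column, which relies on the fact that $a(x) \ne 0$ (if the first column of $\mathbf{A}$ is identically zero the claim is immediate by induction on the number of coordinates with an even better bound). A secondary subtlety is the degree accounting: the entries of $\mathbf{B}(x)$ jump from $H$ to $2H$ in a single application of Lemma \ref{get0s}, which is exactly what produces the factor $2$ per induction step and hence the $2^m$ in the final bound; I should make sure the induction is set up so the hypothesis is applied with the correct (doubled) degree bound. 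Finally, one should note that since $W$ has rank $m$, a $\mathbb{Q}(x)$-basis of the $\mathbb{Q}(x)$-span of $W$ consisting of $m$ vectors of $W$ is what is being claimed, which matches the statement.
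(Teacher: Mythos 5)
Your proof is correct for the lemma as stated, and both you and the paper rest on Lemma~\ref{get0s}, but the organization differs. The paper iterates Lemma~\ref{get0s} in a single pass to drive the matrix into a staircase form in which row~$i$ has entries of degree at most $2^{i}H$ and the residual block below the staircase has entries of degree at most $2^{m}H$; since $\rank(W)=m$, that residual block is forced to vanish, so the first $m$ echelon rows are a $\mathbb{Q}[x]$-module basis for $W$ of the required degree. You instead apply Lemma~\ref{get0s} once and induct on $m=\rank(W)$ through the lower-right block $\mathbf{B}(x)$; the factor of $2$ per step produces the same $2^{m}$.

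Two cautionary points. First, your parenthetical claim that the entries of the first row to the right of $a(x)$ are ``modified only by the row swaps'' and hence have degree at most $H$ is not accurate: the proof of Lemma~\ref{get0s} repeats swap-then-clear until the first column is reduced, and after the first clearing step the row brought into position~$1$ can have entries of degree up to $2H$. This matches the paper's bound $2^{1}H$ for row~$1$, and it does not break your estimate since $2H\le 2^{m}H$ once $m\ge 1$, but the reasoning in the parenthetical is wrong. Second, and more consequentially, your induction yields only a $\mathbb{Q}(x)$-basis of $W$, which is exactly the literal statement, but the paper's argument actually yields a $\mathbb{Q}[x]$-module basis, and Lemma~\ref{kerAq} later cites Lemma~\ref{2mH} precisely to extract a $\mathbb{Q}[x]$-module basis. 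Your construction cannot be trusted to give one as written, because the lifts of a $\mathbb{Q}(x)$-basis of $W'$ together with $\mathbf{v}_1$ need not generate $W$ over $\mathbb{Q}[x]$. The fix is cheap: strengthen the inductive hypothesis to ``$\mathbb{Q}[x]$-module basis,'' lift a $\mathbb{Q}[x]$-module basis of $W'$, and note that $W$ is the direct sum of $\mathbb{Q}[x]\mathbf{v}_1$ and the lift of $W'$ (every row below the first has zero first coordinate and $a(x)\neq 0$), so the lifted basis together with $\mathbf{v}_1$ is a $\mathbb{Q}[x]$-module basis for $W$.
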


\begin{proof} For $i\in\{1,\ldots,n\}$ let $\mathbf{v}_i(x)$ be the $1\times d$ vector whose coordinates all have degree at most $H$ such that $W=\spn_{1\leq i\leq n}\{\mathbf{v}_i(x)\}$. Applying Lemma \ref{get0s} we have that $$\left[\begin{matrix} \mathbf{v}_1(x)\\ \vdots\\ \mathbf{v}_n(x)\end{matrix}\right]\equiv \left[\begin{matrix} a_1(x)& *& &&&&  &\cdots&*\\ 0&\cdots&0& a_2(x) &*&\cdots&&&*\\ &&&&\ddots&&&\\ &&&&0&a_m(x)&*&\cdots &*\\ &&&&&0&&&\\ \vdots&&&&&\vdots&&\mathbf{B}(x)&\\ 0&&&&\cdots&0&&&\\
\end{matrix}\right],$$ where the entries in row $i$ are bounded in degree by $2^{i}H$ for $i\in\{1,\ldots,m\}$ and $\mathbf{B}(x)$ is a matrix whose entries are of degree at most $2^mH$. Now if there are any nonzero entries in $\mathbf{B}(x)$ then we have $\rank(W)>m$, and so since this is not the case, we necessarily have that $\mathbf{B}(x)$ has only zero entries. Thus the first $m$ row vectors of the right-hand matrix in the above equivalence are a $\mathbb{Q}[x]$-module basis for $W$. Since these vectors have entries of degree at most $2^{m}H$ we arrive at the desired result.
\end{proof}

\begin{lem} Let $d\geq 2$ and $\mathbf{w}(x)\in\mathbb{Q}[x]^d$ be a unimodular row all of whose coordinates have degree bounded by $H$. Then there exist $d$ polynomials $q_1(x),\ldots,q_d(x)\in\mathbb{Q}[x]$ each of degree less than $(d-1)H$ such that $$[q_1(x),\ldots,q_d(x)]\cdot \mathbf{w}=1.$$
\end{lem}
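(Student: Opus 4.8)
The plan is to argue by induction on $d\ge 2$, using throughout that $\mathbb{Q}[x]$ is a principal ideal domain, so that a row $\mathbf{w}=(w_1(x),\ldots,w_d(x))$ is unimodular exactly when $\gcd(w_1,\ldots,w_d)$ is a unit, i.e. equals $1$ after rescaling, and that any B\'ezout relation can be forced to have small degree by division with remainder. (We may assume $H\ge 1$, the case $H=0$ being degenerate.) For the base case $d=2$ we have $\gcd(w_1,w_2)=1$. If one of $w_1,w_2$ — say $w_2$ — is a nonzero constant, take $q_1=0$ and $q_2=1/w_2$. Otherwise both are non-constant: starting from any relation $q_1w_1+q_2w_2=1$, replace $q_1$ by its remainder upon division by $w_2$ and adjust $q_2$ accordingly, so that $\deg q_1<\deg w_2$; then $q_2w_2=1-q_1w_1$ forces $\deg q_2<\deg w_1$. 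Since $\deg w_1,\deg w_2\le H$, this is the required bound $\deg q_1,\deg q_2<H=(d-1)H$.

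For the inductive step, let $d\ge 3$ and assume the statement for rows of length $d-1$. Given unimodular $\mathbf{w}=(w_1,\ldots,w_d)$ with $\deg w_i\le H$, set $g=\gcd(w_1,\ldots,w_{d-1})$. If $g$ is a unit, then $(w_1,\ldots,w_{d-1})$ is already unimodular and the induction hypothesis provides $q_1,\ldots,q_{d-1}$ of degree less than $(d-2)H\le (d-1)H$ with $\sum_{i<d}q_iw_i=1$; set $q_d=0$. If $w_1=\cdots=w_{d-1}=0$, then unimodularity forces $w_d$ to be a nonzero constant, and we take $q_d=1/w_d$, $q_i=0$ for $i<d$. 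In the remaining case $g$ is a non-constant polynomial; write $e=\deg g$, so $1\le e\le H$, and $w_i=g\,v_i$ for $i<d$. Then $(v_1,\ldots,v_{d-1})$ is unimodular with $\deg v_i\le H-e$, and $\gcd(g,w_d)=1$ because $\mathbf{w}$ is unimodular — in particular $w_d\ne 0$.

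Now apply the case $d=2$ to $(g,w_d)$: there exist $s(x),t(x)\in\mathbb{Q}[x]$ with $sg+tw_d=1$, $\deg s<\deg w_d\le H$ and $\deg t<\deg g=e$ (when $w_d$ is a unit take instead $s=0$, $t=1/w_d$). If the $v_i$ happen to be all constant (the case $e=H$), choose $j$ with $v_j$ a nonzero constant and put $r_j=1/v_j$ and $r_i=0$ for $i\ne j$; otherwise $H-e\ge 1$ and the induction hypothesis supplies $r_1,\ldots,r_{d-1}$ with $\sum_{i<d}r_iv_i=1$ and $\deg r_i<(d-2)(H-e)$. Either way $\sum_{i<d}r_iv_i=1$, hence
$$1=sg+tw_d=sg\sum_{i<d}r_iv_i+tw_d=\sum_{i=1}^{d-1}(sr_i)\,w_i+t\,w_d,$$
and we set $q_i=sr_i$ for $i<d$ and $q_d=t$. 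For the degrees, $\deg q_d=\deg t<e\le H\le (d-1)H$, while for $i<d$ we have $\deg q_i\le\deg s+\deg r_i<H+(d-2)(H-e)\le H+(d-2)H=(d-1)H$ (and $\deg q_i\le\deg s<H$ in the degenerate sub-case). This completes the induction.

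The only real work is the degree bookkeeping: one must carry $e=\deg\gcd(w_1,\ldots,w_{d-1})$ through the recursion and notice that it can only improve the bound, and one must dispose of the degenerate configurations (a coordinate that is a unit, the truncated row vanishing, or the truncated row becoming constant after dividing out $g$). The one step requiring a genuine idea is the base case, where an arbitrary B\'ezout identity is replaced by one of controlled degree via division with remainder; everything beyond that is routine. One could alternatively extract such a completion from the column-reduction procedure implicit in the Quillen--Suslin setup of this section, but the elementary induction above is cleaner and yields the sharp bound $(d-1)H$ directly.
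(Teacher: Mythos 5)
Your proof is correct, and it takes a genuinely different route from the paper's. The paper argues directly: it takes any B\'ezout relation $\sum q_i w_i = 1$, divides each $q_i$ (for $i<d$) by $t_i = \prod_{j\neq i} w_j$ to get remainders $r_i$ of degree less than $(d-1)H$, observes that $t_i w_i = t_d w_d$ so the adjustment can be pushed into the last coordinate, and then bounds the degree of the adjusted $q_d$ from the identity $u\,w_d = 1 - \sum_{i<d} r_i w_i$ together with the WLOG normalization $\deg w_d \ge \deg w_i$. Your inductive argument instead peels off $g = \gcd(w_1,\dots,w_{d-1})$, applies the two-variable case to $(g, w_d)$ and the $(d-1)$-variable case to the quotient row $(v_1,\dots,v_{d-1})$, and multiplies the two certificates. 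The trade-offs: the paper's version is a one-shot reduction with no recursion, but its intermediate minimality claim and the assertion that ``$r_i$ is nonzero for some $i<d$'' are somewhat delicate in degenerate configurations (all $w_i$ constant) that it leaves implicit; your version handles those degeneracies explicitly and transparently, at the cost of more case analysis, and the intermediate bound $\deg q_i < H + (d-2)(H-e)$ even beats $(d-1)H$ whenever $g$ is non-constant. One small inaccuracy worth fixing: the parenthetical ``(the case $e=H$)'' is not quite right — the $v_i$ can all be constant even when $e<H$ (e.g.\ repeated $w_i$) — but this does not affect the argument, since your two branches together cover all cases and the first branch works whenever the $v_i$ are all constant, regardless of $e$.
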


\begin{proof} Write $\mathbf{w}(x)=[w_1(x), w_2(x),\ldots ,w_d(x)].$  We may assume without loss of generality that the degree of $w_d$ is at least as large as the degree of $w_i$ for $i<d$.  By unimodularity, there exist polynomials $q_1(x),\ldots, q_d(x)$ such that 
$$\sum_i q_i(x)w_i(x)=1.$$  We pick $q_1,\ldots ,q_d$ satisfying this equation with 
$$\max({\rm deg}(q_1),\ldots ,{\rm deg}(q_d)\}$$ minimal.  We claim that each $q_i$ has degree less than $(d-1)H$.  To see this, let $t_i(x)=\prod_{j\neq i} w_j(x)$ for $i=1,\ldots ,d$.  Then each $t_i(x)$ has degree at most $H(d-1)$ and so by the division algorithm, we may write 
$q_i(x) = s_i(x)t_i(x)+r_i(x)$ for some polynomials $s_1(x),\ldots ,s_d(x)$.  We note that $r_i(x)$ is nonzero for some $i<d$ since otherwise, we would have $w_d(x)|q_i(x)$ for $i<d$, which is impossible.

By construction, $t_i(x)w_i(x)- t_d(x)w_d(x)=0$ for all $i$ and so we see
$$[r_1(x),r_2(x),\ldots ,r_{d-1}(x), s_1(x)t_d(x)+\cdots +s_{d-1}(x)t_d(x) + q_d(x)] \mathbf{w}(x)=1.$$
Let $u(x)=s_1(x)t_d(x)+\cdots +s_{d-1}(x)t_d(x) + q_d(x)$.  Then
$$u(x)w_d(x) = 1 -\sum_{i<d} r_i(x)w_i(x).$$  Since the right-hand side has degree strictly less than $(d-1)H+{\rm deg}(w_d)$, we see that $u(x)$ has degree strictly less than $(d-1)H$, which gives the desired result.
\end{proof}

\begin{lem}\label{kerAq} Let $\mathbf{w}(x)\in\mathbb{Q}[x]^d$ be a unimodular row with entries that have degree bounded by $H$ and let $A:\mathbb{Q}[x]^d\to\mathbb{Q}[x]$ be the map defined by $A(\mathbf{v}(x))=\mathbf{v}(x)\cdot \mathbf{w}(x).$ Then $\ker A$ is spanned by a finite number of vectors all of whose coordinates have degree bounded by $H$. In particular, $\ker A$ has a $\mathbb{Q}[x]$-module basis consisting of vectors whose coordinates have degree at most $2^{d-1}H.$
\end{lem}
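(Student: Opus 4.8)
The plan is to write down explicit degree-$\le H$ generators of $\ker A$ — the ``Koszul'' relations among the entries of $\mathbf{w}$ — and then apply Lemma~\ref{2mH} to pass to a basis with the claimed bound. Write $\mathbf{w}(x)=[w_1(x),\ldots,w_d(x)]$ with $\deg w_i(x)\le H$ for each $i$. Unimodularity of the row $\mathbf{w}$ means precisely that the ideal of $\mathbb{Q}[x]$ generated by $w_1(x),\ldots,w_d(x)$ is the whole ring, so there are polynomials $q_1(x),\ldots,q_d(x)$ with $\sum_{i=1}^d q_i(x)w_i(x)=1$. For each pair $1\le i<j\le d$ set $\mathbf{u}_{ij}:=w_i(x)\,e_j-w_j(x)\,e_i\in\mathbb{Q}[x]^d$; every coordinate of $\mathbf{u}_{ij}$ has degree at most $H$, and $A(\mathbf{u}_{ij})=w_i(x)w_j(x)-w_j(x)w_i(x)=0$, so the $\binom{d}{2}$ vectors $\mathbf{u}_{ij}$ all lie in $\ker A$.

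The first step is to check that these vectors span $\ker A$ as a $\mathbb{Q}[x]$-module; one inclusion is clear. For the other, let $\mathbf{v}=(v_1(x),\ldots,v_d(x))\in\ker A$, so $\sum_{\ell}v_\ell(x)w_\ell(x)=0$, and put $c_{ij}(x):=q_i(x)v_j(x)-q_j(x)v_i(x)$ for $i<j$. I claim $\mathbf{v}=\sum_{i<j}c_{ij}(x)\mathbf{u}_{ij}$. Indeed, the $k$-th coordinate of the right-hand side collects the term $c_{ik}(x)w_i(x)$ for each $i<k$ and the term $-c_{kj}(x)w_j(x)$ for each $j>k$; substituting the definition of $c_{ij}$ and regrouping gives $v_k(x)\sum_{\ell\ne k}q_\ell(x)w_\ell(x)-q_k(x)\sum_{\ell\ne k}v_\ell(x)w_\ell(x)$, and inserting the missing $\ell=k$ terms together with the relations $\sum_\ell q_\ell w_\ell=1$ and $\sum_\ell v_\ell w_\ell=0$ reduces this to $v_k(x)\bigl(1-q_k(x)w_k(x)\bigr)-q_k(x)\bigl(0-v_k(x)w_k(x)\bigr)=v_k(x)$. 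Thus $\ker A$ is spanned by the finitely many vectors $\mathbf{u}_{ij}$, each of coordinate degree at most $H$, which is the first assertion.

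For the ``in particular'' clause, note that $A$ is surjective, so $\mathbb{Q}[x]^d/\ker A\cong\mathbb{Q}[x]$ and hence $\rank(\ker A)=d-1$. Since $\ker A$ is a $\mathbb{Q}[x]$-submodule of $\mathbb{Q}[x]^d$ spanned by vectors all of whose coordinates have degree at most $H$, Lemma~\ref{2mH} — whose proof in fact produces a $\mathbb{Q}[x]$-module basis — applied with $m=d-1$, yields a $\mathbb{Q}[x]$-module basis of $\ker A$ consisting of vectors whose coordinates have degree at most $2^{d-1}H$, as desired. (When $d=1$ unimodularity forces $w_1(x)$ to be a nonzero constant, $\ker A=0$, and there is nothing to prove.)

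The only computation of substance is the coordinate identity in the second paragraph; it is an elementary finite manipulation, and the sole point requiring care is the bookkeeping when the $\ell=k$ terms are separated from the two sums over $i<k$ and $j>k$. I anticipate no real obstacle.
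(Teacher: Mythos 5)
Your proof is correct, and it takes a genuinely different route to the heart of the lemma. Both you and the paper start from the Koszul vectors $w_i e_j - w_j e_i$ (with $d-1 = \mathrm{rk}(\ker A)$ and Lemma~\ref{2mH} supplying the final bound of $2^{d-1}H$), but the arguments that $\ker A$ is spanned in degree at most $H$ diverge. The paper does \emph{not} show that the Koszul vectors alone span: it runs a degree-reduction argument, subtracting multiples of $\mathbf{b}_{i,d}(x)$ from an arbitrary $\mathbf{u}\in\ker A$ to land in the finite-dimensional space of kernel elements of coordinate degree strictly less than $H$, and then takes the Koszul vectors together with that finite-dimensional piece as the low-degree spanning set. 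You instead use unimodularity in a sharper way: the Bezout identity $\sum_\ell q_\ell w_\ell = 1$ lets you write down explicit coefficients $c_{ij} = q_i v_j - q_j v_i$ so that $\mathbf{v}=\sum_{i<j}c_{ij}\mathbf{u}_{ij}$ on the nose, yielding the stronger and cleaner conclusion that the $\binom{d}{2}$ Koszul vectors already generate $\ker A$ as a $\mathbb{Q}[x]$-module. Your coordinate computation is correct. The paper's approach has the minor virtue of not needing the explicit Bezout coefficients, while yours gives an explicit, canonical generating set and is the standard argument that the first Koszul syzygy resolves a unimodular row. Either way Lemma~\ref{2mH} does the final work, and your handling of the degenerate case $d=1$ is a reasonable addition (the paper simply assumes $d\ge 2$ at the start of its proof).
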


\begin{proof} Let $d\geq 2$ and $\mathbf{w}(x)\in\mathbb{Q}[x]^d$ be a unimodular row with entries that have degree bounded by $H$ and let $A:\mathbb{Q}[x]^d\to\mathbb{Q}[x]$ be the map defined by $A(\mathbf{v}(x))=\mathbf{v}(x)\cdot \mathbf{w}(x).$ For $i,j\in\{1,\ldots,d\}$, let $\mathbf{b}_{i,j}(x)$ denote the $1\times d$ row vector with $-w_j(x)$ in the $i$-th position and $w_i(x)$ in the $j$-th position 
and with all other entries equal to zero.  We note that $$\ker A\supseteq \spn_{1\leq i<j\leq d}\left\{\mathbf{b}_{i,j}(x)\right\}.$$  We let $W_0$ denote the $\mathbb{Q}[x]$-span of $\{\mathbf{b}_{i,j}(x)\}$.  We claim that $\ker A$ is spanned by $W_0$ along with a set of vectors with the property that every coordinate has degree strictly less than $H$.

To see this, note that we may assume without loss of generality that the degree of $w_d(x)$ is at least as big as the degree of $w_i(x)$ for $i<d$.  Let $\mathbf{u}(x)\in \ker A$ and let $u_i(x)$ denote the $i$-th coordinate of $\mathbf{u}(x)$.  Then, for each $i<d$, we have $\deg(u_i(x)-p_i(x)w_d(x))<\deg(w_d(x))$ for some polynomial $p_i(x)$.  Then
$\mathbf{u}_0(x):=\mathbf{u}(x)+\sum_{i<d} p_i(x)\mathbf{b}_{i,d}(x)$ has the property that each of its coordinates has degree less than $H$, except for possibly the $d$-th coordinate.  We let $u_{0,i}(x)$ denote the $i$-th coordinate of $\mathbf{u}_0(x)$ for $i\in \{1,\ldots ,d\}$.  Then we have
$$\sum_i u_{0,i}(x) w_i(x) = 0.$$  By construction,
$$\sum_{i<d} u_{0,i}(x) w_i(x)$$ has degree at most $2\deg(w_d(x))-1$.  It follows that $u_{0,d}(x)$ must have degree at most $\deg(w_d(x))-1$ and so $\mathbf{u}_0(x)$ has the property that each of its coordinates has degree at most $H-1$.  Thus we see that we may use the procedure above to produce a basis for $W_0$ consisting of the elements $\left\{\mathbf{b}_{i,j}(x)\right\}$ along with a set of vectors with the property that every coordinate has degree strictly less than $H$.  Thus $\ker A$ is spanned by a set of vectors with the property that every coordinate in each vector has degree at most $H$.  Thus there is a $\mathbb{Q}[x]$-module basis for $\ker A$ consisting of polynomial vectors whose coordinates all have degree at most $2^{d-1}H$ by Lemma~\ref{2mH}.
\end{proof}

\begin{lem} Let $\mathbf{w}(x)\in\mathbb{Q}[x]^d$ be a unimodular row with entries that have degree bounded by $H$ and let $A:\mathbb{Q}[x]^d\to\mathbb{Q}[x]$ be the map defined by $A(\mathbf{v}(x))=\mathbf{v}(x)\cdot \mathbf{w}(x).$ Suppose that $\mathbf{q}(x)\in\mathbb{Q}[x]^d$ is a unimodular row with entries that have degree bounded by $(d-1)H$ such that $A(\mathbf{q}(x))=1$. Then $\mathbf{q}(x)$ taken together with a $\mathbb{Q}[x]$-module basis for $\ker A$ is a $\mathbb{Q}[x]$-module basis for $\mathbb{Q}[x]^d$. Moreover, this basis can be taken so that the coordinates of the basis vectors have degree bounded by $2^{d-1}H$.
\end{lem}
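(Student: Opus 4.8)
The plan is to exhibit $\mathbb{Q}[x]^d$ as the internal direct sum $\ker A\oplus\mathbb{Q}[x]\mathbf{q}(x)$ and then read off the degree bound from Lemma \ref{kerAq}.

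First I would check that $A$ admits a $\mathbb{Q}[x]$-linear section. Since $\mathbb{Q}[x]$ is free of rank one, a section is determined by the image of $1$, and the natural candidate is $s\colon\mathbb{Q}[x]\to\mathbb{Q}[x]^d$, $f(x)\mapsto f(x)\mathbf{q}(x)$. By hypothesis $A(\mathbf{q}(x))=\mathbf{q}(x)\cdot\mathbf{w}(x)=1$, so $A(s(f))=f(x)\cdot A(\mathbf{q}(x))=f(x)$; thus $A\circ s=\mathrm{id}$. Consequently the short exact sequence $0\to\ker A\to\mathbb{Q}[x]^d\xrightarrow{A}\mathbb{Q}[x]\to0$ splits, and $\mathbb{Q}[x]^d=\ker A\oplus\mathbb{Q}[x]\mathbf{q}(x)$. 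Explicitly, any $\mathbf{v}(x)\in\mathbb{Q}[x]^d$ decomposes uniquely as $\bigl(\mathbf{v}(x)-A(\mathbf{v}(x))\mathbf{q}(x)\bigr)+A(\mathbf{v}(x))\mathbf{q}(x)$, the first summand lying in $\ker A$; uniqueness follows by applying $A$ to a relation $\mathbf{k}(x)+c(x)\mathbf{q}(x)=\mathbf{0}$ with $\mathbf{k}(x)\in\ker A$, which forces $c(x)=0$ and then $\mathbf{k}(x)=\mathbf{0}$.

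Next I would invoke Lemma \ref{kerAq} to obtain a $\mathbb{Q}[x]$-module basis $\mathbf{k}_1(x),\ldots,\mathbf{k}_{d-1}(x)$ of $\ker A$ whose coordinates all have degree at most $2^{d-1}H$. Combined with the direct sum decomposition of the previous paragraph, this shows that $\mathbf{q}(x),\mathbf{k}_1(x),\ldots,\mathbf{k}_{d-1}(x)$ is a $\mathbb{Q}[x]$-module basis of $\mathbb{Q}[x]^d$. Finally, each coordinate of $\mathbf{q}(x)$ has degree at most $(d-1)H\le 2^{d-1}H$, so the whole basis has the claimed degree bound.

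The argument is essentially formal. What makes it work without extra effort is that all the genuine content — producing a basis for $\ker A$ with controlled degrees — was already established in Lemma \ref{kerAq}; here the only verifications are that $s$ is a section (immediate from $A(\mathbf{q})=1$) and the elementary inequality $d-1\le 2^{d-1}$, so I do not anticipate any real obstacle.
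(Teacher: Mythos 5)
Your proof is correct and is essentially the same argument as in the paper: both use the splitting of the short exact sequence $0\to\ker A\to\mathbb{Q}[x]^d\xrightarrow{A}\mathbb{Q}[x]\to 0$ (you make the section $f\mapsto f\mathbf{q}$ explicit, whereas the paper just asserts the splitting) together with Lemma~\ref{kerAq} to control the degrees. The only incidental difference is that you explicitly record the elementary inequality $(d-1)H\le 2^{d-1}H$, which the paper leaves implicit.
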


\begin{proof} This follows from $$0\rightarrow{} \ker A\xrightarrow{{\rm id}} \mathbb{Q}[x]^{d-1}\oplus \mathbb{Q}[x]\xrightarrow{A} \mathbb{Q}[x]\xrightarrow{} 0.$$ Since this exact sequence splits, we have that $$\ker A \oplus \mathbb{Q}[x]\cong \mathbb{Q}[x]^d.$$ Now since $\mathbf{q}(x)$ is a unimodular row such that $A(\mathbf{q}(x))=1$, the above isomorphism implies that $\mathbf{q}(x)$ taken together with a $\mathbb{Q}[x]$-module basis for $\ker A$ is a $\mathbb{Q}[x]$-module basis for $\mathbb{Q}[x]^d$. By Lemma \ref{kerAq} since $\ker A$ has a basis with coordinates of degree at most $2^{d-1}H$ and since $\mathbf{q}(x)$ has coordinates of degree at most $(d-1)H$ this basis for $\mathbb{Q}[x]^d$ can be taken so that the coordinates of the basis vectors have degree bounded by $2^{d-1}H$. This proves the lemma.
\end{proof}

We are now in a position to prove the main goal of this section (see also Logar and Sturmfels \cite{LS1992} or Fitchas and Galligo \cite{FG1990}). This result is a direct consequence of the following lemma.

\begin{lem}\label{QSlem} Let $W$ be a $\mathbb{Q}[x]$-submodule of $\mathbb{Q}[x]^d$ of rank $m$ that has $\mathbb{Q}[x]$-module basis of row vectors $\mathbf{w}_1(x),\ldots,\mathbf{w}_m(x)$ whose coordinates all have degree at most $H$. Then there is a $\mathbf{G}(x)\in {\rm SL}_d(\mathbb{Q}[x])$ with entries of degree bounded by $(m+1)H$ such that $$\left[\begin{matrix} \mathbf{w}_1(x)\\ \mathbf{w}_2(x)\\ \vdots\\ \mathbf{w}_m(x)\end{matrix}\right]\mathbf{G}(x)=\left[\begin{matrix} a(x) & 0 & \cdots & 0\\ c_1(x)&&&\\ \vdots&&\mathbf{B}(x)&\\ c_{m-1}(x)&&&\end{matrix}\right]$$ where $\mathbf{B}(x)$ is an $(m-1)\times(d-1)$ matrix, and all of the entries of $\mathbf{B}(x)$, as well as $a(x),c_1(x),\ldots,$ $c_{m-1}(x)$, have degree at most $2H$. Moreover, the matrix $(\mathbf{G}(x))^{-1}$ has entries of degree bounded by $(2m-1)H$.
\end{lem}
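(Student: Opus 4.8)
The target equation describes a column reduction: right-multiplication by $\mathbf{G}(x)$ combines the columns of the $m\times d$ matrix $\left[\begin{smallmatrix}\mathbf{w}_1(x)\\ \vdots\\ \mathbf{w}_m(x)\end{smallmatrix}\right]$, and we must clear the first row to the right of its leading entry. Transposing, this is exactly the task of performing row operations on the $d\times m$ matrix $\mathbf{V}(x)$ whose columns are $\mathbf{w}_1(x)^{\mathrm{T}},\dots,\mathbf{w}_m(x)^{\mathrm{T}}$ — a matrix with polynomial entries of degree at most $H$ — so as to bring it into the block shape $\left[\begin{smallmatrix} a(x) & \ast \\ \mathbf{0} & \mathbf{B}'(x)\end{smallmatrix}\right]$ of Lemma \ref{get0s}. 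The plan is therefore to apply Lemma \ref{get0s} to $\mathbf{V}(x)$: this produces $a(x)\in\mathbb{Q}[x]$ with $\deg a(x)\le H$ and $\mathbf{B}'(x)\in\mathbb{Q}[x]^{d\times(m-1)}$ with entries of degree at most $2H$. Transposing back, $a(x)$ is the leading entry, the first row of $\mathbf{B}'(x)$ becomes the column $(c_1(x),\dots,c_{m-1}(x))^{\mathrm{T}}$, and the remaining rows of $\mathbf{B}'(x)$, transposed, form the $(m-1)\times(d-1)$ block $\mathbf{B}(x)$; all of these have degree at most $2H$, as required.

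Next I would record the transformation realizing this row equivalence. By the construction in Lemma \ref{get0s} it is a composition $\mathbf{E}(x)\in\mathrm{GL}_d(\mathbb{Q}[x])$ of elementary row operations, each of which is either a transposition of two rows or the addition of a polynomial multiple of one row to another; hence $\det\mathbf{E}(x)=\pm1$. If the determinant is $-1$, I would follow the reduction with the operation negating the first row: this replaces $(a,c_1,\dots,c_{m-1})$ by its negative, leaves the block shape and every degree bound untouched, and makes the determinant $1$. Setting $\mathbf{G}(x):=\mathbf{E}(x)^{\mathrm{T}}$ then gives a matrix in $\mathrm{SL}_d(\mathbb{Q}[x])$ with the product of $\left[\begin{smallmatrix}\mathbf{w}_1(x)\\ \vdots\\ \mathbf{w}_m(x)\end{smallmatrix}\right]$ and $\mathbf{G}(x)$ of the asserted form, while $\mathbf{G}(x)^{-1}=(\mathbf{E}(x)^{-1})^{\mathrm{T}}$, where $\mathbf{E}(x)^{-1}$ is the composition of the inverse elementary operations performed in the reverse order.

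The substantive issue — and the one I expect to be the main obstacle — is the degree control, namely that $\mathbf{E}(x)$ has entries of degree at most $(m+1)H$ and $\mathbf{E}(x)^{-1}$ entries of degree at most $(2m-1)H$; since transposition preserves degrees and determinants, these bounds then transfer verbatim to $\mathbf{G}(x)$ and $\mathbf{G}(x)^{-1}$. Multiplying out the elementary matrices produced by the reduction and bounding the product term by term is far too lossy. Instead the plan is to run the algorithm of Lemma \ref{get0s} while carrying the accumulated transformation, and to exploit that (i) each reduction round strictly lowers the degree of the pivot in the first column, so that only finitely many rounds occur, (ii) the quotient multipliers used at each round have degree bounded by the current pivot degree, and (iii) the columns $2,\dots,m$ are the only ones whose degrees need watching after transposition. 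Tracking these quantities simultaneously yields growth of the transformation and its inverse that is linear in $m=\rank(W)$, from which the stated bounds follow; this bookkeeping is the technical heart of the argument. Alternatively, one can organise the same reduction as an induction on $m$, clearing one coordinate at a time and verifying that each step enlarges the degree of the transformation by at most a bounded multiple of $H$.
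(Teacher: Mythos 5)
Your structural setup matches the paper's: apply Lemma \ref{get0s} to the transposed $d\times m$ matrix, let $\mathbf{E}(x)$ be the accumulated product of elementary operations, set $\mathbf{G}(x)=\mathbf{E}(x)^{\mathrm{T}}$, and negate a row if necessary to force $\det\mathbf{G}(x)=1$; the transposed block form does come out as $\left[\begin{smallmatrix}a & 0\\ c & \mathbf{B}\end{smallmatrix}\right]$ with the stated $2H$ bound on $a,c_i,\mathbf{B}$. The gap is that you never establish the degree bounds on $\mathbf{G}(x)$ and $\mathbf{G}(x)^{-1}$, which is the entire content of the lemma. You flag this yourself ("this bookkeeping is the technical heart of the argument") and sketch two possible strategies without carrying out either, and the assertion that tracking the Euclidean passes "yields growth of the transformation and its inverse that is linear in $m=\rank(W)$" is not supported by anything: the reduction in Lemma \ref{get0s} operates only on the first column of the $d\times m$ transpose and at no point sees the rank $m$, so no bookkeeping internal to that algorithm can produce a bound expressed in terms of $m$. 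As it stands, the key quantitative claims are asserted, not proved.

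The paper closes this gap by a genuinely different device. Because $\left[\mathbf{w}_1(x),\dots,\mathbf{w}_m(x)\right]^{\mathrm{T}}$ has rank $m$, it contains an invertible $m\times m$ submatrix $\mathbf{W}(x)$ of columns, with entries of degree at most $H$; let $\mathbf{D}(x)$ be the corresponding $m\times m$ submatrix of the reduced form, with entries of degree at most $2H$. Relating $\mathbf{W}(x)$, $\mathbf{D}(x)$ and $\mathbf{G}(x)$ and clearing denominators via the adjugate, one gets $\det(\mathbf{W})\,\mathbf{G}=\adj(\mathbf{W})\,\mathbf{D}$, whose right side has entries of degree at most $(m-1)H+2H=(m+1)H$; since $\mathbf{G}(x)$ is already known to be a polynomial matrix and $\det(\mathbf{W}(x))$ is a polynomial, this bounds the entries of $\mathbf{G}(x)$ by $(m+1)H$. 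The symmetric computation $\det(\mathbf{D})\,\mathbf{G}^{-1}=\adj(\mathbf{D})\,\mathbf{W}$ gives the $(m-1)\cdot 2H+H=(2m-1)H$ bound for $\mathbf{G}(x)^{-1}$. This adjugate/cofactor step is where $m$ enters the bounds, and it is exactly what your proposal is missing. Your alternative route of carefully tracking quotient degrees through the Euclidean rounds is in fact salvageable, because the pivot degree strictly decreases each round and the quotient degrees telescope against it, but you would have to actually perform that computation, and you would discover that the resulting bound is independent of $m$, not linear in it, so the heuristic you were relying on is incorrect even though the method is not.
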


\begin{proof} To prove this lemma, we need only apply Lemma \ref{get0s} to $$\left[\begin{matrix} \mathbf{w}_1(x)\\ \mathbf{w}_2(x)\\ \vdots\\ \mathbf{w}_m(x)\end{matrix}\right]^T$$ noting that, in the proof of Lemma \ref{get0s}, we produce matrices $\mathbf{G}(x),\mathbf{B}(x)\in{\rm SL}_d(\mathbb{Q}[x])$ such that $$\mathbf{G}(x)^T\left[\begin{matrix} \mathbf{w}_1(x)\\ \mathbf{w}_2(x)\\ \vdots\\ \mathbf{w}_m(x)\end{matrix}\right]^T=\left[\begin{matrix} a(x) & 0 & \cdots & 0\\ c_1(x)&&&\\ \vdots&&\mathbf{B}(x)&\\ c_{m-1}(x)&&&\end{matrix}\right]^T,$$ where $a(x)$ is the greatest common divisor of the entries of $\mathbf{w}_1(x)$, and the entries of $\mathbf{B}(x)$ all have degree at most $2H$.  

There is necessarily an $m\times m$ invertible submatrix, $\mathbf{W}(x)$, consisting of some subset of $m$ columns of  $$\left[\begin{matrix} \mathbf{w}_1(x)\\ \mathbf{w}_2(x)\\ \vdots\\ \mathbf{w}_m(x)\end{matrix}\right].$$ We let $\mathbf{D}(x)$ denote the $m\times m$ submatrix of 
$$\left[\begin{matrix} a(x) & 0 & \cdots & 0\\ c_1(x)&&&\\ \vdots&&\mathbf{B}(x)&\\ c_{m-1}(x)&&&\end{matrix}\right]^T$$ made from the corresponding columns.  Then we have
$$\mathbf{W}(x)\mathbf{G}(x)=\mathbf{D}(x),$$ and so $\mathbf{D}(x)$ is invertible and we have
$$\mathbf{G}(x)=\mathbf{W}(x)^{-1}\mathbf{D}(x)$$ and
$$\mathbf{G}(x)^{-1} = \mathbf{D}(x)^{-1}\mathbf{W}(x).$$
We see that
${\rm det}(\mathbf{W}(x))\cdot \mathbf{G}(x) = \adj(\mathbf{W}(x))\cdot\mathbf{D}(x)$, where, as previously, $\adj(\mathbf{E})$ is the classical adjoint of a matrix $\mathbf{E}$.  Since the entries of $\adj(\mathbf{W}(x))$ all have degree at most $(m-1)H$ and the entries of $\mathbf{D}(x)$ all have degree at most $2H$, we see that
${\rm det}(\mathbf{W}(x))\cdot \mathbf{G}(x)$ is a polynomial matrix whose entries have degree at most $(m+1)H$.   Since $\mathbf{G}(x)$ is a polynomial matrix and ${\rm det}(\mathbf{W}(x))$ is a polynomial, we see that the entries of $\mathbf{G}(x)$ are each of degree at most $(m+1)H$.  A similar argument shows that the entries of $\mathbf{G}(x)^{-1}$ are each of degree at most $(m-1)2H+H=(2m-1)H$.   
\end{proof}

\begin{prop}\label{QSprop} Let $W$ be a $\mathbb{Q}[x]$-submodule of $\mathbb{Q}[x]^d$ of rank $m$ that has $\mathbb{Q}[x]$-module basis of row vectors $\mathbf{w}_1(x),\ldots,\mathbf{w}_m(x)$ whose coordinates all have degree at most $H$. Then there exist $m$ matrices $\mathbf{B}_1(x),\mathbf{B}_2(x),\ldots,\mathbf{B}_m(x)\in{\rm SL}_d(\mathbb{Q}[x])$ such that $$\left[\begin{matrix} \mathbf{w}_1(x)\\ \mathbf{w}_2(x)\\ \vdots\\ \mathbf{w}_m(x)\end{matrix}\right]\mathbf{B}_1(x)\mathbf{B}_2(x)\cdots\mathbf{B}_m(x)=\left[\begin{matrix} a_1(x) &0&\cdots&0&0&\cdots&0\\ *&a_2(x) &\cdots&0&0&\cdots&0\\ \vdots&\ddots&\ddots&\vdots&\vdots&&\vdots\\ *&\cdots&*&a_m(x) &0&\cdots&0\end{matrix}\right]$$ is lower triangular with nonzero polynomials on the diagonal. Moreover, the entries of the matrices $\mathbf{B}_1(x)\mathbf{B}_2(x)\cdots\mathbf{B}_m(x)$ and $(\mathbf{B}_1(x)\mathbf{B}_2(x)\cdots\mathbf{B}_m(x))^{-1}$ are bounded in degree by $(m+1)H2^m$ and $(2m+1)H2^m$ respectively.  In addition, if the matrix $$\left[\begin{matrix} \mathbf{w}_1(x)\\ \mathbf{w}_2(x)\\ \vdots\\ \mathbf{w}_m(x)\end{matrix}\right]$$ is unimodular then we can take $a_1(x)=\cdots =a_m(x)=1$.  
\end{prop}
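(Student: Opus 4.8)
The plan is to apply Lemma~\ref{QSlem} $m$ times, stripping off one diagonal pivot and one column at each stage while the degree bound doubles. Set $\mathbf{W}^{(0)}(x):=\bigl[\,\mathbf{w}_1(x)\ \cdots\ \mathbf{w}_m(x)\,\bigr]^{T}$, the $m\times d$ matrix whose rows are the given basis of $W$; their coordinates have degree at most $H=2^{0}H$. I would prove by induction on $j\in\{0,1,\dots,m\}$ that there exist $\mathbf{B}_1(x),\dots,\mathbf{B}_j(x)\in\mathrm{SL}_d(\mathbb{Q}[x])$ and a block $\mathbf{B}^{(j)}(x)\in\mathbb{Q}[x]^{(m-j)\times(d-j)}$ of rank $m-j$ with entries of degree at most $2^{j}H$, such that in $\mathbf{W}^{(0)}(x)\mathbf{B}_1(x)\cdots\mathbf{B}_j(x)$ the first $j$ columns are lower triangular with nonzero diagonal entries $a_1(x),\dots,a_j(x)$, all entries in rows $1,\dots,j$ and columns $j{+}1,\dots,d$ vanish, and the bottom-right $(m-j)\times(d-j)$ submatrix equals $\mathbf{B}^{(j)}(x)$. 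The case $j=0$ is precisely the hypothesis.

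For the step from $j$ to $j{+}1$, observe that the $m-j$ rows of $\mathbf{B}^{(j)}(x)$ are linearly independent over $\mathbb{Q}(x)$ (there are $m-j$ of them and the rank is $m-j$), hence form a $\mathbb{Q}[x]$-module basis of their span; thus Lemma~\ref{QSlem} applies to $\mathbf{B}^{(j)}(x)$ with $d\mapsto d-j$, $m\mapsto m-j$ and $H\mapsto 2^{j}H$, producing $\mathbf{G}_{j+1}(x)\in\mathrm{SL}_{d-j}(\mathbb{Q}[x])$ with
\[
\mathbf{B}^{(j)}(x)\mathbf{G}_{j+1}(x)=\begin{bmatrix} a_{j+1}(x) & 0 & \cdots & 0\\ c_1(x) & & & \\ \vdots & & \mathbf{B}^{(j+1)}(x) & \\ c_{m-j-1}(x) & & & \end{bmatrix},
\]
where $a_{j+1}(x)$ is the greatest common divisor of the first row of $\mathbf{B}^{(j)}(x)$ — nonzero because that row, belonging to a basis, is nonzero — and $\mathbf{B}^{(j+1)}(x)$ has entries of degree at most $2\cdot 2^{j}H=2^{j+1}H$. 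Taking $\mathbf{B}_{j+1}(x):=\operatorname{diag}\bigl(I_j,\mathbf{G}_{j+1}(x)\bigr)\in\mathrm{SL}_d(\mathbb{Q}[x])$, right multiplication by $\mathbf{B}_{j+1}(x)$ fixes the first $j$ columns and acts on the rest through $\mathbf{G}_{j+1}(x)$, so the already-built triangular shape is preserved and a new pivot $a_{j+1}(x)$ and residual block $\mathbf{B}^{(j+1)}(x)$ are produced. Finally $\mathbf{B}^{(j+1)}(x)$ has rank exactly $m-j-1$: a $\mathbb{Q}(x)$-linear dependence among its rows would make a nontrivial combination of rows $2,\dots,m-j$ of $\mathbf{B}^{(j)}(x)\mathbf{G}_{j+1}(x)$ equal to a scalar multiple of the first row $(a_{j+1}(x),0,\dots,0)$, contradicting that this product has rank $m-j$. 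After $m$ steps the residual block is empty, giving the asserted lower-triangular matrix with nonzero diagonal $a_1(x),\dots,a_m(x)$ and vanishing columns $m{+}1,\dots,d$.

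For the degree bounds, Lemma~\ref{QSlem} (applied as above) gives $\deg\mathbf{G}_{j+1}(x)\le(m-j+1)2^{j}H$ and $\deg\mathbf{G}_{j+1}(x)^{-1}\le(2(m-j)-1)2^{j}H$, and $\mathbf{B}_{j+1}(x)^{\pm1}=\operatorname{diag}(I_j,\mathbf{G}_{j+1}(x)^{\pm1})$ satisfy the same bounds. Since the maximal entry-degree of a product of polynomial matrices is at most the sum of those of the factors,
\[
\deg\!\bigl(\mathbf{B}_1(x)\cdots\mathbf{B}_m(x)\bigr)\le\sum_{j=0}^{m-1}(m-j+1)2^{j}H\le(m+1)\sum_{j=0}^{m-1}2^{j}H<(m+1)2^{m}H,
\]
and similarly $\deg\!\bigl((\mathbf{B}_1(x)\cdots\mathbf{B}_m(x))^{-1}\bigr)\le\sum_{j=0}^{m-1}(2(m-j)-1)2^{j}H<(2m+1)2^{m}H$, which are the claimed estimates.

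For the unimodular case, note that if $\mathbf{W}^{(0)}(x)$ is unimodular then so is $\mathbf{W}^{(0)}(x)\mathbf{B}_1(x)\cdots\mathbf{B}_j(x)$ for every $j$ (the $\mathbf{B}_i(x)$ are invertible over $\mathbb{Q}[x]$); a preimage of any vector that vanishes in its first $j$ coordinates must itself vanish there — peel off the pivots $a_1(x),\dots,a_j(x)$ one at a time — and its remaining coordinates form a $\mathbf{B}^{(j)}(x)$-preimage, so $\mathbf{B}^{(j)}(x)$ is unimodular. Then $\mathbf{B}^{(j)}(x)\mathbf{G}_{j+1}(x)$ is unimodular with first row $(a_{j+1}(x),0,\dots,0)$, which forces $a_{j+1}(x)\in\mathbb{Q}^{\times}$; so every pivot of the final matrix is a nonzero constant, and a concluding right multiplication by a determinant-one diagonal matrix (dividing column $i\le m$ by $a_i(x)$ and absorbing the product $a_1(x)\cdots a_m(x)$ into a vanishing column with index $>m$) normalizes all pivots to $1$ without affecting the degree bounds. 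The real content is already contained in Lemma~\ref{QSlem}; the delicate part here is the bookkeeping — checking at every stage that $\mathbf{B}^{(j)}(x)$ keeps rank exactly $m-j$ (so that Lemma~\ref{QSlem} genuinely applies to it, with its rows as a module basis), that the degree bound compounds by precisely a factor $2$ per step, and that unimodularity descends along the recursion. Synchronizing these three invariants through the iteration is the step I would be most careful about.
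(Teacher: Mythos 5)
Your proposal is correct and takes exactly the route the paper intends: iterate Lemma~\ref{QSlem}, conjugate the smaller transformation into ${\rm SL}_d(\mathbb{Q}[x])$ by padding with an identity block, track the residual block's rank, and sum the degree bounds over the $m$ steps. The paper's own proof is one sentence ("the bounds on degrees of entries follow immediately by induction using Lemma~\ref{QSlem}"), so your write-up is a correct and more explicit version of the same argument, with the rank-preservation check and unimodularity descent filled in.
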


\begin{proof} The bounds on degrees of entries follow immediately by induction using Lemma \ref{QSlem}.  If $$\left[\begin{matrix} \mathbf{w}_1(x)\\ \mathbf{w}_2(x)\\ \vdots\\ \mathbf{w}_m(x)\end{matrix}\right]$$ is unimodular, then so is $$\left[\begin{matrix} a_1(x) &0&\cdots&0&0&\cdots&0\\ *&a_2(x) &\cdots&0&0&\cdots&0\\ \vdots&\ddots&\ddots&\vdots&\vdots&&\vdots\\ *&\cdots&*&a_m(x) &0&\cdots&0\end{matrix}\right].$$  This can only occur if $a_1(x),\ldots ,a_d(x)$ are units in $\mathbb{Q}[x]$, and thus by right-multiplying by an invertible scalar matrix if necessary, we may assume that $a_1(x)=\cdots =a_m(x)=1$. 
\end{proof}

\section{Application to $k$-regular power series}\label{muauto}

Concerning the irrationality exponent of an automatic number, Adamczewski and Cassaigne \cite{AC2006} proved the following result.  We recall for the reader's benefit that the definition of the $k$-kernel of a $k$-automatic sequence is given in the introduction.

\begin{thm}[Adamczewski and Cassaigne \cite{AC2006}]\label{ACthm2} Let $k,b\geq 2$ be two integers and let ${\bf a}=\{a(n)\}_{n\geq 0}$ be a $k$-automatic sequence taking values in $\{0,1,\ldots,b-1\}$. Let $m$ be the cardinality of the $k$-kernel of ${\bf a}$ and let $d$ be the number of values in $\{0,1,\ldots,b-1\}$ which the sequence ${\bf a}$ actually assumes. Then $$\mu\left(\sum_{n\geq 0}\frac{a(n)}{b^n}\right)\leq dk(k^m+1).$$
\end{thm}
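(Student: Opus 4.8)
\textbf{The plan} is to argue combinatorially on the base-$b$ expansion rather than through the Mahler-equation estimates of the previous sections: specialised to the automatic case (with $H=k-1$ and a linear system of size $m$), Theorem~\ref{maineffective} only delivers a bound far weaker than $dk(k^m+1)$, which is polynomial in $k^m$ and genuinely requires the finite-automaton structure. Write $\xi=\sum_{n\ge 0}a(n)b^{-n}$ with $b$-ary expansion $\mathbf{a}=a(0)a(1)a(2)\cdots$. If $\xi$ is rational then $\mathbf{a}$ is eventually periodic and $\mu(\xi)=1$, so we may assume $\xi$ irrational and must show that $0<|\xi-p/q|<q^{-\mu}$ has only finitely many solutions once $\mu>dk(k^m+1)$.

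\textbf{Step 1 (transference).} First I would apply the classical lemma that turns a rational approximation into a repetition in the digit string: if $|\xi-p/q|<q^{-\mu}$ for $q$ large, then $\mathbf{a}$ agrees with the (eventually periodic) expansion of $p/q$ on a prefix of length about $\mu\log_b q$, so $\mathbf{a}$ has a prefix of the form $U V^{\gamma}$, where $V$ is repeated with a real exponent $\gamma\ge 1$ (the last copy possibly truncated), $U$ and $V$ are read off from the pre-period and period of $p/q$, and $(\gamma-1)|V|$ is \emph{large compared to} $|U|+|V|$; the precise quantitative relation between this ratio and $\mu$ is the content of the standard transference lemma (see \cite{BuLiv}). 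Thus one very good approximation forces an arbitrarily long near-periodic prefix, ``long'' being measured by $(\gamma-1)|V|/(|U|+|V|)$.

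\textbf{Step 2 (automaton constraint) and conclusion.} The heart of the matter is the purely combinatorial claim that a $k$-automatic sequence whose $k$-kernel has cardinality $m$ and which assumes $d$ distinct values admits no prefix $UV^{\gamma}$ with $(\gamma-1)|V|/(|U|+|V|)>dk(k^m+1)$ unless it is eventually periodic. To prove this I would work directly with the automaton reading base-$k$ digits: a long agreement $a(n)=a(n-|V|)$ across a long interval of positions propagates, level by level in the base-$k$ digit hierarchy, to an agreement for all sufficiently large $n$, hence eventual periodicity, contradicting irrationality. The propagation is a pigeonhole argument: at each base-$k$ level the restriction of $\mathbf{a}$ to the relevant sub-blocks is governed by one of the $\le m$ kernel subsequences, and one matches letters (only $d$ of which occur) against shifted blocks; counting how many base-$k$ levels can elapse before a kernel subsequence recurs produces the factor $k^m+1$, the letter-matching the factor $d$, and the digit bookkeeping the factor $k$, yielding the threshold $dk(k^m+1)$. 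Combining with Step~1: if $\mu>dk(k^m+1)$, any solution of $|\xi-p/q|<q^{-\mu}$ with $q$ large yields, via Step~1, a repetition exceeding the threshold, whence $\xi\in\mathbb{Q}$ by Step~2, a contradiction; so $\mu(\xi)\le dk(k^m+1)$.

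\textbf{The main obstacle} is Step~2, and specifically extracting the \emph{sharp} constant rather than merely a finite one. The difficulty is a clash of bases: the period $|V|$ is a base-$b$ quantity with no arithmetic relation to the base-$k$ blocks the automaton actually sees, so the propagation of the near-period must be threaded through the base-$k$ digit structure one level at a time, and one must control exactly how many levels can pass before a kernel element recurs and how the $d$ output letters and the $k$ base-$k$ choices enter the count. All of the quantitative work, and the exact shape $dk(k^m+1)$, lives in that bookkeeping.
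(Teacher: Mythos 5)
The paper does not prove Theorem~\ref{ACthm2}: it is quoted verbatim from Adamczewski and Cassaigne~\cite{AC2006}, and the paper's own contribution in this direction is Theorem~\ref{regularkkernel}, a generalisation to $k$-regular sequences whose bound $39^{(2^{6}k)^L}$ is much weaker than $dk(k^m+1)$. Your opening observation is therefore accurate and on point: specialising the Mahler-equation machinery of Theorem~\ref{maineffective} to the automatic case cannot recover the sharp polynomial-in-$k^m$ constant, which is exactly why the paper cites the result rather than reproving it. So there is no ``paper proof'' to compare against; what you are proposing is a reconstruction of the original Adamczewski--Cassaigne argument, which indeed proceeds along the two-step route you describe (transference from rational approximations to near-periodic prefixes, followed by a purely combinatorial bound on such prefixes for automatic words).

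That said, as a proof attempt the proposal has a genuine gap. Step~1 is standard and fine. Step~2, however, is not an argument but a description of what an argument would have to do. The assertion that a $k$-automatic sequence with $k$-kernel of size $m$ and $d$ occurring letters admits no prefix $UV^{\gamma}$ beyond the stated threshold unless eventually periodic is precisely the content of Adamczewski and Cassaigne's main combinatorial lemma, and the pigeonhole sketch you give does not deliver it: you note yourself that $|V|$ has no arithmetic relation to the base-$k$ block structure, so the claimed ``level-by-level propagation'' of the repetition is not available in the direct form you describe, and the assignment of the three factors $d$, $k$, and $k^m+1$ to ``letter-matching,'' ``digit bookkeeping,'' and ``kernel recurrence'' is heuristic rather than derived. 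In short, you have correctly identified both the route and the principal obstacle, but the entire quantitative heart of the theorem — exactly what you flag as ``the main obstacle'' — is left unresolved, so this does not yet constitute a proof.
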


\noindent In this section, as an application of Theorem \ref{maineffective}, we provide a generalisation of Theorem \ref{ACthm2}. In particular, we prove the following result.

\begin{thm}\label{regularkkernel} Let $k\geq 2$ be an integer, $\mathbf{f}:=\{f(n)\}_{n\geq 0}$ be a $k$-regular sequence, and $F(x)=\sum_{n\geq 0}f(n)x^n\in\mathbb{Z}[[x]]$. Let $L$ denote the dimension of the $\mathbb{Q}$-vector space spanned by the $k$-kernel of $\mathbf{f}$ and $a/b$ be a rational number with $\log|a|/\log b\in[0,1/(L+2))$ and $b\geq 2$. Then $$\mu\left(F(a/b)\right)\leq 39^{(2^{6}k)^L}.$$
\end{thm}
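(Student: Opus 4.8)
The plan is to convert the $k$-regularity of $\mathbf{f}$ into a Mahler functional system of exactly the shape demanded by Notation \ref{notn: 1}, but with a \emph{polynomial} transition matrix, so that the denominator $B$ is the constant $1$ and the condition $B((a/b)^{k^n})\neq 0$ holds trivially, and then to invoke Theorem \ref{maineffective}. To build the system, recall that the $\mathbb{Q}$-span $V$ of the $k$-kernel of $\mathbf{f}$ has dimension $L$ and is stable under the $k$ section operators $\mathbf{a}\mapsto\{a(kn+r)\}_{n\ge 0}$ ($0\le r<k$). Choosing a $\mathbb{Q}$-basis $\mathbf{g}_1=\mathbf{f},\mathbf{g}_2,\dots,\mathbf{g}_L$ of $V$ and setting $G_i(x)=\sum_{n\ge 0}g_i(n)x^n$, the decomposition of each $G_i$ into residue classes modulo $k$ yields $G_i(x)=\sum_{j=1}^{L}M_{ij}(x)G_j(x^k)$ with every $M_{ij}(x)\in\mathbb{Q}[x]$ of degree at most $k-1$. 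Adjoining the constant function gives $\widetilde{\mathbf{G}}(x):=[1,G_1,\dots,G_L]^{T}=\widetilde{\mathbf{M}}(x)\widetilde{\mathbf{G}}(x^k)$, where $\widetilde{\mathbf{M}}=\begin{pmatrix}1&\mathbf{0}\\\mathbf{0}&(M_{ij})\end{pmatrix}$ has entries of degree at most $k-1$. If $F=G_1$ is a rational function then $F(a/b)\in\mathbb{Q}$ and $\mu(F(a/b))=1$, so from now on $1$ and $F$ may be assumed $\mathbb{Q}(x)$-linearly independent.

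The matrix $\widetilde{\mathbf{M}}$ need not be invertible over $\mathbb{Q}(x)$, and removing this obstruction is exactly where Section \ref{QS} enters. Let $\mathcal{R}\subseteq\mathbb{Q}[x]^{L+1}$ be the $\mathbb{Q}[x]$-module of polynomial relations among the coordinates of $\widetilde{\mathbf{G}}$; since $\mathbb{Q}((x))$ is a domain, $\mathcal{R}$ is saturated, so $\mathbb{Q}[x]^{L+1}/\mathcal{R}$ is torsion-free, hence free, and $\mathcal{R}$ is a direct summand of $\mathbb{Q}[x]^{L+1}$ (so the matrix of any basis of $\mathcal{R}$ is unimodular). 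Applying the Cartier operators used in the proof of Lemma \ref{SetI} to a relation (multiply $\mathbf{r}$ by $\widetilde{\mathbf{M}}(x)$, then apply $\Lambda_j$, to get a relation of roughly $k$ times smaller degree) shows that $\mathcal{R}$ is generated by vectors whose coordinates have degree bounded by an explicit function of $k$ and $L$. Feeding such generators into Lemma \ref{2mH} and Proposition \ref{QSprop} produces $\mathbf{U}(x)\in{\rm SL}_{L+1}(\mathbb{Q}[x])$ with $\mathbf{U}$ and $\mathbf{U}^{-1}$ of degree bounded by a quantity of the form $c\,k\,2^{cL}$, such that the first $m:=L+1-d$ coordinates of $\mathbf{U}(x)^{-1}\widetilde{\mathbf{G}}(x)$ vanish identically, where $d=\dim_{\mathbb{Q}(x)}\spn\{1,G_1,\dots,G_L\}\le L+1$, and the last $d$ coordinates form a $\mathbb{Q}(x)$-linearly independent vector $\mathbf{H}(x)$. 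Substituting $\widetilde{\mathbf{G}}=\mathbf{U}\,[\mathbf{0};\mathbf{H}]^{T}$ into the functional equation and reading off the lower-right $d\times d$ block of $\mathbf{U}(x)^{-1}\widetilde{\mathbf{M}}(x)\mathbf{U}(x^k)$ gives $\mathbf{H}(x)=\mathbf{A}(x)\mathbf{H}(x^k)$ with $\mathbf{A}\in\mathbb{Q}[x]^{d\times d}$ of degree at most $\deg\mathbf{U}^{-1}+(k-1)+k\deg\mathbf{U}$, and $\det\mathbf{A}\neq 0$ since $\mathbf{v}^{T}\mathbf{A}=0$ would force the relation $\mathbf{v}^{T}\mathbf{H}=\mathbf{v}^{T}\mathbf{A}\,\mathbf{H}(x^k)=0$, impossible by independence. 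Both $1$ and $F$ lie in $\spn_{\mathbb{Q}(x)}\{\mathbf{H}\}$ and are in fact $\mathbb{Q}[x]$-combinations of its coordinates (as $\mathbf{U}$ is polynomial), so a final unit-determinant polynomial change of coordinates---using that the constant function has constant term $1$, so its coordinate vector against $\mathbf{H}$ has a primitive part that is completed to an element of ${\rm SL}$ by the lemmas of Section \ref{QS}---puts $1$ in the first coordinate and keeps $F$ among the coordinates, while keeping the transition matrix polynomial. This is the promised removal of Mahler's condition in the regular case.

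It now suffices to apply Theorem \ref{maineffective} to the system $\mathbf{H}(x)=\mathbf{A}(x)\mathbf{H}(x^k)$: here $B\equiv 1$ gives $B((a/b)^{k^n})\neq 0$ for all $n$, the coordinates of $\mathbf{H}$ are $\mathbb{Q}(x)$-linearly independent with first coordinate $1$, $\det\mathbf{A}\neq 0$, and $\log|a|/\log b<\tfrac{1}{L+2}\le\tfrac{1}{d+1}$ since $d\le L+1$. Theorem \ref{maineffective} then yields $\mu(F(a/b))<4^{Hd^{2}}k^{5d^{2}}$, where $H$ is the degree bound on $\mathbf{A}$ recorded above, of the form $H\le c\,k^{2}2^{cL}$; plugging in $d\le L+1$ and simplifying---the crude final constants absorbing the repeated doublings from Lemma \ref{2mH} and Proposition \ref{QSprop}---gives $\mu(F(a/b))\le 39^{(2^{6}k)^{L}}$. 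The genuinely delicate point is the middle paragraph: making the degree bound for generators of the relation module $\mathcal{R}$ completely explicit (so that the quantitative Quillen--Suslin estimates of Section \ref{QS} apply with the constant $2^{6}$ in the exponent), and carrying out the normalization that places $1$ in the first coordinate without reintroducing a denominator. Once the polynomial system $\mathbf{H}=\mathbf{A}\,\mathbf{H}(x^k)$ is in hand, the remainder is bookkeeping.
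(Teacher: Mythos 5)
Your strategy is essentially the one the paper takes: use the $k$-kernel to build a Mahler system with a \emph{polynomial} transition matrix, bound the degrees of the polynomial relation module by a Cartier-operator argument, use the quantitative Quillen--Suslin machinery of Section~\ref{QS} to factor out those relations without introducing a denominator, and then feed the reduced system into Theorem~\ref{maineffective}. The one structural choice where you diverge from the paper is that you adjoin the constant function $1$ to the system at the very beginning, folding it into $\widetilde{\mathbf{G}}=[1,G_1,\dots,G_L]^T$ and into the relation module $\mathcal{R}$, whereas the paper works with $[F_1,\dots,F_L]^T$ alone, reduces to $[G_1,\dots,G_{S-1},H]^T$, and only then prepends a fresh coordinate $1$ (with transition block ${\rm diag}(1,\mathbf{C}_{2,2}(x))$).

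That choice is where your argument has a real gap, in the final normalization step. After the Quillen--Suslin reduction, $1$ is only a $\mathbb{Q}[x]$-linear combination $1=\sum_i c_i(x)H_i(x)$ of the coordinates of $\mathbf{H}$, not itself a coordinate. You propose to repair this with a unit-determinant polynomial change of coordinates, "using that the constant function has constant term $1$, so its coordinate vector against $\mathbf{H}$ has a primitive part that is completed to an element of ${\rm SL}$". But the primitive part of $[c_1,\dots,c_d]$ is $[c_1/p,\dots,c_d/p]$ with $p=\gcd(c_1,\dots,c_d)$, and the resulting change of coordinates yields $1/p(x)$, not $1$, as the first coordinate. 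The constant-term observation only gives $p(0)\neq 0$, not $\deg p=0$; for example, $\mathbb{Q}[x]\cdot 1+\sum_i\mathbb{Q}[x]G_i$ can contain elements like $1/(1-x)$, in which case the cyclic submodule generated by $1$ need not be a direct summand and $p$ is genuinely nonconstant. So the system you produce is not guaranteed to be of the exact form Notation~\ref{notn: 1} requires ($F_1(x)=1$ literally). The paper avoids this entirely by never placing $1$ inside the relation module: it reduces $[F_1,\dots,F_L]^T$ first and then prepends $1$ as a brand-new coordinate, so no unimodularity of a coordinate vector of $1$ is needed. Two smaller points: the degree bound for generators of $\mathcal{R}$ is sketched via Cartier operators citing Lemma~\ref{SetI}, whereas the paper's actual Lemma~\ref{Dequal1} is a more delicate argument working with $\mathbf{B}(x)=\mathbf{A}(x)\cdots\mathbf{A}(x^{k^{L-1}})$ and a minimality/contradiction step; and the final constant-chasing to reach $39^{(2^6k)^L}$ is deferred. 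You flag both of these yourself, so they are gaps only in the sense of being left as exercises, but the normalization issue above is a genuine error in the argument as written.
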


To this end, let $k\geq 2$ be a positive integer and $f:\mathbb{N}\cup\{0\}\to\mathbb{Z}\subseteq \mathbb{Q}$ be a $k$-regular sequence. Let \begin{equation}\label{f1L}\{f(n)\}_{n\geq 0}=\{f_1(n)\}_{n\geq 0},\{f_2(n)\}_{n\geq 0},\ldots,\{f_L(n)\}_{n\geq 0}\end{equation} be a basis for the $\mathbb{Q}$-vector space spanned by $\mathcal{K}_k(\{f(n)\}_{n\geq 0})$. Let \begin{equation}\label{F1L}F_i(x):=\sum_{n\geq 0} f_i(n)x^n\in\mathbb{Z}[[x]]\end{equation} for $i=1,\ldots,L$. Then $$F_i(x)=\sum_{j=0}^{k-1}\sum_{n\geq 0} f_i(kn+j)(x^k)^n x^j,$$ and by construction, each $\{f_i(kn+j)\}_{n\geq 0}$ is a $\mathbb{Q}$-linear combination of $\{f_1(n)\}_{n\geq 0},$ $\ldots,$ $\{f_L(n)\}_{n\geq 0}.$ Thus we may write \begin{equation}\label{FpF}F_i(x)=\sum_{j=1}^{L}p_{i,j}(x)F_j(x^k)\end{equation} where for each $i,j$ we have $p_{i,j}(x)\in\mathbb{Q}[x]$ and $\deg p_{i,j}(x)\leq k-1$. Thus, writing $\mathbf{F}(x):=[F_1(x),\ldots,F_L(x)]^T,$ we have \begin{equation}\label{Areg}\mathbf{F}(x)=\mathbf{A}(x)\mathbf{F}(x^k),\end{equation} where $$\mathbf{A}(x)=(p_{i,j}(x))_{1\leq i,j\leq L}\in \mathbb{Q}[x]^{L\times L},$$ and the polynomials $p_{i,j}(x)$ are given by \eqref{FpF}.

Let $S$ denote the dimension of the span of $\{F_1(x),\ldots,F_L(x)\}$ as a $\mathbb{Q}(x)$-vector space; that is, \begin{equation}\label{S} S:=\dim_{\mathbb{Q}(x)}\sum_{i=1}^L \mathbb{Q}(x) F_i(x),\end{equation} and note that $1\leq S\leq L$. Then \begin{equation}\label{Wker}W:=\left\{[\psi_1(x),\ldots,\psi_L(x)]\in\mathbb{Q}(x)^L: \sum_{i=1}^L \psi_i(x)F_i(x)=0\right\}\end{equation} is a subspace of $\mathbb{Q}(x)^L$ of codimension $S$. We can pick a spanning subset $A$ of $W$ with the following properties: 
\begin{enumerate}
\item[(i)] $A\subseteq\mathbb{Q}[x]^L$, and
\vspace{.1cm}
\item[(ii)] if $D$ is the largest degree polynomial which occurs as a coordinate of some $\mathbf{w}(x)\in A$, then any other spanning subset of $W$ satisfying (i) has some polynomial of degree at least $D$ occurring as a coordinate; that is, $A$ is a minimal spanning set of $W$ with respect to maximum degree of coordinates of elements of $A$.
\end{enumerate}

We have the following lemma.

\begin{lem}\label{Dequal1} Let $k\ge 2$ be a positive integer, $F_1(x),\ldots ,F_L(x)\in \mathbb{Q}[[x]]$ be as defined in \eqref{F1L}, and $W$ be as given in \eqref{Wker}. Then there is a spanning set for $W$ each of whose coordinates are polynomials of degree at most $2^L(k^L-1)$.
\end{lem}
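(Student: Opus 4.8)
The plan is to reduce, via Lemma \ref{2mH}, to finding a spanning set of $W$ consisting of polynomial vectors of degree at most $k^L-1$, and then to extract such a set from the functional equation \eqref{Areg} by an $L$-fold iteration together with a Cartier-section ``descent''.

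\emph{Reduction.} Put $W_0:=W\cap\mathbb{Q}[x]^L$. Clearing denominators shows that $W_0$ is a $\mathbb{Q}[x]$-submodule of $\mathbb{Q}[x]^L$ spanning $W$ over $\mathbb{Q}(x)$, and since $W$ has codimension $S$ in $\mathbb{Q}(x)^L$ it has rank $m:=L-S\le L$. If one proves that $W_0$ is generated, as a $\mathbb{Q}[x]$-module, by vectors whose coordinates have degree at most $k^L-1$, then Lemma \ref{2mH} produces a $\mathbb{Q}[x]$-basis of $W_0$ with coordinate degrees at most $2^m(k^L-1)\le 2^L(k^L-1)$, and that basis is a fortiori a spanning set of $W$ over $\mathbb{Q}(x)$; so it suffices to construct the degree-$(k^L-1)$ generating set of $W_0$.

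\emph{Iteration and descent.} Iterating \eqref{Areg} gives $\mathbf{F}(x)=\mathbf{A}^{(L)}(x)\mathbf{F}(x^{k^L})$ with $\mathbf{A}^{(L)}(x):=\mathbf{A}(x)\mathbf{A}(x^k)\cdots\mathbf{A}(x^{k^{L-1}})$, and since each $p_{i,j}(x)$ has degree at most $k-1$ we get $\deg\mathbf{A}^{(L)}(x)\le (k-1)(1+k+\cdots+k^{L-1})=k^L-1$. The basic move is this: for $\mathbf{w}\in W_0$ the polynomial vector $\mathbf{u}:=\mathbf{w}\mathbf{A}^{(L)}$ satisfies $\mathbf{u}(x)\mathbf{F}(x^{k^L})=0$; writing $\mathbf{u}(x)=\sum_{j=0}^{k^L-1}x^j\,\mathbf{u}_j(x^{k^L})$ for the arithmetic-progression (Cartier) sections of $\mathbf{u}$ and using that the monomials $x^j\cdot(\text{series in }x^{k^L})$ for distinct $j\in\{0,\dots,k^L-1\}$ have disjoint support (as in the proof of Lemma \ref{SetI}), one deduces $\mathbf{u}_j(x^{k^L})\mathbf{F}(x^{k^L})=0$, i.e.\ $\mathbf{u}_j\in W_0$, with $\deg\mathbf{u}_j\le(\deg\mathbf{w}+k^L-1)/k^L$, which is $<\deg\mathbf{w}$ as soon as $\deg\mathbf{w}>1$. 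Thus every element of $W_0$ is ``contracted'' towards the part of $W_0$ of degree $\le k^L-1$, and one checks that $L$ is the right number of iterations because the subspaces $\ker(\,\cdot\,\mathbf{A}^{(n)})\subseteq\mathbb{Q}(x)^L$ form a nondecreasing chain in $n$ (since $\mathbf{A}^{(n+1)}(x)=\mathbf{A}^{(n)}(x)\mathbf{A}(x^{k^n})$) and therefore stabilise within $L$ steps.

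\emph{Main obstacle.} The delicate point is upgrading this contraction from a statement about $\mathbf{w}\mathbf{A}^{(L)}$ to a statement about $\mathbf{w}$ itself, since $\mathbf{A}(x)$ (hence $\mathbf{A}^{(L)}(x)$) need not be invertible over $\mathbb{Q}(x)$ when $S<L$, so one cannot simply divide. I would resolve this by an induction on $\deg\mathbf{w}$ that handles separately the two natural pieces of $W_0$: the submodule $\ker(\,\cdot\,\mathbf{A}^{(L)})$, whose generators can be taken of controlled degree by the left-null-space estimates of Section \ref{QS} (Lemma \ref{kerAq} and Proposition \ref{QSprop}), and a complement, identified via a fixed $\mathbb{Q}(x)$-basis $F_{i_1},\dots,F_{i_S}$ chosen from $F_1,\dots,F_L$, whose corresponding relations $e_j-\sum_\ell c_{j,\ell}(x)e_{i_\ell}$ have coordinates bounded, after clearing denominators, by Cramer's rule applied to the polynomial matrices $\mathbf{A}^{(n)}(x)$ with $n$ in the stabilised range. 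Verifying that all of these contributions stay within degree $k^L-1$ is the real content; the remaining steps are formal.
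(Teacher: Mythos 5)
Your proposal shares the paper's key ingredients---iterating the functional equation to form $\mathbf{A}^{(L)}(x)=\mathbf{A}(x)\mathbf{A}(x^k)\cdots\mathbf{A}(x^{k^{L-1}})$ with entries of degree $\le k^L-1$, the Cartier-section descent showing each $\mathbf{u}_j$ lands back in $W$, and the split of $W_0$ into the left kernel of $\mathbf{A}^{(L)}$ plus a complement---so you have the right architecture. However, the reduction step contains a genuine gap: you want a $\mathbb{Q}[x]$-generating set for $W_0$ whose coordinates have degree at most $k^L-1$, and then to apply Lemma \ref{2mH}. That target is in general unattainable. The left kernel $K_{\mathbf{A}^{(L)}(x)}$ of an $L\times L$ polynomial matrix with entries of degree $H$ can require generators of degree much larger than $H$ (already a $4\times 3$ matrix of degree-$1$ entries built from shifts can force a degree-$3$ kernel generator), and indeed the paper's own method --- row reducing $[\mathbf{A}^{(L)}(x)\mid\mathbf{I}]$ via Lemma \ref{get0s} --- yields a kernel basis $\mathcal{B}_0$ only of degree $\le 2^L(k^L-1)$, which is exactly the bound in the lemma, not $k^L-1$. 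Lemma \ref{kerAq} will not rescue you here either: it is stated for the kernel of a single unimodular row, not for the left kernel of an $L\times L$ matrix. You flag this degree verification as ``the real content,'' but it is not a computation to be carried out --- it is the step where the plan, as stated, fails.

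The paper avoids the overshoot by never aiming for $k^L-1$: it takes the kernel basis $\mathcal{B}_0$ (degree $\le 2^L(k^L-1)$ via row reduction), completes it to a basis $\mathcal{B}_0\cup\{\mathbf{w}_1,\ldots,\mathbf{w}_n\}$ of $W$ chosen so that $D=\max_{i,j}\deg w_{i,j}$ is minimal, applies the Cartier descent to $\mathbf{w}_i\mathbf{A}^{(L)}$ to produce sections $\mathbf{v}_i^{(\ell)}\in W$ of degree $\le (D+k^L-1)/k^L$, and then proves --- by a separate rank argument using orthogonal complements with respect to right-multiplication by $\mathbf{A}^{(L)}(x)\mathbf{A}^{(L)}(x^{k^L})$ --- that these sections together with $\mathcal{B}_0$ span $W$. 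Minimality of $D$ then forces $D\le 1-(D-1)/k^L$, hence $D=1$, and the spanning set $\mathcal{B}_0\cup\{\mathbf{w}_i\}$ has degree $\le 2^L(k^L-1)$ directly, with no second pass through Lemma \ref{2mH}. If you want to make your version work, you should drop the intermediate target of $k^L-1$, accept the $2^L(k^L-1)$ bound for the kernel from the row-reduction lemma, and replace the Cramer's-rule treatment of the complement with the minimality-of-$D$ argument (or something equivalent) that drives the complement degree down to $1$.
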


\begin{proof} Note that for the system in question, ${\bf A}(x)$ is an $L \times L$ polynomial matrix and has entries of degree at most $k-1$. Set $${\bf B}(x):={\bf A}(x){\bf A}(x^k)\cdots {\bf A}(x^{k^{L-1}})$$ and denote by $K_{{\bf B}(x)}$ the left kernel of ${\bf B}(x)$. Note that the entries of ${\bf B}(x)$ are all of degree at most $k^L-1$ and observe that $K_{{\bf B}(x)}$ is equal to the left kernel of $${\bf A}(x){\bf A}(x^k)\cdots {\bf A}(x^{k^{j}})$$ for all $j>L-1$ as well. 

We start our proof, by finding a basis for $K_{{\bf B}(x)}$, each of whose coordinates are polynomials of degree at most $2^{L}(k^L-1)$. We use a reduction similar to that within the proof of Lemma \ref{2mH}; we row reduce $\left[{\bf B}(x)|{\bf I}_{L\times L}\right]$ using Lemma \ref{get0s} to obtain the row equivalent matrix $$\left[{\bf B}(x)|{\bf I}_{L\times L}\right]\equiv \left[\left.\begin{matrix} a_1(x)& *& &&&&  &\cdots&*\\ 0&\cdots&0& a_2(x) &*&\cdots&&&*\\ &&&&\ddots&&&\\ &&&&0&a_m(x)&*&\cdots &*\\ &&&&&0&&\cdots&0\\ \vdots&&&&&\vdots&\ddots&&\vdots\\ 0&&&&\cdots&0&&\cdots&0\\
\end{matrix}\right|{\bf C}(x)\right],$$ where $m=\rank{\ {\bf B}(x)}$, and the matrix ${\bf C}(x)$ has entries whose coordinates are polynomials of degree at most $$2^{L}(k^L-1).$$ 

Performing row operations on the matrix
$\left[{\bf B}(x)|{\bf I}_{L\times L}\right]$ is equivalent to left multiplying by an invertible $L\times L$ matrix; in this case the row operations are the result of left multiplying by ${\bf C}(x)$.   Thus the bottom $L-m$ rows of ${\bf C}(x){\bf B}(x)$ are zero.  Since ${\bf C}(x)$ is invertible and ${\bf B}(x)$ has rank $m$, we see that the bottom $L-m$ rows of ${\bf C}(x)$ form a basis, which we denote by $\mathcal{B}_0$, for $K_{{\bf B}(x)}$.

Also, $\mathcal{B}_0\subset W$ since if $[c_1(x),\ldots ,c_L(x)]$ is in $\mathcal{B}_0$, then by construction we have
\[0= [c_1(x),\ldots ,c_L(x)]{\bf B}(x)[F_1(x^{k^L}),\ldots ,F_L(x^{k^L})]^T = \sum_i c_i(x)F_i(x)\]
and so $[c_1(x),\ldots ,c_L(x)]\in W$.

If $\mathcal{B}_0$ spans $W$, then we are done. If not, let $$\{{\bf w}_1(x),{\bf w}_2(x),\ldots,{\bf w}_n(x)\}, \quad \left({\bf w}_i(x)=[w_{i,1}(x),\ldots,w_{i,L}(x)]\right)$$ be a such that $$\mathcal{B}_0\cup \{{\bf w}_1(x),{\bf w}_2(x),\ldots,{\bf w}_n(x)\}$$ is a basis for $W$ with ${\bf w}_i(x)\notin K_{{\bf B}(x)}$ for each $i$, and with $$D:=\max\{\deg w_{i,j}(x)\}$$ minimal according to property (ii) above. Note that to prove the lemma, it is sufficient to prove that $D=1$.

To this end, set $${\bf v}_i(x)={\bf w}_i(x){\bf B}(x).$$ Then writing, ${\bf v}_i(x)=[v_{i,1}(x),\ldots,v_{i,L}(x)]$, we have that the $v_{i,j}(x)$ are polynomials of degree at most $D+k^L-1,$ and also that $${\bf v}_i(x){\bf F}(x^{k^L})={\bf w}_i(x){\bf B}(x){\bf F}(x^{k^L})={\bf w}_i(x){\bf F}(x)=0.$$ For each $\ell\in\{0,1,\ldots,k^L-1\}$, define ${\bf v}_i^{(\ell)}(x)$ by 
\begin{equation}
\label{eq: vil}
{\bf v}_i(x)=\sum_{\ell=0}^{k^L-1}x^\ell {\bf v}_i^{(\ell)}(x^{k^L}).
\end{equation} By construction ${\bf v}_i^{(\ell)}(x^{k^L}){\bf F}(x^{k^L})=0,$ so that ${\bf v}_i^{(\ell)}(x){\bf F}(x)=0$ for each $\ell\in\{0,1,\ldots,k^L-1\}$; that is, each ${\bf v}_i^{(\ell)}(x)\in W$. Also, at least one of the ${\bf v}_i^{(\ell)}(x)$ is nonzero, since otherwise ${\bf w}_i(x)$ would be in $K_{{\bf B}(x)}$. Define $$W_0:=\spn\left(\mathcal{B}_0\cup\left\{{\bf v}_i^{(\ell)}(x):\ell\in\{0,1,\ldots,k^L-1\},i\in\{1,\ldots, n\}\right\}\right).$$ 

We claim that the set $W_0=W$. To see this, note that if $W_0\neq W$, then the dimension of $W_0/K_{{\bf B}(x)}$ is strictly less than $m$. Since right multiplication by ${\bf B}(x)$ has kernel $K_{{\bf B}(x)}$, applying this map to $W_0$ induces a map on $W_0/K_{{\bf B}(x)}$ and thus $W_0 {\bf B}(x)$ has dimension strictly less than $m$ as well. It follows that there are $L-m+1$ linearly independent column vectors ${\bf t}_1(x),\ldots,{\bf t}_{L-m+1}(x)$ in $\mathbb{Q}(x)^L$ such that $$W_0 {\bf B}(x) {\bf t}_s(x) = 0$$ for all $s$. Thus $${\bf v}_i^{(\ell)}(x^{k^L}){\bf B}(x^{k^L}) {\bf t}_s(x^{k^L}) = 0,$$ for all $i,\ell$ and $s$. This then gives that $${\bf v}_i(x){\bf B}(x^{k^L}) {\bf t}_s(x^{k^L}) = 0,$$ for all $i$ and $s$, which in turn gives that $${\bf w}_i(x){\bf B}(x){\bf B}(x^{k^L}) {\bf t}_s(x^{k^L}) = 0,$$ for all $i$ and $s$. 
Since $\mathcal{B}_0\cup \{{\bf w}_1(x),{\bf w}_2(x),\ldots,{\bf w}_n(x)\}$ is a basis for $W$ and $\mathcal{B}_0$ is a basis for $K_{{\bf B}(x)}$, we have
that the images of ${\bf w}_1(x),{\bf w}_2(x),\ldots,{\bf w}_n(x)$ in $W/K_{{\bf B}(x)}$ form a basis for the quotient space.  In particular, ${\bf w}_1(x){\bf B}(x),\ldots ,{\bf w}_n(x){\bf B}(x)$ are linearly independent.  By construction, the left kernel of ${\bf B}(x)$ is equal to the left kernel of ${\bf B}(x){\bf B}(x^{k^L})$ and so we have that the set $$\left\{{\bf w}_i(x){\bf B}(x){\bf B}(x^{k^L}): i=1,\ldots,n\right\}$$ is linearly independent. Define $${\bf u}_i(x):={\bf w}_i(x){\bf B}(x){\bf B}(x^{k^L}).$$  Then we have $${\bf u}_i(x){\bf t}_s(x^{k^L})=0$$ for all $i$ and $s$. That is, we have an $m$ dimensional set (the span of the ${\bf u}_i(x)$) whose orthogonal complement is at least $L-m+1$ dimensional, a contradiction. Thus $W_0=W$.

We are now in a position to prove that $D=1$. Note that since $F_1(x),\ldots ,F_L(x)$ are linearly independent over $\mathbb{Q}$, we have $D\geq 1$. By the definition of $W_0$, the fact that $W_0=W$, and by the minimality of $D$, we must have that the maximum degree of the coordinates of the ${\bf v}_i^{(\ell)}(x)$ is at least $D$, since otherwise we could pick some subset whose images in $W/K_{{\bf B}(x)}$ form a basis for the quotient space; this would then contradict the minimality of $D$.

Using Equation (\ref{eq: vil}), we see $$\deg {\bf v}_i^{(\ell)}(x)\leq \frac{D+k^L-1-\ell}{k^L},$$ and so we have that $$D\leq \max_{0\leq \ell\leq k^{L-1}}\left\{\frac{D+k^L-1-\ell}{k^L}\right\}=1-\frac{D-1}{k^L}.$$ If $D>1$, then we must have $L=0$, which we are assuming is not the case. Thus $D=1$. Hence there is a spanning set for $W$, namely $$\mathcal{B}_0\cup\left\{{\bf w}_i(x)\colon i\in\{1,\ldots, n\}\right\},$$ whose elements have polynomial coordinates of degree at most $2^L(k^L-1)$.
\end{proof}

\begin{lem}\label{unibasis} Let $S$ be as given in \eqref{S}, $W$ be as given in \eqref{Wker}, and suppose that $\{\mathbf{w}_1(x),\ldots,\mathbf{w}_{L-S}(x)\}$ is a $\mathbb{Q}[x]$-module basis for $W\cap \mathbb{Q}[x]^L$. Then the matrix ${\bf T}(x)$ whose $i$-th row is $\mathbf{w}_{i}(x)$ is unimodular.  
\end{lem}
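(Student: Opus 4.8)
The plan is to show that the $\mathbb{Q}[x]$-submodule $V:=W\cap\mathbb{Q}[x]^L$ is a direct summand of $\mathbb{Q}[x]^L$, and then to read off unimodularity of $\mathbf{T}(x)$ from the existence of a complementary free submodule. First I would record the basic setup: by \eqref{Wker} and \eqref{S}, $W$ is a $\mathbb{Q}(x)$-subspace of $\mathbb{Q}(x)^L$ of dimension $L-S$, so $V=W\cap\mathbb{Q}[x]^L$ is a $\mathbb{Q}[x]$-submodule of the free module $\mathbb{Q}[x]^L$ which spans $W$ over $\mathbb{Q}(x)$; by hypothesis it is free with basis $\mathbf{w}_1(x),\ldots,\mathbf{w}_{L-S}(x)$, so $\mathrm{rk}(V)=L-S$. (If $S=L$ then $W=0$ and $\mathbf{T}(x)$ is the empty $0\times L$ matrix, trivially unimodular, so assume $S<L$.)

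The crucial step is to check that $\mathbb{Q}[x]^L/V$ is torsion-free. Suppose $\mathbf{v}(x)=[v_1(x),\ldots,v_L(x)]\in\mathbb{Q}[x]^L$ and $p(x)\mathbf{v}(x)\in V$ for some nonzero $p(x)\in\mathbb{Q}[x]$. Then $\sum_{i=1}^L p(x)v_i(x)F_i(x)=0$, and since $p(x)$ is a nonzero element of the field $\mathbb{Q}(x)$ we may divide this relation by $p(x)$ to get $\sum_{i=1}^L v_i(x)F_i(x)=0$; hence $\mathbf{v}(x)\in W\cap\mathbb{Q}[x]^L=V$. Therefore $\mathbb{Q}[x]^L/V$ has no torsion. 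Since $\mathbb{Q}[x]$ is a PID, this finitely generated torsion-free module is free, necessarily of rank $S$, so the exact sequence $0\to V\to\mathbb{Q}[x]^L\to\mathbb{Q}[x]^L/V\to 0$ splits and $\mathbb{Q}[x]^L=V\oplus U$ for some free submodule $U$ of rank $S$.

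Finally, choosing a $\mathbb{Q}[x]$-basis $\mathbf{u}_1(x),\ldots,\mathbf{u}_S(x)$ of $U$, the rows $\mathbf{w}_1(x),\ldots,\mathbf{w}_{L-S}(x),\mathbf{u}_1(x),\ldots,\mathbf{u}_S(x)$ form a $\mathbb{Q}[x]$-basis of $\mathbb{Q}[x]^L$, so the $L\times L$ matrix $\mathbf{M}(x)$ with these rows lies in ${\rm GL}_L(\mathbb{Q}[x])$. Given any $\mathbf{b}(x)\in\mathbb{Q}[x]^{L-S}$, set $\mathbf{v}(x):=\mathbf{M}(x)^{-1}[\mathbf{b}(x)^T,\mathbf{0}_{1\times S}]^T\in\mathbb{Q}[x]^L$; then $\mathbf{M}(x)\mathbf{v}(x)=[\mathbf{b}(x)^T,\mathbf{0}]^T$, whose first $L-S$ coordinates are exactly $\mathbf{T}(x)\mathbf{v}(x)=\mathbf{b}(x)$. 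Hence the induced map $\mathbb{Q}[x]^L\to\mathbb{Q}[x]^{L-S}$ given by $\mathbf{v}(x)\mapsto\mathbf{T}(x)\mathbf{v}(x)$ is surjective, i.e., $\mathbf{T}(x)$ is unimodular. The argument is essentially formal; the only genuine content is the torsion-freeness check, which rests on clearing a nonzero polynomial factor from a linear relation over the field $\mathbb{Q}(x)$, and the main thing to be careful about is to phrase everything in terms of the paper's definition of ``unimodular'' (surjectivity of the induced map on free modules) and the standard equivalence between a submodule being a direct summand and a basis of it being completable to an element of ${\rm GL}_L$.
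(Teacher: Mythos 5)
Your proof is correct and follows essentially the same route as the paper: both arguments show that $\mathbb{Q}[x]^L/(W\cap\mathbb{Q}[x]^L)$ is torsion-free so the inclusion $W\cap\mathbb{Q}[x]^L\hookrightarrow\mathbb{Q}[x]^L$ splits, and then complete the basis $\mathbf{w}_1,\ldots,\mathbf{w}_{L-S}$ to a $\mathbb{Q}[x]$-basis of $\mathbb{Q}[x]^L$ to read off unimodularity. The only cosmetic difference is that the paper identifies the quotient with $\sum_i\mathbb{Q}[x]F_i(x)\subseteq\mathbb{Q}[[x]]$ and appeals to torsion-freeness of that image, whereas you verify torsion-freeness of the quotient directly by cancelling the nonzero polynomial factor in a relation $\sum_i p(x)v_i(x)F_i(x)=0$.
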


\begin{proof} We have a surjective $\mathbb{Q}[x]$-module homomorphism $\varphi:\mathbb{Q}[x]^L \to U:=\sum_{i=1}^L \mathbb{Q}[x]F_i(x),$ with kernel $W\cap \mathbb{Q}[x]^L$.  Since $U$ is finitely generated and torsion free as a $\mathbb{Q}[x]$-module, the map splits and we have $\mathbb{Q}[x]^L = W\oplus U.$  Moreover, if we choose $\mathbf{v}_1(x),\ldots,\mathbf{v}_S(x) \in \mathbb{Q}[x]^L$ such that $\varphi(\mathbf{v}_1(x)),\ldots,\varphi(\mathbf{v}_S(x))$ is a basis for $U$ then $\mathbf{w}_1(x),\ldots,\mathbf{w}_{L-S}(x),\mathbf{v}_1(x),\ldots,\mathbf{v}_S(x)$ is a basis for $\mathbb{Q}[x]^L$. It follows that the $L \times L$ matrix $${\bf V}(x):=[\mathbf{w}_1(x),\ldots,\mathbf{w}_{L-S}(x),\mathbf{v}_1(x),\ldots,\mathbf{v}_S(x)]^T$$ is invertible and hence has determinant equal to a nonzero scalar in $\mathbb{Q}$.  In particular the column span of ${\bf V}(x)$ is $\mathbb{Q}[x]^L$ and so the column span of ${\bf T}(x)$ is $\mathbb{Q}[x]^{L-S}$, which says that ${\bf T}(x)$ is unimodular.
\end{proof}

Applying Lemma \ref{2mH}, Lemma \ref{Dequal1}, and Lemma \ref{unibasis} immediately gives the following result.

\begin{lem}\label{WbasisS} Let $k\ge 2$ be a positive integer, $F_1(x),\ldots ,F_L(x)\in \mathbb{Q}[[x]]$ be as defined in \eqref{F1L}, and $W$ be as given in \eqref{Wker}. Then $W\cap \mathbb{Q}[x]^L$ has a $\mathbb{Q}[x]$-module basis consisting of polynomial vectors whose coordinates all have degree at most $2^{2L-S}(k^L-1)$, where $S$ is as defined in \eqref{S}.
\end{lem}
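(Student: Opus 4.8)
The plan is to obtain the bound by stacking the three cited lemmas; all the genuine work is in keeping track of ranks and degree bounds, so I will organise the argument around those.

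First, I would set $m:=L-S$ and record that $W$, being the $\mathbb{Q}(x)$-space of linear relations among $F_1(x),\dots,F_L(x)$, has $\mathbb{Q}(x)$-dimension $L-S$ by~\eqref{S} together with the observation following~\eqref{Wker} that $W$ has codimension $S$ in $\mathbb{Q}(x)^L$. Consequently $W\cap\mathbb{Q}[x]^L$ is a $\mathbb{Q}[x]$-submodule of $\mathbb{Q}[x]^L$ of rank $m$; Lemma~\ref{unibasis} tells us moreover that any $\mathbb{Q}[x]$-module basis of $W\cap\mathbb{Q}[x]^L$ assembles into a unimodular matrix, confirming in particular that this module is free of rank $m$ (this is also the incarnation in which the bound will be used in the next section).

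Next, I would invoke Lemma~\ref{Dequal1}, which furnishes a spanning set for $W$ consisting of polynomial vectors all of whose coordinates have degree at most $2^{L}(k^{L}-1)$. The point that must be checked with a little care here is that the generating family $\mathcal{B}_0\cup\{\mathbf{w}_i(x)\}$ produced in the proof of Lemma~\ref{Dequal1} actually generates the full lattice $W\cap\mathbb{Q}[x]^L$ as a $\mathbb{Q}[x]$-module, and not merely the ambient space $W$ over $\mathbb{Q}(x)$: the part $\mathcal{B}_0$ is an honest $\mathbb{Q}[x]$-module basis of the $\mathbb{Q}[x]$-left kernel of $\mathbf{A}(x)\mathbf{A}(x^{k})\cdots\mathbf{A}(x^{k^{L-1}})$, being read off from the bottom rows of a matrix invertible over $\mathbb{Q}[x]$, while the finitely many extra vectors $\mathbf{w}_i(x)$ lie in $W\cap\mathbb{Q}[x]^L$ and, together with $\mathcal{B}_0$, project onto a basis of the rank-$S$ quotient. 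Hence $W\cap\mathbb{Q}[x]^L$ is $\mathbb{Q}[x]$-generated by polynomial vectors whose coordinates have degree at most $2^{L}(k^{L}-1)$.

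Finally, I would apply Lemma~\ref{2mH} to the $\mathbb{Q}[x]$-submodule $W\cap\mathbb{Q}[x]^L$ with $H=2^{L}(k^{L}-1)$ and $\rank(W\cap\mathbb{Q}[x]^L)=m=L-S$; its proof yields a $\mathbb{Q}[x]$-module basis all of whose coordinates have degree at most
$$
2^{m}H \;=\; 2^{L-S}\cdot 2^{L}(k^{L}-1) \;=\; 2^{2L-S}(k^{L}-1),
$$
which is the asserted estimate. The hard part is precisely the middle step: one has to be sure that the family coming out of Lemma~\ref{Dequal1} generates the saturated lattice $W\cap\mathbb{Q}[x]^L$ over $\mathbb{Q}[x]$ rather than a proper full-rank submodule, since only then does Lemma~\ref{2mH} apply with the correct rank input $L-S$ and the advertised degree bound; everything after that is a direct substitution of degree bounds.
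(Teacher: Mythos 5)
Your plan — apply Lemma~\ref{Dequal1} for a degree-bounded spanning family, then Lemma~\ref{2mH} with rank $m=L-S$ to extract a basis, with Lemma~\ref{unibasis} supplying the relevance — is exactly the route the paper's one-line proof indicates, and you correctly put your finger on the one genuinely delicate point: Lemma~\ref{Dequal1} provides a \emph{$\mathbb{Q}(x)$-spanning set} for the $\mathbb{Q}(x)$-vector space $W$, whereas Lemma~\ref{2mH} applied to the $\mathbb{Q}[x]$-span of that set produces a $\mathbb{Q}[x]$-module basis only of that span, not automatically of the saturated lattice $W\cap\mathbb{Q}[x]^L$. This distinction is material, since the downstream use (Lemma~\ref{unibasis} and the unimodularity of $\mathbf{T}(x)$ in the proof of Theorem~\ref{regularkkernel}) genuinely requires a basis of $W\cap\mathbb{Q}[x]^L$ itself.

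However, the argument you offer to close this gap does not succeed. You note, correctly, that $\mathcal{B}_0$ can be taken to be a $\mathbb{Q}[x]$-module basis of $K_{\mathbf{B}(x)}\cap\mathbb{Q}[x]^L$ (the bottom rows of a matrix invertible over $\mathbb{Q}[x]$), and that the extra vectors $\mathbf{w}_i(x)$ lie in $W\cap\mathbb{Q}[x]^L$ and project, modulo $K_{\mathbf{B}(x)}$, onto a $\mathbb{Q}(x)$-basis of $W/K_{\mathbf{B}(x)}$. But the conclusion ``Hence $W\cap\mathbb{Q}[x]^L$ is $\mathbb{Q}[x]$-generated by these vectors'' does not follow: from these facts one only gets that $\mathcal{B}_0\cup\{\mathbf{w}_i(x)\}$ generates a full-rank $\mathbb{Q}[x]$-submodule of $W\cap\mathbb{Q}[x]^L$. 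Equality would require the images of the $\mathbf{w}_i(x)$ in the free module $(W\cap\mathbb{Q}[x]^L)/(K_{\mathbf{B}(x)}\cap\mathbb{Q}[x]^L)$ to be a $\mathbb{Q}[x]$-module \emph{generating set}, which is strictly stronger than a $\mathbb{Q}(x)$-basis after tensoring with $\mathbb{Q}(x)$; in general a family of lattice vectors can span the ambient $\mathbb{Q}(x)$-space yet generate a proper finite-index sublattice (compare $[1,x]$ and $[x,1]$ in $\mathbb{Q}[x]^2$). The minimality built into the proof of Lemma~\ref{Dequal1} controls the \emph{degree} $D$ of the $\mathbf{w}_i(x)$, not the index of the sublattice they generate, so it does not deliver saturation. (As a side remark, the phrase ``rank-$S$ quotient'' is also off: the quotient in play, $(W\cap\mathbb{Q}[x]^L)/(K_{\mathbf{B}(x)}\cap\mathbb{Q}[x]^L)$, has rank $\rank\mathbf{B}(x)-S$, not $S$.) To finish the proof one needs an additional argument showing that the family coming out of Lemma~\ref{Dequal1}, or a suitably modified family of the same degree, actually $\mathbb{Q}[x]$-generates the saturated kernel $W\cap\mathbb{Q}[x]^L$; the identification of where that argument is needed is the valuable part of your write-up, but it is not yet supplied.
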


We are now ready to prove Theorem \ref{regularkkernel}.

\begin{proof}[Proof of Theorem \ref{regularkkernel}]
Let $\mathbf{w}_1(x),\ldots,\mathbf{w}_{L-S}(x)$ be the $\mathbb{Q}[x]$-module basis for $W\cap \mathbb{Q}[x]^L$ as given by Lemma \ref{WbasisS} and, as above, define $$\mathbf{T}(x):=\left[\begin{matrix}\mathbf{w}_1(x)\\  \vdots\\ \mathbf{w}_{L-S}(x)\end{matrix}\right].$$ By Proposition \ref{QSprop} there is a $\mathbf{B}(x)\in{\rm SL}_L(\mathbb{Q}[x])$, such that $\mathbf{B}(x)$ and $(\mathbf{B}(x))^{-1}$ have entries of degree at most $(L-S+1)2^{3L-2S}(k^L-1)$ and $(2L-2S+1)2^{3L-2S}(k^L-1)$, respectively, and $$\mathbf{T}(x)\mathbf{B}(x)=\left[\begin{matrix} 1&0&\cdots&0&0&\cdots&0\\ *&1&\cdots&0&0&\cdots&0\\ \vdots&\ddots&\ddots&\vdots&\vdots&&\vdots\\ *&\cdots&*&1&0&\cdots&0\end{matrix}\right],$$ since ${\bf T}(x)$ is unimodular.  Define $$\mathbf{M}(x):=\left[\begin{matrix} \mathbf{T}(x)\mathbf{B}(x) \\ \mathbf{0}_{S\times (L-S)}\mid\mathbf{I}_{S\times S}\end{matrix}\right],$$ and note that by construction $\det \mathbf{M}(x)=1$ and the matrix $\mathbf{T}(x)\mathbf{B}(x){\bf M}(x)^{-1}$ is the the first $L-S$ rows of the $L\times L$ identity matrix.  Now set $$\mathbf{Y}(x):=\mathbf{M}(x)\mathbf{B}(x)^{-1}=\left[\begin{matrix} \mathbf{T}(x) \\ \mathbf{R}(x)\end{matrix}\right],$$ for some $S\times L$ matrix $\mathbf{R}(x)$. By construction we have that $\det \mathbf{Y}(x)=1$ and that the entries of $\mathbf{Y}(x)$ are polynomials of degree at most $(2L-2S+1)2^{3L-2S}(k^L-1)$. Moreover, the matrix $(\mathbf{Y}(x))^{-1}=\mathbf{B}(x)$ has entries of degree at most $(L-S+1)2^{3L-2S}(k^L-1)$

By the definition of $\mathbf{Y}(x)$, we have $$\mathbf{Y}(x)\mathbf{F}(x)=\left[\begin{matrix} \mathbf{0}_{(L-S)\times 1}\\ G_{1}(x)\\ \vdots\\ G_S(x)\end{matrix}\right],$$ where each of $G_{1}(x),\ldots,G_S(x)$ is a $\mathbb{Q}[x]$-linear combination of $F_1(x),\ldots,F_L(x)$.  We claim that $G_{1}(x),\ldots,$ $G_S(x)$ are linearly independent over $\mathbb{Q}(x)$.  To see this, suppose that there is some non-trivial linear combination that is equal to zero:
$$\sum_{i=1}^S u_i(x) G_i(x) \ = \ 0.$$  Let $${\bf u}(x)=[\mathbf{0}_{1\times (L-S)}~u_1(x)~\cdots ~u_S(x)].$$
Then by construction, we have $${\bf u}(x){\bf Y}(x){\bf F}(x)=0.$$  In particular, since ${\bf u}(x){\bf Y}(x)$ is a vector that annihilates ${\bf F}(x)$, we have that it is a $\mathbb{Q}(x)$-linear combination of ${\bf w}_1(x),\ldots ,{\bf w}_{L-S}(x)$.  That is, there is some ${\bf v}(x)\in \mathbb{Q}(x)^{1\times (L-S)}$ such that ${\bf v}(x){\bf T}(x)={\bf u}(x){\bf Y}(x)$.  But ${\bf Y}(x)={\bf M}(x){\bf B}(x)^{-1}$ and so we see that
$${\bf v}(x){\bf T}(x){\bf B}(x){\bf M}(x)^{-1}={\bf u}(x).$$  
But this is impossible, since \[ \mathbf{T}(x)\mathbf{B}(x) {\bf M}(x)^{-1}
= \left[{\bf I}_{(L-S)\times (L-S)}~\mathbf{0}_{(L-S)\times S}\right]\]   
and so the last $S$ entries of ${\bf v}(x){\bf T}(x){\bf B}(x){\bf M}(x)^{-1}$ must be zero, contradicting the fact that ${\bf u}(x)$ is nonzero.  

Let $a/b\in\mathbb{Q}$ with $b>1$ and $\log|a|/\log b \in [0,1/(L+2)).$ Note that we are interested in an irrationality measure of $F(a/b)$ and also that $F_1(x)=F(x)$ is in the $\mathbb{Q}[x]$-span of $G_{1}(x),\ldots,G_S(x)$. Thus there is a $\mathbb{Q}$-linear combination $$H(x):=\sum_{j=1}^S r_jG_j(x)$$ of $G_{1}(x),\ldots,G_S(x)$ such that when specialised at $x=a/b$ we have $H(a/b)=F(a/b)$ and for at least one $j_0\in\{1,\ldots,S\}$ we have $r_{j_0}\neq 0$. Permuting $G_{1}(x),\ldots,G_S(x)$ if needed, we can assume that $j_0=S$. Thus we have, setting $$\mathbf{R}:=\left[\begin{matrix}\mathbf{I}_{(L-1)\times L}\\ \mathbf{0}_{1\times (L-S)} \qquad r_1\ \cdots\ r_S\end{matrix}\right]\in\mathbb{Q}^{L\times L},$$ that $\det \mathbf{R}\neq 0$ and $$\mathbf{R}\cdot\mathbf{Y}(x)\mathbf{F}(x)=\left[\begin{matrix} \mathbf{0}_{(L-S)\times 1}\\ G_{1}(x)\\ \vdots\\ G_{S-1}(x)\\ H(x)\end{matrix}\right].$$ 

Since $\mathbf{F}(x)=\mathbf{A}(x)\mathbf{F}(x^k)$ we have that $$(\mathbf{R}\cdot\mathbf{Y}(x))^{-1}\left[\begin{matrix} \mathbf{0}_{(L-S)\times 1}\\ G_1(x)\\ \vdots\\ G_{S-1}(x)\\ H(x)\end{matrix}\right]=\mathbf{A}(x)(\mathbf{R}\cdot\mathbf{Y}(x^k))^{-1}\left[\begin{matrix} \mathbf{0}_{(L-S)\times 1}\\ G_1(x^k)\\ \vdots\\ G_{S-1}(x^k)\\ H(x^k)\end{matrix}\right],$$ and so $$\left[\begin{matrix} \mathbf{0}_{(L-S)\times 1}\\ G_1(x)\\ \vdots\\ G_{S-1}(x)\\ H(x)\end{matrix}\right]=\mathbf{R}\cdot\mathbf{Y}(x)\mathbf{A}(x)(\mathbf{R}\cdot\mathbf{Y}(x^k))^{-1}\left[\begin{matrix} \mathbf{0}_{(L-S)\times 1}\\ G_1(x^k)\\ \vdots\\ G_{S-1}(x^k)\\ H(x^k)\end{matrix}\right].$$ Define the matrices $\mathbf{C}_{1,1}(x),\mathbf{C}_{1,2}(x),\mathbf{C}_{2,1}(x),$ and $\mathbf{C}_{2,2}(x)$ by $$\mathbf{R}\cdot\mathbf{Y}(x)\mathbf{A}(x)(\mathbf{R}\cdot\mathbf{Y}(x^k))^{-1}=\left(\begin{matrix} \mathbf{C}_{1,1}(x)&\mathbf{C}_{1,2}(x)\\ \mathbf{C}_{2,1}(x)& \mathbf{C}_{2,2}(x)\end{matrix} \right),$$ where $\mathbf{C}_{1,1}(x)$ is $(L-S)\times (L-S)$, $\mathbf{C}_{1,2}(x)$ is $(L-S)\times S$, $\mathbf{C}_{2,1}(x)$ is $S\times (L-S)$, and $\mathbf{C}_{2,2}(x)$ is $S\times S$. Then $$\left[\begin{matrix} G_1(x)\\ \vdots\\ G_{S-1}(x)\\ H(x)\end{matrix}\right]=\mathbf{C}_{2,2}(x)\left[\begin{matrix} G_1(x^k)\\ \vdots\\ G_{S-1}(x^k)\\ H(x^k)\end{matrix}\right].$$ Note that $$\mathbf{R}\cdot\mathbf{Y}(x)\mathbf{A}(x)(\mathbf{R}\cdot\mathbf{Y}(x^k))^{-1}\in\mathbb{Q}[x]^{S\times S},$$ so that the matrix $\mathbf{C}_{2,2}(x)$ has the form \begin{equation}\label{CB}\mathbf{C}_{2,2}(x)=\left(\frac{c_{i,j}(x)}{q}\right)_{1\leq i,j\leq S}\end{equation} where $q\in\mathbb{Z}\setminus \{0\}$ and all of the polynomials $c_{i,j}(x)$ are polynomials with integer coefficients and have degree bounded by the degrees of the entries of $$\mathbf{R}\cdot\mathbf{Y}(x)\mathbf{A}(x)(\mathbf{R}\cdot\mathbf{Y}(x^k))^{-1},$$ degrees for which the bound 
\begin{align}
\nonumber L(2L-2S+1)&2^{3L-2S}(k^L-1)+(k-1)\\
\nonumber &=L(2L+1)2^{3L-2S}(k^L-1)+(k-1)-2SL2^{3L-2S}(k^L-1)\\
\nonumber&\leq 2L^2 2^{3L-2}k^L+L2^{3L-2S}(k^L-1)+(k-1)-2L2^{3L-2S}(k^L-1)\\
\nonumber&\leq \frac12 L^2 2^{3L}k^L+(k-1)-L2^{L}(k^L-1)\\
\nonumber&\leq \frac12 L^2 2^{3L}k^L+(k-1)-2(k^L-1)\\
\label{eq: importantbound}& <\frac{1}{2}L^22^{3L}k^L.
\end{align}
holds. 

Now note that since $G_1(x),\ldots,G_{S-1}(x),H(x)$ are linearly independent over $\mathbb{Q}(x)$, the matrix $\mathbf{C}_{2,2}(x)$ is invertible. So also the matrix $$\left[\begin{matrix} 1 & \mathbf{0}_{1\times S}\\ \mathbf{0}_{S\times 1} & \mathbf{C}_{2,2}(x)\end{matrix}\right]$$ is invertible. Thus we can apply Theorem \ref{maineffective} to the system $$ \left[\begin{matrix} 1\\G_1(x)\\ \vdots\\ G_{S-1}(x)\\ H(x)\end{matrix}\right]=\left[\begin{matrix} 1 & \mathbf{0}_{1\times S}\\ \mathbf{0}_{S\times 1} & \mathbf{C}_{2,2}(x)\end{matrix}\right]\left[\begin{matrix} 1\\G_1(x^k)\\ \vdots\\ G_{S-1}(x^k)\\ H(x^k)\end{matrix}\right],$$ using $H$ as given in the right-hand side of \eqref{eq: importantbound} and taking $d=S+1\le L+1$; since $H(a/b)=F(a/b)$, this gives us that the irrationality exponent of $F(a/b)$ is at most $$2^{L^2 (L+1)^2 2^{3L}k^L} k^{5(L+1)^2}.$$

\noindent It is straightforward to check, using elementary estimates, that this is in turn bounded by $39^{(2^{6}k)^L}$, which is the desired result.
\end{proof}

\section{Approximation of regular numbers by algebraic numbers}\label{regAB}

Adamczewski and Bugeaud \cite{AB2007,AdBu11} showed that automatic 
irrational numbers  are transcendental and are not  $U$-numbers
within Mahler's classification recalled in the Introduction.
In this section, we use the system constructed 
at the end of the proof of Theorem \ref{regularkkernel} 
together with Lemma \ref{right} and the $p$-adic Schmidt Subspace 
Theorem to extend their result to irrational regular numbers. 

\begin{thm}\label{thmNotUeffective}
Let $k\geq 2$ be an integer, $\mathbf{f}:=\{f(n)\}_{n\geq 0}$ be a $k$-regular sequence, and $F(x)=\sum_{n\geq 0}f(n)x^n\in\mathbb{Z}[[x]]$. Let $L$ denote the dimension of the $\mathbb{Q}$-vector space spanned by the $k$-kernel of $\mathbf{f}$ and $a/b$ be a rational number with $\rho := \log|a|/\log b\in[0,1/(L+2))$ and $b\geq 2$. Then $F(a/b)$ is either rational or transcendental. Moreover, if $m\geq 2$ is an integer, then we have $$
w_m (F(a/b)) \leq \max\left\{39^{(2^{6}k)^L}, 
(8 m)^{c   (\log 8 m) (\log \log 8 m)} \right\}, 
$$
where the constant $c$ can be explicitly given
and depends on $F(x)$ and $\rho$, but is independent of $b$ and $m$.
\end{thm}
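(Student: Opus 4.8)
The plan is to feed the rational approximations produced by our earlier machinery into the $p$-adic Subspace Theorem. First I would specialise the construction at the end of the proof of Theorem~\ref{regularkkernel}: the vector ${\bf v}(x):=[1,G_{1}(x),\dots,G_{S-1}(x),H(x)]^{T}$ satisfies a Mahler system whose matrix is block diagonal with blocks $1$ and ${\bf C}_{2,2}(x)$, this matrix is invertible, $H(a/b)=F(a/b)$, and $1,G_{1},\dots,G_{S-1},H$ are linearly independent over $\mathbb{Q}(x)$. Hence (after clearing a common denominator) Notation~\ref{notn: 1} applies with $d=S+1\le L+1$ and with $B(x)$ equal to the \emph{constant} denominator $q$ of ${\bf C}_{2,2}(x)$, so that Mahler's condition $B((a/b)^{k^{n}})\neq 0$ holds automatically and the hypothesis $\rho<1/(L+2)$ comfortably implies the bound $\rho<1/(d+1)$ required there. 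Proposition~\ref{prop: lower} then yields integers $p_{n},q_{n}$ with $0<|F(a/b)-p_{n}/q_{n}|<q_{n}^{-(1+\delta)}$ for all large $n$, where $\delta=1/(3d^{3})$, and $q_{n}<q_{n+1}<Cq_{n}^{k}$. The decisive structural feature, read off from \eqref{q} and Lemma~\ref{right}(iii) with $B\equiv q$, is that $q_{n}=b^{e_{n}}v_{n}$ with $e_{n}=((d+2)(d-1)+1)Hk^{n}$ and $v_{n}=q^{n}|Q_{0}((a/b)^{k^{n}})|$; since $|Q_{0}((a/b)^{k^{n}})|\to|Q_{0}(0)|=1$ while $\log(q^{n})=O(n)$ and $\log q_{n}\asymp k^{n}$, we get $v_{n}=q_{n}^{o(1)}$. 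Thus $q_{n}$ is, up to a factor $q_{n}^{o(1)}$, a power of the fixed integer $b$, and passing to the primitive part only improves matters, so we may take $(q_{n},p_{n})$ primitive.

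For the dichotomy: if $F(a/b)\in\mathbb{Q}$ we are done. Otherwise put $\xi:=F(a/b)$ and assume, for contradiction, that $\xi$ is irrational and algebraic. Apply the two–variable $p$-adic Subspace Theorem (that is, Ridout's theorem) over $\mathbb{Q}$ with the set of places $S=\{\infty\}\cup\{p:p\mid b\}$, the primitive integer points $(q_{n},p_{n})$, the archimedean forms $\xi X-Y$ and $Y$, and at each $p\mid b$ the coordinate forms $X$ and $Y$. Because $\prod_{p\mid b}|q_{n}|_{p}\le\prod_{p\mid b}|b|_{p}^{e_{n}}=b^{-e_{n}}=v_{n}/q_{n}$, the product of these forms at $(q_{n},p_{n})$ is at most $|\xi q_{n}-p_{n}|\cdot|p_{n}|\cdot\prod_{p\mid b}|q_{n}|_{p}\le (|\xi|+1)\,v_{n}\,q_{n}^{-\delta}$, which after dividing by $\max(|q_{n}|,|p_{n}|)^{2}\asymp q_{n}^{2}$ is $\le q_{n}^{-2-\delta+o(1)}<q_{n}^{-2-\delta/2}$ for large $n$, since $v_{n}=q_{n}^{o(1)}$. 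The Subspace Theorem then confines the $(q_{n},p_{n})$ to finitely many lines through the origin, on each of which $p_{n}/q_{n}$ is a fixed rational --- impossible for infinitely many $n$, because $q_{n}\to\infty$ and $p_{n}/q_{n}\to\xi\notin\mathbb{Q}$. Hence $F(a/b)$ is rational or transcendental.

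For the measure, assume $\xi$ is irrational (hence transcendental) and fix $m\ge 2$. If $w_{m}(\xi)$ exceeded the asserted bound then, after the usual passage to approximations by algebraic numbers $\alpha$ of degree at most $m$ and the pairing of each $\alpha$ with an index $n=n(\alpha)$ for which $H(\alpha)$ and $q_{n}$ are polynomially comparable, one applies the $p$-adic Subspace Theorem to the Veronese lift $\mathbf{x}_{n}=(q_{n}^{m},q_{n}^{m-1}p_{n},\dots,p_{n}^{m})$: at the archimedean place one uses linear forms built from $\alpha$ and its conjugates (so their product over conjugates is governed by $q_{n}^{\deg\alpha}|P_{\alpha}(p_{n}/q_{n})|$, $P_{\alpha}$ the minimal polynomial of $\alpha$), and at the places above $p\mid b$ one uses coordinate forms, exploiting $|q_{n}^{m}|_{p}\le|b|_{p}^{m e_{n}}$ together with $v_{n}=q_{n}^{o(1)}$; for an admissible pairing exponent the product of the normalised forms drops below $H(\mathbf{x}_{n})^{-\varepsilon}$ for a fixed $\varepsilon>0$, forcing a subsequence of the $\mathbf{x}_{n}$ onto a fixed proper rational subspace, i.e. forcing a fixed nonzero polynomial of degree at most $m$ to vanish at infinitely many distinct $p_{n}/q_{n}$ --- impossible. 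This is the strategy of Adamczewski and Bugeaud \cite{AdBu11}, and running it on our approximations bounds $w_{m}(\xi)$. Tracking the parameters then yields the stated maximum: the height $H\le\frac{1}{2}L^{2}2^{3L}k^{L}$ of the ${\bf C}_{2,2}$-system enters through $\delta$ and through the admissible range of the pairing exponent, reproducing the bound $39^{(2^{6}k)^{L}}$ of Theorem~\ref{regularkkernel} as the term governing the range of small $m$, while the dependence on $m$ in the quantitative $p$-adic Subspace Theorem, together with the conversion from $w_{m}^{*}$ to $w_{m}$ (Koksma's equivalence and the Wirsing-type inequalities of Bugeaud's monograph \cite{BuLiv}), produces the term $(8m)^{c(\log 8m)(\log\log 8m)}$ with $c$ depending only on $F$ and $\rho$.

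The main obstacle is precisely this last step: choosing the auxiliary linear forms and the pairing exponent so that the Subspace inequality holds with a uniform $\varepsilon$, correctly handling the varying $\alpha$ and its conjugates, and extracting the explicit dependence on $m$ along with the $w_{m}^{*}\to w_{m}$ conversion --- this is where essentially all of the quantitative work resides, the earlier sections of the paper supplying exactly the input (approximations with denominators that are quasi-powers of $b$, quasi-geometric growth, and a fixed approximation exponent $1+\delta>1$) that makes the Subspace argument run.
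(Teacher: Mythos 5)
Your transcendence argument (a two-variable Ridout/$p$-adic Subspace application to $(q_n,p_n)$ with forms $\xi X-Y$, $Y$ at $\infty$ and coordinate forms at $p\mid b$) is essentially sound and would establish the rational-or-transcendental dichotomy, although it differs from the paper's proof, which instead applies the Subspace Theorem to the $(\delta+1)$-dimensional vectors
$\mathbf{x}_n=\bigl(B^n b^{(D-\ell_1)k^n},\ldots,B^n b^{(D-\ell_\delta)k^n},p_n\bigr)$
built from the \emph{individual monomials} of $Q_n(1/b)$, not from $(q_n,p_n)$ alone.

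The genuine gap is in the $w_m$ bound. You propose to apply the quantitative Subspace Theorem to the Veronese lift $\mathbf{x}_n=(q_n^m,q_n^{m-1}p_n,\ldots,p_n^m)\in\mathbb{Z}^{m+1}$. But Theorem~\ref{thm5.1} counts subspaces by a quantity that grows like $8^{(n+9)^2}(1+\varepsilon^{-1})^{n+4}\log(4nD)\log\log(4nD)$ where $n$ is the \emph{number of variables}. With $n=m+1$ this is super-exponential in $m$, and propagating through $\chi\le 2k^{M+1}$ one would get $w_m$ at best doubly exponential in $m$ --- nowhere near the asserted $(8m)^{c(\log 8m)(\log\log 8m)}$. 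The crux of the paper's argument is to keep the dimension of the Subspace application \emph{fixed} at $\delta+1$ (a quantity depending only on $F$ and not on $m$), and to let $\alpha$ enter only through the \emph{coefficients} of one linear form, $L'_{\delta+1,\infty}=\sum_i a_{\ell_i}\alpha X_i-X_{\delta+1}$. Then $m$ enters only as the degree $D$ of the number field $\mathbb{Q}(\alpha)$, which appears inside a $\log(4nD)\log\log(4nD)$ factor, giving the polylogarithmic count $T\lesssim\log(8m)\log\log(8m)$. This is combined with the grouping $b^{d(d+1)Hk^{j-1}}\le H(\alpha)<b^{d(d+1)Hk^j}$, Lemma~\ref{lem8.4} to bound the number of vectors $\mathbf{x}'_h$ per hyperplane, and the Liouville-type Lemma~\ref{lem8.2} to force $U\lesssim\log m$ indices per subspace; these give $M\lesssim(\log 8m)^2\log\log 8m$ and hence the stated bound. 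Your sketch also attributes the polylogarithmic shape to ``the conversion from $w_m^*$ to $w_m$,'' but Lemma~\ref{lem8.1} is only a linear comparison; the polylogarithmic dependence comes entirely from the fixed-dimensional Subspace count, which your Veronese lift destroys.
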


Since, by Theorem \ref{regularkkernel},
we already know that $F(a/b)$ is not a Liouville number, that is, $w_1(F(a/b))$ is finite,
it only remains for us to control the approximation to $F(a/b)$ by algebraic numbers 
of any given degree $m \ge 2$. We will not only show that $F(a/b)$
is transcendental or rational, but also bound from below the distance
between $F(a/b)$, when irrational, and any irrational algebraic number. 
To do this, we proceed as in \cite{AdBuMes,AdBu11} using a suitable
quantitative version of the Schmidt Subspace Theorem. 

Since automatic numbers form a subset of regular numbers, Theorem \ref{thmNotUeffective} implies
that any irrational automatic number is either an $S$-number or a 
$T$-number, a result already established in \cite{AdBu11}. Although both 
proofs depend ultimately on the Schmidt Subspace Theorem, there are some important differences.
Indeed, the
transcendence results obtained in \cite{AB2007,AdBu11} depend on a
result of Cobham \cite{Cob} asserting that an automatic sequence
has sublinear complexity, while in the present paper we make use of the
functional equation satisfied by the generating series of an
automatic sequence.

Before proceeding with the proof of Theorem \ref{thmNotUeffective}, we present
a few auxiliary results. 

We state below an immediate consequence of Theorem 5.1 of \cite{BuEv08}.

For a linear form $L=\sum_{i=1}^n \alpha_i X_i$ with real algebraic coefficients,
let $\mathbb{Q} (L)$ denote the number field generated by $\alpha_1, \ldots , \alpha_n$ and
define the height of $L$ by
$$
H^*(L):=\prod_{i=1}^n \, d H(\alpha_i),
$$ 
where $d$ is the degree of $\mathbb{Q} (L)$.

Let $n$ be an integer with $n\geq 2$. Let $\varepsilon$ be a positive real number
and $\mathcal{S}=\{ \infty ,p_1, \ldots , p_t\}$ be a finite subset of
the set of places of $\mathbb{Q}$ containing the infinite place. Further, let $L_{i, \infty}$ 
$(i=1, \ldots , n)$ be linear forms in $X_1, \ldots ,  X_n$ with real algebraic coefficients and
$L_{i, p}$ $(p\in \{p_1, \ldots ,  p_t\},\,
i=1, \ldots ,  n)$ be linear forms in $X_1, \ldots ,  X_n$   
such that
\begin{align}
\nonumber &|\det (L_{1, \infty}, \ldots ,  L_{n, \infty})| =1, \\
\nonumber & [\mathbb{Q} (L_{i,  \infty}):\mathbb{Q} ]\leq D\ \hbox{for $i=1, \ldots ,  n$,}\\
\nonumber & H^*(L_{i, \infty})\leq H \ \hbox{for $i=1, \ldots ,  n$,} 
\end{align}
and let $e_{i, p}$ $(p\in \mathcal{S},\,i=1, \ldots ,  n)$ be real numbers satisfying $e_{i, \infty}\leq 1$ $(i=1, \ldots ,  n),$ and $e_{i, p}\leq 0$ $(p\in \mathcal{S}\setminus\{\infty\},\, i=1, \ldots ,  n),$ such that 
$$\sum_{p\in \mathcal{S}}\sum_{i=1}^n e_{i, p} = -\varepsilon .$$

Let $\Vert {\bf x} \Vert$ denote the sum of the absolute values of the 
coordinates of the integer vector ${\bf x}$.
We consider
the system of inequalities
\begin{equation}\label{y5.7}
|L_{i, p}({\bf x})|_p\leq \Vert {\bf x} \Vert^{e_{i, p}}\ (p\in \mathcal{S},\, i=1, \ldots ,  n)
\ \hbox{in ${\bf x} \in\mathbb{Z}^n$ with ${\bf x} \not= 0$.}
\end{equation}

\begin{thm}\label{thm5.1}
The set of solutions of \eqref{y5.7} with
$$
\Vert {\bf x} \Vert  >\max \left\{ 2H, n^{2n/\varepsilon}\right\}
$$
is contained in the union of at most
\begin{equation}\label{y5.9}\begin{cases}
8^{(n+9)^2}(1+\varepsilon^{-1})^{n+4}\log (4nD)\log\log (4nD) &
\hbox{if $n\geq 3$}\\
2^{32}(1+\varepsilon^{-1})^3
\log (4nD)\log\big( (1+\varepsilon^{-1})\log (4nD)\big)&
\hbox{if $n=2$}.\end{cases}\end{equation}
proper linear subspaces of $\mathbb{Q}^n$.
\end{thm}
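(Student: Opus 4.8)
The plan is to obtain Theorem \ref{thm5.1} by specialising the quantitative Subspace Theorem of Bugeaud and Evertse \cite[Theorem~5.1]{BuEv08} to the number field $\mathbb{Q}$. That result is stated for an arbitrary number field $K$ equipped with a finite set of places $S$ containing all archimedean ones, together with, for each $v\in S$, a system of $n$ linearly independent linear forms $L_{i,v}$ (those at archimedean places having algebraic coefficients of bounded degree and height), a normalisation of the determinants, and real exponents $e_{i,v}$ whose total sum over $v\in S$ and $i$ equals $-\varepsilon$; its conclusion is that the primitive integral solutions $\mathbf{x}$ of the associated system of twisted inequalities whose height exceeds an explicit bound lie in a bounded union of proper linear subspaces, with an explicit bound on the number of such subspaces depending only on $n$, $D$ and $\varepsilon$.

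First I would take $K=\mathbb{Q}$, so that $S$ coincides with $\mathcal{S}=\{\infty,p_1,\dots,p_t\}$ and the places are the usual archimedean and $p$-adic ones with the standard absolute values $|\cdot|$ and $|\cdot|_p$; these already satisfy the product formula with the normalisation used in \cite{BuEv08}, so no rescaling is required. Next I would match the height conventions: the quantity $H^*(L)=\prod_{i=1}^n dH(\alpha_i)$ with $d=[\mathbb{Q}(L):\mathbb{Q}]$ introduced above is, up to the harmless adjustments of \cite{BuEv08}, precisely the height of the linear form $L$ used there, so the hypotheses $[\mathbb{Q}(L_{i,\infty}):\mathbb{Q}]\le D$ and $H^*(L_{i,\infty})\le H$ translate directly. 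The condition $|\det(L_{1,\infty},\dots,L_{n,\infty})|=1$ is imposed exactly so that the global product of determinants over $S$ contributes no extra factor, while the forms $L_{i,p}$ at the finite places (not assumed to have algebraic coefficients) enter only through $e_{i,p}\le 0$, under which $|L_{i,p}(\mathbf{x})|_p\le\Vert\mathbf{x}\Vert^{e_{i,p}}\le 1$ is a mild $p$-adic integrality constraint handled in the standard way inside \cite[Theorem~5.1]{BuEv08}. Finally, since a solution $\mathbf{x}$ and any nonzero rational multiple of it span the same subspace, the bound for all solutions of \eqref{y5.7} is the same as the bound for primitive ones.

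With this dictionary, the height threshold of \cite{BuEv08} becomes $\Vert\mathbf{x}\Vert>\max\{2H,\,n^{2n/\varepsilon}\}$ once the intrinsic height of $\mathbf{x}$ is replaced by the comparable quantity $\Vert\mathbf{x}\Vert$, and the bound on the number of subspaces specialises to the two displayed expressions in \eqref{y5.9}, according to whether $n\ge 3$ or $n=2$. The only genuine work is bookkeeping: verifying that the normalisation of absolute values, the definition of the height of a linear form, and the determinant conditions in our statement agree with (or are stronger than) those in \cite[Theorem~5.1]{BuEv08}, and confirming that the explicit thresholds and subspace counts there reduce to the ones stated here. I expect this matching of normalisations to be the most delicate point, but it is entirely routine given the cited theorem, which is why the statement is an immediate consequence.
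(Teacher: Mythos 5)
Your proposal is correct and takes exactly the same route as the paper: the paper explicitly presents Theorem~\ref{thm5.1} as ``an immediate consequence of Theorem~5.1 of \cite{BuEv08}'' and gives no further proof, so your specialisation of that result to $K=\mathbb{Q}$ with the accompanying normalisation bookkeeping is precisely the intended argument.
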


We will make use of the following lemmas.

\begin{lem}\label{lem8.1}
Let $m \ge 1$ be an integer. For every real irrational number $\xi$ we have
$$
\frac{w_m (\xi) + 1}{2} \le w_m^* (\xi) \le w_m(\xi).
$$
In particular, $w_m(\xi)$ is finite if, and only if, $w_m^*(\xi)$ is finite.
\end{lem}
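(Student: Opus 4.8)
The plan is to recover this classical two-sided estimate of Wirsing by proving the two inequalities separately; the ``in particular'' clause is then immediate, since the two bounds give $w_m(\xi)\le 2w_m^*(\xi)-1$ and $w_m^*(\xi)\le w_m(\xi)$, so either of $w_m(\xi),w_m^*(\xi)$ being finite forces the other to be finite. For the complete details I would point the reader to \cite[Chapter~3]{BuLiv}.

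For the upper bound $w_m^*(\xi)\le w_m(\xi)$ I would fix a real number $w<w_m^*(\xi)$ and use the infinitely many algebraic $\alpha$ of degree at most $m$ with $0<|\xi-\alpha|<H(\alpha)^{-w-1}$; since $\xi$ is irrational, $H(\alpha)$ is unbounded along such a family. To each such $\alpha$ I would attach its minimal polynomial $P_\alpha\in\mathbb{Z}[x]$, so that $\deg P_\alpha\le m$ and $H(P_\alpha)=H(\alpha)$. Using $P_\alpha(\alpha)=0$ and the mean value theorem, $|P_\alpha(\xi)|=|\xi-\alpha|\,|P_\alpha'(\zeta)|$ for some real $\zeta$ with $|\zeta|\le|\xi|+1$, and since the coefficients of $P_\alpha'$ are at most $mH(\alpha)$ in modulus this gives $|P_\alpha'(\zeta)|\le c_1(\xi,m)\,H(\alpha)$, whence
\[
0<|P_\alpha(\xi)|\le c_1(\xi,m)\,H(P_\alpha)^{-w},
\]
the strict inequality on the left holding for all but finitely many $\alpha$ in the family. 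As distinct $\alpha$ give distinct $P_\alpha$ and $H(P_\alpha)\to\infty$, the right-hand side is $<H(P_\alpha)^{-w'}$ for any fixed $w'<w$ once $H(P_\alpha)$ is large; thus $w_m(\xi)\ge w'$, and letting $w'\to w$ and then $w\to w_m^*(\xi)$ finishes this direction.

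For the lower bound $w_m^*(\xi)\ge(w_m(\xi)+1)/2$ I would fix $w<w_m(\xi)$ and produce, from each of the infinitely many $P\in\mathbb{Z}[x]$ of degree $d\le m$ with $0<|P(\xi)|<H(P)^{-w}$ and $H(P)$ unbounded, an algebraic number $\alpha$ of degree at most $m$ that is abnormally close to $\xi$. First I would reduce to a squarefree (indeed irreducible) $P$, at the cost of replacing $w$ by a slightly smaller value, using the standard comparison between the height of a polynomial and the heights of its factors via the Mahler measure; after this reduction $\mathrm{disc}(P)$ is a nonzero integer, so $|\mathrm{disc}(P)|\ge1$. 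Writing $P=a_d\prod_{i=1}^d(x-\alpha_i)$ with $|\xi-\alpha_1|\le\cdots\le|\xi-\alpha_d|$ and taking $\alpha=\alpha_1$ (so $H(\alpha)\le c_2(m)H(P)$ by the usual height inequalities), I would note that every root has modulus at most $1+H(P)$, so that each factor $|\xi-\alpha_i|$ with $i\ge2$ that is not within a factor $2$ of $|\xi-\alpha_1|$ is within a factor $2$ of $|\alpha_1-\alpha_i|$; comparing $|P(\xi)|=|a_d|\prod_i|\xi-\alpha_i|$ with $|P'(\alpha)|=|a_d|\prod_{i\ge2}|\alpha_1-\alpha_i|$ then yields $|\xi-\alpha|\le c_3(m)\,|P(\xi)|/|P'(\alpha)|$, with a sharper exponent whenever several roots cluster near $\xi$. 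The proof is completed by a case analysis on the size of $|P'(\alpha)|$ (equivalently, on how tightly the roots cluster about $\xi$): when $|P'(\alpha)|$ is not too small one gets $|\xi-\alpha|\ll H(P)^{-w}/|P'(\alpha)|\ll H(\alpha)^{-(w+1)/2-1}$ directly, so $w_m^*(\xi)\ge(w+1)/2$; in the delicate opposite case one invokes $|\mathrm{disc}(P)|\ge1$, which — the conjugates of $P'(\alpha)$ being bounded polynomially in $H(P)$ — keeps the root separations, hence $|P'(\alpha)|$, from being too small relative to $H(P)$, leading to a contradiction or to an even stronger approximation.

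I expect the main obstacle to be exactly this last, clustering, case of the lower bound. The naive estimate $|\xi-\alpha|\le|P(\xi)|^{1/d}$ only gives $w_m^*(\xi)\ge w_m(\xi)/m-1$, which is far weaker than $(w_m(\xi)+1)/2$, so one genuinely needs the refined relation $|\xi-\alpha|\asymp|P(\xi)|/|P'(\alpha)|$ together with the arithmetic input $|\mathrm{disc}(P)|\ge1$ to control $|P'(\alpha)|$; getting the bookkeeping right in this case and choosing the threshold between the two cases correctly is the technical heart of the argument, and is carried out in full in \cite[Chapter~3]{BuLiv}.
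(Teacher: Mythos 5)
The paper does not prove this lemma itself; it simply defers to Bugeaud's monograph \cite{BuLiv}, which presents the classical argument of Wirsing. Your sketch is a correct outline of exactly that argument (the easy direction via the mean value theorem applied to the minimal polynomial, and the hard direction via the relation $|\xi-\alpha|\asymp|P(\xi)|/|P'(\alpha)|$ for the nearest root together with the discriminant bound to control root clustering), so you are taking the same route as the cited source, with the bookkeeping in the clustering case appropriately deferred to \cite[Chapter~3]{BuLiv}.
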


For a proof of Lemma \ref{lem8.1}, see e.g. \cite{BuLiv}.

\begin{lem}\label{lem8.2} 
Let $\alpha$ be a real algebraic number
of degree $m$ and $P(X)$ be an integer polynomial of degree $n$.
If $P(X)$ does not vanish at $\alpha$, then 
$$
|P(\alpha)| \ge (m+1)^{-n-1} \, (n+1)^{-m-1} \, H(\alpha)^{-n} \, H(P)^{-m+1}.
$$
\end{lem}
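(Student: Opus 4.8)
The plan is to reduce the lower bound on $|P(\alpha)|$ to a statement about the resultant of $P(X)$ with the minimal polynomial of $\alpha$, and to control that resultant using the standard bound on the resultant of two polynomials in terms of their heights, degrees, and the sizes of their roots. Let $A(X) \in \mathbb{Z}[X]$ be the minimal polynomial of $\alpha$, so $\deg A = m$; write $a$ for its leading coefficient and let $\alpha = \alpha_1, \ldots, \alpha_m$ be its conjugates. First I would observe that the resultant $\mathrm{Res}(A, P)$ is a nonzero rational integer, because $A$ is irreducible of degree $m$ and $P$ does not vanish at $\alpha$ (hence $A \nmid P$, so $\gcd(A,P) = 1$ over $\mathbb{Q}$); thus $|\mathrm{Res}(A,P)| \ge 1$.

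Next I would expand the resultant in the two classical ways. On one hand, $|\mathrm{Res}(A,P)| = |a|^{n} \prod_{i=1}^m |P(\alpha_i)|$, and separating the factor $i=1$ gives
$$
|P(\alpha)| = \frac{|\mathrm{Res}(A,P)|}{|a|^{n} \prod_{i=2}^m |P(\alpha_i)|} \ge \frac{1}{|a|^{n} \prod_{i=2}^m |P(\alpha_i)|}.
$$
So it remains to bound $|a|^n$ and each $|P(\alpha_i)|$ from above. For the leading coefficient, Mahler's inequality (or simply that the Mahler measure dominates each coefficient) gives $|a| \le H(\alpha) \cdot$ something harmless; more cleanly, $|a| \le M(A) \le (m+1)^{1/2} H(\alpha)$, and $|a|^n \le (m+1)^{n/2} H(\alpha)^n$ — I would in fact just use the crude $|a| \le (m+1) H(\alpha)$ if it simplifies the bookkeeping toward the stated constants. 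For each conjugate, $|P(\alpha_i)| \le (n+1) H(P) \max\{1, |\alpha_i|\}^n$, and the product $\prod_{i=1}^m \max\{1,|\alpha_i|\}$ is exactly $M(A)/|a| \le (m+1)^{1/2} H(\alpha)/|a|$; combining these across $i = 2, \ldots, m$ yields an upper bound for $\prod_{i=2}^m |P(\alpha_i)|$ of the shape $(n+1)^{m-1} H(P)^{m-1} \cdot (\text{a power of }(m+1) \text{ times } H(\alpha)^n/|a|)$. Assembling everything and cancelling the $|a|$ powers leaves a bound of the form $|P(\alpha)| \ge (m+1)^{-c_1 n} (n+1)^{-(m-1)} H(\alpha)^{-n} H(P)^{-(m-1)}$, and a slightly careful choice of which crude estimates to use at each step produces exactly the stated exponents $(m+1)^{-n-1}(n+1)^{-m-1} H(\alpha)^{-n} H(P)^{-m+1}$.

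The main obstacle is purely bookkeeping: making sure the powers of $(m+1)$ and $(n+1)$ that accumulate from (i) bounding the leading coefficient $|a|$, (ii) bounding each $|P(\alpha_i)|$ by $(n+1)H(P)\max\{1,|\alpha_i|\}^n$, and (iii) bounding $\prod_i \max\{1,|\alpha_i|\}$ in terms of $M(A)$ and hence $H(\alpha)$, all collapse into the clean exponents $-n-1$ and $-m-1$ claimed in the statement — rather than something marginally weaker. Since the lemma is quoted from the literature (e.g.\ Bugeaud's monograph \cite{BuLiv}), in the write-up I would either cite it directly or present the resultant argument above and absorb the constants with one line of elementary estimation, noting $M(A) \le \sqrt{m+1}\, H(\alpha)$ and $|P(\alpha_i)| \le (n+1) H(P) \max\{1,|\alpha_i|\}^{n}$ as the only two inputs needed.
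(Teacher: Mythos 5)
Your resultant argument is correct, and in fact it proves something slightly stronger than the stated lemma; you should be aware, however, that the paper does not prove Lemma \ref{lem8.2} at all --- it simply cites \cite[Theorem A.1]{BuLiv}, where essentially this resultant argument appears. To verify your bookkeeping: with $A$ the minimal polynomial of $\alpha$, $|\mathrm{Res}(A,P)| = |a|^{n}\prod_{i=1}^{m}|P(\alpha_i)| \ge 1$, the bound $|P(\alpha_i)|\le (n+1)H(P)\max\{1,|\alpha_i|\}^{n}$, the identity $\prod_{i}\max\{1,|\alpha_i|\}=M(A)/|a|$, and Landau's inequality $M(A)\le\sqrt{m+1}\,H(\alpha)$ combine to give
\[
|P(\alpha)| \ \ge\ \frac{1}{|a|^{n}\prod_{i=2}^{m}|P(\alpha_i)|}\ \ge\ \frac{1}{(n+1)^{m-1}H(P)^{m-1}M(A)^{n}}\ \ge\ (m+1)^{-n/2}(n+1)^{-m+1}H(\alpha)^{-n}H(P)^{-m+1},
\]
and since $(m+1)^{-n/2}\ge(m+1)^{-n-1}$ and $(n+1)^{-m+1}\ge(n+1)^{-m-1}$, this implies the stated inequality. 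So the ``bookkeeping obstacle'' you flag is a non-issue: your natural bound already dominates the lemma's, and you need only record that implication rather than force the exponents to match exactly. The only genuine gap in your write-up as it stands is that you leave the final constant-chasing as a promise; the two lines above close it.
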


Lemma \ref{lem8.2} follows from \cite[Theorem A.1]{BuLiv}. 

Let us recall that the height $H({\bf p})$ of an integer vector ${\bf p}$ in $\mathbb{Z}^n$ is the 
maximum of the absolute values of its coefficients 
and that the height $H(\mathcal{H})$ of a hyperplane $\mathcal{H}$ of 
$\mathbb{Q}^n$ given by the equation $y_1 x_1 + \ldots + y_n x_n = 0$, where
$y_1, \ldots , y_n$ are integers, not all zero and without a common prime divisor, 
is equal to the maximum of the absolute values of $y_1, \ldots , y_n$.

For given vectors
${\bf x}_1, \ldots, {\bf x}_n$ in $\mathbb{Z}^n$, 
we denote by
$\rank( {\bf x}_1, \ldots, {\bf x}_n)$ 
the dimension of the $\mathbb{Q}$-vector space generated by these vectors. 

\begin{lem}\label{lem8.4}
Let $n$ and $N$ 
be integers with $N > 2^n$ and
${\bf p}_1,{\bf p}_2, \ldots , {\bf p}_N$ 
be nonzero elements of $\mathbb{Z}^n$ such that
$$
H({\bf p}_1) \leq H({\bf p}_2) \leq \cdots \leq H({\bf p}_N)
$$
and
$$  
\rank({\bf p}_1,{\bf p}_2, \ldots , {\bf p}_N) < n.
$$
Then, there exist integers $1 \leq j_1 < j_2 < \cdots < j_l$ with
$l \geq N/2^n$ and such that the points
${\bf p}_{j_1},{\bf p}_{j_2}, \ldots , {\bf p}_{j_l}$ 
belong to a hyperplane $\mathcal{H}$ of $\mathbb{Q}^n$ whose height satisfies
$$
H(\mathcal{H}) \leq n! H({\bf p}_{j_1})^n.
$$
\end{lem}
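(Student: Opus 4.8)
The plan is to argue by a greedy extraction of a basis from the list ${\bf p}_1, \ldots, {\bf p}_N$, followed by a pigeonhole argument on the resulting blocks of indices. (Note first that, as the ${\bf p}_i$ are nonzero, $\rank({\bf p}_1, \ldots, {\bf p}_N) < n$ forces $n \ge 2$.) Set $i_1 := 1$ and, having chosen $i_1 < \cdots < i_s$ with ${\bf p}_{i_1}, \ldots, {\bf p}_{i_s}$ linearly independent, let $i_{s+1}$ be the least index exceeding $i_s$ for which ${\bf p}_{i_{s+1}}$ lies outside $\langle {\bf p}_{i_1}, \ldots, {\bf p}_{i_s}\rangle$, stopping when no such index exists. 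Writing $r := \rank({\bf p}_1, \ldots, {\bf p}_N)$ and $B_s := \{i_s, i_s+1, \ldots, i_{s+1}-1\}$ with the convention $i_{r+1} := N+1$, a short induction on $s$ shows that ${\bf p}_1, \ldots, {\bf p}_{i_s} \in \langle {\bf p}_{i_1}, \ldots, {\bf p}_{i_s}\rangle$ for each $s$ reached, so the process terminates exactly at step $r$ with ${\bf p}_{i_1}, \ldots, {\bf p}_{i_r}$ a basis of the common span; moreover, by the minimality built into the construction, every $i \in B_s$ satisfies ${\bf p}_i \in \langle {\bf p}_{i_1}, \ldots, {\bf p}_{i_s}\rangle$.

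Since $B_1, \ldots, B_r$ partition $\{1, \ldots, N\}$, one block $B_{s_0}$ has at least $N/r \ge N/(n-1) \ge N/2^n$ elements; I would take $\{j_1 < \cdots < j_l\} := B_{s_0}$, so that $j_1 = i_{s_0}$ and $l \ge N/2^n$, and observe that all of ${\bf p}_{j_1}, \ldots, {\bf p}_{j_l}$ lie in the $s_0$-dimensional subspace $V_0 := \langle {\bf p}_{i_1}, \ldots, {\bf p}_{i_{s_0}}\rangle$. Because $s_0 \le r \le n-1$ and the standard basis vectors $e_1, \ldots, e_n$ span $\mathbb{Q}^n$, one can greedily adjoin $n-1-s_0$ of them to ${\bf p}_{i_1}, \ldots, {\bf p}_{i_{s_0}}$ so as to obtain $n-1$ linearly independent integer vectors; their $\mathbb{Q}$-span is then a hyperplane $\mathcal{H}$ of $\mathbb{Q}^n$ containing $V_0$, hence containing ${\bf p}_{j_1}, \ldots, {\bf p}_{j_l}$.

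It remains to bound $H(\mathcal{H})$. A nonzero normal vector to $\mathcal{H}$ is provided by the $(n-1)\times(n-1)$ minors of the $(n-1)\times n$ integer matrix whose rows are ${\bf p}_{i_1}, \ldots, {\bf p}_{i_{s_0}}$ together with the chosen standard basis vectors. Expanding each such minor as a sum of $(n-1)!$ signed products, it is bounded in absolute value by $(n-1)!$ times the product of the maximal entries of the rows; the $e_j$-rows contribute a factor $1$, while $H({\bf p}_{i_t}) \le H({\bf p}_{i_{s_0}})$ for $t \le s_0$ by the monotonicity of the heights, so every minor is at most $(n-1)!\,H({\bf p}_{i_{s_0}})^{s_0}$. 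Consequently $H(\mathcal{H}) \le (n-1)!\,H({\bf p}_{j_1})^{s_0} \le n!\,H({\bf p}_{j_1})^n$, using $s_0 \le n-1$ and $H({\bf p}_{j_1}) \ge 1$, which is exactly the asserted bound. The only genuinely delicate point is verifying the span property of the blocks $B_s$ together with the termination of the greedy selection at step $r$; everything else reduces to a routine determinant estimate.
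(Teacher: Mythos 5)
The paper does not prove this lemma; it is cited directly from the reference \cite{AdBuMes}, so there is no internal proof to compare against. Your argument is correct and self-contained, and it in fact establishes the slightly stronger count $l \geq N/(n-1)$, since the number of blocks in your rank-based partition equals $r := \rank(\mathbf{p}_1,\dots,\mathbf{p}_N) \leq n-1 < 2^n$. The points you flag as delicate do check out: because $i_{s+1}$ is chosen as the \emph{least} index beyond $i_s$ with $\mathbf{p}_{i_{s+1}}$ outside the current span, an easy induction gives $\mathbf{p}_i \in \langle \mathbf{p}_{i_1},\dots,\mathbf{p}_{i_s}\rangle$ for all $i_s \leq i < i_{s+1}$, and the convention $i_{r+1}:=N+1$ handles the last block; the nonvanishing of the $\mathbf{p}_i$ forces $n\geq 2$ and $r\geq 1$, so the pigeonhole divisor is positive. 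The extension of $\mathbf{p}_{i_1},\dots,\mathbf{p}_{i_{s_0}}$ to $n-1$ linearly independent integer vectors by adjoining standard basis vectors succeeds because $e_1,\dots,e_n$ span $\mathbb{Q}^n$, and the vector of signed $(n-1)\times(n-1)$ minors of the resulting matrix is a nonzero integer normal to the hyperplane; the Leibniz expansion together with the height monotonicity $H(\mathbf{p}_{i_t})\leq H(\mathbf{p}_{i_{s_0}})=H(\mathbf{p}_{j_1})$ for $t\leq s_0$ bounds each minor by $(n-1)!\,H(\mathbf{p}_{j_1})^{s_0}$, and passing to the primitive normal vector can only shrink this, giving $H(\mathcal{H})\leq (n-1)!\,H(\mathbf{p}_{j_1})^{s_0}\leq n!\,H(\mathbf{p}_{j_1})^n$ as required. (You do not actually use $N>2^n$, which is fine.) In short, this is a clean and valid proof of the cited lemma.
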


This is \cite[Lemma 8.4]{AdBuMes}.

We are now in a position to establish Theorem \ref{thmNotUeffective}. 

\begin{proof}[Proof of Theorem \ref{thmNotUeffective}] We give here, the proof for the case $a=1$; see the remark following this proof for the explanation of the general case.

Let $F(x)\in\mathbb{Z}[[x]]$ be a $k$-regular power series and $b\geq 2$ be a positive integer. 
Let $H(x)$ be the series given 

In the proof of Theorem \ref{regularkkernel}, we have 
associated to $F(x)$ power series
$G_1(x), \ldots , G_{S} (x)$ and  $H(x)$ such that $H(x)$ is a $\mathbb{Q}$-linear   
combination of $G_1(x), \ldots , G_{S} (x)$ chosen to have the property that $H(1/b)=F(1/b)$.  
Furthermore, there is a matrix $\mathbf{C}_{2,2}(x)$  
described in the proof of Theorem \ref{regularkkernel}  that satisfies  
$$
\left[\begin{matrix} 1\\G_1(x)\\ \vdots\\ G_{S-1}(x)\\ H(x)\end{matrix}\right]
=\left[\begin{matrix} 1 & \mathbf{0}_{1\times S}\\ \mathbf{0}_{S\times 1} 
& \mathbf{C}_{2,2}(x)\end{matrix}\right]
\left[\begin{matrix} 1\\G_1(x^k)\\ \vdots\\ G_{S-1}(x^k)\\ H(x^k)\end{matrix}\right]. 
$$
We pick a positive integer $B$ such that
$B\mathbf{C}_{2,2}(x)$ has entries in $\mathbb{Z}[x]$.  
For clarity, set $d:=S+1$. Note that $d\leq L+1.$

By Lemma \ref{right}, there exist polynomials $P_{1,n}(x),\ldots,P_{d,n}(x),Q_n(x)\in\mathbb{Z}[x]$ 
such that 
\begin{equation}\label{QbQ} 
Q_n(x)=B^nQ_0(x^{k^n}),
\end{equation} 
and positive constants $C_0,C_1$ such that, 
for $i\in\{1,\ldots, d\}$ and for sufficiently large $n$, 
we have $Q_n(1/b)\neq 0$ and 
\begin{equation}\label{y9.1} 
\left|F(1/b)-\frac{P_{d,n}(1/b)}{Q_n(1/b)}\right|
=\left|H(1/b)-\frac{P_{d,n}(1/b)}{Q_n(1/b)}\right|\leq \frac{C_1C_0^n}{b^{d(d+2)Hk^n}}.
\end{equation} 
Write 
$$
Q_0(x):=\sum_{i=0}^D a_i x^{i} = \sum_{i=1}^{\delta} a_{\ell_i} x^{\ell_i},
$$ 
where
$$
\{ i : 0 \le i \le D, a_i \not= 0\} := \{\ell_1, \ldots , \ell_{\delta} \},
$$
with $0 = \ell_1 < \ell_2 < \ldots < \ell_{\delta} = D$. 

Lemma \ref{right} asserts that
\begin{equation}\label{y9.2}
D \le (d^2 + d - 1) H,  
\end{equation}
 where $H$ is the maximum of the degrees of the entries of ${\bf C}_{2,2}(x)$, and that
\begin{equation}\label{y9.2a}
Q_n (x) = B^n Q_0 (x^{k^n}) = \sum_{i=1}^{\delta} a_{\ell_i} B^n x^{\ell_i k^n}.
\end{equation}
Set  
$$
p_n:=b^{Dk^n}P_{d,n}(1/b)\quad\hbox{and}\quad q_n:=b^{Dk^n}Q_n(1/b). 
$$ 
Note that $p_n$ and $q_n$ are both integers. 

It follows from \eqref{QbQ} and \eqref{y9.2a} that
\begin{equation}\label{y9.3}
|q_n|=B^n\left|\sum_{i=1}^{\delta} 
a_{\ell_i}b^{(D-\ell_i)k^n}\right|\leq B^n \sum_{i=1}^{\delta} |a_{\ell_i}| b^{(D-\ell_i)k^n}
\leq C_2 B^n b^{Dk^n}, 
\end{equation}
where $C_2$ is a positive constant depending only on $F$. 
Thus, by \eqref{y9.1}, \eqref{y9.2} and using that $d \ge 2$, we obtain
\begin{equation}\label{L1inf}
\left|q_n F(1/b)-p_n\right|\leq \frac{C_1C_2(C_0 B)^n b^{(d^2 + d - 1)Hk^n}}{b^{d(d+2)Hk^n}}
=\frac{C_1C_2(C_0 B)^n}{b^{dHk^n}}<\frac{1}{b^{Hk^n}}.
\end{equation}

We proceed in two steps. First, we prove that $F(1/b)$ is rational or transcendental.
Second, we show that $F(1/b)$ cannot be too well approximated 
by irrational algebraic numbers of any fixed degree.

For simplicity, we set $\xi = F(1/b)$.

Assume that $\xi$ is algebraic. 
We consider the linear forms in the $\delta + 1$ variables $X_1, \ldots , X_{\delta + 1}$:
$$
L_{i, \infty}  (X_1, \ldots , X_{\delta + 1}) = X_i,  \quad (1 \le i \le \delta ), 
$$
$$
L_{\delta + 1, \infty}  (X_1, \ldots , X_{\delta + 1} ) 
= \sum_{i=1}^{\delta} a_{\ell_i} \xi  X_i - X_{\delta + 1}, 
$$
and, for every prime divisor $p$ of $b$,
$$
L_{i, p} (X_1, \ldots , X_{\delta+ 1}) = X_i,  \quad (1 \le i \le \delta + 1).
$$ 
For $n \ge 1$, set
$$
{\bf x}_n = (B^{n} b^{(D - \ell_1) k^{n}}, \ldots , B^{n} b^{(D - \ell_{\delta}) k^{n}}, p_{n}).
$$

Define $\varepsilon'$ by the equality
$$
\varepsilon' = \min \left\{ {\log 2 \over 2 D \log b}, {1 \over 6 \delta (1 + \omega (b)) d^2} \right\},
$$
where $\omega(b)$ denotes the number of distinct prime factors of $b$. 

Set
\begin{equation}\label{y9.6a}  
e_{1, \infty} = 1,  \quad e_{i , \infty} = {D - \ell_i \over D} + \varepsilon' \quad (2 \le i \le \delta),
\quad \hbox{and} \quad
e_{\delta + 1, \infty} = - {1 \over 4 d^2},
\end{equation}    
and, for every prime divisor $p$ of $b$,
\begin{equation}\label{y9.6b}   
e_{i, p} = {\log |b|_p \over \log b} \cdot {D - \ell_i \over D} + \varepsilon' \quad ( 1 \le i \le \delta),
\quad \hbox{and} \quad
e_{\delta + 1, p} = 0.
\end{equation}   
Recall that the $p$-adic absolute value is normalised such that $|b|_p = p^{-v}$
if $p^v$ divides $b$ and $p^{v+1}$ does not. 
Set $\mathcal{S} = \{\infty\} \cup \{p : \hbox{$p$ divides $b$}\}$.
We check that
$$
e_{i, \infty} \le 1, \quad e_{i, p} \le 0, \quad (1 \le i \le \delta + 1, p \in \mathcal{S} \setminus \{\infty\}),
$$
and 
$$
-\varepsilon := \sum_{p\in S} \sum_{i=1}^{\delta + 1} e_{i, p} \le 
\delta (1 + \omega(b)) \varepsilon' + e_{\delta + 1, \infty} \le
{1 \over 6 d^2} - {1 \over 4 d^2} = - {1 \over 12 d^2}.
$$

Observe that there exists a positive real number $C_3$ such that
\begin{equation}\label{y9.11a}
B^{n} b^{D k^{n}} \le 
\Vert {\bf x}_n \Vert
\le C_3 B^{n} b^{D k^{n}},  
\end{equation}
for $n$ sufficiently large.

We claim that, if $n$ is large enough, then \eqref{L1inf} implies that the tuple
${\bf x}_n$ satisfies the system of inequalities
\begin{equation}\label{y9.12a}
|L_{i, p}({\bf x})|_p\leq \Vert {\bf x} \Vert^{e_{i, p}}\quad (p\in \mathcal{S},\, i=1, \ldots ,  \delta + 1),
\end{equation}
where $| \cdot |_{\infty}$ means the ordinary absolute value.
To see this, first observe that, for any positive real number $\eta$ at most equal to
$(\log 2)/ (2 D \log b)$, we have
\begin{align*}
|L_{i, \infty}({\bf x}_n)| = B^n b^{(D- \ell_i) k^n} & \le (B^n b^{D k^n})^{\eta + (D - \ell_i)/D} \\
& \le \Vert {\bf x}_n \Vert^{\eta + (D - \ell_i)/D},
\end{align*}
for $i = 2, \ldots , \delta$, and
\begin{align*}
|L_{i, p}({\bf x})|_p \le |b|_p^{(D- \ell_i) k^n} 
& \le (C_3 B^n b^{D k^n})^{\eta + (D - \ell_i) \log |b|_p / ( D \log b)} \\
& \le \Vert {\bf x}_n \Vert^{\eta + (D - \ell_i) \log |b|_p / ( D \log b)}
\end{align*}
for $i = 1, \ldots , \delta - 1$ and every prime divisor $p$ of $b$, provided that $n$
is sufficiently large. Since 
$$
|L_{1, \infty}  ({\bf x}_n)| = B^n b^{D k^n} \le \Vert {\bf x}_n \Vert
$$
and
$$
|L_{\delta, p}({\bf x})|_p \le 1, \quad |L_{\delta + 1, p}({\bf x})|_p \le 1,
$$
for every prime divisor $p$ of $b$, it only remains for us to check that
\begin{equation}\label{y9.12aa}
|L_{\delta + 1, \infty}({\bf x}_n)|  \le \Vert {\bf x}_n \Vert^{e_{\delta + 1, \infty}}
\end{equation}
holds for every sufficiently large integer $n$. To this end, observe that the combination of
\eqref{L1inf}, \eqref{y9.2} and $d \ge 2$ gives
$$
|L_{\delta + 1, \infty}({\bf x}_n)| = |q_n F(1/b) - p_n| < b^{-H k^n} \le (b^{D k^n})^{-1 / (2 d^2)},
$$
for $n$ large enough, which, by \eqref{y9.11a}, gives \eqref{y9.12aa}.

By Theorem \ref{thm5.1}, the
tuples ${\bf x}_n$, $n \geq 1$, 
satisfying \eqref{y9.12a} are contained in a finite union of 
proper rational subspaces of $\mathbb{Q}^{\delta + 1}$.
Thus, we deduce that
there exist an infinite set $\mathcal{N}$ of positive integers
and rational integers $y_1, \ldots , y_{\delta + 1}$, not all zero, such that
\begin{equation}\label{y9.12b}
y_1 B^{n} b^{(D - \ell_1) k^{n}} + \ldots 
+ y_{\delta} B^{n } b^{(D - \ell_{\delta}) k^{n}} + 
y_{\delta + 1} p_{n} = 0, 
\end{equation}
for every $n$ in $\mathcal{N}$. 
Dividing \eqref{y9.12b} by $B^n b^{(D - \ell_1) k^{n}}$ and letting $n$
tend to infinity along $\mathcal{N}$, we deduce that 
$$
y_1 + a_0 \xi y_{\delta + 1} = 0.
$$
If $\xi$ is irrational, we get 
that $y_1 = y_{\delta + 1} = 0$ and, using again \eqref{y9.12b}, we deduce that
$$
y_1 = \ldots = y_{\delta + 1} = 0,
$$
a contradiction. Consequently, the real number $\xi$ is either rational or transcendental.

We will now use the full strength of Theorem \ref{thm5.1}, that is, the upper bound 
for the number of subspaces provided by \eqref{y5.9}, 
to estimate from below the distance between $\xi$
and irrational algebraic numbers of any fixed degree.

Let $m \ge 2$ be an integer and $\alpha$ be a real algebraic number
of degree $m$ and sufficiently large height. 
Let $j$ be the integer determined by the inequalities
\begin{equation}\label{y9.8}
b^{d(d+1) H k^{j-1}} \le  H(\alpha) < b^{d (d+1) H k^j}. 
\end{equation}
In the sequel, the integer $j$ is implicitly assumed to be sufficiently
large. This causes no trouble, since we can safely omit the case
$|\xi - \alpha|$ for $\alpha$ of bounded degree and height.

Let us define the real number $\chi$ by
$$
|\xi - \alpha| = \frac{1}{H(\alpha)^{\chi}}.
$$
We will bound $\chi$ from above in terms of $m$.

It follows from \eqref{y9.1} that
\begin{equation}\label{y9.7}
\left| \xi - \frac{p_n}{q_n} \right| < {1 \over 2 b^{d(d+1) H k^n}},
\end{equation}
when $n$ is large enough. 

Let $M$ be the greatest integer for which
$$
2 b^{d(d+1) H k^{j+M-1}} < H(\alpha)^{\chi}.
$$
For every integer $h = 0, \ldots, M-1$, we have
$2 b^{d(d+1) H k^{j+h}} \le 2 b^{d(d+1) H k^{j+M-1}}$, thus, by \eqref{y9.7}, 
\begin{align}
\nonumber\biggl|\alpha - {p_{j+h} \over q_{j+h}} \biggr| &
\le \biggl|\xi - {p_{j+h} \over q_{j+h} } \biggr|
+ |\xi - \alpha|\\
\nonumber& \le 
{1 \over 2 b^{d(d+1) H k^{j+h}}} + \frac{1}{H(\alpha)^{\chi}}\\
\label{y9.9} &< {1 \over b^{d(d+1) H k^{j+h}}}.
\end{align}
By \eqref{y9.2}, \eqref{y9.3} and \eqref{y9.9}, we then have  
\begin{align*}
|q_{j+h} \alpha - p_{j+h}|  &= 
\left|\sum_{i = 1}^{\delta} a_{\ell_i} B^{j+ h} b^{(D - \ell_i) k^{j+h}} \alpha -  p_{j+h}\right| \\
&<  q_{j+h} b^{-d(d+1) H k^{j+h}} \\
&< b^{- H k^{j+h} / 2}.
\end{align*}
We proceed as in the proof of \cite[Theorem 2.1]{BuEv08}.
We consider the linear forms in the $\delta + 1$ variables $X_1, \ldots , X_{\delta + 1}$:
$$
L'_{i, \infty}  (X_1, \ldots , X_{\delta + 1}) = X_i,  \quad (1 \le i \le \delta ),
$$
$$
L'_{\delta + 1, \infty}  (X_1, \ldots , X_{\delta + 1} ) 
= \sum_{i=1}^{\delta} a_{\ell_i} \alpha X_i - X_{\delta + 1},
$$
and, for every prime divisor $p$ of $b$,
$$
L'_{i, p} (X_1, \ldots , X_{\delta+ 1}) = X_i,  \quad (1 \le i \le \delta + 1).
$$ 
For $h = 0, \ldots , M-1$, set
$$
{\bf x}'_h = (B^{j+h} b^{(D - \ell_1) k^{j+h}}, \ldots , B^{j+h} b^{(D - \ell_{\delta}) k^{j+h}}, p_{j+h}).
$$
By \eqref{y9.11a}, if $j$ is large enough, then the tuple
${\bf x}'_h$ satisfies the system of inequalities
\begin{equation}\label{y9.12}
|L'_{i, p}({\bf x})|_p\leq \Vert {\bf x} \Vert^{e_{i, p}}\quad (p\in \mathcal{S},\, i=1, \ldots ,  \delta + 1),
\end{equation}
with the same $e_{i,p}$ as defined above in \eqref{y9.6a} and \eqref{y9.6b}.   
We omit the detailed proof, which is very similar    
to that of \eqref{y9.12a}.   

In the sequel, the constants $c_1, c_2, \ldots$
are independent of the degree $m$ of $\alpha$.
Theorem \ref{thm5.1} and \eqref{y9.8} imply that the
tuples ${\bf x}'_h$ with $\lfloor M / 2 \rfloor  \le h \le M-1$ 
satisfying \eqref{y9.12} are contained in a finite union of $T$
proper rational subspaces of $\mathbb{Q}^{\delta + 1}$, where
\begin{equation}\label{y9.13}
T \le \lceil c_1  \log (8m) \log \log (8m) \rceil .
\end{equation}

Let $U$ be a positive integer 
and assume that one of these proper rational subspaces
contains $2^{\delta + 1} U$ integer tuples ${\bf x}'_h$ 
all of which satisfy \eqref{y9.12}. 
By Lemma \ref{lem8.4}, there exist a nonzero integer tuple $(z_1, \ldots , z_{\delta + 1})$ and
integers  $\lfloor M / 2 \rfloor  \leq i_1 < \cdots < i_U < M$ such that
\begin{equation}\label{y9.15}
z_1 B^{j + i_u} b^{(D - \ell_1) k^{j+i_u}} + \cdots 
+ z_{\delta} B^{j + i_u} b^{(D - \ell_{\delta}) k^{j+i_u}} + 
z_{\delta + 1} p_{j+i_u} = 0, 
\end{equation} for $1 \le u \le U$, and
\begin{equation}\label{y9.16}
Z := \max\{|z_1|, \ldots , |z_{\delta + 1}|\} \le (\delta + 1)!  
\bigl( C_3 B^{j + i_1} b^{D k^{j+i_1}} \bigr)^{\delta + 1}.  
\end{equation}
Note that
$$
\biggl|{p_{j+i_U} \over  a_0 B^{j + i_U} b^{D k^{j+i_U}} }
- \alpha \biggr| 
 \le \biggl| {p_{j+i_U} \over q_{j + i_U}}  
- {p_{j+i_U} \over  a_0 B^{j + i_U} b^{D k^{j+i_U}} } \biggr| + 
\biggl| {p_{j+i_U} \over q_{j + i_U}} 
- \alpha \biggr| \le {2 \over b^{k^{j + i_U} / 2}}.
$$
It then follows from \eqref{y9.15}, with $u=U$, and \eqref{y9.16} that
\begin{equation}\label{y9.17}
|z_1 + z_{\delta + 1} a_0 \alpha| \le {  Z  \over b^{k^{j + i_U} / 3}}.  
\end{equation}
Since $\alpha$ is irrational of degree $m$, Lemma \ref{lem8.2}, \eqref{y9.8} and \eqref{y9.16} 
imply that
\begin{equation}\label{y9.18}
|z_1 + z_{\delta + 1} a_0 \alpha| \ge 2^{-2 m} |a_0 Z|^{-m+1} 
H(\alpha)^{-1} \ge 2^{-2m} |a_0 Z|^{-m+1} b^{- d (d+1) H k^j} .
\end{equation}
Combining \eqref{y9.17} and \eqref{y9.18} gives
$$
b^{k^{j + i_U} / 3} \le   |4 a_0 Z|^m b^{d (d+1) H k^j}
\le b^{2 D^2 m k^{j + i_1}},
$$
for $j$ sufficiently large. This implies that
$$
U \le c_2 \log m.
$$

Thus, each of the $T$ subspaces introduced
above contains at most $c_2 \log m$ elements in the set 
$\left \{ {\bf x}'_h, \; 0 \leq h \leq M-1 \right \}$. 
The upper bound for $T$ given by \eqref{y9.13} then implies
$$
M < c_3   (\log 8 m)^2 (\log \log 8 m).
$$  

Moreover, \eqref{y9.8} and the definition of $\chi$ give
$$
H(\alpha)^{\chi} \le 2 b^{d(d+1) H k^{j+M}} \le
2 H(\alpha)^{k^{M+1}},  
$$
whence
$$
\chi \le 2 k^{M+1} \le (8 m)^{c_4 (\log 8 m) (\log \log 8 m)}.
$$
Consequently, Lemma \ref{lem8.1} implies that 
$$
w_m (\xi) \leq \max\left\{w_1(\xi), 
(8 m)^{c_5  (\log 8 m) (\log \log 8 m)} \right\}, 
$$
for every integer $m \geq 1$.

Furthermore, we have already established that $\xi$ is not
a Liouville number, that is, that $w_1 (\xi)$ is finite. A bound for its value is given in Theorem \ref{regularkkernel}. It then follows
that $w_m (\xi)$ is finite for every positive integer $m$, showing that $\xi$
cannot be a $U$-number. 

Since we have already established that $\xi$ is either rational or transcendental,
this completes the proof of the theorem. 
\end{proof}

\begin{rem} Inequality \eqref{y9.7} is less precise than 
what we have actually obtained. Indeed, we have
$$
\Bigl| \xi - {p_n \over q_n} \Bigr| < {1 \over 2 b^{d(d+2) (1 - \varepsilon) H k^n}},   
$$
for every $\varepsilon > 0$ and every $n$ sufficiently large.
Keeping this in mind, we can prove that the conclusion of Theorem \ref{thmNotUeffective} holds for the real numbers $F(a/b)$ provided that $\rho= (\log |a|)/(\log b) < 1/(L+2)\leq 1/ (d+1)$. To see this,
we proceed exactly as above, enlarging the set $\mathcal{S}$ in such a way that it
contains all the prime divisors of $a$. We need to check that the product of the 
linear forms is sufficiently small, that is, that
$$
(a/b)^{d(d+2 - \varepsilon) H k^n} b^{(d^2 + d - 1) H k^n}
$$
is smaller than $b^{-\eta H k^n}$ for some positive real
number $\eta$. This is true, since
$$
d^2 + d - 1 - d (d+2 - \varepsilon) (1- \rho) < 0,
$$
when $\varepsilon$ is small enough.
\end{rem}

{\bf Acknowledgement.}
We thank the anonymous referee for many helpful comments and suggestions.


\bibliographystyle{amsalpha}

\end{document}